\numberwithin{equation}{section}
\theoremstyle{plain}
\newtheorem*{question*}{Question}
\newtheorem{theorem}[equation]{Theorem}
\newtheorem{proposition}[equation]{Proposition}
\newtheorem{lemma}[equation]{Lemma}
\newtheorem{corollary}[equation]{Corollary}
\newtheorem{conjecture}[equation]{Conjecture}
\newtheorem*{nonumtheorem}{Theorem}                           
\newtheorem*{nonumconjecture}{Conjecture}                     
\theoremstyle{definition}
\newtheorem{definition}[equation]{Definition}
\theoremstyle{remark}
\newtheorem{remark}[equation]{Remark}
\newtheorem{example}[equation]{Example}
\newcommand{\Z}{\mathbb Z}
\newcommand{\Q}{\mathbb Q}
  \newcommand{\QQ}{\mathbb Q}
\newcommand{\R}{\mathbb R}
  \newcommand{\RR}{\mathbb R}
\newcommand{\F}{\mathbb F}
  \newcommand{\FF}{\mathbb F}
\renewcommand{\H}{H}
\newcommand{\calD}{\mathcal{D}}
  \newcommand{\KK}{\mathcal{K}}
\newcommand{\calO}{\mathcal{O}}
\newcommand{\scrK}{\mathscr{K}}
\newenvironment{enumalph}
{\begin{enumerate}}
{\end{enumerate}}
\renewcommand{\epsilon}{\varepsilon}
\renewcommand{\phi}{\varphi}
\def\mod{\,\text{mod}\,}
\def\EKplus{ E_{\negthinspace K}^{+} }
\def\EK4{E_{\negthinspace K,4} }
\def\rad{\textup{rad }}
\def\Valt{V_{\textup{alt}}}
\def\vcan{v_{\textup{can}}}       
\def\wcan{w_{\textup{can}}}      
\def\vcanone{v_{1,{\textup{can}}}}        
\def\subalt#1{#1_{\textup{alt}}}         
\def\gp#1{\langle \, #1 \, \rangle}
\def\order#1{\#{#1}}   
\DeclareMathOperator{\iso}{\simeq}
\DeclareMathOperator{\disc}{disc}
\DeclareMathOperator{\Aut}{Aut}
\newcommand{\defi}[1]{\textsf{#1}} 	
\newcommand{\abs}[1]{\lvert{#1}\rvert} 
\DeclareMathOperator{\GL}{GL}
\DeclareMathOperator{\Gal}{Gal}
\DeclareMathOperator{\Prob}{Prob}
\DeclareMathOperator{\im}{im}
\DeclareMathOperator{\rk}{rk}
\DeclareMathOperator{\Sel}{Sel}
\DeclareMathOperator{\sgn}{sgn}
\DeclareMathOperator{\sgnrk}{sgnrk}
\DeclareMathOperator{\Tr}{Tr}
\newcommand{\mathfraka}{\mathfrak{a}}
\begin{document}

\title[Selmer groups and heuristics for number fields]
{The $2$-Selmer group of a number field and heuristics for narrow class groups and signature ranks of units}

\author{David S.\ Dummit}
\address{Department of Mathematics, University of Vermont, Lord House, 16 Colchester Ave., Burlington, VT 05405, USA}
\email{dummit@math.uvm.edu}

\author{John Voight \\ (appendix with Richard Foote)} 
\address{Department of Mathematics,
  Dartmouth College, 6188 Kemeny Hall, Hanover, NH 03755, USA}
\email{jvoight@gmail.com}

\subjclass[2010]{11R29, 11R27, 11R45, 11Y40} 


\begin{abstract}
We investigate in detail a homomorphism which we call the 2-Selmer signature map
from the $2$-Selmer group of a number field $K$ to a 
nondegenerate symmetric space,
in particular proving the image is a maximal totally isotropic subspace.  
Applications include precise predictions on the density of fields $K$ 
with given narrow class group 2-rank and with given 
unit group signature rank.  In addition to theoretical evidence, 
extensive computations for totally real cubic and quintic fields are
presented that match the predictions extremely well.
In an appendix with Richard Foote, we  
classify the maximal totally isotropic subspaces of orthogonal direct sums 
of two nondegenerate symmetric
spaces over perfect fields of characteristic 2 and derive some consequences,
including a mass formula for such subspaces.

\end{abstract}

\maketitle
\tableofcontents

\section{Introduction}

Let $K$ be a number field of degree $n=[K:\Q]=r_1+2r_2$, where $r_1, r_2$ as usual denote the 
number of real and complex places of $K$, respectively.  We assume here that $r_1 > 0$, i.e.,
that $K$ is not totally complex, in order to avoid trivialities.
Let $C_K$ denote the class group of $K$ and $E_K=\calO_K^*$ the unit group of $K$. 

The purpose of this paper is to closely investigate the \defi{$2$-Selmer group} of $K$, defined as
$$
\Sel_2(K) =\{z \in K^* : (z)=\mathfraka^2 \text{ for some fractional ideal $\mathfraka$}\}/K^{*2}
$$
and a homomorphism
$$
 \phi : \Sel_2(K) \to V_\infty \perp V_2 ,
$$
which we call the \defi{$2$-Selmer signature map}, 
from $\Sel_2(K)$ to an orthogonal direct sum of two nondegenerate symmetric spaces over $\F_2$. 
The spaces $V_\infty$ and $V_2$ are constructed from the archimedean and 2-adic  
completions of $K$ and their nondegenerate symmetric space structures are induced 
by the quadratic Hilbert symbol.  The homomorphism $\phi$ is composed of two maps, the first 
mapping to $V_\infty$ that records signs under the real embeddings of $K$, and the second to 
$V_2$ that keeps track of certain $2$-adic congruences up to squares.  See Sections \ref{sec:2adic2Selmer} 
and \ref{sec:symmetricspace} for details.

Our first main result is the following theorem (Theorem \ref{theorem:imSelmaxisotropic}).

\begin{nonumtheorem}
The image of the $2$-Selmer signature map $\phi : \Sel_2(K) \to V_\infty \perp V_2$ is a maximal 
totally isotropic subspace.
\end{nonumtheorem}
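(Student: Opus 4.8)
The plan is to prove the statement in two halves: first that the image of $\phi$ is totally isotropic, and second that it is maximal. For the isotropy half, I would use a Hilbert symbol (global reciprocity) argument. Given $z, w \in \Sel_2(K)$, the local pairing on $V_\infty \perp V_2$ applied to $\phi(z), \phi(w)$ is a sum of local quadratic Hilbert symbols $\sum_{v \mid \infty} (z,w)_v + \sum_{v \mid 2} (z,w)_v$. The reciprocity law says that the product (sum, in $\F_2$) of $(z,w)_v$ over \emph{all} places $v$ of $K$ is trivial, so the archimedean-plus-dyadic contribution equals the sum of $(z,w)_v$ over the remaining finite places $v \nmid 2$. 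At such a place, since $z$ and $w$ both generate squares of fractional ideals, $\ord_v(z)$ and $\ord_v(w)$ are both even, and for $v \nmid 2$ the Hilbert symbol $(z,w)_v$ depends only on the parities of these valuations (the tame symbol formula) — hence vanishes. Therefore the total pairing is $0$, i.e.\ the image is totally isotropic. This step I expect to be essentially routine once the pairing on $V_\infty \perp V_2$ is set up to be the one induced by the Hilbert symbol, as stated in the excerpt.

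For maximality, the strategy is a dimension count. A totally isotropic subspace $W$ of a nondegenerate symmetric space of dimension $N$ over $\F_2$ has $\dim W \le N/2$, with equality exactly when $W$ is maximal. So I would (i) compute $\dim_{\F_2} \Sel_2(K)$, (ii) compute $\dim_{\F_2}(V_\infty \perp V_2)$, (iii) compute $\dim_{\F_2} \ker \phi$, and check that $\dim \Sel_2(K) - \dim \ker \phi = \tfrac12 \dim(V_\infty \perp V_2)$. For (i), there is the standard exact sequence relating $\Sel_2(K)$ to the $2$-torsion of the class group and the units mod squares: $\dim_{\F_2} \Sel_2(K) = \dim_{\F_2} E_K/E_K^2 + \dim_{\F_2} C_K[2] = (r_1 + r_2) + \dim_{\F_2} C_K[2]$, using Dirichlet's unit theorem (and including the torsion $-1$). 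For (ii), $V_\infty$ should have dimension $r_1$ (one coordinate per real place) and $V_2$ should have dimension $[K:\Q] = n$ coming from $K \otimes \Q_2$ modulo squares, via the local theory $\dim_{\F_2} K_v^*/K_v^{*2} = [K_v:\Q_2] + 2$ summed appropriately against the structure making it a symmetric space — so $\dim(V_\infty \perp V_2) = r_1 + (n + \text{correction})$; I would pin this down from Sections~\ref{sec:2adic2Selmer} and~\ref{sec:symmetricspace}. For (iii), the kernel of $\phi$ consists of classes that are totally positive and locally a square at all places above $2$; I would identify $\ker \phi$ with a subquotient and count it, again via class field theory / the product formula.

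The main obstacle, and the genuinely substantive part, is the maximality direction — specifically reconciling the three dimension counts so that the alternating sum comes out to exactly half of $\dim(V_\infty \perp V_2)$. The counts for $\Sel_2(K)$, for $V_\infty \perp V_2$, and for $\ker\phi$ each individually involve unit-rank and class-group contributions, and the statement that they combine correctly is really an incarnation of a global duality / Poitou–Tate type exactness (or, concretely, the ambiguous-class-number formula together with reciprocity). I would organize this by writing down the precise exact sequence
\[
1 \to \ker\phi \to \Sel_2(K) \xrightarrow{\phi} V_\infty \perp V_2 \to \cok\phi \to 1,
\]
computing all four outer terms, and then invoking the self-duality of the Hilbert pairing to match $\cok\phi$ with the $\F_2$-dual of $\ker\phi$ (a perfect-pairing / orthogonal-complement argument). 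Once $\dim\cok\phi = \dim\ker\phi$ is established, $\dim\img\phi = \tfrac12\dim(V_\infty\perp V_2)$ follows immediately, and an image that is both totally isotropic and of half the ambient dimension is maximal. I would also double-check the low-rank and ramification edge cases (e.g.\ when $2$ ramifies or when $C_K[2]$ is large) do not break the count.
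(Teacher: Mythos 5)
Your isotropy half is essentially the paper's argument: the product formula for the Hilbert symbol, together with the observation that at the odd finite places both elements are units times squares so the local symbol is trivial (the paper phrases this via unramified extensions and norms of units rather than the tame symbol, but it is the same point). The maximality half, however, has a genuine gap exactly at the step you flag as the substantive one. The duality you propose---matching $\cok\phi$ with the $\F_2$-dual of $\ker\phi$---is false, and the implication you draw from it does not follow. With the correct dimensions one has $\dim \Sel_2(K)=\rho+r_1+r_2$ (where $\rho=\rk_2 C_K$), $\dim(V_\infty\perp V_2)=r_1+n=2(r_1+r_2)$, and $\dim\ker\phi=\rho$; hence $\dim\cok\phi=r_1+r_2$ while $\dim\ker\phi=\rho$, and these agree only in the exceptional case $\rho=r_1+r_2$. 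Worse, $\dim\cok\phi=\dim\ker\phi$ would force $\dim\Sel_2(K)=\dim(V_\infty\perp V_2)$, which fails in general (e.g.\ a totally real cubic field with odd class number), so this step cannot be repaired; the only correct ``self-duality'' statement in this setting is the theorem itself, namely that $\im\phi$ is its own orthogonal complement, which is not available as an input.

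What closes the argument is the direct count you sketch in items (i)--(iii) but leave unfinished, and this is precisely the paper's proof. First, $V_2=\calO_{K,2}^*/(1+4\calO_{K,2})\calO_{K,2}^{*2}$ has dimension exactly $n$, with no correction term: its local factor $U_v/(1+4\calO_v)U_v^2$ has dimension $[K_v:\QQ_2]$, whereas your formula $[K_v:\QQ_2]+2$ is for the full group $K_v^*/K_v^{*2}$, which is not what $V_2$ is. So $\dim(V_\infty\perp V_2)=r_1+n$. Second, $\ker\phi$ consists of classes of singular $z$ that are totally positive and congruent to a square modulo $4$ at the even places (note: ``square mod $4$'', i.e.\ unramified at even places, not ``locally a square''); by the local conductor computation these are exactly the $z$ for which $K(\sqrt z)/K$ is unramified at all places, so Kummer theory and class field theory identify $\ker\phi$ with $\Gal(Q/K)$, where $Q$ is the compositum of the quadratic subfields of the Hilbert class field, of dimension $\rho$. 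Then $\dim\im\phi=(\rho+r_1+r_2)-\rho=r_1+r_2=\tfrac12(r_1+n)$, and an isotropic subspace of half the dimension of a nondegenerate space is maximal. So your overall framework is sound, but you should delete the cokernel-kernel duality and simply carry out the kernel computation.
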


The spaces $V_\infty$ and $V_2$ have, separately, been studied (see Remark \ref{rem:history1}),
however the observation that the image is a {\it maximal} totally
isotropic subspace, which follows from a computation of the relevant dimensions, has been missed in previous work.
This observation is crucial, because we can prove (in the Appendix with Richard Foote) a fundamental structure theorem 
(Theorem \ref{thm:structuretheorem}) for maximal totally isotropic subspaces of orthogonal direct sums such as  
$V_\infty \perp V_2$. 
This structure theorem then allows us to compute 
the probability that a subspace of $V_\infty \perp V_2$ is isomorphic to a given 
maximal totally isotropic subspace (Theorem \ref{theorem:Sprobability}), 
which in turn then allows us to give several precise conjectures related to the size of the narrow
class group of $K$ and of the group of possible signatures of units of $K$.

To state these conjectures, first recall that the \defi{$2$-rank}, $\rk_2(A)$,
of an abelian group $A$ is the dimension of $A/2A$ as a vector space over $\F_2$. 
If $A$ is finite, then $\rk_2(A)=\dim A[2]$, where $A[2]=\{x \in A : 2x=0\}$ is the subgroup of
elements of order dividing 2, and if $A$ is an elementary abelian $2$-group, we have $\order{A} = 2^{\rk_2(A)}$.  

For nonnegative integers $m$, define the symbol (the $q$-Pochhammer symbol $(q^{-1};q^{-1})_{m-1}$)
\begin{equation} \label{eqn:pochham}
(q)_m =\prod_{i=1}^m (1-q^{-i}) .
\end{equation}

If $K$ is a number field whose Galois closure has the symmetric group $S_n$ as Galois group we refer to
$K$ simply as an `$S_n$-field'.

If $\rho$ denotes the 2-rank of $C_K$ and $\rho^+$ denotes the 2-rank of the narrow class group $C_K^+$ of $K$,  
it is a theorem due to Armitage and Fr\"ohlich (for which we provide two proofs)
that $\rho^+ - \rho \le \lfloor r_1/2 \rfloor$.  Our first application (Conjecture \ref{conj:rhoplusminusrho})
predicts the distribution of the values
of $\rho^+ - \rho$.

\begin{nonumconjecture}
As $K$ varies over $S_n$-fields of odd degree $n$ with signature $(r_1,r_2)$ (counted by absolute discriminant), 
the density of fields such that $\rho^+ - \rho = k$ for $0 \le k \le \lfloor r_1/2 \rfloor$ is
$$
\dfrac{
(2)_{r_1 + r_2 - 1}(4)_{(r_1 - 1)/2}(4)_{(r_1 - 1)/2 +r_2}
}{
2^{k(k+r_2)} (2)_{k}(2)_{k+r_2}  (4)_{r_1+r_2-1} (4)_{(r_1 - 1)/2-k}
} \ . 
$$
\end{nonumconjecture}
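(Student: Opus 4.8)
The plan is to reduce the conjecture to the heuristic that the maximal totally isotropic subspace $W=\img(\phi)\subseteq V_\infty\perp V_2$ furnished by Theorem~\ref{theorem:imSelmaxisotropic} is equidistributed. First one identifies $\rho^+-\rho$ with a linear-algebraic invariant of $W$; then one reads off the density from the mass formula of Theorem~\ref{theorem:Sprobability}.

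\emph{Step 1: the dictionary $\rho^+-\rho\leftrightarrow W$.} The exact sequence $0\to E_K/E_K^2\to\Sel_2(K)\to C_K[2]\to0$ (send $z$ with $(z)=\mathfraka^2$ to the wide class of $\mathfraka$) shows $\dim\Sel_2(K)=r_1+r_2+\rho$. The subgroup of totally positive classes in $\Sel_2(K)$ is exactly $\ker(\phi_\infty)$. The \emph{naive} map from it to $C_K^+[2]$ is not well defined — which is precisely why the narrow class group is subtler than the wide one — but the same recipe $z\mapsto[\mathfraka]$ to the \emph{wide} group $C_K[2]$ is well defined, with kernel $E_K^+/E_K^2$ and image the kernel of the snake-lemma connecting map $\delta\colon C_K[2]\to\{\pm1\}^{r_1}/\sgn(E_K)$ for $1\to\{\pm1\}^{r_1}/\sgn(E_K)\to C_K^+\to C_K\to1$. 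Feeding in the six-term snake sequence, the $\sgnrk(E_K)$ contributions cancel, leaving $\dim\ker(\phi_\infty)=r_2+\rho^+$; hence
\[
\dim\pi_\infty(W)=\dim\img(\phi_\infty)=\dim\Sel_2(K)-\dim\ker(\phi_\infty)=r_1-(\rho^+-\rho).
\]
So $\rho^+-\rho$ is exactly the codimension in $V_\infty$ of the image of $W$ under the orthogonal projection $\pi_\infty$. That this codimension is at most $\lfloor r_1/2\rfloor$, for $W$ maximal totally isotropic, is then the Armitage--Fr\"ohlich bound, and follows from the structure theorem — the second proof alluded to above.

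\emph{Steps 2--3: equidistribution and the count.} The heuristic is that, as $K$ runs over $S_n$-fields of odd degree $n$ and signature $(r_1,r_2)$ ordered by $|\disc K|$, the symmetric spaces $V_\infty$ and $V_2$ take their generic isomorphism types and $W$ is uniform among maximal totally isotropic subspaces of $V_\infty\perp V_2$. Oddness of $n$ is what makes this clean: then $r_1$ is odd, so $V_\infty$ is the nondegenerate non-alternating space of odd dimension $r_1$; the Witt class of $V_2$ is pinned down (for instance through the distinguished isotropic vector $\phi(-1)$, which is isotropic because $\prod_{v\mid2}(-1,-1)_v=(-1)^{r_1}$ by Hilbert reciprocity); and $\dim(V_\infty\perp V_2)$ is even, so the structure theorem applies. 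Granting this, the density of fields with $\rho^+-\rho=k$ equals the probability that a random maximal totally isotropic $W$ has $\codim_{V_\infty}\pi_\infty(W)=k$, which by Theorem~\ref{thm:structuretheorem} is the single invariant classifying such subspaces, and which Theorem~\ref{theorem:Sprobability} evaluates as an explicit ratio of $q$-Pochhammer symbols in $q=2$ and $q=4$. Rewriting this ratio — using identities such as $(2)_{2m}=(4)_m\prod_{i=1}^m(1-2^{1-2i})$, and keeping track of the contributions of $V_\infty$ and $V_2$, whence the arguments $(r_1-1)/2$ and $(r_1-1)/2+r_2=(n-1)/2$, the respective Witt indices — produces the closed form in the statement; one checks separately that it is a probability distribution on $0\le k\le\lfloor r_1/2\rfloor$.

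The main obstacle is Step~1: establishing that $\rho^+-\rho$ depends only on $W$ — not on the finer data of $\ker\phi$ or of how $2$ splits in $K$ — and matching it precisely to the invariant used in Theorems~\ref{thm:structuretheorem} and \ref{theorem:Sprobability}, and in parallel justifying that $V_2$ has a single generic isomorphism type over odd-degree $S_n$-fields, so that the mass formula yields one density and not a splitting-dependent family. Once the invariant is correctly pinned down, the $q$-series bookkeeping in Steps~2--3 is routine.
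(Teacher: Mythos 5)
Your proposal is correct and is essentially the paper's own derivation: you identify $\rho^+-\rho$ with the invariant $k$ of the maximal totally isotropic image $W$ of the $2$-Selmer signature map (your $\codim_{V_\infty}\pi_\infty(W)$ coincides with the paper's $\dim(W\cap V_\infty)$ for maximal totally isotropic $W$, and your snake-lemma computation of $\dim\ker\phi_\infty=\rho^++r_2$ is the paper's computation of $\rk_2(Z^+/K^{*2})$), assume the image is uniformly random (heuristic $(\textup{H}_1)$), and apply Theorem \ref{theorem:Sprobability}. The only cosmetic differences are that Theorem \ref{theorem:Sprobability} already gives the conjectured density verbatim, so no further $q$-Pochhammer rewriting is needed, and that for odd $n$ the isometry type of $V_2$ is automatic (an odd-dimensional nondegenerate symmetric space over $\F_2$ must be nonalternating), so the aside about $\phi(-1)$ is superfluous---indeed $\sgn_2(-1)$ is anisotropic in $V_2$ when $n$ is odd.
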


If we combine this conjecture with the existing predictions of 
Malle \cite[Conjecture 2.1, Proposition 2.2]{Malle2} and Adam--Malle \cite{A-M} 
for the distribution of the values of 
$\rho=\rk_2 C_K$, we obtain a conjecture on the distribution of the values 
of $\rho^+$ (see Conjecture \ref{conj:rhoplus}).  

One consequence of Conjecture \ref{conj:rhoplus} is a prediction for the distribution of the
2-rank of the narrow class group of $S_n$-fields with a fixed $r_2$ and 
odd $r_1$ tending to infinity (see Corollary  \ref{corollary:largenrhoplus});
for totally real $S_n$-fields of odd degree $n$ tending to infinity we predict 
the 2-rank $\rho^+$ of the narrow class group is given by
$ (2)_\infty /  2^{ (\rho^+)^2 } (2)_{\rho^+}^{\, 2} $ (so, for example, approximately
$28.879\%$, $57.758\%$, $12.835\%$, $0.524\%$ and $0.005\%$ should have 2-rank 0,1,2,3,4,
respectively).  

Another consequence of Conjecture \ref{conj:rhoplus} is the following conjecture 
(Conjecture \ref{conj:conjCKplus}) on the size of the 2-torsion of the narrow class group,
which for $n=3$ is known to be true by a theorem of Bhargava--Varma \cite{B-V} (see also the results of
Ho--Shankar--Varma \cite{HSV}).

\begin{nonumconjecture} \label{conj:Ck2plus}
The average size of $C_K^+[2]$ is $1+2^{-r_2}$ as $K$ varies over $S_n$-fields of odd degree $n$
with signature $(r_1,r_2)$ (counted by absolute discriminant). 
\end{nonumconjecture}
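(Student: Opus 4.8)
We explain how to deduce this from Conjecture~\ref{conj:rhoplus}; the plan is a short expectation computation. Since $C_K^+[2]$ is an elementary abelian $2$-group of rank $\rho^+$, we have $\#C_K^+[2] = 2^{\rho^+}$, so the conjectural average of $\#C_K^+[2]$ over $S_n$-fields of odd degree $n$ and signature $(r_1,r_2)$ is
$$
\sum_{j \ge 0} 2^{j}\,\Prob(\rho^+ = j),
$$
where $\Prob(\rho^+ = j)$ denotes the density predicted by Conjecture~\ref{conj:rhoplus}. By construction that density is the convolution of the Malle--Adam--Malle prediction for the distribution of $\rho = \rk_2 C_K$ with the prediction of Conjecture~\ref{conj:rhoplusminusrho} for the distribution of $\rho^+ - \rho$, under the heuristic that these two quantities are independent; writing $\rho^+ = \rho + (\rho^+-\rho)$, the expectation therefore factors as
$$
\sum_{j} 2^{j}\,\Prob(\rho^+ = j) \;=\; \Bigl(\,\sum_{\ell \ge 0} 2^{\ell}\,\Prob(\rho = \ell)\Bigr)\Bigl(\,\sum_{k \ge 0} 2^{k}\,\Prob(\rho^+ - \rho = k)\Bigr).
$$

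The first factor is the conjectural average of $\#C_K[2]$, which the Malle--Adam--Malle heuristics for the $2$-rank of the class group of an $S_n$-field of signature $(r_1,r_2)$ predict to equal $1 + 2^{-(r_1+r_2-1)}$; for $n=3$ this last value is the theorem of Davenport--Heilbronn and Bhargava. For the second factor I would substitute the explicit density from Conjecture~\ref{conj:rhoplusminusrho}, extract the factor independent of $k$, namely
$$
\frac{(2)_{r_1+r_2-1}\,(4)_{(r_1-1)/2}\,(4)_{(r_1-1)/2+r_2}}{(4)_{r_1+r_2-1}},
$$
and evaluate the remaining terminating sum
$$
\sum_{k=0}^{\lfloor r_1/2\rfloor} \frac{2^{k-k(k+r_2)}}{(2)_k\,(2)_{k+r_2}\,(4)_{(r_1-1)/2-k}}
$$
in closed form using standard $q$-series identities. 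After simplification the second factor equals $(1 + 2^{-r_2})/(1 + 2^{-(r_1+r_2-1)})$, and multiplying by the first factor collapses all the $q$-Pochhammer expressions, leaving exactly $1 + 2^{-r_2}$.

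The bulk of the work is routine manipulation of $q$-Pochhammer symbols; the one genuinely nontrivial step is the closed-form evaluation of the terminating sum above --- identifying the correct $q$-series identity, or equivalently finding the right telescoping in $k$. One should also be careful about the precise sense in which Conjecture~\ref{conj:rhoplus} builds in independence of $\rho$ and $\rho^+ - \rho$ so that the expectation splits as above; alternatively, one can bypass the factorization and run the same $q$-series computation directly on the closed-form density for $\rho^+$ furnished by Conjecture~\ref{conj:rhoplus}. As a consistency check on this two-factor calculation, note that for $n = 3$ the conclusion is unconditional by the theorem of Bhargava--Varma.
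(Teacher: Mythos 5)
Your proposal is correct and follows essentially the same route as the paper: the paper also writes the average of $\#C_K^+[2]$ as the first moment of the convolved distribution $\eta^+(\rho^+)=\sum_k \eta(\rho^+-k)\,p(k)$, factors it into Malle's first moment $1+2^{-(r_1+r_2-1)}$ for $\#C_K[2]$ times the first moment of $p(k)$ (Proposition \ref{prop:probdistcks}). The terminating sum you leave to ``standard $q$-series identities'' is exactly the paper's Lemma \ref{lem:pksum}, proved there by a $q$-Zeilberger (WZ-certificate) telescoping, and it yields the value $(1+2^{-r_2})/(1+2^{-(r_1+r_2-1)})$ you assert, so the two calculations agree.
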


The next application in Section \ref{section:conjectures} was the source of our original motivation (arising from certain 
generalizations of a refined abelian Stark's Conjecture).  Every unit in $E_K$ has a sign in each of the
$r_1$ real embeddings of $K$ and the collection of possible signatures of units in $K$ is a subgroup of
$\{ \pm 1 \}^{r_1}$.  The rank of this elementary abelian 2-group is an integer $s$ between 1 (since 
$-1 \in E_K$) and $r_1$, and is equal to 1 precisely when $K$ has a system of fundamental units that 
are all totally positive.  Attempts to find totally real cubic and quintic fields with a totally positive system of
fundamental units suggested that such fields are rare.  This contrasts markedly with the real quadratic case, for which,
as Harold Stark observed to the second author, a density of 
100\% have a totally positive fundamental unit, hence unit signature rank 1
(since to have a unit of norm $-1$ the discriminant cannot be divisible by a prime $p \equiv 3 \mod 4$).  
Trying to understand and reconcile these two disparate behaviors 
led to the question considered here: what is the density of fields $K$ whose units have given signature rank $s$?  
In Conjecture \ref{conj:signaturerank}, one of the central 
results of this paper, we predict the
probability that an $S_n$-field of odd degree $n$ and signature $(r_1,r_2)$ has unit signature rank $s$
for any $s$ with $1 \le s \le r_1$.  

A consequence of Conjecture \ref{conj:signaturerank} is that for totally real $S_n$-fields of odd degree $n$, 
the most common signature rank for the units is not $n$ (indicating all possible signatures occur for the units),
but rather $n-1$ (more precisely, the principal terms in
equation \eqref{eq:signaturerank} show that the ratio of corank 1 fields to corank 0 fields is approximately
$2 - 1/2^{n-2}$, the ratio of the reciprocals of the orders of the isometry groups $\Aut(S_1)$ and $\Aut(S_0)$,
cf.\ Section \ref{section:conjectures}).  After corank 1 followed by corank 0, the next highest predicted 
densities are corank 2,3, etc., in decreasing order.
Although we make no specific conjectures here for fields of even degree,
the fact that corank 1 is predicted to be the most common unit signature rank (followed by $0,2,3,\dots$)
suggests that real quadratic and totally real cubic fields may in fact be demonstrating the same, 
rather than disparate, behavior regarding the existence of a totally positive system of fundamental units.

Our final conjecture (Conjecture \ref{conj:sequencesplitting}) in Section \ref{section:conjectures} is a 
prediction for the density of $S_n$-fields of odd degree $n$ 
whose class group $C_K$ (which is naturally a quotient of the narrow class group $C_K^+$) is in fact 
a direct summand of $C_K^+$.

While we expect the development here will be applicable to other number fields, for the conjectures 
in Section \ref{section:conjectures} we restrict to fields $K$ of odd degree $n$ whose Galois closure 
has the symmetric group $S_n$ as Galois group 
(see the discussion at the beginning of Section \ref{section:conjectures}).
 
When $n = 3$ or 5, i.e., when $K$ is a cubic or quintic number field, it is known (cf.\ \cite{Bha4}, \cite{Cohn},
\cite{DavenportHeilbronn}, \cite{Bha2}) that, 
when ordered by absolute discriminant, a density of 100\% of fields of degree $n$ have the
symmetric group $S_n$ as Galois group for their Galois closure.
As a result, in these cases the conjectures in Section \ref{section:conjectures} can be stated as densities for
all fields (with given signature $(r_1,r_2)$).  For example, in the case of totally real fields
we have the following (for other possibilities for $(r_1,r_2)$ and the exact values in these conjectures, see 
section \ref{section:conjectures}).

\begin{nonumconjecture} \label{conj:trcubics}
As $K$ varies over all totally real cubic (respectively, all totally real quintic) fields ordered by absolute discriminant:

\begin{enumalph}

\item
$\rho^+ = \rho$ with density $2/5$, and $\rho^+ = \rho + 1$ with density $3/5$,
(respectively, $\rho^+ = \rho$, $\rho^+ = \rho + 1$, and $\rho^+ = \rho + 2$ with densities $16/51$, $30/51$, and $5/51$)

\item
the signature rank of the units is 3, 2, and 1 with densities that are approximately 36.3\%, 61.8\%, and 1.9\%
(respectively, 5, 4, 3, 2, and 1 with densities that are approximately 30.46\%, 58.93\%, 10.55\%, 0.058\%, and 0.000019\%), and

\item
$C_K$ is a direct summand of $C_K^+$ with a density that is approximately 94.4\% (respectively, 98.2\%).

\end{enumalph}

\end{nonumconjecture}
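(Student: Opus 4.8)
This statement is a corollary of the general conjectures of Section~\ref{section:conjectures}---namely Conjecture~\ref{conj:rhoplusminusrho} for part~(a), Conjecture~\ref{conj:signaturerank} (equation~\eqref{eq:signaturerank}) for part~(b), and Conjecture~\ref{conj:sequencesplitting} for part~(c)---specialized to $n=3$ and $n=5$ with signature $(r_1,r_2)=(n,0)$. The plan has two stages: first, justify passing from the class of $S_n$-fields to the class of \emph{all} totally real cubic (resp.\ quintic) fields; second, carry out the elementary evaluation of the relevant closed-form expressions.

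For the first stage I would invoke the theorems of Davenport--Heilbronn \cite{DavenportHeilbronn} (for $n=3$) and Bhargava \cite{Bha2} (for $n=5$) that, when cubic (resp.\ quintic) fields are ordered by absolute discriminant, a density of 100\% have Galois group $S_n$ for their Galois closure; the same statement simultaneously removes, at density zero, any fields carrying extra automorphisms. Since each density appearing in Section~\ref{section:conjectures} is unaffected by deleting a density-zero set of fields, all three general conjectures hold verbatim with ``$S_n$-field of odd degree $n$'' replaced by ``totally real cubic field'' (resp.\ ``totally real quintic field''), so parts~(a)--(c) become purely a matter of evaluation at fixed parameters.

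For part~(a) I would substitute $r_2=0$ and $r_1=3$ (resp.\ $r_1=5$) into the density formula of Conjecture~\ref{conj:rhoplusminusrho}, noting that $\lfloor r_1/2\rfloor$ equals $1$ (resp.\ $2$), so that $k$ ranges over $\{0,1\}$ (resp.\ $\{0,1,2\}$); writing out $(2)_m=\prod_{i=1}^m(1-2^{-i})$ and $(4)_m=\prod_{i=1}^m(1-4^{-i})$ for the few small values of $m$ that occur and cancelling gives the rationals $2/5,3/5$ in the cubic case and $16/51,30/51,5/51$ in the quintic case, which (as a consistency check) sum to $1$. For part~(b) I would likewise specialize Conjecture~\ref{conj:signaturerank} to $(r_1,r_2)=(3,0)$ and $(5,0)$: for each corank $c=r_1-s$ of the unit signature map the conjecture expresses the probability as a finite combination of $q$-Pochhammer factors built from the mass formula for maximal totally isotropic subspaces (Theorem~\ref{thm:structuretheorem}) and the probability computation of Theorem~\ref{theorem:Sprobability}, and evaluating these at $c=0,1,2,\dots$ yields the stated decimals, approximately 36.3\%, 61.8\%, 1.9\% in the cubic case and approximately 30.46\%, 58.93\%, 10.55\%, 0.058\%, 0.000019\% in the quintic case. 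For part~(c), specializing Conjecture~\ref{conj:sequencesplitting} to these two signatures and evaluating the resulting expression produces approximately 94.4\% and 98.2\%.

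None of the steps is deep once the general conjectures are in hand; the only genuine work is careful bookkeeping of the $q$-Pochhammer factors and, in part~(b), making sure the corank-by-corank probabilities are indexed correctly against the isometry-group orders $\Aut(S_c)$ (cf.\ the discussion in Section~\ref{section:conjectures}), so that the list of densities for fixed $(r_1,r_2)$ sums to $1$. That internal consistency check---especially confirming the signature-rank densities sum to $1$ before converting to decimals---is the step I would expect to be the most error-prone, and I would perform it symbolically.
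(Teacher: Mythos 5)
Your proposal is correct and follows exactly the paper's own route: the paper deduces this statement by citing the density-$100\%$ results for $S_3$ (Davenport--Heilbronn) and $S_5$ (Bhargava) to pass from $S_n$-fields to all totally real cubic and quintic fields, and then simply specializes Conjectures \ref{conj:rhoplusminusrho}, \ref{conj:signaturerank}, and \ref{conj:sequencesplitting} to $(r_1,r_2)=(3,0)$ and $(5,0)$, exactly as you do (the resulting values appear in Tables \ref{table:imagephi}, \ref{table:signaturerank}, and \ref{table:sequencesplits}). Your internal consistency check that the densities for fixed $(r_1,r_2)$ sum to $1$ is a sensible addition but not a point of divergence.
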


The small densities predicted for totally real fields that possess a totally positive system of fundamental 
units---approximately one in every five million for totally real quintic fields, for example---quantifies
(conjecturally) the empirical observation that such fields appear to be rare which, as previously mentioned, was
the question that motivated this investigation.

\medskip

It has been conjectured (see Malle, \cite{Malle4}) that when ordered by absolute discriminant, 
a density of 100\% of fields of degree $n$ have the
symmetric group $S_n$ as Galois group for their Galois closure if and only if $n = 2$ or
$n$ is an odd prime.  If we also grant this conjecture, then each of
the conjectures in Section \ref{section:conjectures} can be stated as densities for
all fields (with given signature $(r_1,r_2)$) of odd prime degree.

\medskip

In Section \ref{section:computations} we present the results of fairly extensive
computations in the case of totally real cubic and quintic fields.  The numerical data 
agrees with the predicted values extremely well
(cf.\ Tables \ref{table:cubicfieldsdata} and \ref{table:quinticfieldsdata})
and provides compelling evidence for the conjectures.  

We note there is no apparent function field analogue of our setting because
when the prime field has nonzero characteristic, there are no 2-adic
places and so the intricate bilinear structure on the direct sum of
signature spaces disappears.

\subsection*{Organization}

The remainder of this paper is organized as follows.

In Section \ref{section:sec1} we set up basic notation, discuss 
the archimedean signature map, and derive some fundamental rank relations.  

In Section \ref{sec:singular}, we discuss the $2$-Selmer group and give 
an elegant but unpublished proof of the Armitage--Fr\"ohlich theorem due to Hayes, 
in particular relating the $2$-Selmer group with subfields of ray class groups.  

In Section \ref{sec:2adic2Selmer}, we discuss the 2-adic signature map, then define and
prove the basic properties of the 2-Selmer signature map.

In section \ref{sec:symmetricspace}, we examine the bilinear space structure provided by the Hilbert symbol.  
 
In Section \ref{section:im2Selmer} we prove that the image 
of the $2$-Selmer signature map is a maximal totally isotropic subspace and derive a number of consequences:
another proof of the Armitage--Fr\"ohlich theorem, a result of Hayes on the
size of the Galois group of the compositum of unramified quadratic extensions of unit type, and
fundamental results needed for the conjectures that follow.  We close Section \ref{section:im2Selmer} with 
an explicit description of the possible images of the 2-Selmer signature map for fields $K$ of degree up to 5.

Section \ref{section:conjectures} applies the results on the 2-Selmer signature map from the previous sections 
to produce the explicit conjectures described above.  

We conclude the main body in Section \ref{section:computations} with a description of computations for
totally real cubic and quintic fields.  

Finally, Appendix \ref{sec:appendixA} (with Richard Foote) establishes the basic properties of nondegenerate finite
dimensional symmetric spaces over perfect fields of characteristic 2, proves a theorem classifying the
maximal totally isotropic subspaces of an orthogonal direct sum of two such spaces, and then derives a number
of consequences of the classification theorem.

\subsection*{Acknowledgments}
The authors would like to thank Evan Dummit, Richard Foote, John Jones, Bjorn Poonen, Peter Stevenhagen, 
and David P.~Roberts for helpful comments, and Michael Novick for some early computations that helped 
shape the final result.  The second author was funded by an NSF CAREER Award (DMS-1151047).

\section{The archimedean signature map} \label{section:sec1}

We begin with the usual method to keep track of the signs 
of nonzero elements of $K$ at the real infinite places.  Let $K_\R=K \otimes_{\Q} \R$.

\begin{definition} \label{def:Vinfty}
The \defi{archimedean signature space} $V_\infty$ of $K$ is
\begin{equation} \label{eq:Vinfty}
V_\infty = K_\R^* / K_\R^{*2} \simeq \prod_{\substack{v\mid\infty \\ \text{$v$ real}}} \{\pm 1\} = \{\pm 1\}^{r_1}.
\end{equation}
\end{definition}

The multiplicative group $V_\infty$ can also be naturally viewed as a vector space over $\FF_2$, written additively, and we shall often
consider $V_\infty \simeq \F_2^{r_1}$ by identifying $\{\pm 1\}$ with $\FF_2$.

\begin{definition} \label{def:realsignature}
For $\alpha \in K^*$ and $v:K \hookrightarrow \R$ a real place of $K$, let $\alpha_v=v(\alpha)$ and 
define $\sgn(\alpha_v)=\alpha_v/\lvert \alpha_v \rvert \in \{\pm 1\}$.
The \defi{archimedean signature map} of $K$ is the homomorphism
\begin{equation}
\begin{aligned}
\sgn_\infty: K^* &\to V_\infty \\
\alpha &\mapsto (\sgn(\alpha_v))_{v}.
\end{aligned}
\end{equation}
\end{definition}

The map $\sgn_\infty$ is surjective with kernel $K^{*+}$, the subgroup of totally positive elements of $K^*$, which  
contains the nonzero squares, $K^{* 2}$.

\begin{definition} \label{def:unitsignatures}
The \defi{unit signature group} of $K$ is the image, $\sgn_\infty(E_K)$, of the units of $K$ under the archimedean signature map.
Define the \defi{(unit) signature rank} of $K$ to be the 2-rank of $\sgn_\infty(E_K)$:
$$
\sgnrk(E_K) = \rk_2 \sgn_\infty(E_K) .
$$
\end{definition}

The unit signature group is the subgroup of all signatures of units of $K$, and the size of this elementary abelian 2-group 
is a measure of how many signature types of units are possible.  
Since $-1 \in E_K$ and $r_1$ is assumed to be nonzero, we have $1 \le \sgnrk(E_K) \le r_1$, where the minimum is achieved precisely
when $K$ has a system of fundamental units that are all totally positive and the maximum occurs 
if and only if every possible signature occurs as the signature of some unit of $K$.  For example, a real quadratic field 
has unit signature rank $1$ if and only if the fundamental unit has norm $+1$.  Note also that if $r_1 = 1$ then
necessarily $\sgnrk(E_K) = 1$.

\subsection*{Relationship to 2-ranks of class groups} 

We use the following notation:
\begin{itemize}[label=\raisebox{0.25ex}{\tiny$\bullet$}]
\item $I_K$, the group of fractional ideals of $K$;
\item $P_K \leq I_K$, the subgroup of principal fractional ideals of $K$;
\item $C_K=I_K/P_K$, the class group of $K$;
\item $P_K^+ \leq P_K$, the subgroup of principal fractional ideals generated by $\alpha \in K^{*+}$; and
\item $C_K^+=I_K/P_K^+$, the narrow (or strict) class group of $K$.  
\end{itemize}
The fundamental exact sequence relating the usual and the narrow class groups is
\begin{equation} \label{eq:clgp1} 
0 \to P_K/P_K^+ \to C_K^+ \to C_K \to 0.
\end{equation}
The natural map $\alpha \mapsto (\alpha)$ for elements $\alpha \in K^*$ gives an exact sequence
$$
1 \rightarrow E_K \rightarrow K^* \rightarrow P_K \rightarrow 1 ,
$$
and the image of the subgroup $K^{*+}$ of totally positive elements in $P_K$ is, by definition, $P_K^+$.  This gives the isomorphism
\begin{equation} \label{eq:ptounits}
P_K/P_K^+ \simeq K^* / E_K K^{*+} ,
\end{equation}
so \eqref{eq:clgp1} may be written
\begin{equation} \label{eq:clgp2}
0 \rightarrow K^* / E_K K^{*+} \rightarrow C_K^+ \rightarrow C_K \rightarrow 0  .
\end{equation}
The image of the units $E_K$ under the archimedean signature map $\sgn_\infty$
has full preimage $E_K K^{*+}$, so \eqref{eq:clgp2} may also be written 
\begin{equation} \label{eq:clgp3}
0 \rightarrow  \{ \pm 1 \}^{r_1} / \sgn_\infty(E_K) \rightarrow C_K^+ \rightarrow C_K \rightarrow 0 .
\end{equation}
The map on the left is induced by mapping an $r_1$-tuple of signatures in $\{ \pm 1 \}^{r_1}$ to the principal ideal 
$(\alpha)$, where $\alpha$ is any element in $K^*$ with the given signatures.

\begin{definition} \label{def:unitsignaturesrhos}
For the groups in the exact sequence \eqref{eq:clgp2}, we define
\begin{equation} \label{eqn:rrhoplusrhoinfty}
\begin{aligned}
\rho & =  \rk_2 C_K, \\
\rho^+ & =  \rk_2 C_K^+, \\
\rho_\infty & = \rk_2 K^* / E_K K^{*+} .
\end{aligned}
\end{equation}
\end{definition}

By the Dirichlet unit theorem we have $\rk_2 (E_K) = \dim(E_K/E_K^2) = r_1+r_2$, and by the
surjectivity of $\sgn_\infty$ we have $\rk_2 (K^{*} / K^{*+} ) = r_1$.  Let $\EKplus = E_K \cap K^{*+}$ 
denote the group of totally positive units of $K$.  The following diagram is commutative, with exact rows and columns:

$$
\beginpicture
\setcoordinatesystem units <1 pt,1 pt>
\linethickness= 0.3pt
\put {$E_K^2$} at  0 0
\put {$\EKplus$} at  0 40  
\put {$E_K$} at  0 80 
\put {$K^{*+}$} at  50 40      
\put {$E_K K^{*+}$} at  55 80   
\put {$K^{*}$} at  50 120 
\put {$P_K^+$} at  102 80   
\put {$P_K$} at  100 120 
\arrow<5pt> [.2,.67] from 0 10 to 0 30 
\arrow<5pt> [.2,.67] from 0 50 to 0 70 
\arrow<5pt> [.2,.67] from 50 50 to 50 70 
\arrow<5pt> [.2,.67] from 50 90 to 50 110 
\arrow<5pt> [.2,.67] from 98 90 to 98 110 
\arrow<5pt> [.2,.67] from 10 40 to 35 40
\arrow<5pt> [.2,.67] from 10 80 to 35 80
\arrow<5pt> [.2,.67] from 75 80 to 90 80
\arrow<5pt> [.2,.67] from 62 120 to 90 120
\put {.} at  130 80                     
\endpicture 
$$

The row maps in the upper square induce the isomorphism $K^*/E_K K^{*+} \simeq P_K/P_K^+$ in \eqref{eq:ptounits}, and
the row maps in the lower square induce the evident isomorphism $E_K/E_K^+ = E_K /(E_K \cap K^{*+}) \simeq E_K K^{*+}/K^{*+}$.
Since $E_K/E_K^2$ and $K^*/K^{*+}$ are elementary abelian $2$-groups, these isomorphisms together with the diagram above
give various rank relations which we record in the following lemma.

\begin{lemma}
With notation as above, we have the following rank relations:
\begin{equation} \label{eq:rankrelations}
\begin{aligned} 
\rk_2 ( K^* / E_K K^{*+} ) & = \rk_2 ( P_K / P_K^+ ) =  \rho_\infty , \\
\rk_2 ( E_K K^{*+} / K^{*+} ) & = \rk_2 ( E_K / \EKplus ) = r_1 - \rho_\infty , \\
\rk_2 ( \EKplus / E_K^2) & =   r_2 + \rho_\infty , \text{ and} \\
\sgnrk(E_K) &= r_1-\rho_\infty .
\end{aligned}
\end{equation}
\end{lemma}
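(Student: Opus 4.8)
The plan is to read off all four rank relations from the commutative diagram together with the two isomorphisms already noted, using only the Dirichlet unit theorem and the surjectivity of $\sgn_\infty$. First I would record the two inputs: $\rk_2(E_K) = \dim(E_K/E_K^2) = r_1 + r_2$ (Dirichlet) and $\rk_2(K^*/K^{*+}) = r_1$ (surjectivity of $\sgn_\infty$). The first line of \eqref{eq:rankrelations} is immediate: $\rk_2(K^*/E_K K^{*+}) = \rho_\infty$ is the definition of $\rho_\infty$, and the isomorphism $P_K/P_K^+ \simeq K^*/E_K K^{*+}$ from \eqref{eq:ptounits} gives $\rk_2(P_K/P_K^+) = \rho_\infty$ as well.

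For the second line, I would use the short exact sequence $1 \to E_K K^{*+} \to K^* \to K^*/E_K K^{*+} \to 1$, all of whose terms (the quotient, and hence the relevant subquotients) are elementary abelian $2$-groups once we pass to the appropriate quotients; more precisely, since $K^{*+} \supseteq K^{*2}$ and $K^*/K^{*+} \simeq \{\pm 1\}^{r_1}$, the subgroup $E_K K^{*+}/K^{*+}$ of $K^*/K^{*+}$ has rank $r_1 - \rho_\infty$ by the exactness of $0 \to E_KK^{*+}/K^{*+} \to K^*/K^{*+} \to K^*/E_KK^{*+}\to 0$ and additivity of $\rk_2$ on elementary abelian $2$-groups. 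The isomorphism $E_K/\EKplus = E_K/(E_K \cap K^{*+}) \simeq E_K K^{*+}/K^{*+}$ noted just before the lemma then transfers this to $\rk_2(E_K/\EKplus) = r_1 - \rho_\infty$.

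For the third line I would apply $\rk_2$ additivity to the exact sequence $0 \to \EKplus/E_K^2 \to E_K/E_K^2 \to E_K/\EKplus \to 0$ (note $E_K^2 \subseteq \EKplus$ since squares are totally positive), giving $\rk_2(\EKplus/E_K^2) = \rk_2(E_K/E_K^2) - \rk_2(E_K/\EKplus) = (r_1+r_2) - (r_1-\rho_\infty) = r_2 + \rho_\infty$. Finally, the fourth line follows since $\sgn_\infty$ restricted to $E_K$ has image $\sgn_\infty(E_K) \simeq E_K K^{*+}/K^{*+} \simeq E_K/\EKplus$ (the kernel of $\sgn_\infty$ on $K^*$ is $K^{*+}$, so on $E_K$ the kernel is $\EKplus$), whence $\sgnrk(E_K) = \rk_2(E_K/\EKplus) = r_1 - \rho_\infty$ by the second line.

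There is essentially no obstacle here: the proof is pure bookkeeping with short exact sequences of elementary abelian $2$-groups, where $\rk_2$ is additive. The only point requiring a moment's care is the repeated use that every group appearing is (or can be replaced by) an elementary abelian $2$-group so that $\rk_2$ behaves additively on exact sequences — for instance $E_K/\EKplus$ is a quotient of $E_K/E_K^2$ and hence elementary abelian, and $K^*/K^{*+}$ is elementary abelian by \eqref{eq:Vinfty} — and once that is observed the four identities are forced. I would present this as a short paragraph verifying each line of \eqref{eq:rankrelations} in the order above, citing \eqref{eq:Vinfty}, \eqref{eq:ptounits}, the commutative diagram, and the Dirichlet unit theorem.
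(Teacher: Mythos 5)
Your proposal is correct and follows essentially the same route as the paper, which simply records these relations as consequences of the isomorphisms $K^*/E_K K^{*+} \simeq P_K/P_K^+$ and $E_K/\EKplus \simeq E_K K^{*+}/K^{*+}$, the diagram, Dirichlet's unit theorem, and the surjectivity of $\sgn_\infty$; your write-up just makes the rank-additivity bookkeeping explicit. No gaps.
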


Finally, since $P_K / P_K^+ \simeq K^* / E_K K^{*+}$ is an elementary abelian 2-group, 
from \eqref{eq:clgp2} it follows that $\rho^+ - \rho$ is the rank of the largest subgroup 
of $P_K/P_K^+$ that is a direct summand of $C_K^+$.  We record the following consequence.

\begin{lemma} \label{lem:exactseqsplits}
We have $\rho^+ = \rho + \rho_\infty$ if and only if the exact sequence in \eqref{eq:clgp3} splits. 
\end{lemma}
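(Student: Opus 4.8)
The plan is to read the claim off the short exact sequence \eqref{eq:clgp3}, namely
$$
0 \to W \to C_K^+ \to C_K \to 0, \qquad W := \{\pm 1\}^{r_1}/\sgn_\infty(E_K),
$$
in which the kernel $W \simeq P_K/P_K^+$ is an elementary abelian $2$-group of rank $\rho_\infty$ by the previous lemma. Since everything in sight is a finitely generated abelian group, the strategy is to compare $2$-ranks: we always have $\rho^+ = \rk_2 C_K^+ \le \rho + \rho_\infty$ (the right-hand side is the sum of the $2$-ranks of the outer terms, and $\rk_2$ is subadditive on short exact sequences), and the content of the lemma is to identify exactly when equality holds with the splitting of the sequence.

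First I would establish the inequality and analyze the snake/Tor long exact sequence obtained by applying $-\otimes_\Z \F_2$ (equivalently, looking at the $2$-torsion and cokernel-of-multiplication-by-$2$). Because $W$ is elementary abelian $2$-torsion, $W\otimes \F_2 = W = W[2]$, and one gets an exact sequence
$$
0 \to C_K^+[2] \to C_K[2] \xrightarrow{\partial} W \to C_K^+/2C_K^+ \to C_K/2C_K \to 0,
$$
where the connecting map $\partial$ is (up to sign) the composite $C_K[2] \hookrightarrow C_K^+ \xrightarrow{\times 1/2 \text{ lift}}$ — more precisely it is the standard Bockstein. From exactness, $\rho^+ = \rho + \rho_\infty$ holds if and only if $\partial = 0$, i.e. if and only if the injection $W \hookrightarrow C_K^+$ stays injective after reduction mod $2$ (equivalently, $W \cap 2C_K^+ = 0$), which is exactly the statement that $W$ is a direct summand of $C_K^+$ as an abelian group.

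Then I would translate "$W$ is a direct summand" into "the sequence splits." One direction is trivial: if \eqref{eq:clgp3} splits then $C_K^+ \simeq W \oplus C_K$ and the $2$-ranks add. For the converse, suppose $\rho^+ = \rho + \rho_\infty$, so by the above $W$ injects into $C_K^+/2C_K^+$; since $W$ is elementary abelian, a complement to its image in the $\F_2$-vector space $C_K^+/2C_K^+$ lifts (using that $W$ is a pure subgroup, or directly that $W$ being a bounded-order pure subgroup of an abelian group is a direct summand — a standard fact, e.g. Kaplansky) to a subgroup $U \le C_K^+$ with $C_K^+ = W \oplus U$; then $U \xrightarrow{\sim} C_K$ via the quotient map, giving a splitting. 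The only mild subtlety is making the purity argument clean: I would phrase it as "$W[2] = W$ and $W \cap 2C_K^+ = 0$ imply $W \cap m C_K^+ = 0$ for all $m$, so $W$ is pure, hence a direct summand since it has bounded (indeed squarefree) exponent."

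The main obstacle is essentially bookkeeping rather than a real difficulty: one must be careful that "$\rho^+-\rho = \rho_\infty$" genuinely forces $W \cap 2C_K^+ = 0$ and not merely $\dim_{\F_2}(W/(W\cap 2C_K^+)) = \rho_\infty$ — but since $\dim_{\F_2} W = \rho_\infty$ exactly, these coincide. I expect the cleanest writeup simply cites the remark immediately preceding the lemma (that $\rho^+-\rho$ equals the rank of the largest subgroup of $P_K/P_K^+$ that is a direct summand of $C_K^+$) together with the observation that $P_K/P_K^+$ has rank exactly $\rho_\infty$, so "largest direct-summand subgroup has full rank $\rho_\infty$" is equivalent to "$P_K/P_K^+$ itself is a direct summand," which is equivalent to the splitting of \eqref{eq:clgp3}.
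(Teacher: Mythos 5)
Your proof is correct in substance, but it is more elaborate than what the paper does: the paper offers essentially no argument, recording the lemma as an immediate consequence of the sentence just before it, namely that since $P_K/P_K^+$ is an elementary abelian $2$-group inside $C_K^+$ with quotient $C_K$, the difference $\rho^+-\rho$ is the rank of the largest subgroup of $P_K/P_K^+$ that is a direct summand of $C_K^+$ (your closing paragraph reproduces exactly this shortcut). Your self-contained route via the $\operatorname{Tor}$ six-term sequence plus the Pr\"ufer--Kaplansky fact that a pure subgroup of bounded exponent is a direct summand proves the same equivalence and buys an explicit criterion, $W\cap 2C_K^+=0$, that the paper leaves implicit. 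Two small repairs are needed in your writeup, neither of which affects the conclusion. First, the six-term sequence should begin $0 \to W \to C_K^+[2] \to C_K[2] \xrightarrow{\partial} W \to C_K^+/2C_K^+ \to C_K/2C_K \to 0$: the map $C_K^+[2]\to C_K[2]$ has kernel $W$, and with your truncated version the alternating sum of $\F_2$-dimensions would force $\rho_\infty=0$; your deduction only uses exactness at the last three terms, so it survives intact. Second, purity of $W$ in $C_K^+$ means $W\cap mC_K^+ = mW$ for every $m$, which for odd $m$ reads $W\cap mC_K^+ = W$ (automatic, since multiplication by an odd integer is surjective on the $2$-primary part of the finite group $C_K^+$), not $W\cap mC_K^+=0$; only for even $m$ does the condition reduce to $W\cap 2C_K^+=0$. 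With those adjustments your argument is complete, and arguably more informative than the paper's one-line justification, at the cost of invoking homological machinery for a statement about finite abelian groups that the paper treats as elementary.
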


\section{The 2-Selmer group of a number field} \label{sec:singular}

In this section, we investigate a subgroup $Z$ of elements of $K^*$ classically 
referred to as ``singular elements'', and their classes modulo squares. 
As an application we give an unpublished proof due to Hayes of the Armitage--Fr\"ohlich theorem.

\begin{definition}
An element $z \in K^*$ is called a \defi{singular element} if the principal ideal generated by $z$ is a square, 
i.e., $(z) = \mathfraka^2$ for some fractional ideal $\mathfrak a \in I_K$.  
\end{definition}

Let $Z$ denote the multiplicative group of all singular elements of $K$.  Since every element in 
$K^{*2}$ is singular, $Z \supseteq K^{*2}$.  

\begin{definition} 
The \defi{2-Selmer group} of $K$ is $\Sel_2(K)=Z/K^{*2}$.
\end{definition}

\begin{remark}

As noted by Lemmermeyer \cite{Le}, nonzero field elements whose principal ideals are squares arose in a number of 
classical problems in algebraic number theory, often involving reciprocity laws.  The nomenclature referring 
to the collection of such elements mod squares as the 2-Selmer group was introduced by Cohen in \cite{Co2} by 
analogy with the Selmer groups for elliptic curves.  We shall use the over-used qualifier \emph{singular} sparingly.  

\end{remark}

The group $\Sel_2(K)$ is a finite elementary abelian 2-group whose rank can be computed as follows.  If $z \in Z$, then $(z) = \mathfraka^2$ for a unique $\mathfraka$, so we have a well-defined homomorphism
\begin{equation}
\begin{aligned}
Z &\to C_K \\
z &\mapsto [\mathfraka];
\end{aligned}
\end{equation}
this map surjects onto the subgroup $C_K[2]$ and has kernel $E_K K^{*2}$, giving the isomorphism
$$
Z / E_K K^{*2} \simeq C_K[2] .
$$
Since $E_K K^{*2} / K^{*2}  \simeq E_K / E_K^2$ has rank $r_1 + r_2$, and $C_K[2]$ has rank $\rho$,
it follows that the elementary abelian 2-group $\Sel_2(K) = Z / K^{*2}$ has rank
\begin{equation} \label{eq:Sel2rank}
\rk_2  \Sel_2(K)  = \rho + r_1 + r_2 .
\end{equation}

\subsection*{Totally positive singular elements}

Let $Z^+ = Z \cap K^{*+} \leq Z$ denote the subgroup of totally positive elements of $Z$.  
Then for  $z \in Z^+$, the ideal $\mathfrak a$ with $(z) = {\mathfrak a}^2$ defines a class of 
order 2 in the narrow class group $C_K^+$, and the
map $z \mapsto [\mathfrak a]$ gives a surjective homomorphism from $Z^+$ to $C_K^+ [2]$.  
The kernel of this homomorphism is the set of $z \in Z^+$ with $(z) = (\beta)^2$ where 
$\beta \in K^{*+}$ is totally positive.  Then $z = \epsilon \beta^2$ where $\epsilon$ is a unit, 
necessarily totally positive, i.e., $\epsilon \in E_K^+$.  This gives the isomorphism
$$
Z^+ / \EKplus (K^{*+})^2 \simeq C_K^+ [2] .
$$

Under this isomorphism, the image of the subgroup $ \EKplus K^{*2} / \EKplus (K^{*+})^2$ consists of those classes
$[ \mathfrak a ]$ (of order 2) in $C_K^+$ represented by an ideal $\mathfrak a$ with $\mathfrak a^2 = (\epsilon \alpha^2)$ for some
$\epsilon \in \EKplus$ and $\alpha \in K^*$, i.e., $\mathfrak a = (\alpha)$ is a principal ideal.  Hence
$$
Z^+ / \EKplus K^{*2} \simeq C_K^+ [2] / (P_K/P_K^+) .
$$
(Note this is the image of $ C_K^+ [2] $ in $ C_K [2] $ under the natural projection in \eqref{eq:clgp1}.) 
Since $C_K^+ [2]$ is an elementary abelian 2-group of rank $\rho^+$ and $P_K/P_K^+$ has rank $\rho_\infty$, this gives 
\begin{equation}
\rk_2 (Z^+ / \EKplus K^{*2}) = \rho^+ - \rho_\infty .
\end{equation}
We have $\EKplus K^{*2} / K^{*2}  = \EKplus / (\EKplus \cap K^{*2}) = \EKplus  / E_K^2$, whose rank was computed to
be $r_2 + \rho_\infty$ in \eqref{eq:rankrelations}.  
%
%
Since $Z^+  / K^{*2} $ is an elementary abelian 2-group, its dimension over $\F_2$ is the sum of the dimensions over $\F_2$
of $Z^+ / \EKplus K^{*2}$ and $\EKplus K^{*2} / K^{*2}$, which gives 
\begin{equation} \label{eq:Zplusrank}
\rk_2 (Z^+ / K^{*2}) = \rho^+ + r_2 .
\end{equation}

\subsection*{The Armitage--Fr\"ohlich theorem}

Let $H$ be the Hilbert class field of $K$ and $H^+$ the narrow Hilbert class field
of $K$.  Then we have $C_K \simeq \Gal(H/K)$ and $C_K^+ \simeq \Gal(H^+/K)$, so \eqref{eq:clgp1} gives
\begin{equation} \label{eq:hilbertranks}
\begin{aligned}
 \rk_2 \Gal (H/K) &= \rho \\
\rk_2 \Gal (H^+/K)&= \rho^+  \\
\rk_2  \Gal (H^+/H) &= \rho_\infty .
\end{aligned}
\end{equation}
Let $Q$ denote the compositum of all quadratic subfields of $H$ (i.e., the compositum of all 
unramified quadratic extensions of $K$) and let $ Q^+$ be 
the compositum of all the quadratic subfields of $H^+$ (i.e., the compositum of all
quadratic extensions of $K$ unramified at finite primes).  Then in particular the rank relations \eqref{eq:hilbertranks} show
\begin{equation} \label{eq:Qranks}
\begin{aligned}
[Q^+ : K] & = 2^{\rho^+} \\
[Q  : K ] & = 2^{\rho} .
\end{aligned}
\end{equation}

We can also relate $Z$ and $Z^+$ to class fields and their 2-ranks as a result of the following lemma, the source
of interest for these elements classically.  Let $H_4$ denote the ray class field of $K$ of conductor $(4)$ and let 
$H_4^+$ denote the ray class field of $K$ of conductor $(4)\infty$, 
where $\infty$ denotes the product of all the real infinite places of $K$.

\begin{lemma}   \label{lem:KummerZ}
Let $z \in K^*$.  Then $K(\sqrt z) \subseteq H_4^+$ if and only if $z \in Z$, and $K(\sqrt z) \subseteq H_4$ if and only if $z \in Z^+$.
\end{lemma}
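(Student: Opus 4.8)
The plan is to prove both equivalences simultaneously using class field theory and Kummer theory. First I would recall the standard Kummer-theoretic criterion: for $z \in K^*$, the quadratic extension $K(\sqrt{z})/K$ is ramified at a finite prime $\mathfrakp$ if and only if $\ord_\mathfrakp(z)$ is odd or ($\ord_\mathfrakp(z)$ is even and $z$ is not a square in the completion $K_\mathfrakp^{\times}$, which for $\mathfrakp \nmid 2$ cannot happen once the valuation is even, but for $\mathfrakp \mid 2$ requires a congruence condition modulo a power of $\mathfrakp$). The precise statement I want is: $K(\sqrt{z})/K$ is unramified at all primes not dividing $2$ precisely when $\ord_\mathfrakp(z)$ is even for every $\mathfrakp \nmid 2$, and the ramification at primes above $2$ is bounded by conductor $(4)$ precisely when in addition $\ord_\mathfrakp(z)$ is even for $\mathfrakp \mid 2$ and $z$ is congruent to a square modulo $4$ locally (equivalently, modulo the appropriate power of $\mathfrakp$). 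Combining the conditions at all finite primes gives exactly: $(z) = \mathfraka^2$ for some fractional ideal $\mathfraka$, i.e., $z \in Z$, \emph{together with} the $2$-adic condition that gets absorbed by allowing conductor $(4)$ rather than $(1)$.

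The cleaner route, which I would actually write up, is via the conductor-discriminant setup for quadratic extensions combined with the defining property of ray class fields. A quadratic extension $L = K(\sqrt{z})/K$ is contained in the ray class field $H_\mathfrakm$ of conductor $\mathfrakm$ if and only if the conductor $\mathfrakf(L/K)$ divides $\mathfrakm$. So $K(\sqrt{z}) \subseteq H_4^+$ iff $\mathfrakf(K(\sqrt{z})/K) \mid (4)\infty$, and $K(\sqrt{z}) \subseteq H_4$ iff $\mathfrakf(K(\sqrt{z})/K) \mid (4)$ (with no infinite part). The finite part of the conductor of $K(\sqrt{z})/K$ divides $(4)$ exactly when $K(\sqrt{z})/K$ is unramified outside $2$ and, at each $\mathfrakp \mid 2$, tamely-or-mildly ramified in the sense that the local conductor exponent is at most $2\,\ord_\mathfrakp(2)$; a local computation at each $\mathfrakp \mid 2$ (choosing a uniformizer and reducing $z$ modulo squares in $K_\mathfrakp^\times$) shows this is equivalent to $\ord_\mathfrakp(z)$ being even. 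At primes $\mathfrakp \nmid 2$, unramifiedness is equivalent to $\ord_\mathfrakp(z)$ even (or $K(\sqrt z)=K$). Putting these together: the finite conductor divides $(4)$ iff $\ord_\mathfrakp(z)$ is even for every finite prime $\mathfrakp$, i.e., iff $(z) = \mathfraka^2$ for some $\mathfraka \in I_K$, i.e., iff $z \in Z$. That proves the first equivalence once we note the infinite part of the conductor automatically divides $\infty$. For the second equivalence, $K(\sqrt{z}) \subseteq H_4$ additionally requires the infinite part of the conductor to be trivial, i.e., $K(\sqrt{z})/K$ is unramified at all real places, i.e., $z$ is totally positive (so every real place of $K$ splits or stays real in $L$ rather than becoming complex); combined with $z \in Z$ this says exactly $z \in Z \cap K^{*+} = Z^+$.

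I expect the main obstacle to be the local analysis at the primes $\mathfrakp$ above $2$: one must verify carefully that ``$K(\sqrt z)/K$ has conductor exponent at most $2\ord_\mathfrakp(2)$ at $\mathfrakp$'' is equivalent to ``$\ord_\mathfrakp(z)$ is even'', with no extra congruence condition surviving. The point is that conductor $(4)$ is generous enough: for a ramified quadratic extension $K_\mathfrakp(\sqrt{u})/K_\mathfrakp$ with $u$ a unit, the conductor exponent is at most $2e+1$ where $e = \ord_\mathfrakp(2)$ (with equality in the wildest case $u \equiv$ a nonsquare unit), and $2e+1 > 2e$ would seem to be a problem; the resolution is that the conductor of $K(\sqrt z)/K$ as a \emph{global} abelian extension is the conductor of the corresponding quadratic character, and the relevant bound from the ray class field $H_4 = H_{(4)}$ allows exactly exponent $2e = \ord_\mathfrakp(4)$, which forces $\ord_\mathfrakp(z)$ even but still permits the unit part of $z$ to be an arbitrary square class locally only if that square class has conductor $\le 2e$ — and here one uses that $1 + 4\calO_\mathfrakp \subseteq (K_\mathfrakp^\times)^2$ is not quite true, but $1 + \mathfrakp^{2e+1}\calO_\mathfrakp \subseteq (K_\mathfrakp^\times)^2$ is, so that units congruent to a square mod $\mathfrakp^{2e}$ after adjusting by $\mathfrakp$-adic squares are exactly those giving conductor dividing $\mathfrakp^{2e}$. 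I would handle this by citing the standard formula for the conductor of a Kummer extension of degree $p$ in residue characteristic $p$ (e.g.\ via the different, or Serre's \emph{Local Fields}), reducing everything to the statement that the $\mathfrakp$-part of $\mathfrakf(K(\sqrt z)/K)$ divides $\mathfrakp^{2\ord_\mathfrakp(2)}$ iff $z$ is a square times a unit in $K_\mathfrakp^\times$, i.e., iff $2 \mid \ord_\mathfrakp(z)$. Once that lemma is in hand, the global assembly via ray class fields is routine.
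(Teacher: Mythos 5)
Your overall route is the same as the paper's: translate containment in $H_4^+$ (resp.\ $H_4$) into the statement that the conductor of $K(\sqrt z)/K$ divides $(4)\infty$ (resp.\ $(4)$), reduce to the local conductor/discriminant computation for quadratic extensions at the even primes (the paper cites Narkiewicz where you would cite Serre), and settle the second equivalence by ramification at the real places. The global assembly and the treatment of the infinite part are fine, and the local lemma you isolate in your last sentence---the $\mathfrakp$-part of the conductor divides $\mathfrakp^{2\ord_\mathfrakp(2)}$ if and only if $\ord_\mathfrakp(z)$ is even---is exactly the fact needed.

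However, the discussion by which you try to justify that local lemma contains two incorrect assertions, and taken literally it would undercut the result rather than prove it. First, for a quadratic extension $K_\mathfrakp(\sqrt u)/K_\mathfrakp$ with $u$ a \emph{unit}, the conductor exponent is at most $2e$ (where $e=\ord_\mathfrakp(2)$), not ``at most $2e+1$ with equality in the wildest case'': after multiplying $u$ by a square one may assume $u=1+w$ with $k=\ord_\mathfrakp(w)$ either $\geq 2e$ (the unramified or trivial case) or odd with $k<2e$, and in the latter case the exponent is $2e-k+1\leq 2e$; the exponent $2e+1$ occurs precisely when the Kummer generator has \emph{odd} valuation at $\mathfrakp$, which is what makes the converse direction work. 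Second, the claim that ``units congruent to a square mod $\mathfrakp^{2e}$ after adjusting by $\mathfrakp$-adic squares are exactly those giving conductor dividing $\mathfrakp^{2e}$'' is false: those units are exactly the ones giving \emph{unramified} extensions (conductor exponent $0$, which is Proposition \ref{prop:sgn2unram}), whereas \emph{every} unit square class has conductor exponent at most $2e$. Read literally, your characterization contradicts the lemma you are proving: $-1\in Z$ always, and for $K=\QQ$ the element $-1$ is not a square mod $4$ in $\QQ_2$, yet $\QQ(\sqrt{-1\,})$ has conductor $(4)\infty$ and does lie in $H_4^+$. So the proof is salvageable and then essentially identical to the paper's, but the middle of your local analysis must be replaced by the two correct statements (unit generator: exponent $\leq 2e$; odd-valuation generator: exponent exactly $2e+1$), either proved by the computation above or quoted from a standard reference as the paper does.
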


\begin{proof}
The first statement of the lemma follows from the conductor-discriminant theorem and the explicit description of 
the ring of integers in local quadratic extensions (see e.g.\ Narkiewicz \cite[Theorem 5.6, Corollary 5.6]{N}).  
The second statement follows as a consequence: a subfield, $K(\sqrt z)$, of $H_4^+$ is also contained in 
$H_4$ if and only if $K(\sqrt{z})$ is unramified over $K$ at all real places, so if and only if 
$z$ is also totally positive, i.e., $z \in Z^+$.
\end{proof}

Let $Q_4^+$ be the compositum of the quadratic extensions of $K$ in $H_4^+$, 
i.e., the composite of the quadratic extensions of $K$ of conductor dividing $(4) \infty$.  
Similarly, let $Q_4 \subseteq H_4$ be the compositum of the quadratic extensions of $K$ in $H_4$, 
i.e., the composite of the quadratic extensions of $K$ of conductor dividing $(4)$.  
By the lemma, the elements of $Z^+$ give the Kummer generators 
for $Q_{ 4}$ (respectively, the elements of
$Z$ give the Kummer generators for $Q_{4}^+$); by Kummer theory, equation 
\eqref{eq:Sel2rank} (respectively, \eqref{eq:Zplusrank}) gives

\begin{equation} \label{eq:Qrayranks}
\begin{aligned}
[Q_4^+ : K] & = 2^{\rho + r_1 + r_2 } \\
[Q_{ 4 } : K] & = 2^{ \rho^+ + r_2 } \ .
\end{aligned}
\end{equation}
\hskip 5.1 true in
$
\beginpicture
\setcoordinatesystem units <1 pt, 1 pt>
%
%
%
%
\linethickness= 0.3pt
\put {$K \ $} at  0 0
\put {$Q \ $} at  0 80  
\put {$H \ $} at  0 160 
\put {$Q_{ 4}$} at  50 100        
\put {$Q^+$} at  20 120        
\put {$H^+$} at  20 200        
\put {$H_4$} at  50 178          
\put {$\ \ Q_4^+$} at  75 140    
\put {$\ \ H_4^+$} at  75 220        
\putrule from -1.0 10 to -1.0 70      
\putrule from -1.0 90 to -1.0 150     
\putrule from 49.0 110 to 49.0 170     
\putrule from 24.0 130 to 24.0 163    
\putrule from 24.0 172 to 24.0 190     
\putrule from 74.0 150 to 74.0 210     
\setlinear \plot  5.0 85.0  40.0 100.0 /      
\setlinear \plot  5.0 160.0  40.0 175.0 /      
\setlinear \plot  3.0 87.0  20.0 110.0 /      
\setlinear \plot  53.0 107.0  70.0 130.0 /       
\setlinear \plot  53.0 187.0  70.0 210.0 /       
\setlinear \plot  3.0 167.0  20.0 190.0 /      
\setlinear \plot  30.0 124.0  45.0 130.4 /        
\setlinear \plot  53.0 133.86  65.0 139.0 /        
\setlinear \plot  30.0 203.0  65.0 218.0 /        
\endpicture
$

\vspace{-42ex} Combined with \eqref{eq:Qranks} this gives the degrees:
\begin{align*}
[Q_4^+ \negthinspace : Q ] & =  2^{r_1 + r_2 }  \\
[Q^+ \negthinspace : Q] & = 2^{\rho^+ -  \rho } \\
[Q_{ 4 } \negthinspace : Q ] & =  2^{\rho^+ + r_2 - \rho }  \ .
\end{align*}

The fields $Q^+$ and $Q_{ 4 }$ are Galois over $Q$, with intersection $Q$, so 
$$
[Q^+ Q_{ 4 } \negthinspace : Q]  = [Q^+ \negthinspace : Q] [Q_{ 4 } \negthinspace : Q ] =   2^{2 \rho^+ -  2 \rho  + r_2 } .
$$
Since $Q^+Q_{ 4 } \subseteq Q_4^+$, this shows
$$
{r_1 + r_2 } \ge 2 \rho^+ -  2 \rho  + r_2 , 
$$
i.e.,  $\rho^+ -  \rho \le r_1 /2 $, a result due to Armitage--Fr\"ohlich \cite{A-F}.  

\parshape=2 0.0 true in 4.7 true in 0.0 true in 4.7 true in
\begin{theorem}[Armitage--Fr\"ohlich] \label{theorem:armfroh} 
If $\rho$ (respectively, $\rho^+$) is the 2-rank of the class group (respectively, narrow class group) of the number field $K$
then
$$
 \rho^+ -  \rho \le \lfloor r_1 /2 \rfloor ,
$$
where $r_1$ is the number of real places of $K$. 
\end{theorem}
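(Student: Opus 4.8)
The plan is to complete the field-theoretic argument already begun in the paragraphs immediately preceding the theorem (the proof due to Hayes), the point being that that argument in fact delivers the floor version once one tracks a parity. First I would assemble the four degree formulas in play: $[Q:K]=2^\rho$ and $[Q^+:K]=2^{\rho^+}$ from \eqref{eq:Qranks}, together with the Kummer-theoretic formulas $[Q_4^+:K]=2^{\rho+r_1+r_2}$ and $[Q_4:K]=2^{\rho^++r_2}$ coming from \eqref{eq:Sel2rank} and \eqref{eq:Zplusrank} via Lemma \ref{lem:KummerZ} (the elements of $Z$, resp.\ $Z^+$, supply the Kummer generators for $Q_4^+$, resp.\ $Q_4$, over $K$). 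Dividing, $[Q_4^+:Q]=2^{r_1+r_2}$, $[Q^+:Q]=2^{\rho^+-\rho}$, and $[Q_4:Q]=2^{\rho^++r_2-\rho}$.

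Next I would establish $Q^+\cap Q_4=Q$, which is the only step requiring an argument. Since $Q^+/K$ and $Q_4/K$ are both multiquadratic, so is $Q^+\cap Q_4$, and it is the compositum of those quadratic extensions of $K$ that lie in both. A quadratic subextension of $Q^+$ is unramified at every finite prime, while a quadratic subextension of $Q_4$ has conductor dividing $(4)$---in particular is unramified at the infinite places; a quadratic extension with both properties has trivial conductor, hence lies in $Q$. The reverse containment $Q\subseteq Q^+\cap Q_4$ is clear, so equality holds, and therefore $[Q^+Q_4:Q]=[Q^+:Q][Q_4:Q]=2^{2(\rho^+-\rho)+r_2}$. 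The inclusion $Q^+Q_4\subseteq Q_4^+$ then forces $2(\rho^+-\rho)+r_2\le r_1+r_2$, i.e.\ $2(\rho^+-\rho)\le r_1$.

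Finally comes the observation that produces the floor: $\rho^+-\rho$ is a nonnegative integer (as $C_K$ is a quotient of $C_K^+$, so $\rho\le\rho^+$), hence $2(\rho^+-\rho)$ is a nonnegative even integer bounded above by $r_1$ and therefore bounded above by $2\lfloor r_1/2\rfloor$; dividing by $2$ gives $\rho^+-\rho\le\lfloor r_1/2\rfloor$. I do not expect any real obstacle here: the substantive content is already contained in \eqref{eq:Sel2rank} and \eqref{eq:Zplusrank} together with Lemma \ref{lem:KummerZ}, and the only bookkeeping that needs care is the conductor argument for $Q^+\cap Q_4=Q$---everything else is arithmetic of $2$-power degrees plus the parity of $2(\rho^+-\rho)$.
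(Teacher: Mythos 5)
Your proof is correct and is essentially the paper's own argument (the Hayes proof laid out in the paragraphs preceding the theorem): the same degree formulas from \eqref{eq:Qranks} and \eqref{eq:Qrayranks}, the same compositum bound $Q^+Q_4\subseteq Q_4^+$, and the same integrality step to pass from $\rho^+-\rho\le r_1/2$ to the floor. Your conductor argument for $Q^+\cap Q_4=Q$ simply supplies a detail the paper leaves implicit.
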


By the exact sequence \eqref{eq:clgp1}, we have $\rho^+ \geq \rho_\infty$, so one consequence 
is the following corollary, also due to Armitage--Fr\"ohlich.

\begin{corollary} \label{cor:weakArmFro}
We have
\begin{equation} \label{eq:weakArmFro}
\rho \ge \rho_\infty - \lfloor r_1 /2 \rfloor =   \lceil  r_1 /2 \rceil - \sgnrk(E_K) 
= \rk_2 ( \EKplus / E_K^2) -  \lfloor n /2 \rfloor .
\end{equation}
\end{corollary}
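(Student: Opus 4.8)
The plan is to deduce the displayed inequality directly from Theorem~\ref{theorem:armfroh} together with the exact sequence~\eqref{eq:clgp1}, and then to rewrite it in the two other asserted forms by bookkeeping with the rank relations~\eqref{eq:rankrelations} and the identity $n = r_1 + 2r_2$.

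First I would observe that the subgroup $P_K/P_K^+$ occurring in~\eqref{eq:clgp1} is an elementary abelian $2$-group of rank $\rho_\infty$ (it is isomorphic to $K^*/E_K K^{*+}$, a quotient of $K^*/K^{*2}$, and its rank is recorded in~\eqref{eq:rankrelations}), so it injects into $C_K^+[2]$ and hence
$$
\rho^+ = \rk_2 C_K^+ \ge \rk_2 (P_K/P_K^+) = \rho_\infty.
$$
Combining this with the Armitage--Fr\"ohlich bound $\rho^+ - \rho \le \lfloor r_1/2 \rfloor$ of Theorem~\ref{theorem:armfroh} gives
$$
\rho \ge \rho^+ - \lfloor r_1/2 \rfloor \ge \rho_\infty - \lfloor r_1/2 \rfloor,
$$
which is the claimed inequality.

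It then remains to rewrite $\rho_\infty - \lfloor r_1/2 \rfloor$ in the other two forms. For the first equality I would substitute $\sgnrk(E_K) = r_1 - \rho_\infty$ from~\eqref{eq:rankrelations}, so that $\rho_\infty - \lfloor r_1/2 \rfloor = r_1 - \sgnrk(E_K) - \lfloor r_1/2 \rfloor = \lceil r_1/2 \rceil - \sgnrk(E_K)$, using $r_1 - \lfloor r_1/2 \rfloor = \lceil r_1/2 \rceil$. For the second equality I would substitute $\rk_2(\EKplus/E_K^2) = r_2 + \rho_\infty$ from~\eqref{eq:rankrelations} and use $\lfloor n/2 \rfloor = r_2 + \lfloor r_1/2 \rfloor$ (which follows from $n = r_1 + 2r_2$) to obtain $\rk_2(\EKplus/E_K^2) - \lfloor n/2 \rfloor = \rho_\infty - \lfloor r_1/2 \rfloor$ as well.

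There is no genuine obstacle here: the corollary is purely a repackaging of Theorem~\ref{theorem:armfroh} together with the rank dictionary in~\eqref{eq:rankrelations}. The only point deserving a moment's attention is that the map $P_K/P_K^+ \to C_K^+$ of~\eqref{eq:clgp1} has image inside $C_K^+[2]$, which is automatic since its source is an elementary abelian $2$-group, so that comparison of $2$-ranks is legitimate.
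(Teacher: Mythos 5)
Your proposal is correct and follows essentially the same route as the paper: the paper also deduces $\rho^+ \ge \rho_\infty$ from the exact sequence \eqref{eq:clgp1}, combines it with Theorem \ref{theorem:armfroh}, and rewrites the bound using the rank relations \eqref{eq:rankrelations} and $n = r_1 + 2r_2$. You merely spell out the routine verifications (that $P_K/P_K^+$ is $2$-torsion of rank $\rho_\infty$, hence lands in $C_K^+[2]$) which the paper leaves implicit.
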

\parshape=0

\begin{remark}
Armitage--Fr\"ohlich \cite{A-F} proved, but did not explicitly state, the stronger result
$\rho^+ - \rho \le \lfloor r_1 / 2 \rfloor $ (Theorem \ref{theorem:armfroh}), explicitly stating only 
$\rho_\infty -\rho \le \lfloor r_1 /2 \rfloor$ (Corollary \ref{cor:weakArmFro}).  However, 
equations (3) and (4) of their paper show that (in their notation) 
\[ \dim _2(X_2) - \dim_2(\text{Ker}( \rho \cap X_2)) = \dim_2(\rho(X_2)) \le \lfloor r_1 /2 \rfloor \] 
which is equivalent to $\rho^+ - \rho \le \lfloor r_1 / 2 \rfloor $. The stronger result has occasionally 
been misattributed as due first to Oriat \cite{O}, who provided a different proof of the result.  
\end{remark}

\begin{remark}
The elegant proof of the Armitage--Fr\"ohlich Theorem presented above is an unpublished proof 
due to D. Hayes \cite{H}. 
In section \ref{sec:symmetricspace} we provide a proof, due to Hayes and Greither-Hayes, 
which shows the contribution to the class number on the right hand side of \eqref{eq:weakArmFro} in 
Corollary \ref{cor:weakArmFro} is provided by unramified quadratic extensions generated by units 
(cf.~Proposition \ref{cor:Hayesunitextensions}).

\end{remark}

\section{The 2-adic and the 2-Selmer signature maps} \label{sec:2adic2Selmer}

In this section, in addition to keeping track of the signs of elements at the real places as in section \ref{section:sec1}, 
we also keep track of ``$2$-adic signs''.

\subsection*{The 2-adic signature map} \label{subsec:2adicsig}

Let $\calO_{K,2}=\calO_K \otimes \Z_2$.

\begin{definition} \label{def:V2}
The \defi{2-adic signature space} of $K$ is 
\begin{equation} \label{eq:V2}
V_2 =  \calO_{K,2}^*/(1+4\calO_{K,2})\calO_{K,2}^{*2}. 
\end{equation}
\end{definition}

We say that a finite place $v$ of $K$ is \defi{even} if it corresponds to a prime dividing $(2)$.  
Let $K_v$ denote the completion of $K$ at a place $v$ and let $\calO_v = \calO_{K,v} \subseteq K_v$ be its valuation ring.  
Then $\calO_{K,2} \simeq \prod_{\textup{$v$ even}} \calO_v$.  Let $U_v =\calO_v^*$ denote the group of 
local units in $\calO_v$.  Then as abelian groups, we have
\begin{equation} 
V_2 \simeq \prod_{v \vert (2)} U_v /(1+4\calO_v) U_v^2.
\end{equation}

The idea that signatures of units are related to congruences modulo $4$ goes back (at least) to 
Lagarias \cite{La}, and the space $V_2$ appears explicitly in Hagenm\"uller \cite{Ha1,Ha2}, where one can also 
find the following proposition.  

\begin{proposition} \label{prop:V2rkn}
The $2$-adic signature space $V_2$ is an elementary abelian $2$-group of rank $n=[K:\Q]$.
\end{proposition}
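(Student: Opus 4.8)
## Proof proposal

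The plan is to reduce the global computation to a product of local ones via the isomorphism $V_2 \simeq \prod_{v \mid (2)} U_v/(1+4\calO_v)U_v^2$ already established, and then to compute each local factor using the known structure of the unit group of a $2$-adic local field together with the filtration by higher unit groups.

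First I would fix an even place $v$ with residue field of cardinality $2^{f_v}$ and ramification index $e_v$ over $\Q_2$, so that $\sum_{v\mid(2)} e_v f_v = n$. The group $U_v = \calO_v^*$ is a finitely generated $\Z_2$-module of the form $\mu \times \Z_2^{e_v f_v}$, where $\mu$ is the (finite cyclic) group of roots of unity of order prime to $2$ times the $2$-power roots of unity in $K_v$; the only part relevant modulo squares is the $\F_2$-dimension of $U_v/U_v^2$, which by this structure is $e_v f_v + 1 + \delta_v$, where $\delta_v = 1$ if $K_v$ contains a primitive fourth root of unity (equivalently $-1 \in K_v^{*2}$ forces care, but the standard count is $\dim_{\F_2} K_v^*/K_v^{*2} = e_v f_v + 2$ and $\dim_{\F_2} U_v/U_v^2 = e_v f_v + 1$ after accounting for the uniformizer). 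So I would instead work directly with $W_v := U_v/(1+4\calO_v)U_v^2$ and show $\dim_{\F_2} W_v = e_v f_v$.

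The key step is to identify $(1+4\calO_v)U_v^2$ inside $U_v$ precisely. Using the filtration $U_v^{(i)} = 1 + \mathfrakp_v^i$ and the standard facts that $x \mapsto x^2$ maps $U_v^{(i)}$ into $U_v^{(i+e_v)}$ for $i$ below the critical index and isomorphically onto $U_v^{(2i)}$ or $U_v^{(i+e_v)}$ depending on whether $i < e_v$, $i = e_v$, or $i > e_v$ (the "$2 = $ uniformizer power $e_v$" phenomenon), one computes that $U_v^2 \cdot (1+4\calO_v)$ has index exactly $2^{e_v f_v}$ in $U_v$: the subgroup $1 + 4\calO_v = U_v^{(2e_v)}$ absorbs all the higher-filtration ambiguity, and the quotient is detected entirely by the bottom $e_v f_v + $ (roots of unity) layers, with the roots-of-unity contribution cancelling against the squares of the teichmüller part. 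Concretely, $W_v \simeq \calO_v/2\calO_v$ as $\F_2$-vector spaces (via $u = 1 + x \mapsto x \bmod 2\calO_v$ on the principal units, together with the Teichmüller lift being a square in the prime-to-$2$ part), which has dimension $e_v f_v$. Summing over $v \mid (2)$ gives $\dim_{\F_2} V_2 = \sum_v e_v f_v = n$.

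The main obstacle is the careful bookkeeping of the squaring map on the unit filtration near the critical index $i = e_v$ (where $v(2) = e_v$), since this is exactly where the behavior of $x\mapsto x^2$ changes and where the subgroup $1+4\calO_v = 1 + \mathfrakp_v^{2e_v}$ sits — one must verify that modding out by $1+4\calO_v$ removes precisely the "second half" of the squares so that $U_v/(1+4\calO_v)U_v^2$ sees each of the $e_v f_v$ relevant $\F_2$-dimensions exactly once, with no off-by-one error coming from a fourth root of unity in $K_v$. I would handle this by citing or reproving the explicit description of $\calO_{K,v}^{*2}$ in terms of the filtration (this is the same local input behind Lemma \ref{lem:KummerZ}, via Narkiewicz \cite{N}), and then the dimension count is immediate. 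The unramified and tamely ramified cases can be done by hand as a sanity check before doing the general wildly ramified case.
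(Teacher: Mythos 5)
Your global strategy—decompose $V_2 \simeq \prod_{v\mid(2)} U_v/(1+4\calO_v)U_v^2$ and show each local factor has $\F_2$-dimension $[K_v:\Q_2]=e_vf_v$—is the same as the paper's, but the local step has a genuine gap as written. First, the explicit identification you offer, $u=1+x\mapsto x \bmod 2\calO_v$ on principal units, is not correct: it is not multiplicative unless $\mathfrak{p}_v^2\subseteq 2\calO_v$ (i.e.\ $e_v\le 2$), it does not annihilate squares (for a uniformizer $\pi$ one has $(1+\pi)^2=1+2\pi+\pi^2\mapsto \pi^2\bmod 2\calO_v\neq 0$ when $e_v\ge 3$), and for $K_v=\Q_2$, where $\mathfrak{p}_v=2\Z_2$, it is identically zero even though the quotient has order $2$ (generated by $-1$). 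Second, the decisive claim—that $(1+4\calO_v)U_v^2$ has index exactly $2^{e_vf_v}$ in $U_v$, which hinges precisely on the squaring map at the critical index $i=e_v$ and on the fourth-root-of-unity issue you yourself flag—is only described (``absorbs'', ``cancels'') and deferred to a citation; since that count is the entire content of the proposition, the proposal stops exactly at the point you name as the main obstacle. (The muddled count $\dim_{\F_2}U_v/U_v^2 = e_vf_v+1+\delta_v$ early on is immaterial since you don't use it, but it reflects the same bookkeeping risk.)

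For comparison, the paper's proof sidesteps the filtration entirely: $A=U_v/(1+4\calO_v)$ is a finite abelian group, so $\rk_2(A/A^2)=\rk_2 A[2]$, and $A[2]$ is computed in two lines—if $u^2=1+4t$ then $a=(u-1)/2$ satisfies $a^2+a-t=0$, hence $a\in\calO_v$ and $u=1+2a$, and $u\mapsto a$ gives an isomorphism $A[2]\simeq \calO_v/2\calO_v$ of rank $[K_v:\Q_2]$. Note this map is defined only on classes represented by elements $\equiv 1 \bmod 2\calO_v$, which is exactly why it is a homomorphism, in contrast to your map on all principal units. If you prefer your filtration route, you must actually establish the index, e.g.\ by proving $(1+4\calO_v)\cap U_v^2 = \{\,1+4(a^2+a) : a\in\calO_v\,\}$ and counting; but at that point you have essentially reproduced the paper's argument, so I would recommend adopting its torsion-duality trick rather than the wild-ramification bookkeeping.
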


\begin{proof}
The local factor $U_v /(1+4\calO_v) U_v^2$ is the quotient modulo squares of $U_v /(1+4\calO_v)$, so is
an abelian group of exponent 2, and its rank is the same as the rank of the subgroup $U_v /(1+4\calO_v)[2]$ of elements 
of order dividing 2 in this group.  If $u \in U_v$ has $u^2=1+4t$ with $t \in \calO_v$, 
then $a =(u - 1)/2$ satisfies $a^2 + a - t = 0$, 
so $a \in \calO_v$ and $u = 1 + 2 a$.  An easy check shows the map $u \mapsto a$ defines a group isomorphism 
$U_v /(1+4\calO_v)[2] \xrightarrow{\sim} \calO_v/2\calO_v$, an elementary abelian 2-group of rank equal to the
local degree $[K_v : \QQ_v]$, from which the proposition follows.
\end{proof}

As in the case of the archimedean signature map, we shall often view 
$V_2 \simeq \F_2^n $ as a vector space over $\FF_2$, written additively.

Let
\begin{equation} 
 \calO_{K,(2)}^* =\{\alpha \in K : v(\alpha)=0 \text{ for $v$ even}\} 
\end{equation}
be the units in the localization of $\calO_K$ at the set of even primes.  

The inclusion $ \calO_{K,(2)}^* \hookrightarrow \calO_{K,2}^*$ followed by the natural projection 
induces a homomorphism from $\calO_{K,2}^*$ to $V_2$.  We can use this homomorphism and 
the following lemma to define a homomorphism from the group $Z$ of singular elements to 
the 2-adic signature space $V_2$.

\eject

\begin{lemma} \label{lem:2adicsigwelldef}
Every $\alpha \in Z$ can be written $\alpha=\alpha' \beta^2$ with $\beta \in K^*$ and 
$\alpha' \in \calO_{K,(2)}^*$.  
The element $\alpha'$ is unique up to $\calO_{K,(2)}^{*2}$.  
\end{lemma}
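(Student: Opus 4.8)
The plan is to exploit the fact that $\alpha$ is singular, so $(\alpha) = \mathfraka^2$ for some fractional ideal $\mathfraka$, and then to modify $\mathfraka$ at the even primes only. First I would observe that the class group $C_K$ is finite, so some power $\mathfraka^h$ is principal; more usefully, I want to arrange that the even-primary part of $\mathfraka$ becomes principal. Concretely, write $\mathfraka = \mathfrakb \mathfrakc$ where $\mathfrakb$ is supported only on the even primes (those $v \mid (2)$) and $\mathfrakc$ is coprime to $(2)$. By weak approximation (or the surjectivity of $K^* \to \prod_{v \mid (2)} K_v^*/\calO_v^*$ on the relevant finite set of places) choose $\beta_0 \in K^*$ whose valuation at each even prime matches that of $\mathfrakb$; replacing $\alpha$ by $\alpha/\beta_0^2$ changes nothing about membership in $Z$ but makes the new $\mathfraka$ coprime to $(2)$. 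So without loss of generality $(\alpha) = \mathfraka^2$ with $\mathfraka$ coprime to $(2)$, and then $v(\alpha) = 2v(\mathfraka) \geq 0$ at every even $v$ and in fact $v(\alpha)=0$, which is exactly the condition $\alpha \in \calO_{K,(2)}^*$. This gives the existence of the factorization $\alpha = \alpha'\beta^2$ with $\beta = \beta_0$ and $\alpha' = \alpha/\beta_0^2 \in \calO_{K,(2)}^*$ — note $\alpha'$ itself is again singular, since $(\alpha')=\mathfraka^2$.

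For the uniqueness statement, suppose $\alpha = \alpha'\beta^2 = \alpha''\gamma^2$ with both $\alpha', \alpha'' \in \calO_{K,(2)}^*$ and $\beta,\gamma \in K^*$. Then $\alpha'/\alpha'' = (\gamma/\beta)^2$ is a square in $K^*$, say $\alpha'/\alpha'' = \delta^2$ with $\delta = \gamma/\beta \in K^*$. Since $\alpha'/\alpha''$ is a unit at every even prime (both $\alpha'$ and $\alpha''$ are), $\delta^2$ has valuation $0$ at every even $v$, hence $v(\delta) = 0$ for all even $v$, i.e. $\delta \in \calO_{K,(2)}^*$. Therefore $\alpha'/\alpha'' = \delta^2 \in \calO_{K,(2)}^{*2}$, which is precisely the assertion that $\alpha'$ is well-defined modulo $\calO_{K,(2)}^{*2}$.

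I expect the only genuine subtlety — and so the "main obstacle," though it is mild — is making the first reduction clean: one needs a single element $\beta_0 \in K^*$ whose divisor agrees with $\mathfrakb$ exactly at the even primes while being allowed arbitrary behavior elsewhere, which is precisely what weak approximation / the approximation theorem for the finitely many even places delivers (equivalently, the surjectivity of $I_K \to \bigoplus_{v\mid 2}\Z$ onto the even part, combined with principality of the full group $I_K$ after allowing support away from $2$). Everything else is formal manipulation of valuations. It is worth remarking for later use that in this factorization $\alpha'$ lies in $Z$ as well, so the induced map $Z \to V_2$ (sending $\alpha$ to the class of $\alpha'$ under $\calO_{K,(2)}^* \hookrightarrow \calO_{K,2}^* \to V_2$) is a well-defined homomorphism, which is the point of the lemma.
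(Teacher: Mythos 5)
Your proof is correct and follows essentially the same route as the paper: existence via weak approximation to adjust $\alpha$ by a square so that it becomes a unit at the even primes (using that all valuations of $\alpha$ are even), and uniqueness by noting that $\alpha'/\alpha''$ is both a square in $K^*$ and a unit at the even primes, hence lies in $\calO_{K,(2)}^{*2}$. Your explicit decomposition of the ideal $\mathfraka$ into its even and odd parts just spells out what the paper compresses into the phrase ``by weak approximation.''
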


\begin{proof}
If $\alpha \in Z$, then the valuation of $\alpha$ at any finite prime is even; by weak approximation, 
we can write $\alpha =  \alpha'  \beta^2 $  with $\alpha', \beta \in K^*$ with
$\alpha' \in Z$ relatively prime to $(2)$ (i.e., $\alpha' \in U_v$ for all even places $v$).
If also $\alpha =  \alpha''  \gamma^2 $
with $\alpha'', \gamma \in K^*$, and $\alpha''$ relatively prime to (2), then
$(\beta/\gamma)^2 = \alpha'' / \alpha'$ shows $\beta/\gamma \in \calO_{K,(2)}^*$.  
Finally, $\alpha'' = \alpha' (\beta/\gamma)^2$ shows $\alpha'' \calO_{K,(2)}^{*2} = \alpha' \calO_{K,(2)}^{*2}$.  
\end{proof}

\begin{definition} \label{def:2adicsignature}
The \defi{2-adic signature map} is the homomorphism
\begin{align*}
\sgn_2: Z & \to V_2  \\
\alpha & \mapsto \alpha'
\end{align*}
given by mapping $\alpha = \alpha' \beta^2$ as in Lemma \ref{lem:2adicsigwelldef} to the 
image of $\alpha' \in \calO_{K,(2)}^*$ in $V_2$. 
\end{definition}

Since $\alpha'$ in Lemma \ref{lem:2adicsigwelldef} is unique up to $\calO_{K,(2)}^{*2}$, the image of $\alpha'$ in $V_2$
does not depend on the choice of $\alpha'$, so the $2$-adic signature map is well-defined.

By weak approximation, the $2$-adic signature map $\sgn_2$ is surjective.

\subsection*{Kernel of the 2-adic signature map}

By construction, $\ker \sgn_2$ is the subgroup of $Z$ consisting of the elements congruent to a square modulo $4$.  
An alternate characterization of the elements in $\ker \sgn_2$ comes from the following classical result.

\begin{proposition} \label{prop:sgn2unram}
Let $\alpha_v \in U_v$.  Then $K_v(\sqrt{\alpha_v})$ is unramified over $K_v$ if and only if
$\alpha_v$ is congruent to a square modulo $4$.  

If $z \in Z$ then $\sgn_2(z) = 0$ (viewing
$V_2$ additively) if and only if $K(\sqrt{z})$ is unramified over $K$ at all even primes.  
\end{proposition}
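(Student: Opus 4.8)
The plan is to prove the two statements of Proposition~\ref{prop:sgn2unram} in order, the second being a consequence of the first.

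\textbf{Step 1: the local statement.} Fix an even place $v$ and a local unit $\alpha_v \in U_v$. I would argue both directions directly. For the ``if'' direction, suppose $\alpha_v \equiv u^2 \pmod{4\calO_v}$ for some $u \in U_v$; replacing $\alpha_v$ by $\alpha_v/u^2$ (which does not change the extension $K_v(\sqrt{\alpha_v})$), we may assume $\alpha_v = 1 + 4t$ with $t \in \calO_v$. Then, exactly as in the proof of Proposition~\ref{prop:V2rkn}, the element $a = (\sqrt{\alpha_v} - 1)/2$ satisfies the monic integral equation $a^2 + a - t = 0$, so $K_v(\sqrt{\alpha_v}) = K_v(a)$ is generated by a root of a separable polynomial whose discriminant $1 + 4t = \alpha_v$ is a unit; hence the extension is unramified (it is either trivial or the unramified quadratic extension of $K_v$, depending on whether $a^2 + a - t$ splits over the residue field). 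For the ``only if'' direction, suppose $K_v(\sqrt{\alpha_v})/K_v$ is unramified. If it is trivial then $\alpha_v \in K_v^{*2}$ and there is nothing to prove; otherwise it is the unique unramified quadratic extension, and its ring of integers is generated over $\calO_v$ by a root of an Artin--Schreier-type polynomial $x^2 + x - t$ with $t \in \calO_v$ mapping to a non-trace-zero element of the residue field. The element $1 + 2a$, where $a$ is such a root, is a square root of $1 + 4t$, and since $K_v(\sqrt{1+4t}) = K_v(\sqrt{\alpha_v})$ we get $\alpha_v = (1+4t) w^2$ for some $w \in K_v^*$; comparing valuations shows $w \in U_v$, and then $\alpha_v \equiv w^2 \pmod{4\calO_v}$, as desired. (Alternatively, one simply cites Narkiewicz \cite[Theorem 5.6, Corollary 5.6]{N} for the explicit description of integers in local quadratic extensions, as the paper does for Lemma~\ref{lem:KummerZ}; I would present the self-contained Artin--Schreier argument but remark that this is classical.)

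\textbf{Step 2: the global statement.} Let $z \in Z$. Write $z = \alpha' \beta^2$ with $\alpha' \in \calO_{K,(2)}^*$ as in Lemma~\ref{lem:2adicsigwelldef}, so that $\sgn_2(z)$ is the class of $\alpha'$ in $V_2 \simeq \prod_{v \mid (2)} U_v/(1+4\calO_v)U_v^2$. Since $K(\sqrt{z}) = K(\sqrt{\alpha'})$ and the even completions of this field are the $K_v(\sqrt{\alpha'})$, the extension $K(\sqrt{z})/K$ is unramified at every even prime if and only if $K_v(\sqrt{\alpha'})/K_v$ is unramified for each $v \mid (2)$. By Step~1 this holds if and only if $\alpha'$ is congruent to a square modulo $4$ in each $\calO_v$, i.e.\ if and only if the image of $\alpha'$ in each local factor $U_v/(1+4\calO_v)U_v^2$ is trivial, which is precisely the condition $\sgn_2(z) = 0$ in $V_2$. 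This chain of equivalences is the assertion.

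\textbf{Main obstacle.} The substantive content is entirely in Step~1, the local computation, and specifically in pinning down the ``only if'' direction cleanly: one must know that the integers of the unramified quadratic extension of a $2$-adic field are generated by an Artin--Schreier root $x^2 + x = t$ with $t$ integral, so that a square root of the unit $1 + 4t$ lies in (and generates) that extension. This is standard but is the one place where care with the characteristic-$2$ residue field is needed; the global Step~2 is then purely formal, just compatibility of the Selmer decomposition with completions at even primes together with the observation $K(\sqrt z) = K(\sqrt{\alpha'})$.
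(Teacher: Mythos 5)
Your proof is correct, and its overall structure matches the paper's: the global statement is deduced exactly as in the paper, by writing $z=\alpha'\beta^2$ via Lemma \ref{lem:2adicsigwelldef}, noting $K(\sqrt z)=K(\sqrt{\alpha'})$, and applying the local criterion place by place together with the definition of $\sgn_2$. The only divergence is in the local step: the paper disposes of it in one line by citing the formula for the discriminant (conductor) of a local quadratic extension, as in Lemma \ref{lem:KummerZ}, whereas you prove it directly --- reducing to $\alpha_v=1+4t$ and using the Artin--Schreier generator $a$ with $a^2+a=t$, $(1+2a)^2=1+4t$, in both directions. Your self-contained argument is sound (the discriminant $1+4t$ of $x^2+x-t$ is a unit, so the extension is unramified; conversely the unramified quadratic extension is generated by such an $a$, and comparing valuations shows the square factor relating $\alpha_v$ to $1+4t$ is a unit), and it buys independence from the citation at the cost of a slightly longer proof; the paper's version buys brevity by outsourcing precisely this computation to Narkiewicz.
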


\begin{proof}
As in Lemma \ref{lem:KummerZ}, the first statement follows from the formula for the discriminant of 
a local quadratic extension. For $z \in Z$, $K(\sqrt{z}) =K(\sqrt{\alpha'}) $ where
$\alpha' \in \calO_{K,(2)}^*$ as in Lemma \ref{lem:2adicsigwelldef}, so the second statement follows
from the first together with the definition of $\sgn_2$.
\end{proof}

In particular, we have the following result determining when the image of $-1$ is trivial under the
2-adic signature map, showing it conveys arithmetic information about the field $K$.  
(The image of $-1$ under the archimedean signature map being both obvious and never trivial.)

\begin{corollary} \label{cor:sgn2minus1}
We have $\sgn_2(-1)=0$ if and only if $K(\sqrt{-1})$ is unramified at all finite primes of $K$.
\end{corollary}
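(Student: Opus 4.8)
The statement to prove is Corollary~\ref{cor:sgn2minus1}: $\sgn_2(-1)=0$ if and only if $K(\sqrt{-1})$ is unramified at all finite primes of $K$. The plan is to deduce this directly from Proposition~\ref{prop:sgn2unram} applied to the element $z=-1$, checking only that $-1$ lies in the group $Z$ of singular elements and that the only finite primes where ramification could occur are even.

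First I would observe that $-1 \in K^{*}$ and the principal ideal $(-1) = (1) = \calO_K^2$ is trivially a square, so $-1 \in Z$ and $\sgn_2(-1)$ is defined. Applying the second part of Proposition~\ref{prop:sgn2unram} with $z=-1$ gives immediately that $\sgn_2(-1)=0$ if and only if $K(\sqrt{-1})$ is unramified over $K$ at all even primes. It therefore remains only to upgrade ``unramified at all even primes'' to ``unramified at all finite primes.'' For this, note that the relative discriminant of $K(\sqrt{-1})/K$ divides $(4)$ (indeed $\calO_K[\sqrt{-1}]$ has discriminant $-4$ over $\calO_K$), so any finite prime at which $K(\sqrt{-1})/K$ ramifies must divide $(2)$, i.e.\ must be even. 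Hence ``unramified at all finite primes'' is equivalent to ``unramified at all even primes,'' and combining with the previous sentence yields the corollary.

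The main obstacle — really the only point requiring any care — is the claim that ramification of $K(\sqrt{-1})/K$ can only occur at primes above $2$. This is standard: for an odd prime $\mathfrakp$ of $K$, the polynomial $x^2+1$ has discriminant $-4$, a unit in $\calO_{K,\mathfrakp}$, so $\calO_{K,\mathfrakp}[\sqrt{-1}]$ is the maximal order and the extension $K_\mathfrakp(\sqrt{-1})/K_\mathfrakp$ is unramified (it is either split or inert). Equivalently, one can invoke the local criterion already used in Lemma~\ref{lem:KummerZ} and Proposition~\ref{prop:sgn2unram}: $K_v(\sqrt{\alpha_v})/K_v$ for $\alpha_v \in U_v$ is always unramified when $v$ is odd, since every unit is then automatically a square modulo $4\calO_v = \calO_v$. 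With that dispatched, the corollary is a one-line consequence of Proposition~\ref{prop:sgn2unram}.
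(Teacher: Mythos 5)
Your proposal is correct and follows essentially the same route as the paper: apply Proposition \ref{prop:sgn2unram} to $z=-1$ and note that $K(\sqrt{-1})/K$ is automatically unramified at all odd finite primes, so unramifiedness at even primes suffices. Your extra justification of the odd-prime step (discriminant of $x^2+1$ being a unit away from $2$) is just a spelled-out version of what the paper takes as immediate.
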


\begin{proof}
The extension $K(\sqrt{-1})$ is automatically unramified at finite primes not dividing (2), 
so the result follows immediately from Proposition \ref{prop:sgn2unram}. 
\end{proof}

\begin{corollary}  \label{cor:sgn2minus1odddegree}
If $K$ is a field of odd degree over $\QQ$, then $\sgn_2(-1) \neq 0$.
\end{corollary}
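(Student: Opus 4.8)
The plan is to deduce Corollary \ref{cor:sgn2minus1odddegree} directly from Corollary \ref{cor:sgn2minus1}, so the task reduces to showing that if $[K:\Q]$ is odd, then $K(\sqrt{-1})$ \emph{must} be ramified at some finite prime of $K$; equivalently, $K(i)/K$ cannot be everywhere unramified at finite primes when $n=[K:\Q]$ is odd. First I would suppose for contradiction that $K(i)/K$ is unramified at all finite primes. Since $K(i)/K$ is a quadratic extension, the only primes where it could ramify are those above $2$ (as already noted in the proof of Corollary \ref{cor:sgn2minus1}); the hypothesis then says it is unramified everywhere over finite primes, so its relative discriminant $\mathfrak d_{K(i)/K}$ is the unit ideal. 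The key step is then to compare discriminants over $\Q$ via the tower formula $\disc(K(i)/\Q) = N_{K/\Q}(\mathfrak d_{K(i)/K}) \cdot \disc(K/\Q)^{2}$, which under the assumption gives $\disc(K(i)) = \pm\disc(K)^{2}$.

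The main obstacle—really the crux of the argument—is extracting a contradiction from this, and the cleanest route is a parity/sign argument at the archimedean place or a direct argument at $2$. I would argue at the prime $2$: the local degree considerations force a contradiction because $[K(i):\Q]=2n$ is $\equiv 2 \pmod 4$, so $K(i)$ has a prime above $2$ of odd residue degree times power-of-two ramification, but unramifiedness of $K(i)/K$ over $2$ together with $v(2)$ odd-denominator considerations pins down the local behavior. More concretely: for any prime $\mathfrak p$ of $K$ above $2$ with residue field $\F_q$, the extension $K_{\mathfrak p}(i)/K_{\mathfrak p}$ is unramified, hence is either trivial (if $-1$ is a square in $K_{\mathfrak p}$, which for the unramified-over-$2$ statement forces structure on $q$) or the unramified quadratic extension. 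Summing local degrees $\sum_{\mathfrak p \mid 2} [K_{\mathfrak p}:\Q_2] = n$ is odd, and I would track how the splitting type of $2$ in $K(i)$ is constrained, using that $\Q_2(i)/\Q_2$ is ramified of degree $2$. The point is that $2$ must ramify in $K(i)/\Q$ (since it ramifies in $\Q(i)/\Q \subseteq K(i)$ unless... ), but if $K(i)/K$ is unramified at $2$ then all the ramification of $2$ in $K(i)/\Q$ already occurs in $K/\Q$, and this ramification must be even—contradicting nothing yet—so I'd instead push the global discriminant relation: $\disc(K(i))=\pm\disc(K)^2$ means $\disc(K(i))$ is (up to sign) a perfect square, but $K(i)\supseteq \Q(i)$ with $\disc(\Q(i))=-4$, and the conductor-discriminant formula for the abelian-over-$\Q$... — here the assumption that $K$ is an arbitrary odd-degree field (not Galois) is the real difficulty.

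Given that difficulty, the argument I actually expect to work cleanly is the one Hayes/the authors most likely intend: reduce modulo the observation that an everywhere-finite-unramified $K(i)/K$ would make $i$ a singular element with trivial $2$-adic \emph{and} with no ramification contribution, and then invoke a counting obstruction from the rank formulas already established, OR—most simply—argue at the infinite places. Indeed $K$ of odd degree has $r_1 \geq 1$ (in fact $r_1$ is odd, so $r_1\geq 1$), and $K(i)/K$ is \emph{ramified} at every real place of $K$. If $K(i)/K$ were unramified at all finite places, its conductor would be a product only of real infinite places; but then by class field theory $K(i)$ is contained in the narrow Hilbert class field $H^+$ of $K$, so $\Gal(K(i)/K)$ is a quotient of $C_K^+$. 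This is not yet absurd. The genuine contradiction: $K(i)/K$ unramified at all finite primes implies, by Proposition \ref{prop:sgn2unram} applied with $z=-1$, precisely $\sgn_2(-1)=0$; and conversely. So to prove the corollary I must show $\sgn_2(-1)\neq 0$ directly for odd $n$, which by Proposition \ref{prop:V2rkn} amounts to: in $V_2 \cong \prod_{v\mid 2}\calO_v/2\calO_v$ (via $u=1+2a \mapsto (a \bmod 2)$ on the $2$-torsion), the class of $-1$ is nonzero. Writing $-1 = 1+2a$ formally gives $a=-1$, and $-1 \equiv 1 \pmod 2$ is nonzero in $\calO_v/2\calO_v$ precisely when the residue field has characteristic... it is always nonzero since $-1\neq 0$ in any ring where $2$ is not a unit. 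The hard part, then, is confirming that $-1$ genuinely lies in the $2$-torsion subgroup of $V_2$ and that its image under the isomorphism of Proposition \ref{prop:V2rkn} is the nonzero class of $-1 \bmod 2$; once that is checked the corollary is immediate, and I expect this verification (handling the case where $-1$ is already a square in some $\calO_v$, which can only happen when $-1 \in \calO_v^{*2}$, forcing a condition incompatible with $\sum [K_v:\Q_2]$ odd) to be the one delicate point.
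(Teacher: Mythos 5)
There is a genuine gap, and it sits exactly at the point you flag as "the one delicate point" and then leave unverified. The triviality of the class of $-1$ in the local factor of $V_2$ at an even place $v$ is the condition $-1 \in (1+4\calO_v)\,U_v^{2}$, i.e., that $-1$ is congruent to a \emph{square} modulo $4$ — not the condition $-1 \in 1+4\calO_v$. Your final computation ("$-1 = 1+2a$ with $a=-1$, and $-1 \not\equiv 0$ in $\calO_v/2\calO_v$") only shows that $-1$ is nontrivial in $U_v/(1+4\calO_v)$; it ignores the quotient by $U_v^{2}$ entirely. The isomorphism in Proposition \ref{prop:V2rkn} identifies the $2$-torsion subgroup of $U_v/(1+4\calO_v)$ with $\calO_v/2\calO_v$ as part of a rank count; it does not send the class of a unit $u$ in $V_2$ to $(u-1)/2 \bmod 2$, so you cannot read off $\sgn_2(-1)$ from it. Indeed, your argument as stated would prove $\sgn_2(-1)\neq 0$ for \emph{every} number field, which is false: for $K=\Q(\sqrt{-1})$, or any quadratic field of discriminant $\equiv 4 \pmod 8$, the paper notes that $V_2$ is alternating and $\sgn_2(-1)=0$. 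Equivalently, $-1$ can be a square mod $4$ locally (e.g.\ when $K_v \supseteq \Q_2(i)$) even though $-1 \not\equiv 1 \pmod{4\calO_v}$. The hypothesis that $n$ is odd never actually enters your final argument, which is a sure sign something is missing.

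The missing idea is the paper's short local argument: since $\sum_{v \mid 2} [K_v:\Q_2] = n$ is odd, some even place $v$ has $[K_v:\Q_2]$ odd; because $\Q_2(\sqrt{-1})/\Q_2$ is ramified, its ramification index $2$ must divide $e(K_v(\sqrt{-1})/\Q_2) = e(K_v(\sqrt{-1})/K_v)\,e(K_v/\Q_2)$, and $e(K_v/\Q_2)$ is odd, so $K_v(\sqrt{-1})/K_v$ is ramified; then Proposition \ref{prop:sgn2unram} gives that $-1$ is not a square mod $4$ at $v$, i.e.\ $\sgn_2(-1)\neq 0$. Your earlier global attempts (the discriminant tower and the conductor argument via $C_K^+$), which you yourself abandon mid-proposal, do not lead to a contradiction because $K(\sqrt{-1})/K$ unramified at all finite primes is not globally absurd in general — it is only incompatible with $n$ odd, and that incompatibility is exactly the local statement above.
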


\begin{proof}
Since $n=[K:\Q]$ is odd, at least one of the local field degrees $[K_v : \QQ_2]$ for some $v$ dividing (2)
must be odd.  Since $\QQ_2(\sqrt{-1})/\QQ_2$ is ramified it follows that
$K_v(\sqrt{-1})$ is a ramified quadratic extension of $K_v$ for this $v$, so
$\sgn_2(-1) \neq 0$ by Proposition \ref{prop:sgn2unram}.  
\end{proof}

\subsection*{The 2-Selmer signature map}

Combining the archimedean and 2-adic signature maps, noting that $K^{*2}$ is in the kernel of
both maps, we may define one of the fundamental objects of this paper:

\begin{definition} \label{def:2SelSigMapK}
The \defi{$2$-Selmer signature map} of $K$ is the map
\begin{align*} 
\phi : \Sel_2(K)  & \to V_\infty \oplus V_2 \\
\alpha K^{*2} &\mapsto (\sgn_\infty(\alpha), \sgn_2(\alpha))
\end{align*}
for any representative $\alpha \in Z$. 
Write $\phi_{\infty}$ for the homomorphism $\sgn_\infty$, viewed as having image in $V_\infty$ identified
as a subgroup of $V_\infty \oplus V_2$, and similarly for $\phi_{2}$. 
\end{definition}

The groups $\Sel_2(K)$, $V_\infty$, and $V_2$ are all multiplicative elementary abelian 2-groups, 
written additively when viewed as vector spaces over $\FF_2$; as 
$\FF_2$-vector spaces, by \eqref{eq:Sel2rank}, \eqref{def:Vinfty}, and \eqref{eq:V2} we have
\begin{equation}
\begin{aligned}
\dim\Sel_2(K) &= \rho+r_1+r_2, \\
\dim V_\infty &= r_1, \text{ and} \\
\dim V_2 &= n.
\end{aligned}
\end{equation}

By Kummer theory, subgroups of $K^*/K^{*2}$ correspond to composita of quadratic extensions of $K$, and
Lemmas \ref{lem:KummerZ} and \ref{prop:sgn2unram} identify those corresponding to several subgroups related to the
2-Selmer signature map, whose ranks were computed in Section \ref{sec:singular}.  We summarize the results
in the following proposition.

\begin{proposition} \label{prop:kummerfields}
With notation as above, we have the following correspondences of Kummer generators and extensions of $K$:

\begin{enumalph}

\item
$\Sel_2(K) = Z/ K^{*2} \simeq \Gal(Q_4^+/K)$, where $Q_4^+$ is the compositum of all quadratic extensions of $K$ of conductor
dividing $(4)\infty$;

\item
$\ker \phi_\infty = Z^+ / K^{*2} \simeq \Gal(Q_4/K)$, where $Q_4$ is the compositum of all quadratic extensions of $K$ of conductor
dividing $(4)$;

\item
$\ker \phi_2 \simeq \Gal(Q^+/K)$, where $Q^+$ is the compositum of all quadratic extensions in the narrow Hilbert class field of $K$; and

\item
$\ker \phi \simeq \Gal(Q/K)$, where $Q$ is the compositum of all quadratic extensions in the Hilbert class field of $K$.

\end{enumalph}

\noindent
In particular, 
\begin{equation} \label{eq: kernelranks}
\begin{aligned}
\rk_2 \Sel_2(K)  &=  \rho + r_1 + r_2, \\
\rk_2 \ker \phi_\infty  &=  \rho^+ + r_2, \\
\rk_2 \ker \phi_2  &=  \rho^+ , \text{and} \\
\rk_2 \ker \phi  &= \rho.
\end{aligned}
\end{equation}

\end{proposition}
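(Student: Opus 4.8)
The plan is to establish each of the four isomorphisms in Proposition \ref{prop:kummerfields} by combining Kummer theory with the already-proven lemmas \ref{lem:KummerZ} and \ref{prop:sgn2unram}, and then read off the rank formulas from the computations of Section \ref{sec:singular}. Recall the basic Kummer-theoretic dictionary: for a number field $K$ containing $\mu_2$ (automatic here), finite subgroups $\Delta \leq K^*/K^{*2}$ correspond bijectively to finite abelian exponent-$2$ extensions $L/K$ via $L = K(\sqrt{\Delta})$, with $\Gal(L/K) \simeq \Hom(\Delta, \{\pm 1\}) \simeq \Delta$ (the last isomorphism non-canonical but rank-preserving). So the whole proposition amounts to identifying, for each of the four subgroups $Z/K^{*2}$, $Z^+/K^{*2}$, $\ker\phi_2$, and $\ker\phi$, the corresponding compositum of quadratic extensions.

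First I would handle (a): by Lemma \ref{lem:KummerZ}, $K(\sqrt{z}) \subseteq H_4^+$ exactly when $z \in Z$, so the subgroup $Z/K^{*2}$ corresponds under Kummer theory precisely to the compositum $Q_4^+$ of all quadratic subextensions of $H_4^+$, which is by definition the compositum of all quadratic extensions of conductor dividing $(4)\infty$; thus $\Sel_2(K) = Z/K^{*2} \simeq \Gal(Q_4^+/K)$. Statement (b) is identical with $H_4^+$ replaced by $H_4$ and $Z$ by $Z^+$, again using Lemma \ref{lem:KummerZ}. For (c), I would argue that $z \in \ker\phi_2$ means $\sgn_2(z) = 0$ and $\sgn_\infty$ is unconstrained, i.e. $z \in Z$ with $K(\sqrt{z})$ unramified at all even primes (Proposition \ref{prop:sgn2unram}); but $z \in Z$ already forces $K(\sqrt{z})$ unramified at all odd finite primes (the valuation of $z$ is even there), so $\ker\phi_2$ corresponds exactly to the quadratic extensions unramified at all finite primes, whose compositum is $Q^+$, the compositum of quadratic subextensions of the narrow Hilbert class field $H^+$. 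Statement (d) combines this with the archimedean condition: $z \in \ker\phi = \ker\phi_\infty \cap \ker\phi_2$ means $z \in Z^+$ (totally positive, so $K(\sqrt z)$ unramified at real places) \emph{and} $K(\sqrt z)$ unramified at even primes, hence $K(\sqrt z)$ is everywhere unramified, i.e. contained in the Hilbert class field $H$; so $\ker\phi$ corresponds to $Q$. Then equation \eqref{eq: kernelranks} follows: $\rk_2\Sel_2(K) = \rho + r_1 + r_2$ is \eqref{eq:Sel2rank}; $\rk_2\ker\phi_\infty = \rk_2(Z^+/K^{*2}) = \rho^+ + r_2$ is \eqref{eq:Zplusrank}; and $\rk_2\ker\phi_2 = \rho^+$, $\rk_2\ker\phi = \rho$ follow from \eqref{eq:hilbertranks}, \eqref{eq:Qranks} since $\rk_2\Gal(Q^+/K) = \rk_2\Gal(H^+/K) = \rho^+$ and likewise for $Q$ and $H$ (the compositum of quadratic subextensions of an abelian extension $M/K$ has $2$-rank equal to $\rk_2\Gal(M/K)$).

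The one point requiring a little care — and the likeliest source of a gap — is the matching of the two descriptions of the kernels of $\phi_2$ and $\phi$: one must be sure that "$z \in Z$ and $\sgn_2(z) = 0$" is genuinely equivalent to "$K(\sqrt z)/K$ unramified at all finite primes", and not merely at even primes. This is where the observation that membership in $Z$ already controls ramification at odd primes is essential, and it should be stated explicitly rather than left implicit; with that in hand everything else is a direct translation. A secondary subtlety is that in (c) and (d) the subgroup of $K^*/K^{*2}$ in question is a subgroup of $\Sel_2(K) = Z/K^{*2}$ rather than of all of $K^*/K^{*2}$, but this is harmless since we are only ever taking composita of the quadratic extensions $K(\sqrt z)$ for $z$ ranging over the relevant subgroup of $Z$, and the Kummer correspondence restricts perfectly well. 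I would present the argument as four short paragraphs, one per statement, followed by a one-line deduction of the rank list, citing \eqref{eq:Sel2rank}, \eqref{eq:Zplusrank}, \eqref{eq:hilbertranks}, and \eqref{eq:Qranks}.
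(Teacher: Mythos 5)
Your proposal is correct and follows essentially the same route as the paper, which offers no separate proof but simply summarizes Lemma \ref{lem:KummerZ}, Proposition \ref{prop:sgn2unram}, Kummer theory, and the rank computations \eqref{eq:Sel2rank}, \eqref{eq:Zplusrank}, \eqref{eq:hilbertranks}, \eqref{eq:Qranks} — exactly the ingredients you assemble. Your explicit remark that membership in $Z$ already forces the extensions $K(\sqrt{z})$ to be unramified at odd primes is the right point to make precise (the paper leaves it implicit here, using it openly only later in the proof of Theorem \ref{theorem:imSelmaxisotropic}).
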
 

We consider the image of the 2-Selmer signature map $\phi$ in Section \ref{section:im2Selmer}.

\section{Nondegenerate symmetric space structures} \label{sec:symmetricspace}

In this section we show that both the archimedean and 2-adic signature spaces $V_\infty$ and $V_2$ carry 
the structure of a finite-dimensional nondegenerate symmetric space over $\FF_2$.

The quadratic Hilbert (norm residue) symbol $(\alpha_v, \beta_v)_v$ defines a symmetric nondegenerate 
pairing on $K_v^* / K_v^{*2}$ for each place $v$, satisfying  
$(\alpha_v, \beta_v)_v = +1$ if and only if $\beta_v$ is a norm from $K_v (\sqrt {\alpha_v})$.
In particular, for archimedean $v$, the symbol is $+1$ unless $K_v \simeq \RR$ 
and both $\alpha_v$ and $\beta_v$ are negative, in which case the symbol is $-1$  

Suppose that $v$ is an even place of $K$.  If $\alpha_v \in 1+4\calO_v \subset K_v^*$ then
by Lemma \ref{prop:sgn2unram} the field $K_v (\sqrt{\alpha_v})$ is unramified over $K_v$ (possibly 
equal to $K_v$ if $\alpha_v$ is a square).  By class field theory, every unit is a norm from
such an unramified abelian extension, so $(\alpha_v, \beta_v)_v = 1$ for every $\beta_v \in U_v$.  
It follows that the Hilbert symbol induces a symmetric pairing of the quotient
$U_v /(1+4\calO_v)U_v^2$ with itself, which we again denote simply by
$(\alpha_v, \beta_v)_v$.  This induced pairing is also nondegenerate:
if $\alpha_v \in U_v$ satisfies $(\alpha_v, \beta_v)_v = 1$ for every $\beta_v \in U_v$, 
then every unit in $K_v$ is a norm from $K_v (\sqrt {\alpha_v})$, which again by class field theory 
implies $K_v (\sqrt {\alpha_v})/K_v$ is unramified, so $\alpha_v$ is a square modulo 4 (i.e., is trivial in
$U_v /(1+4\calO_v)U_v^2$) by Lemma \ref{prop:sgn2unram}.

Taking the product of the Hilbert symbols on  $K_v^* / K_v^{*2}$ for the real places
and on $U_v /(1+4\calO_v)U_v^2$ for the even places then gives 
nondegenerate symmetric pairings on $V_\infty$ and $V_2$:

\begin{equation} \label{eq:binftyb2}
\begin{aligned} 
b_\infty: V_\infty \oplus V_\infty & \to \{\pm 1\}                                     &   & \text{and} &
      b_2:V_2 \oplus V_2 & \to \{\pm 1\}    \\
b_\infty(\alpha,\beta) & = \prod_{\textup{$v$ real}} (\alpha_v,\beta_v)_v,   &   &            &
      b_2(\alpha,\beta) & = \prod_{\textup{$v$ even}}  (\alpha_v,\beta_v)_v.   
\end{aligned}
\end{equation}
Viewing $V_\infty$ and $V_2$ as $\FF_2$-vector spaces and writing the
pairings $b_\infty$ and $b_2$ additively, 
both $V_\infty$ and $V_2$ have the structure of a finite dimensional $\FF_2$-vector space 
equipped with a nondegenerate symmetric bilinear form.  

Proposition \ref{prop:symspaces} in the Appendix classifies the three possible
nondegenerate finite dimensional symmetric spaces over $\FF_2$ up to isometry: 
(1) alternating of even dimension, (2) nonalternating of odd dimension, and (3) nonalternating
of even dimension. For nonalternating spaces, there is a canonical nonzero element $\vcan$:
the unique nonisotropic element orthogonal to the alternating subspace 
when $n$ is odd and the unique nonzero vector in the radical of the alternating subspace 
when $n$ is even; in both cases $\vcan$ is the sum of all the elements in any orthonormal basis.

The following proposition determines the isometry type for $V_\infty$ and for $V_2$.

\begin{proposition} \label{prop:VinftyV2types}
If the archimedean signature space $V_\infty$ is equipped with the  
bilinear form $b_\infty$ and the 2-adic signature space $V_2$ is equipped with 
the bilinear form $b_2$ then the following statements hold:

\begin{enumalph}
\item 
$V_\infty$ is a nondegenerate nonalternating symmetric space over $\FF_2$ of dimension $r_1$ with 
$\sgn_\infty (-1) = \vcan \in V_\infty$.

\item 
$V_2$  is a nondegenerate symmetric space over $\FF_2$ of 
dimension $n$.  More precisely,

\begin{enumerate}

\item[\textup{(i)}]
if $K(\sqrt{-1})$ is ramified over $K$ at some even prime, then $V_2$ is nonalternating,
and $\sgn_2(-1) = \vcan \in V_2$, and

\item[\textup{(ii)}]
if $K(\sqrt{-1})$ is unramified over $K$ at all finite primes, then 
$V_2$ is alternating (so $n$ is even), and $\sgn_2 (-1) = 0$.
\end{enumerate}

\end{enumalph}
\end{proposition}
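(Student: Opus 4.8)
The plan is to handle both spaces by a single mechanism, namely the characteristic-$2$ Hilbert-symbol identity $(\alpha_v,\alpha_v)_v=(\alpha_v,-1)_v$, which holds at every place $v$ (it follows from the bimultiplicativity of the symbol together with $(\alpha_v,-\alpha_v)_v=1$). Taking the product over the real places gives $b_\infty(\alpha,\alpha)=b_\infty(\alpha,\sgn_\infty(-1))$ for all $\alpha\in V_\infty$, since $\sgn_\infty(-1)$ is represented by $(-1,\dots,-1)$; taking the product over the even places gives $b_2(\alpha,\alpha)=b_2(\alpha,\sgn_2(-1))$ for all $\alpha\in V_2$, since $-1\in\calO_{K,(2)}^*$ and so $\sgn_2(-1)$ is literally the class of $(-1,\dots,-1)$ in $\prod_{v\mid 2}U_v$. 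In both cases the map $\alpha\mapsto b(\alpha,\alpha)$ is an $\F_2$-linear functional, this being the identity $b(\alpha+\beta,\alpha+\beta)=b(\alpha,\alpha)+b(\beta,\beta)$ in characteristic $2$; hence, by the nondegeneracy of $b_\infty$ and $b_2$ recorded just above, it is represented by a unique vector, and the displayed identities identify that vector as $\sgn_\infty(-1)$, respectively $\sgn_2(-1)$. Finally recall from the discussion preceding the proposition (and Proposition \ref{prop:symspaces}) that a nondegenerate symmetric space over $\F_2$ is alternating exactly when this functional vanishes identically, and is nonalternating otherwise with the representing vector equal to $\vcan$: in an orthonormal basis $(e_i)$ of a nonalternating space one has $b(e_j,\sum_i e_i)=b(e_j,e_j)$ for every $j$, and $\vcan=\sum_i e_i$.

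For part (a): $\dim V_\infty=r_1$ by Definition \ref{def:Vinfty}, and $\sgn_\infty(-1)=(-1,\dots,-1)\neq 0$ because $r_1>0$; by the mechanism above, $V_\infty$ is therefore nonalternating of dimension $r_1$ with $\sgn_\infty(-1)=\vcan$. (One could equally well argue directly: at a real place $v$ the symbol $(\alpha_v,\beta_v)_v$ is $-1$ only when both arguments are negative, so the evident basis of $V_\infty\simeq\{\pm1\}^{r_1}$ is orthonormal, whence nonalternating, and $\sgn_\infty(-1)$ is the sum of its elements, which is $\vcan$.) For part (b): $\dim V_2=n$ by Proposition \ref{prop:V2rkn}, and $b_2$ is nondegenerate as recorded above. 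By Proposition \ref{prop:sgn2unram} applied to $z=-1$ — equivalently Corollary \ref{cor:sgn2minus1}, using that $K(\sqrt{-1})/K$ is automatically unramified at odd finite primes — the element $\sgn_2(-1)$ is zero in $V_2$ precisely when $K(\sqrt{-1})$ is unramified over $K$ at all finite primes. Feeding this dichotomy into the mechanism: in case (i), $\sgn_2(-1)\neq 0$, so $V_2$ is nonalternating with canonical vector $\sgn_2(-1)=\vcan$; in case (ii), $\sgn_2(-1)=0$, so the functional $\alpha\mapsto b_2(\alpha,\alpha)$ vanishes identically, i.e.\ $V_2$ is alternating (and hence $n$ is even, every nondegenerate alternating space having even dimension), with $\sgn_2(-1)=0$.

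I do not anticipate a serious obstacle here; the only real content is recognizing that the Hilbert-symbol identity $(\alpha_v,\alpha_v)_v=(\alpha_v,-1)_v$ is exactly what links the quadratic form to the class of $-1$. The one point requiring care is the matching of conventions — the Hilbert symbols take values in $\{\pm 1\}$ while $V_\infty$, $V_2$, $b_\infty$, $b_2$ and $\vcan$ are written additively over $\F_2$, so "$b(\alpha,\alpha)=b(\alpha,w)$" must be read as an identity of $\F_2$-valued functionals — and verifying that the class in $V_2$ denoted $\sgn_2(-1)$ in the definition of the $2$-adic signature map is indeed the one paired by $b_2$, which is immediate because $-1$ is a unit at every prime.
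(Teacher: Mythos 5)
Your proof is correct, and it uses the same essential arithmetic inputs as the paper (the local identity $(\alpha_v,-\alpha_v)_v=1$, equivalently $(\alpha_v,\alpha_v)_v=(\alpha_v,-1)_v$, together with Corollary \ref{cor:sgn2minus1} and the nondegeneracy of $b_\infty$, $b_2$ established before the proposition), but the way you identify $\vcan$ is genuinely different and a bit slicker. The paper verifies the parity-dependent characterizing property of $\vcan$ from Proposition \ref{prop:symspaces}: it shows $\sgn_2(-1)$ is orthogonal to the alternating subspace and then, via the global product formula over all places (using that odd places contribute trivially), computes $b_2(\sgn_2(-1),\sgn_2(-1))=(-1)^{r_1}=(-1)^n$ to settle whether $\sgn_2(-1)$ is isotropic, splitting into the cases $n$ odd and $n$ even; part (a) is handled separately by an explicit coordinate argument in $\{\pm1\}^{r_1}$. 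You instead observe that $v\mapsto b(v,v)$ is an $\F_2$-linear functional, hence by nondegeneracy is represented by a unique vector, that the local identity shows this vector is $\sgn_\infty(-1)$ (resp.\ $\sgn_2(-1)$), and that an orthonormal-basis check identifies the representing vector of a nonalternating space as $\vcan$ (this is consistent with the proof of Proposition \ref{prop:symspaces}, where $b(v,v)=b(v,\vcan)^2$). This buys you a uniform treatment of $V_\infty$ and $V_2$, no appeal to the product formula, and no case distinction on the parity of $n$; the paper's computation, while not needed for the statement, does record explicitly the isotropy of $\sgn_2(-1)$ according to parity. The only care points are the ones you already flag: the identity must be used for the induced pairing on $U_v/(1+4\calO_v)U_v^2$, which is legitimate since $-1$ is a unit, and $\sgn_\infty(-1)\neq 0$ requires $r_1>0$, which the paper assumes throughout.
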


\begin{proof}
The element $\alpha = (-1,1,\dots,1)$ (written multiplicatively) in $V_\infty$ has $b_\infty(\alpha,\alpha) = -1$, so
$V_\infty$ is nonalternating.  An element $(\alpha_v)_v$ in $V_\infty$ is isotropic 
with respect to $b_\infty$ if and only if the number of $\alpha_v$ that are negative is even, and in that
case the element is orthogonal to $\sgn_\infty (-1)$.  It follows that $\sgn_\infty (-1)$
satisfies the characterizing property of $\vcan$ both when $n$ is odd and when $n$ is even, proving (a).

For the space $V_2$, note first that
the Hilbert symbol always satisfies $(\alpha_v, - \alpha_v) = 1$ (multiplicatively), hence
$(\alpha_v, \alpha_v) = 1$ for all $\alpha_v$ if and only if $(\alpha_v, - 1) =1$ for all $\alpha_v$.  Since
the pairing on $U_v /(1+4\calO_v)U_v^2$ is nondegenerate, this happens if and only if $-1 \in (1+4\calO_v)U_v^2$.
Hence $b_2(\alpha, \alpha) = 0$ (additively) for all $\alpha$ if and only if $-1 \in (1+4\calO_v)U_v^2$ for every even prime $v$.
In other words, $b_2$ is alternating if and only if $\sgn_2(-1) = 0$ (additively in $\FF_2^n \simeq V_2 $), which is equivalent 
to the statement that $K(\sqrt{-1})$ is unramified over $K$ at all finite primes by Corollary \ref{cor:sgn2minus1}.

Suppose now that $\sgn_2(-1) \ne 0$, i.e., that $V_2$ is not alternating.   
By the product formula for the Hilbert norm residue symbol we have
$$
\prod_{\textup{$v$ real}} (-1,-1)_v \prod_{\textup{$v$ even}} (-1,-1)_v \prod_{\textup{$v$ odd}} (-1,-1)_v  = 1.
$$ 
For odd primes $v$, $K_v(\sqrt{-1})$ is unramified over $K_v$, so $-1$ is a norm, hence $(-1,-1)_v = 1$ for these
primes.  If $v$ is a real archimedean place, $(-1,-1)_v = -1$.  Hence
$$
b_2(\sgn_2(-1),\sgn_2(-1)) = \prod_{\textup{$v$ even}} (-1,-1)_v  = 
\prod_{\textup{$v$ real}} (-1,-1)_v
= (-1)^{r_1} = (-1)^n
$$
since $n = r_1 + 2 r_2$. 
(Alternatively, by local class field theory, the nonzero norms from $K_v(\sqrt{-1})$ to $K_v$ for $v$ even 
are the elements of $K_v$ whose norms to $\QQ_2$ lie in the group of nonzero norms from $\QQ_2(\sqrt{-1})$ to $\QQ_2$
(cf.~\cite[Theorem 7.6]{Iw}, for example).
Since $-1$ is not a norm from $\QQ_2(\sqrt{-1})$ to $\QQ_2$ and $\textup{Norm}_{K_v /\QQ_2} (-1) = (-1)^{[K_v : \QQ_2]}$,
we have $(-1,-1)_v = +1$ if and only if $[K_v : \QQ_2]$ is even, i.e., $(-1,-1)_v = (-1)^{[K_v : \QQ_2]}$. 
The sum of the local degrees $[K_v : \QQ_2]$ over all even $v$ is $n$, so taking the product of $(-1,-1)_v$ over all even
$v$ shows that $ b_2(\sgn_2(-1),\sgn_2(-1)) = (-1)^n$.)
It follows that $\sgn_2 (-1)$ is isotropic with respect to $b_2$ if $n$ is even and is
nonisotropic if $n$ is odd.

As noted above, the norm residue symbol satisfies
$1 = (\alpha_v, - \alpha_v)_v = (\alpha_v, \alpha_v)_v (\alpha_v, - 1)_v$ for all $\alpha_v$.  Hence
$\sgn_2(-1)$ is orthogonal to every isotropic element of $V_2$, i.e., is orthogonal to the alternating
subspace of $V_2$.  It follows that $\sgn_2(-1)$ satisfies the characterizing property of 
$\vcan$ whether $n$ is odd or even, completing the proof. 
\end{proof}

\label{exm:quadfields}
All three possible types of nondegenerate space over $\FF_2$ in the proposition arise for number
fields, in fact all three can occur as a $V_2$.  
For any odd degree field $K$, the space $V_2$ is necessarily nonalternating of odd dimension.  
For even degree, we can see both possibilities already in the case $n=2$ of quadratic fields, as follows.
Suppose $K=\QQ(\sqrt{D})$ is a quadratic field of discriminant $D$.  By the proposition,
$V_2$ is alternating if and only if $K(\sqrt{-1})$ is unramified over $K$ at all finite primes, which
happens if and only if $2$ ramifies in $K$ but does not totally ramify in $K(\sqrt{-1})$, i.e.,
if and only if $D \equiv 4 \pmod{8}$.

\begin{remark}

We note the special role played by $-1$, the generating root of unity in $E_K$: its image in the symmetric spaces
$V_\infty$ and $V_2$ gives the nonzero canonical element of the space (when there is one) as in 
Proposition \ref{prop:symspaces}.  This is in keeping with the philosophy of Malle \cite{M} 
that the second roots of unity should play a role in determining whether 2 is a ``good'' prime in 
Cohen-Lenstra type heuristics.

\end{remark}

\section{The image of the 2-Selmer signature map}   \label{section:im2Selmer}

Since the form $b_\infty$ is nondegenerate on $V_\infty$ and 
$b_2$ is nondegenerate on $V_2$, the form $b_\infty \perp b_2$
defines a nondegenerate symmetric bilinear form on the orthogonal direct sum of $V_\infty$ and $V_2$.
To emphasize this symmetric space structure on the target space for the 2-Selmer signature map, from now on we
write $V_\infty \oplus V_2$ as $V_\infty \perp V_2$.  

The following theorem is foundational.

\begin{theorem} \label{theorem:imSelmaxisotropic}

The image of the $2$-Selmer signature map 
\[ \phi : \Sel_2(K) \to V=V_\infty \perp V_2 \] 
is a maximal 
totally isotropic subspace of $V_\infty \perp V_2$ 
and $\dim (\im \phi \cap V_\infty) = \rho^+ - \rho$.

\end{theorem}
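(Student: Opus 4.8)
The plan is to establish the two assertions separately: first that $\im\phi$ is totally isotropic, then that it has the correct dimension, from which maximality is automatic (a totally isotropic subspace of a nondegenerate symmetric space of dimension $N$ has dimension at most $\lfloor N/2\rfloor$, and equality forces maximality). For the isotropy, the key input is the Hilbert reciprocity law (the product formula for the norm residue symbol). Given $z,w\in Z$, choose representatives $\alpha,\beta\in\calO_{K,(2)}^*$ that are, in addition, coprime to each other away from $(2)$ and $\infty$ using weak approximation; then for every finite odd place $v$ at least one of $\alpha_v,\beta_v$ is a unit whose extension $K_v(\sqrt{\cdot})$ is unramified, and since $z$ (hence $\alpha$) is singular its divisor is a square, so a short local computation shows $(\alpha_v,\beta_v)_v=1$ at all odd $v$. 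The product formula $\prod_v(\alpha_v,\beta_v)_v=1$ then collapses to $b_\infty(\sgn_\infty z,\sgn_\infty w)\cdot b_2(\sgn_2 z,\sgn_2 w)=1$, i.e.\ $\phi(z)$ and $\phi(w)$ are orthogonal in $V_\infty\perp V_2$; taking $z=w$ gives isotropy. I expect the bookkeeping at the odd primes — arranging coprime representatives and checking the Hilbert symbol vanishes there — to be the main technical obstacle, though it is standard.

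For the dimension, I would exploit the rank formulas already assembled in Proposition~\ref{prop:kummerfields}. We have $\dim\Sel_2(K)=\rho+r_1+r_2$ and $\dim\ker\phi=\rho$, so $\dim\im\phi=r_1+r_2$. On the other hand $\dim V=\dim V_\infty+\dim V_2=r_1+n=r_1+(r_1+2r_2)=2(r_1+r_2)$, so indeed $\dim\im\phi=\tfrac12\dim V$, the maximum possible for a totally isotropic subspace; combined with the isotropy just proved, $\im\phi$ is maximal totally isotropic. This half is essentially free once the rank computations of Sections~\ref{sec:singular} and~\ref{sec:2adic2Selmer} are in hand.

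Finally, for $\dim(\im\phi\cap V_\infty)=\rho^+-\rho$: an element of $\im\phi$ lies in $V_\infty$ exactly when it comes from some $z\in Z$ with $\sgn_2(z)=0$, i.e.\ $z\in\ker\phi_2$, so $\im\phi\cap V_\infty=\phi_\infty(\ker\phi_2)=\sgn_\infty(\ker\phi_2)$. Its $\F_2$-dimension is therefore $\rk_2\ker\phi_2-\rk_2(\ker\phi_2\cap\ker\phi_\infty)=\rk_2\ker\phi_2-\rk_2\ker\phi$. By Proposition~\ref{prop:kummerfields} the two terms are $\rho^+$ and $\rho$ respectively, giving the claim. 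The only point requiring a word of care is the identification $\im\phi\cap V_\infty=\sgn_\infty(\ker\phi_2)$, which holds because $V_\infty$ and $V_2$ are independent summands in $V_\infty\perp V_2$, so the $V_\infty$-component of $\phi(z)$ can vanish from the $V_2$ side only when $\phi_2(z)=0$ already. I do not anticipate serious difficulty here; the substance of the theorem is concentrated in the reciprocity argument for isotropy.
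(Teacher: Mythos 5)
Your proposal is correct and follows essentially the same route as the paper: Hilbert reciprocity plus the vanishing of the symbols at odd places (where singular elements are locally a unit times a square, hence norms from the relevant unramified extensions) gives isotropy, the rank counts $\dim\Sel_2(K)=\rho+r_1+r_2$ and $\dim\ker\phi=\rho$ give $\dim\im\phi=r_1+r_2=\tfrac12\dim(V_\infty\perp V_2)$ and hence maximality, and $\im\phi\cap V_\infty=\phi(\ker\phi_2)$ yields $\rho^+-\rho$. The coprimality arrangement at the odd places is harmless but unnecessary, since both representatives already have even valuation everywhere.
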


\begin{proof}
If $\alpha, \beta \in Z$ then, by the product formula for the Hilbert norm residue symbol, 
$$
\prod_{\textup{$v$ real}} (\alpha,\beta)_v \prod_{\textup{$v$ even}} (\alpha,\beta)_v 
\prod_{\textup{$v$ odd}} (\alpha,\beta)_v  = 1.
$$
Since the principal ideals generated by the singular elements $\alpha, \beta$ are squares, locally at all finite primes
these elements differ from a square by a unit. Then for all odd finite places $v$, the field $K_v(\sqrt{\alpha_v})$ is an 
unramified extension of $K_v$ (possibly trivial), so every unit is a norm. Since $\beta_v$ differs from a unit by a square,
also $\beta_v$ is a norm, so $ (\alpha,\beta)_v = 1$.  Therefore
$$
\prod_{\textup{$v$ real}} (\alpha,\beta)_v \prod_{\textup{$v$ even}} (\alpha,\beta)_v  = 1,
$$
which when written additively for the bilinear forms $b_\infty$ and $b_2$ gives
$$
b(\phi(\alpha),\phi(\beta))=b_\infty (\sgn_\infty(\alpha), \sgn_\infty(\beta )) + b_2  (\sgn_2(\alpha), \sgn_2(\beta) ) = 0.
$$
Thus $\phi(\alpha)$ and $\phi(\beta)$ are orthogonal with respect to $b=b_\infty \perp b_2$, so $\phi (\Sel_2(K))$ 
is a totally isotropic subspace of $V_\infty \perp V_2$.

The dimension of $V_\infty \perp V_2$ is $r_1 + n$.  By Proposition \ref{prop:kummerfields},
the dimension of $\Sel_2(K)$ is $\rho + r_1 + r_2$ and the dimension of 
$\ker \phi$ is $\rho$, so $\im \phi$ has dimension $r_1 + r_2 = (r_1 + n)/2$, hence
$\phi (\Sel_2(K))$ is a maximal totally isotropic subspace.

Finally, $\im \phi \cap V_\infty$ is $\phi (\ker \phi_2)$, hence
has dimension $\dim (\ker \phi_2) - \dim(\ker \phi) = \rho^+ - \rho$ by \eqref{eq: kernelranks}.
\end{proof}

As a corollary of Theorem \ref{theorem:imSelmaxisotropic}, we have a second proof of the theorem of
Armitage--Fr\"ohlich (Theorem \ref{theorem:armfroh}).  

\begin{corollary}[The Armitage--Fr\"ohlich Theorem] \label{cor:ArmFrohThm}
We have 
$$
 \rho^+ -  \rho \le \lfloor r_1 /2 \rfloor .
$$
\end{corollary}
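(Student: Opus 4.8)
The plan is to read this off immediately from Theorem~\ref{theorem:imSelmaxisotropic} together with one elementary fact about nondegenerate bilinear spaces. That theorem tells us that $\im \phi$ is a (maximal) totally isotropic subspace of $V_\infty \perp V_2$ and, crucially, that $\dim(\im \phi \cap V_\infty) = \rho^+ - \rho$. So the entire content of the Armitage--Fr\"ohlich bound reduces to the assertion that a totally isotropic subspace of $V_\infty$ has dimension at most $\lfloor r_1/2 \rfloor$.

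First I would observe that $W = \im \phi \cap V_\infty$ is a totally isotropic subspace of the symmetric space $(V_\infty, b_\infty)$: it is contained in the totally isotropic subspace $\im \phi$, and on vectors lying in the summand $V_\infty$ of $V_\infty \perp V_2$ the form $b = b_\infty \perp b_2$ restricts to $b_\infty$. Next, since $b_\infty$ is nondegenerate on the $r_1$-dimensional space $V_\infty$ (Section~\ref{sec:symmetricspace}), taking orthogonal complements inside $V_\infty$ yields $\dim W + \dim W^{\perp} = r_1$; total isotropy means $W \subseteq W^{\perp}$, so $\dim W \le r_1 - \dim W$, that is $2 \dim W \le r_1$, hence $\dim W \le \lfloor r_1/2 \rfloor$. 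Combining this with the equality $\dim W = \rho^+ - \rho$ from Theorem~\ref{theorem:imSelmaxisotropic} completes the proof.

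There is essentially no obstacle here beyond bookkeeping; the one point to be careful about is that the relevant restriction of $b$ to $V_\infty$ is genuinely the nondegenerate form $b_\infty$, which is exactly what Proposition~\ref{prop:VinftyV2types}(a) provides. It is worth remarking that the inequality is sharp for the space $V_\infty$ itself: because $V_\infty$ is the nonalternating space with an orthonormal basis of size $r_1$, pairing up that basis into $\lfloor r_1/2 \rfloor$ mutually orthogonal isotropic vectors exhibits a totally isotropic subspace of dimension $\lfloor r_1/2 \rfloor$ --- although only the upper bound is needed for the corollary.
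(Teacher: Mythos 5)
Your argument is correct and is essentially the paper's own proof: the paper also deduces the bound from total isotropy of $\phi(\Sel_2(K))$, working with $\phi_\infty(\ker\phi_2)$ (which equals $\im\phi\cap V_\infty$) as a totally isotropic subspace of $(V_\infty,b_\infty)$ of dimension $\rho^+-\rho$ and applying the same $\lfloor r_1/2\rfloor$ bound. The only difference is bookkeeping: you quote the final clause of Theorem \ref{theorem:imSelmaxisotropic} for the dimension, while the paper recomputes it from the kernel ranks in \eqref{eq: kernelranks}.
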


\begin{proof}
Since the image $\phi(\Sel_2(K))$ is totally isotropic, the image under
$\phi_\infty$ of the elements in $\ker \phi_2$ is a totally isotropic subspace of $V_\infty$ with
respect to $b_\infty$. As a result, $\phi_\infty( \ker \phi_2)$
has dimension at most $\lfloor r_1/2 \rfloor$.  The computations
in section \ref{sec:singular} show the dimension of $\ker \phi_2$ is $\rho^+$ and 
the kernel of $\phi_\infty$ on $\ker \phi_2$ is $\ker \phi_\infty \cap \ker \phi_2 = \ker \phi$,
which has dimension $\rho$.  This gives $\rho^+ - \rho \le \lfloor r_1/2 \rfloor$.
\end{proof}

As a second corollary of Theorem \ref{theorem:imSelmaxisotropic} we prove the following result of Hayes (unpublished) 
on ``unramified quadratic extensions of unit type'', namely, unramified quadratic extensions that are 
generated by square roots of units.

\begin{corollary} \label{cor:Hayesunitextensions}
Let $Q_u$ be the subfield of the Hilbert class field of $K$ given by the compositum of all unramified quadratic extensions 
of the form $K(\sqrt{\epsilon})$ for units $\epsilon \in E_K$.  Then  
$$
\rk_2 \Gal(Q_u/K) \geq \rk_2(\EKplus / E_K^2) - \lfloor n/2 \rfloor = \rho_\infty - \lfloor r_1 /2 \rfloor.  
$$
In particular, $\rho \ge \rho_\infty - \lfloor r_1 /2 \rfloor$.
\end{corollary}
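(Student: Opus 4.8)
The plan is to use Kummer theory to realize $\Gal(Q_u/K)$ as the dual of an explicit subspace of $\Sel_2(K)$ built out of units, and then to bound its dimension using that $\im\phi$ is totally isotropic (Theorem \ref{theorem:imSelmaxisotropic}). First I would record that $E_K\cap K^{*2}=E_K^2$: a unit that is a square in $K^*$ has trivial ideal, hence is the square of a unit. Thus the natural map $E_K/E_K^2\to\Sel_2(K)$ is injective; write $W$ for its image. Next, using that $\ker\phi_\infty=Z^+/K^{*2}$ and that the nonzero squares are totally positive, a class $\epsilon K^{*2}\in W$ lies in $\ker\phi_\infty$ precisely when $\epsilon$ is totally positive modulo squares, i.e.\ $\epsilon\in\EKplus$ modulo $E_K^2$; hence $W\cap\ker\phi_\infty$ is the image of $\EKplus/E_K^2$, of dimension $r_2+\rho_\infty$ by \eqref{eq:rankrelations}.

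The second step is to identify the Kummer group of $Q_u$. By Kummer theory, $\rk_2\Gal(Q_u/K)$ is the dimension of the subgroup $\Delta\subseteq K^*/K^{*2}$ generated by the classes $\epsilon K^{*2}$ with $\epsilon\in E_K$ and $K(\sqrt{\epsilon})/K$ unramified at every place. A quadratic extension generated by a unit is automatically unramified at all odd primes; by Proposition \ref{prop:sgn2unram} it is unramified at every even prime exactly when $\sgn_2(\epsilon)=0$; and it is unramified at every real place exactly when $\epsilon$ is totally positive modulo squares. Combining these three conditions gives $\Delta=(W\cap\ker\phi_\infty)\cap\ker\phi_2=W\cap\ker\phi$. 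Applying rank--nullity to $\phi_2$ restricted to the subspace $W\cap\ker\phi_\infty$, whose image is $\sgn_2(\EKplus)$ and whose kernel is $W\cap\ker\phi$, yields $\rk_2\Gal(Q_u/K)=\dim(W\cap\ker\phi)=(r_2+\rho_\infty)-\dim\sgn_2(\EKplus)$.

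The key input is then that $\sgn_2(\EKplus)$ is a totally isotropic subspace of $V_2$. Indeed, for $\epsilon\in\EKplus$ we have $\sgn_\infty(\epsilon)=0$, so $\phi(\epsilon)=(0,\sgn_2(\epsilon))$ lies in the summand $V_2$; since $\im\phi$ is totally isotropic in $V_\infty\perp V_2$, its intersection with $V_2$ is totally isotropic with respect to $b_2$, and that intersection contains $\sgn_2(\EKplus)$. Because $V_2$ is a nondegenerate symmetric space of dimension $n$, any totally isotropic subspace has dimension at most $\lfloor n/2\rfloor$, so $\dim\sgn_2(\EKplus)\le\lfloor n/2\rfloor$. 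Therefore $\rk_2\Gal(Q_u/K)\ge(r_2+\rho_\infty)-\lfloor n/2\rfloor$; since $\lfloor n/2\rfloor=r_2+\lfloor r_1/2\rfloor$ and $\rk_2(\EKplus/E_K^2)=r_2+\rho_\infty$, this equals $\rk_2(\EKplus/E_K^2)-\lfloor n/2\rfloor=\rho_\infty-\lfloor r_1/2\rfloor$, giving the displayed inequalities. Finally, $Q_u$ is contained in the Hilbert class field $H$, so $\rk_2\Gal(Q_u/K)\le\rk_2\Gal(H/K)=\rho$, which yields the last assertion $\rho\ge\rho_\infty-\lfloor r_1/2\rfloor$.

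The step I expect to be the main obstacle is the precise identification $\Delta=W\cap\ker\phi$: one must carefully translate the three local ramification conditions defining $Q_u$ into membership in $\ker\phi_2$ together with total positivity modulo squares, and in particular verify that a class in $W$ lying in $\ker\phi_\infty$ is actually represented by a totally positive unit (not merely by a totally positive singular element), and that $\phi$ restricted to the image of $E_K$ is literally $(\sgn_\infty,\sgn_2)$ on $E_K$. Everything else is bookkeeping with the rank relations \eqref{eq:rankrelations} and the maximal-isotropic dimension count.
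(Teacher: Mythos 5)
Your proof is correct and follows essentially the same route as the paper: restrict the $2$-Selmer signature map to the unit classes $E_K K^{*2}/K^{*2}$, observe that total isotropy of $\im\phi$ forces $\phi_2(\EKplus K^{*2}/K^{*2})$ to be totally isotropic in $V_2$ and hence of dimension at most $\lfloor n/2\rfloor$, and identify the kernel with the Kummer group of $Q_u$ (the paper's $\EK4^+ K^{*2}/K^{*2}$) via Proposition \ref{prop:sgn2unram} and the rank relations \eqref{eq:rankrelations}. The points you flagged as potential obstacles are handled exactly as you anticipated and present no gap.
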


\begin{proof}
Consider the restriction of $\phi_\infty$ and $\phi_2$ to the subgroup $E_K K^{*2} /K^{*2}$
of $\Sel_2 (K)$.  Then, as before, since $\phi (\Sel_2(K))$ is totally isotropic,  
the image under $\phi_2$ of $\ker \phi_\infty $ (restricted to $E_K K^{*2} /K^{*2}$) is a totally isotropic
subspace of $V_2$ under $b_2$, hence has dimension at most $\lfloor n/2 \rfloor$.  The kernel of $\phi_\infty$ on
$E_K K^{*2} /K^{*2}$ is the subgroup $\EKplus K^{*2} /K^{*2}$ and the kernel of $\phi_2$  on these
elements is the subgroup $\EK4^+ K^{*2} /K^{*2}$ where $\EK4^+$ are the totally positive units of $K$ that are locally 
squares mod 4 at the primes above 2. This gives
$$
\dim(\EKplus K^{*2} /K^{*2}) - \dim(\EK4^+ K^{*2} /K^{*2}) \le \lfloor n/2 \rfloor
$$
i.e.,
$$
\dim(\EKplus / E_K^2) - \dim(\EK4^+ / E_K^2) \le \lfloor n/2 \rfloor .
$$

It follows from Proposition \ref{prop:sgn2unram} that $\epsilon \in \EK4^+$ if and only if
$K(\sqrt{\epsilon})$ is an unramified quadratic extension of $K$.  Two units in 
$\EK4^+$ generate the same quadratic extension if and only if they differ by an element in $E_K^2$ 
(equivalently, $\EK4^+ {K^{*}}^2 / {K^{*}}^2 \simeq \EK4^+/ E_K^2 $).  
Hence $ \rk_2 \Gal(Q_u/K) = \dim(\EK4^+ K^{*2} /K^{*2}) $ which together with the
previous inequality and equation \eqref{eq:rankrelations} gives the first statement of the corollary.
Since $Q_u$ is a subfield of the Hilbert class field of $K$, $\rho \ge \rk_2 \Gal(Q_u/K)$, which gives the
final statement of the corollary and completes the proof. 
\end{proof}

\begin{remark} \label{rem:history1}
While essentially equivalent to the elegant proof of Hayes presented in section \ref{sec:singular},
the proof presented in Corollary \ref{cor:ArmFrohThm} is perhaps more conceptual and `explains' the 
$\lfloor r_1 /2 \rfloor$ in the Armitage--Fr\"ohlich Theorem: it is the maximum dimension of a 
totally isotropic subspace.  This proof has its origins in a note by Serre at the end of
the original 1967 Armitage--Fr\"ohlich paper \cite{A-F}---in the current notation that note amounts to the statement that 
$\phi_\infty (\ker \phi_2)$ is totally isotropic in $V_\infty$.  Serre's note provided an alternate proof of the critical
step in the original Armitage--Fr\"ohlich proof.  Greither and Hayes in an unpublished paper in the notes of the 
Centre de Recherches Math\'ematiques from 1997, \cite{G-H}, 
were perhaps the first to also involve dyadic signatures.  They
explicitly noted what they termed a `dual' statement to Serre's, namely that 
$\phi_2 (\ker \phi_\infty)$ is totally isotropic in $V_2$.  
In the same paper Greither and Hayes note the work of Haggenm\"uller \cite{Ha1} from 1981/82 providing 
quadratic extensions of unit type unramified outside finite primes (but possibly ramified at infinite places)
and, as previously noted, Haggenm\"uller explictly uses the space $V_2$.
\end{remark}

\begin{remark} \label{rem:history2}
As remarked earlier, Armitage and Fr\"ohlich \cite{A-F} explicitly give only the inequality 
$\rho \ge \rho_\infty - \lfloor r_1/2 \rfloor $.  
In response to a question/conjecture of the current paper's first author (based on computer calculations
for totally real cubic fields), Hayes \cite{H} in 1997, and then Greither--Hayes \cite{G-H} using the `dual' 
statement noted above, proved Corollary \ref{cor:Hayesunitextensions}, which shows that 
a subgroup of 2-rank at least $ \rho_\infty - \lfloor r_1/2 \rfloor $, precisely the contribution to the
class group guaranteed by the explicit theorem of Armitage and Fr\"ohlich, is accounted for by 
totally unramified quadratic extensions of unit type.  In fact there are independent elements of order
4 in the narrow Hilbert class group also accounting for a subgroup of this 2-rank in the class group \cite{D}.
\end{remark}

As noted in Remark \ref{rem:history1}, the spaces $V_\infty$ and $V_2$ have, either implicitly or
explicitly, been introduced in previous work.  The advantage to the current approach, combining the 
two spaces $V_\infty$ and $V_2$, is precisely that the
image of the 2-Selmer signature map is a \emph{maximal} totally isotropic subspace of $V_\infty \perp V_2$, 
and Theorem \ref{thm:structuretheorem} in the Appendix gives a structure theorem for such subspaces.
This allows us to determine the possible images of the 2-Selmer signature map for $K$ by applying 
the results in the Appendix to $W = V_\infty$ and $W' = V_2$ (so $\dim W = r_1$ and $\dim W' = n = r_1 + 2 r_2$ 
have the same parity), as follows.

\begin{theorem} \label{theorem:imageSelmer}
Up to equivalence under the action of $\Aut(V_\infty) \perp \Aut(V_2)$, the following is a complete list of the
maximal totally isotropic subspaces $S$ of $V_\infty \perp V_2$ and the orders of their stabilizers, $\Aut(S)$, 
in $\Aut(V_\infty) \perp \Aut(V_2)$.  Recall the notation $(q)_m = \prod_{i=1}^m (1-q^{-i})$.    

\begin{enumalph}
\item
If $n$ is odd, then for each $k$ with $0 \le  k \le \lfloor  r_1/2 \rfloor$, up to equivalence there is a
unique maximal totally isotropic subspace $S_k$ with $\dim(S_k \cap V_\infty) = k$, and
\begin{equation*} \label{eq:case1order}
\order{\Aut(S_k)} = 2^{(r_1 + r_2 - 1)(r_1 + r_2)/2 + r_2^2+r_2k+k^2} (2)_k(2)_{k+r_2}(4)_{(r_1-1)/2-k} \ .
\end{equation*}
\item
If $n$ is even and $K(\sqrt{-1})$ is ramified over $K$ at a finite place, then
\begin{enumerate}
\item[\textup{(i)}]
for each $k$ with $0 \le k <  r_1/2$, up to equivalence there is a unique maximal totally isotropic 
subspace $S_{k,1}$ such that $U=S_{k,1} \cap V_\infty$ has $\dim U=k$ and $\sgn_\infty (-1) \notin U$, and  
\begin{equation*} \label{eq:case2order1}
\order{\Aut(S_{k,1})} = 2^{(r_1 + r_2 - 1)(r_1 + r_2)/2 + r_2^2+r_2k+k^2} (2)_k(2)_{k+r_2}(4)_{r_1/2-1-k} \ ;
\end{equation*}
\item[\textup{(ii)}]
for each $k$ with $0 < k \leq r_1/2$, up to equivalence there is a unique maximal totally isotropic 
subspace $S_{k,2}$ such that $U=S_{k,2} \cap V_\infty$ has $\dim U=k$ and $\sgn_\infty (-1) \in U$, and
\begin{equation*} \label{eq:case2order2}
\order{\Aut(S_{k,2})} = 2^{(r_1 + r_2 - 1)(r_1 + r_2)/2 + r_2^2+r_2k+k^2 + r_1-2k} (2)_{k-1}(2)_{k+r_2-1}(4)_{r_1/2-k} \ .
\end{equation*}
\end{enumerate}
\item
If $n$ is even and $K(\sqrt{-1})$ is unramified over $K$ at all finite places, then 
for each $k$ with $0 <  k \le r_1/2$, up to equivalence there is a unique maximal totally 
isotropic subspace $S_k$ with $\dim(S_k \cap V_\infty)=k$, and
\begin{equation*} \label{eq:case3order}
\order{\Aut(S_k)} = 2^{(r_1 + r_2 - 1)(r_1 + r_2)/2 + r_2^2+r_2k+k^2 + r_1+r_2-k} (2)_{k-1}(2)_{k+r_2}(4)_{r_1/2-k} \ .
\end{equation*}
\end{enumalph}
\end{theorem}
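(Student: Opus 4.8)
The plan is to reduce Theorem~\ref{theorem:imageSelmer} to the abstract classification and mass formula for maximal totally isotropic subspaces of an orthogonal direct sum $W \perp W'$ of two nondegenerate symmetric spaces over $\F_2$, as developed in the Appendix (Theorem~\ref{thm:structuretheorem}, Corollary~\ref{corollary:equivclasses}, Proposition~\ref{prop:isometryorders}). First I would set $W = V_\infty$ and $W' = V_2$; by Theorem~\ref{theorem:imSelmaxisotropic} the image $\im\phi$ is a maximal totally isotropic subspace of $V = V_\infty \perp V_2$, and $\dim W = r_1$, $\dim W' = n = r_1 + 2r_2$ have the same parity, so the Appendix applies. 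By Proposition~\ref{prop:VinftyV2types}(a), $W = V_\infty$ is always nonalternating with canonical vector $\sgn_\infty(-1)$; and Proposition~\ref{prop:VinftyV2types}(b) splits $W' = V_2$ into exactly the three cases of the theorem statement: (odd $n$) nonalternating odd-dimensional; ($n$ even, $K(\sqrt{-1})$ ramified somewhere above $2$) nonalternating even-dimensional with canonical vector $\sgn_2(-1)$; and ($n$ even, $K(\sqrt{-1})$ everywhere unramified at finite primes) alternating. These are precisely the configurations whose classification of maximal totally isotropic subspaces is carried out in the Appendix, and I would cite the relevant sub-cases of Corollary~\ref{corollary:equivclasses} in the order indicated in the paragraph preceding the theorem — cases (v), (iv), and (ii) respectively.

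Next, for each of the three cases I would record the invariant that distinguishes the equivalence classes. The structure theorem says a maximal totally isotropic $S$ is determined up to the action of $\Aut(W)\perp\Aut(W')$ by the isometry type of $U = S \cap W$ together with its position relative to the canonical vectors. Since $W = V_\infty$ is nonalternating, a subspace $U \subseteq V_\infty$ that is totally isotropic (which $S \cap V_\infty$ is, being contained in the totally isotropic $S$) is itself an alternating space, hence determined by its dimension $k = \dim U = \dim(\im\phi \cap V_\infty)$; by Theorem~\ref{theorem:imSelmaxisotropic} this equals $\rho^+ - \rho$. In the odd case that single integer $k$ (ranging $0 \le k \le \lfloor r_1/2\rfloor$) is a complete invariant, giving the family $S_k$. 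In the even nonalternating case there is the additional binary datum of whether $\sgn_2(-1) \in U$ (equivalently, by transporting the canonical vector of $W'$ into $W$ via the isotropic structure), splitting into $S_{k,1}$ and $S_{k,2}$ with the stated ranges for $k$; and in the even alternating case only $S_k$ with $0 < k \le r_1/2$ survives. I would simply quote the bijection between these combinatorial data and $\Aut(V_\infty)\perp\Aut(V_2)$-orbits furnished by Corollary~\ref{corollary:equivclasses}.

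Finally, to obtain the stabilizer orders $\#\Aut(S)$ I would invoke Proposition~\ref{prop:isometryorders}, which presumably gives $\#\Aut(S)$ as a product of: the order of the isometry group of $U$ as an alternating $\F_2$-space of dimension $2k$ (contributing the $(2)_k$-type factor and a power of $2$ of the form $2^{k^2}$ or $2^{k(k-1)}$), the order of an isometry group attached to the "complementary" data in $W'$ of dimension related to $r_2$ (contributing $(2)_{k+r_2}$ or $(2)_{k+r_2-1}$), the order of an isometry group on the residual nonalternating/alternating piece of dimension controlled by $r_1/2 - k$ or $(r_1-1)/2 - k$ (contributing the $(4)_{\cdot}$ factor), plus a "unipotent" power of $2$ accounting for isometries of $V$ fixing $S$ but acting trivially on the associated graded pieces — this is the large $2$-power exponent $(r_1+r_2-1)(r_1+r_2)/2 + r_2^2 + r_2 k + k^2 + (\cdots)$, the extra summand $r_1 - 2k$ in case (b)(ii) and $r_1 + r_2 - k$ in case (c) reflecting the presence of the canonical vector inside $U$ or the alternating-ness of $V_2$. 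I would substitute $\dim W = r_1$, $\dim W' = n$, $\dim U = k$ into the Appendix formulas and simplify. The main obstacle I expect is purely bookkeeping: carefully matching the Appendix's notation and normalization for the three sub-cases (v), (iv), (ii) to the variables $(r_1, r_2, k)$ here — in particular getting the exponent of $2$ and the off-by-one shifts in the $(q)_m$ symbols exactly right, and verifying the ranges of $k$ (strict versus non-strict inequalities, which encode when the canonical vector can or must lie in $U$) — rather than any conceptual difficulty, since all the structural content is already packaged in Theorem~\ref{thm:structuretheorem}, Corollary~\ref{corollary:equivclasses}, and Proposition~\ref{prop:isometryorders}.
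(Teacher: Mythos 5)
Your proposal matches the paper's derivation essentially verbatim: the paper obtains Theorem \ref{theorem:imageSelmer} precisely by taking $W = V_\infty$, $W' = V_2$, using Proposition \ref{prop:VinftyV2types} to identify the three type configurations, applying Corollary \ref{corollary:equivclasses} in cases (v), (iv), and (ii), and substituting into the stabilizer-order formulas of Proposition \ref{prop:isometryorders} (combined via Corollary \ref{cor:stabilizerorder}, i.e.\ $\order{\Aut(S)} = \order{\Aut(W,U)}\,\order{\Aut(W',U')}/\order{\Aut(\KK)}$, which is the one ingredient you should cite explicitly rather than attributing the full product structure to Proposition \ref{prop:isometryorders} alone). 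With that small citation fix and the bookkeeping you acknowledge, the argument is the paper's own.
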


\begin{proof}
By Proposition \ref{prop:VinftyV2types}, the space $W = V_\infty$ is always nonalternating of
dimension $r_1$ (with nonzero canonical element $\sgn_\infty (-1)$), and 
the same proposition gives the conditions under which $V_2$ is one of three possible types.  
Then applying Corollary \ref{corollary:equivclasses} in cases (v), (iv), and (ii), respectively, 
determines the possible maximal totally isotropic subspaces $S$ of $V = V_\infty \perp V_2$ up to
equivalence under the action of $\Aut(V_\infty) \perp \Aut(V_2)$.  Then for each $S$, 
Corollary \ref{cor:stabilizerorder},
using the
appropriate formulas in Proposition \ref{prop:isometryorders}, will give the order of the 
stabilizer $\Aut(S)$ in $\Aut(V_\infty) \perp \Aut(V_2)$.  

Explicitly, suppose that $n$ is odd.
Then by Proposition \ref{prop:VinftyV2types}, $V_2$ is also nonalternating (of odd dimension $n$).  
By (v) of Corollary \ref{corollary:equivclasses}, there is a unique (up to equivalence
under $\Aut(V_\infty) \perp \Aut(V_2)$) maximal totally isotropic subspace
$S_k$ of $V$ for each integer $k$, $0 \le k \le (r_1 - 1)/2 = \lfloor r_1 /2 \rfloor$.
The integer $k$ is the dimension of the subspace $U = S_k \cap V_\infty$ of $S_k$.  
By (a) of Theorem \ref{thm:structuretheorem}, the dimension $k'$ of
the subspace $U' = S_k \cap V_2$ satisfies $ 2(k' - k) = n - r_1$, i.e., $k' = k + r_2$.  
Since $W = V_\infty$ has odd dimension $r_1$, Remark \ref{remark:Ktype} shows that 
$\KK$ in Theorem \ref{thm:structuretheorem} is nonalternating of odd dimension, which here
is $r_1 - 2k$.  The order of the stabilizer $\Aut(S_k)$ is given by equation \eqref{eq:Sisometryorder},
and it remains to compute the orders 
$\order{\Aut(V_\infty,U)}$, $\order{\Aut(V_2,U')}$, and $\order{\Aut(\KK)}$.  These orders are
given by 2.(i) of Proposition \ref{prop:isometryorders} (with $q = 2$) since each of $V_\infty$, $V_2$, and $\KK$ is
nonalternating and of odd dimension (equal to $r_1$, $n$, and $r_1 - 2k$, respectively), which yields 
\begin{align*}
\order{\Aut(V_\infty,U)} & =    2^{((r_1 - 1)/2)^2} \prod_{i = 1}^{k} (2^i - 1) \prod_{i = 1}^{(r_1 - 1)/2-k} (2^{2 i} - 1) ,     \\
\order{\Aut(V_2,U')} & =    2^{m^2} \prod_{i = 1}^{k + r_2} (2^i - 1) \prod_{i = 1}^{(r_1 - 1)/2-k} (2^{2 i} - 1), \text{ and}     \\
\order{\Aut(\KK)}  & =   2^{(r_1 - 1)/2-k)^2} \prod_{i = 1}^{(r_1 - 1)/2-k} (2^{2 i} - 1).
\end{align*}
Taking the product of the first two expressions, dividing by the third, and rewriting in terms of the modified $q$-Pochhammer symbol in
\eqref{eqn:pochham} (noting $ \prod_{i=1}^N (q^i - 1) = q^{N(N+1)/2} (q)_N$) gives
\begin{equation*}
\order{\Aut(S_k)} = 
2^{(r_1 + r_2 - 1)(r_1 + r_2)/2 + r_2^2+r_2k+k^2} (2)_k(2)_{k+r_2}(4)_{(r_1-1)/2-k} \ ,
\end{equation*}
which completes the proof of (a).  The remaining cases when $n$ is even are done similarly.
\end{proof}

\medskip

For brevity we state the following result for the case when $n$ is odd since 
this is the case considered in the applications of the next section; 
the other two possible cases are very similar.

\begin{theorem} \label{theorem:Sprobability}
Suppose $n$ is odd, and $S_k$ for $0 \le  k \le \lfloor  r_1/2 \rfloor$ is as in (a) of Theorem \ref{theorem:imageSelmer}.
If $S$ is chosen uniformly randomly from among the maximal totally 
isotropic subspaces of $V_\infty \perp V_2$, then the probability that $S$ is isomorphic to $S_k$ is
\begin{align}
\Prob(S \iso S_k) & = \dfrac{1}{ \order{\Aut(S_k) } } \ \big{/} \ \sum_{i = 0}^{\lfloor r_1 / 2 \rfloor }    \dfrac{1}{ \order{\Aut(S_i) } } 
\notag \\
& = 
\dfrac{
(2)_{r_1 + r_2 - 1}(4)_{(r_1 - 1)/2}(4)_{(r_1 - 1)/2 +r_2}
}{
2^{k(k+r_2)} (2)_{k}(2)_{k+r_2}  (4)_{r_1+r_2-1} (4)_{(r_1 - 1)/2-k}
} \ .  \label{eq:ProbImSk}
\end{align}
\end{theorem}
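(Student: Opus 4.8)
The plan is to combine the counting philosophy implicit in Theorem \ref{theorem:imageSelmer} with the explicit product formula for the number of maximal totally isotropic subspaces in a fixed isometry type. First I would recall the orbit–counting identity: if a group $G$ acts on a finite set $X$ and $X$ decomposes into orbits $X = \coprod_k O_k$ with representatives having stabilizers $\Aut(S_k)$, then $\order{O_k} = \order{G}/\order{\Aut(S_k)}$, so a uniformly random element of $X$ lies in $O_k$ with probability $(1/\order{\Aut(S_k)}) \big/ \sum_i (1/\order{\Aut(S_i)})$. Here $G = \Aut(V_\infty) \perp \Aut(V_2)$ and $X$ is the set of all maximal totally isotropic subspaces of $V_\infty \perp V_2$; part (a) of Theorem \ref{theorem:imageSelmer} says that when $n$ is odd the orbits are exactly the $S_k$ for $0 \le k \le \lfloor r_1/2 \rfloor$, and gives $\order{\Aut(S_k)}$. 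This immediately yields the first equality in \eqref{eq:ProbImSk}, so the entire content of the theorem is the evaluation of that ratio into the closed form on the second line.

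The second step is the algebraic simplification. I would substitute $n = r_1 + 2r_2$ (so $n$ odd means $r_1$ odd, and $(r_1-1)/2$ is an integer) and plug in
\[
\order{\Aut(S_k)} = 2^{(r_1+r_2-1)(r_1+r_2)/2 + r_2^2 + r_2 k + k^2} (2)_k (2)_{k+r_2} (4)_{(r_1-1)/2 - k}.
\]
The factor $2^{(r_1+r_2-1)(r_1+r_2)/2 + r_2^2}$ is independent of $k$, so it cancels between numerator and denominator in the ratio, leaving
\[
\Prob(S \iso S_k) = \frac{1}{2^{r_2 k + k^2}(2)_k (2)_{k+r_2}(4)_{(r_1-1)/2-k}} \;\Big/\; \sum_{i=0}^{(r_1-1)/2} \frac{1}{2^{r_2 i + i^2}(2)_i (2)_{i+r_2}(4)_{(r_1-1)/2-i}}.
\]
So everything comes down to showing that the denominator sum equals
\[
\sum_{i=0}^{(r_1-1)/2} \frac{1}{2^{r_2 i + i^2}(2)_i (2)_{i+r_2}(4)_{(r_1-1)/2-i}} = \frac{(4)_{(r_1-1)/2}(4)_{(r_1-1)/2+r_2}}{(2)_{r_1+r_2-1}(4)_{r_1+r_2-1}},
\]
which upon rearrangement is a finite $q$-series (basic hypergeometric) identity. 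Writing $m = (r_1-1)/2$ and clearing the $k$-independent Pochhammer factors, this is a balanced ${}_2\phi_1$-type summation — essentially a $q$-Chu–Vandermonde or $q$-analogue of a hypergeometric summation — evaluated at specific parameters depending on $m$ and $r_2$. I would prove it either by citing the relevant $q$-series summation (expressing $(q)_m$ in terms of $q$-Pochhammer symbols at $q^{-1}$ with base $q^{-1}$ or $q^{-2}$), or by a short induction on $m$: check $m=0$ trivially, and for the inductive step split off the $i = m$ term and match the remaining sum against the $m-1$ case after absorbing the ratio $(4)_m/(4)_{m-1} = 1 - 4^{-m}$ and the telescoping $2$-powers.

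The main obstacle is the last step: verifying the closed-form evaluation of that terminating $q$-series. The bookkeeping is delicate because three distinct Pochhammer-type symbols appear, two in base $2$ and one in base $4$ (i.e. $(2)_j$ versus $(4)_j$), with shifted arguments $k$, $k+r_2$, and $m-k$, plus a quadratic exponent $k^2 + r_2 k$ in the $2$-power — this quadratic term is exactly what signals a $q$-series rather than an ordinary hypergeometric identity, and it must be tracked carefully when one tries to telescope or to match against a known summation formula. I expect the cleanest route is to recognize it as a specialization of the $q$-Chu–Vandermonde identity (or the terminating $q$-Gauss sum); alternatively the induction on $m$ will work but requires patient manipulation of the $(4)_{m-k}$ factor as $m$ increments. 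Everything else — the orbit-counting setup, the cancellation of the $k$-independent prefactor, and the reduction to a pure identity in $(2)_j$ and $(4)_j$ — is routine given Theorem \ref{theorem:imageSelmer}.
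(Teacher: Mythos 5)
Your orbit-counting setup is exactly the paper's: each equivalence class has size $\order{\Aut(V_\infty)}\,\order{\Aut(V_2)}/\order{\Aut(S_k)}$, so the first equality in \eqref{eq:ProbImSk} is immediate, and after cancelling the $k$-independent power of $2$ everything reduces to evaluating $D=\sum_{i=0}^{m} a_i$ with $a_i = 2^{-i(i+r_2)}\bigl((2)_i(2)_{i+r_2}(4)_{m-i}\bigr)^{-1}$ and $m=(r_1-1)/2$. The gap is in that evaluation. First, your stated target is incorrect: to recover \eqref{eq:ProbImSk} you need $D = (4)_{2m+r_2}\big/\bigl((2)_{2m+r_2}(4)_m(4)_{m+r_2}\bigr)$, not $(4)_m(4)_{m+r_2}\big/\bigl((2)_{2m+r_2}(4)_{2m+r_2}\bigr)$; already for $(r_1,r_2)=(3,0)$ the sum is $4/3+2=10/3$, which matches the former and not the latter, so the identity you propose to prove is false as written. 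Second, even with the corrected target you do not prove it: you flag it as ``the main obstacle'' and only sketch two possible strategies ($q$-Chu--Vandermonde or induction on $m$). Since that evaluation is the entire content of the second equality, the argument is incomplete.

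You should also know that the paper sidesteps the $q$-series altogether. By Witt's Theorem (Proposition \ref{prop:witt}), $\Aut(V_\infty \perp V_2)$ acts transitively on maximal totally isotropic subspaces, so their total number is $\order{\Aut(V)}/\order{\Aut(V,S)}$; partitioning this count over the equivalence classes gives the mass formula $\sum_i 1/\order{\Aut(S_i)} = \order{\Aut(V)}\big/\bigl(\order{\Aut(V,S)}\,\order{\Aut(V_\infty)}\,\order{\Aut(V_2)}\bigr)$, and substituting the orders from Proposition \ref{prop:isometryorders} (this is Corollary \ref{cor:massformula}) yields the closed form with no summation identity to verify. Note that the genuinely $q$-hypergeometric work in the paper (Lemma \ref{lem:pksum}, proved via a WZ certificate) is needed only for the moment computation in Proposition \ref{prop:probdistcks}, not for this normalization. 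If you want to keep your route, prove the corrected identity, e.g.\ by induction on $m$ after splitting off the $i=m$ term; otherwise the group-theoretic route gives it for free.
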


\begin{proof}
This a restatement of the mass formula in the Appendix 
(cf.\ the discussion preceding Corollary \ref{cor:massformula}), as follows. 
The total number of maximal totally isotropic subspaces of $V_\infty \perp V_2$ equivalent to $S_k$ is
$ { \order{ \Aut(V_\infty) } \order{ \Aut(V_2) } } / { \order{ \Aut(S_k) } }$ and the total number
of all maximal totally isotropic subspaces of $V_\infty \perp V_2$ is the sum of these for $0 \le k \le \lfloor r_1/2 \rfloor$.
Then $\Prob(S \iso S_k)$ is the quotient, which is the right hand side of the first equality in \eqref{eq:ProbImSk}.
Using the values computed in Corollary \ref{cor:massformula} for these expressions gives the final equality of the theorem.
\end{proof}

\begin{remark}
The probability in Theorem \ref{theorem:Sprobability} is, not surprisingly, 
the probability obtained following the Cohen--Lenstra heuristic of 
assigning a mass of $1/\#\Aut(S)$ to each equivalence type (under the action of $ \Aut(V_\infty) \perp \Aut(V_2)$)
of maximal totally isotropic subspace $S$.  
\end{remark}

\subsection*{Examples in low degree}

We conclude this section by giving an explicit representative for each of the isometry classes of 
maximal totally isotropic subspaces $S$ in Theorem \ref{theorem:imageSelmer} in the case when 
$K$ is a totally real field, i.e., $r_2 = 0$, when $n = 2,3,4,5$.  Explicit representatives
for fields with $r_2 > 0$ can be constructed similarly.

In each example, the spaces $S$ are given by presenting an $n \times 2n$ matrix giving the $2n$ coordinates for
$n$ basis elements for $S$ in terms of bases for $W = V_\infty$ and $W' = V_2$ chosen as in 
Remark \ref{rem:isometrytype}: an orthonormal basis for the nonalternating space $V_\infty$ and for
$V_2$ when it is nonalternating, and a hyperbolic basis for $V_2$ when it is alternating (which can only 
occur if $n$ is even).  The procedure for finding a basis for $S$ is the one described 
following Corollary \ref{cor:massformula} at the end of the Appendix. Also listed is the order of
the subgroup $\Aut(S)$ of isometries in $\Aut(V_\infty) \perp \Aut(V_2)$ that stabilize $S$.

\begin{example} $n = 2$.  There are two cases:

\begin{enumerate}
\item[{1.}]
$V_2$ nonalternating ($K = \QQ(\sqrt D), \ D > 0$, with discriminant $D \not\equiv 4 \pmod{8}$).   Then
(orthonormal basis for $V_\infty$ and for $V_2$)
$$
\begin{matrix}
k = 0: 
& 
\begin{pmatrix}
1 & 1 & 1 & 1 \\
0 & 1 & 0 & 1 \\
\end{pmatrix}
& 
\quad \order{ \Aut(S) } =  2
\\
&   &  \\
k = 1:
&
\begin{pmatrix}
0 & 0 & 1 & 1 \\
1 & 1 & 0 & 0 \\
\end{pmatrix}
&
\quad \order{ \Aut(S) } = 4 \ .
\end{matrix}
$$

\item[{2.}]
$V_2$ alternating ($K = \QQ(\sqrt D), \ D > 0$, with discriminant $D \equiv 4 \pmod{8}$).  Then
(orthonormal basis for $V_\infty$ and hyperbolic basis for $V_2$)
$$
\begin{matrix}
k = 1:
&
\begin{pmatrix}
0 & 0 & 1 & 0 \\
1 & 1 & 0 & 0 \\
\end{pmatrix}
&
\quad \order{ \Aut(S) } = 4 \ .
\end{matrix}
$$
\end{enumerate}
\end{example}

\begin{example} $n = 3$.  Then
(orthonormal basis for $V_\infty$ and for $V_2$)
$$
\begin{matrix}
k = 0: 
& 
\begin{pmatrix}
1 & 1 & 1 & 1 & 1 & 1 \\
0 & 1 & 1 & 0 & 1 & 1 \\
1 & 1 & 0 & 1 & 1 & 0 \\
\end{pmatrix}
& 
\quad \order{ \Aut(S) } = 6 
\\
&   &  \\
k = 1:
&
\begin{pmatrix}
1 & 1 & 1 & 1 & 1 & 1 \\
0 & 0 & 0 & 1 & 1 & 0 \\
1 & 1 & 0 & 0 & 0 & 0 \\
\end{pmatrix}
&
\quad \order{ \Aut(S) } = 4 \ .
\end{matrix}
$$

\end{example}

\begin{example} $n = 4$.  There are two cases:
\begin{enumerate}
\item[{1.}]
$V_2$ nonalternating ($K$ a totally real quartic field and $K(\sqrt{-1})$ is ramified over $K$ at some finite (necessarily even) prime).  Then
(orthonormal basis for $V_\infty$ and for $V_2$)
$$
\begin{matrix}
\qquad \quad \
k = 0:                                  
& 
\begin{pmatrix}
1 & 1 & 1 & 1 & 1 & 1 & 1 & 1 \\
0 & 0 & 0 & 1 & 0 & 0 & 0 & 1 \\
0 & 1 & 1 & 0 & 0 & 1 & 1 & 0 \\
1 & 1 & 0 & 0 & 1 & 1 & 0 & 0 \\
\end{pmatrix}                             \qquad \qquad  \qquad                 
& 
\quad \   \order{ \Aut(S) } =  48             \qquad \qquad  \qquad \qquad   \qquad            
\end{matrix}
$$

$$
\begin{matrix}
\qquad 
k = 1:                                
&
\begin{pmatrix}
1 & 1 & 1 & 1 & 1 & 1 & 1 & 1 \\
0 & 0 & 0 & 1 & 0 & 0 & 0 & 1 \\
0 & 0 & 0 & 0 & 1 & 1 & 0 & 0 \\
1 & 1 & 0 & 0 & 0 & 0 & 0 & 0 \\
\end{pmatrix}         
&
\text{ ($\wcan \notin U$, $\wcan' \notin U'$) }  \quad   \order{ \Aut(S) } =  32 
\\                                      
&   &  \\
{ }                                
&
\begin{pmatrix}
0 & 1 & 1 & 0 & 0 & 1 & 1 & 0 \\
1 & 1 & 0 & 0 & 1 & 1 & 0 & 0 \\
0 & 0 & 0 & 0 & 1 & 1 & 1 & 1 \\
1 & 1 & 1 & 1 & 0 & 0 & 0 & 0 \\
\end{pmatrix}         
&
\text{ ($\wcan \in U$, $\wcan' \in U'$) } \quad   \order{ \Aut(S) } =  384
\\                                      
&   &  \\
\qquad k = 2:                                 
&
\begin{pmatrix}
0 & 0 & 0 & 0 & 1 & 1 & 1 & 1 \\
0 & 0 & 0 & 0 & 1 & 1 & 0 & 0 \\
1 & 1 & 1 & 1 & 0 & 0 & 0 & 0 \\
1 & 1 & 0 & 0 & 0 & 0 & 0 & 0 \\
\end{pmatrix}
&
\quad \order{ \Aut(S) } =  256 \ .
\end{matrix}
\\                                      
$$

\item[{2.}]
$V_2$ alternating ($K$ a totally real quartic field and $K(\sqrt{-1})$ is unramified over $K$ at all finite primes).  Then
(orthonormal basis for $V_\infty$ and hyperbolic basis for $V_2$)  
$$
\begin{matrix}
k = 1:                                
&
\begin{pmatrix}
1 & 1 & 0 & 0 & 1 & 0 & 0 & 0 \\
0 & 1 & 1 & 0 & 0 & 1 & 0 & 0 \\
0 & 0 & 0 & 0 & 0 & 0 & 1 & 0 \\
1 & 1 & 1 & 1 & 0 & 0 & 0 & 0 \\
\end{pmatrix}         
&
\quad \order{ \Aut(S) } =  384
\\                                      
&   &  \\
k = 2:                                 
&
\begin{pmatrix}
0 & 0 & 0 & 0 & 1 & 0 & 0 & 0 \\
0 & 0 & 0 & 0 & 0 & 0 & 1 & 0 \\
1 & 1 & 0 & 0 & 0 & 0 & 0 & 0 \\
0 & 0 & 1 & 1 & 0 & 0 & 0 & 0 \\
\end{pmatrix}
&
\quad \order{ \Aut(S) } =  768 \ .
\end{matrix}
\\                                      
$$

\end{enumerate}
\end{example}

\noindent
\begin{example} $n = 5$.  Then
(orthonormal basis for $V_\infty$ and for $V_2$)
$$
\begin{matrix}
k = 0: 
& 
\begin{pmatrix}
1 & 1 & 1 & 1 & 1 & 1 & 1 & 1 & 1 & 1 \\
0 & 0 & 0 & 1 & 1 & 0 & 0 & 0 & 1 & 1 \\
0 & 0 & 1 & 1 & 0 & 0 & 0 & 1 & 1 & 0  \\
0 & 1 & 1 & 1 & 1 & 0 & 1 & 1 & 1 & 1  \\
1 & 1 & 0 & 0 & 0 & 1 & 1 & 0 & 0 & 0  \\
\end{pmatrix}
& 
\quad \order{ \Aut(S) } = 720
\\
&   &  \\
k = 1:
&
\begin{pmatrix}
1 &  1 &  1  &  1 &  1 & 1 &  1 &  1  &  1 &  1 \\
0 &  0 &  0  &  1 &  1 & 0 &  0 &  0  &  1 &  1 \\
0 &  0 &  1  &  1 &  0 & 0 &  0 &  1  &  1 &  0  \\
0 &  0 &  0  &  0 &  0 & 1 &  1 &  0  &  0 &  0 \\
1 &  1 &  0  &  0 &  0 & 0 &  0 &  0  &  0 &  0 \\
\end{pmatrix}
&
\quad \order{ \Aut(S) } = 384
\end{matrix}
$$

$$
\begin{matrix}
k = 2: 
& 
\begin{pmatrix}
1 & 1 & 1 & 1 & 1 & 1 & 1 & 1 & 1 & 1 \\
0 & 0 & 0 & 0 & 0 & 0 & 0 & 1 & 1 & 0 \\
0 & 0 & 0  & 0 & 0 & 1 & 1 & 0 & 0 & 0 \\
0 & 0 & 1 & 1 & 0  & 0 & 0 & 0 & 0 & 0  \\
1 & 1 & 0 & 0 & 0 & 0 & 0 & 0 & 0 & 0  \\
\end{pmatrix}
& 
\quad \order{ \Aut(S) } = 2304 \ .
\\
\end{matrix}
$$

\end{example}

\section{Conjectures on 2-ranks of narrow class groups and unit signature ranks} \label{section:conjectures} 

In this section we apply the results on $2$-Selmer signature maps in the previous sections to 
provide heuristics for the distribution of various numerical invariants of number fields.  
Among these applications are conjectures related to the distributions for the 2-ranks of narrow class groups 
and for the signature rank of the units.

For these applications we consider number fields $K$ 
up to isomorphism, and generally restrict here to fields $K$ satisfying two further assumptions.  

The first assumption is that the degree $n = [K:\QQ]$ is \emph{odd}.  The reasons for this restriction are (at least) 
twofold: (1) the conjectures involve the 2-ranks of various groups and the prime 2 is generally
much more problematic in extensions of even degree, and (2) the possible images of the 2-Selmer 
signature map for $K$ are simpler to describe when $n$ is odd. Fields of even degree have
recently been considered by Breen \cite{Breen}.

The second assumption is that the Galois closure of $K$ over $\QQ$ has the symmetric group $S_n$ 
as Galois group.  When the Galois group $G$ of the Galois closure of $K$ is a proper 
subgroup of $S_n$ it is expected that it will be necessary to consider additional complications 
due to the existence of possible $G$-stable subspaces.  

If $\scrK$ is a collection of number fields (for example, the collection of all number fields with signature $(r_1,r_2)$),
then by the density of the subset satisfying some condition $\mathscr{C}$ (for example, the subset of fields
with odd class number) we mean the following:  
order the fields $K$ in $\scrK$ by their absolute discriminant $\lvert \disc K \rvert$ and set
$$
\Prob (\mathscr{C} )
= \lim_{X \to \infty} 
\frac
{
\order{ \{  K \in \scrK : \lvert \disc K \rvert \leq X \textup{ and } K \textup{ satisfies condition } \mathscr{C}   \} } 
}
{
\order{ \{K \in \scrK : \lvert \disc K \rvert \leq X\} }  
} 
$$
when this limit exists.  In the following, a statement conjecturing ``$\Prob (\mathscr{C}) = \alpha$'' for a given collection $\scrK$
will implictly include the necessary statement that the limit required for the left hand side exists.  

Recall the notation $(q)_m = \prod_{i=1}^m (1 - q^{-i})$.

\subsection*{Conjectures: image of the 2-Selmer signature map} \label{subsec:modelingselmaps}

By Theorem \ref{theorem:imSelmaxisotropic}, the image of the 2-Selmer signature map $\phi$ for the number field $K$ is a
maximal totally isotropic subspace of $V_\infty \perp V_2$, and by Theorem \ref{theorem:imageSelmer} this image is
uniquely determined up to isomorphism by the integer $k=\dim(\im \phi \cap V_\infty)$.  Further, by 
Theorem \ref{theorem:imSelmaxisotropic}, $\dim(\im \phi \cap V_\infty) = \rho^+ - \rho$ where, as before, $\rho^+$ is the 2-rank of the
narrow class group of $K$ and $\rho$ is the 2-rank of the class group of $K$. 
 
If we apply the heuristic assumption
 
\smallskip
\begin{itemize}  
\item[$(\textup{H}_1) \ \ $] The image of the 2-Selmer signature map is a uniformly random maximal totally isotropic subspace of $V_\infty \perp V_2$.
\end{itemize}
\smallskip
then we may apply Theorem \ref{theorem:Sprobability}, which computes the distribution of such subspaces of $V_\infty \perp V_2$, and
obtain the following conjecture.

\begin{conjecture}  \label{conj:rhoplusminusrho}
For the collection of number fields $K$ of odd degree $n$ with signature $(r_1,r_2)$ whose
Galois closure has the symmetric group $S_n$ as Galois group, we have
\begin{equation} \label{eq:conjrhoplusminusrho}
\Prob(\rk_2 C_K^+ - \rk_2 C_K = k) =
\dfrac{
(2)_{r_1 + r_2 - 1}(4)_{(r_1 - 1)/2}(4)_{(r_1 - 1)/2 +r_2}
}{
2^{k(k+r_2)} (2)_{k}(2)_{k+r_2}  (4)_{r_1+r_2-1} (4)_{(r_1 - 1)/2-k}
} \ , 
\end{equation}
where $k$ is any integer $0 \le k \le \lfloor r_1/2 \rfloor$.
\end{conjecture}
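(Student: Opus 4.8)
The plan is to deduce the formula \eqref{eq:conjrhoplusminusrho} directly from the structural results already in hand, using the heuristic $(\textup{H}_1)$ to pass from the arithmetic object $\im\phi$ to a uniformly random maximal totally isotropic subspace of $V_\infty \perp V_2$. In spirit this is not a proof of a theorem but a derivation of the conjecture: all the genuine content lives in the earlier results, and what remains is a short, formal chain of implications.

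First I would recall from Theorem \ref{theorem:imSelmaxisotropic} that for any number field $K$ of odd degree $n$ the image $\im\phi \subseteq V_\infty \perp V_2$ is a maximal totally isotropic subspace and that $\dim(\im\phi \cap V_\infty) = \rho^+ - \rho$, where $\rho = \rk_2 C_K$ and $\rho^+ = \rk_2 C_K^+$. Next, since $n$ is odd, Theorem \ref{theorem:imageSelmer}(a) shows that the isomorphism class of a maximal totally isotropic subspace $S$ of $V_\infty \perp V_2$ under the action of $\Aut(V_\infty) \perp \Aut(V_2)$ is completely determined by the single integer $k = \dim(S \cap V_\infty)$, which ranges over $0 \le k \le \lfloor r_1/2 \rfloor$; write $S_k$ for the corresponding class. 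Combining these two facts gives the key equivalence
$$
\rho^+ - \rho = k \iff \im\phi \iso S_k .
$$

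The third step is to invoke the heuristic $(\textup{H}_1)$: as $K$ ranges over $S_n$-fields of odd degree $n$ with fixed signature $(r_1,r_2)$, ordered by $\lvert \disc K \rvert$, the subspace $\im\phi$ is modeled by a uniformly random maximal totally isotropic subspace $S$ of $V_\infty \perp V_2$. Under this modeling, $\Prob(\rho^+ - \rho = k) = \Prob(S \iso S_k)$, and Theorem \ref{theorem:Sprobability} evaluates $\Prob(S \iso S_k)$ to be exactly the claimed ratio of $q$-Pochhammer expressions (with $q=2$ and $q=4$). This is precisely \eqref{eq:conjrhoplusminusrho}, completing the derivation.

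The main obstacle is not the chain of implications, which is routine given the earlier theorems, but the justification of $(\textup{H}_1)$ itself: there is at present no proof that the image of the $2$-Selmer signature map of a random odd-degree $S_n$-field equidistributes among maximal totally isotropic subspaces, and this is exactly why the statement is a conjecture rather than a theorem. All the substantive work — that $\im\phi$ is maximal totally isotropic (Theorem \ref{theorem:imSelmaxisotropic}), the classification of such subspaces together with the orders of their stabilizers via the appendix structure theorem (Theorem \ref{theorem:imageSelmer}), and the resulting mass-formula computation (Theorem \ref{theorem:Sprobability}) — has already been carried out, so the remaining step really is purely formal once $(\textup{H}_1)$ is granted.
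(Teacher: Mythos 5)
Your derivation is correct and follows exactly the paper's own route: Theorem \ref{theorem:imSelmaxisotropic} gives that $\im\phi$ is maximal totally isotropic with $\dim(\im\phi\cap V_\infty)=\rho^+-\rho$, Theorem \ref{theorem:imageSelmer}(a) reduces the isomorphism class to the integer $k$, and heuristic $(\textup{H}_1)$ together with the mass-formula computation of Theorem \ref{theorem:Sprobability} yields the stated density. You also correctly identify that the only unproven ingredient is $(\textup{H}_1)$ itself, which is precisely why the statement is a conjecture in the paper.
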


\medskip\noindent
Table \ref{table:imagephi} gives the values for these predicted densities when $n = 3,5,7$ (when $r_1 = 1$,
$\dim(\im \phi \cap V_\infty)$ is necessarily 0; also equation \eqref{eq:conjrhoplusminusrho} yields precisely 1 in this case).

\begin {table}[ht] 
\begin{center}
\begin{tabular}{c|cccc}
$(r_1,r_2)$ & $k=0$ & $k=1$ & $k=2$ & $k=3$ \\
\hline
$(3,0)$ & $2/5$ & $3/5$ &   &     \rule[0pt]{0pt}{12pt} \\
$(1,1)$ & $1$ &   &   &   \\
\hline
$(5,0)$ & $16/51$ & $30/51$ & $5/51$ &    \rule[0pt]{0pt}{12pt} \\
$(3,1)$ & $2/3$ & $1/3$ &   &   \\
$(1,2)$ & $1$ &   &   &   \\
\hline
$(7,0)$ & $3584/12155$ & $7056/12155$ & $1470/12155$ & $45/12155$  \rule[0pt]{0pt}{12pt} \\
$(5,1)$ & $112/187$ & $70/187$ & $5/187$ &   \\
$(3,2)$ & $14/17$ & $3/17$ &   &   \\
$(1,3)$ & $1$ &   &   &  
\end{tabular}
\end{center}
\caption {Conjectured $\Prob(\rk_2 C_K^+ - \rk_2 C_K = k)$ for $S_n$-fields $K$ of degree $n$ and 
signature $(r_1,r_2)$.}
\label{table:imagephi} 

  \end {table}

\subsection*{Conjectures: 2-ranks of narrow class groups}
Conjecture \ref{conj:rhoplusminusrho} predicts the difference between the 2-rank of the narrow class group, $\rho^+$,
and the 2-rank of the usual class group, $\rho$, so can be used to predict $\rho^+$ itself if it is coupled with
information about $\rho$.  

Malle \cite[Conjecture 2.1, Proposition 2.2]{Malle2} and Adam--Malle \cite{A-M}, 
based on extensive computations and the analogy to the function field case, have refined the Cohen--Martinet heuristics 
to give a prediction for $\rk_2 C_K$.  
This prediction for $K$ involves in particular the value of an inner product $u = \langle \chi_E , \chi_a \rangle$ of two characters
on the Galois group of the Galois closure of $K$. 

If $K$ is a number field of degree $n$ whose Galois closure $L$ has the symmetric group $S_n$ 
as Galois group, the decomposition group $\Gamma_v$ of an infinite place in $L$ is generated by an element
$\sigma \in S_n$ which is the product of $r_2$ distinct transpositions.  Then 
Herbrand's theorem (cf.\ \cite[Theorem 6.7, p.\ 62]{CohenMartinet})
shows the character $\chi_E$ in \cite{Malle2} is the character of the representation 
$ -1 + \text{{\rm Ind}}_{\Gamma_v}^{S_n} (1_{\Gamma_v})$.
The augmentation character $\chi_a = \chi_\pi - 1$ in \cite{Malle2} is the character 
of the natural linear permutation representation $\pi$ of $S_n$ on the
cosets of $\Gal(L/K)$ (which is defined over $\QQ$) less the 
trivial representation; the value of $\chi_a (\tau)$ for $\tau \in S_n$ is one 
less than the number of fixed points of $\tau$.  
The character $\chi_a$ is absolutely irreducible on $S_n$ of degree $n-1$ (this 
follows from the double transitivity of $S_n$---cf.\ Exercise 9 in Section 18.3 of \cite{DummitFoote}), and  
has Schur index 1 (since the associated representation is defined over $\QQ$).
The value of $\chi_a$ on an element $\tau$ of
$S_n$ is one less than the number of fixed points of $\tau$.  Then 
$$
u = \langle \chi_E , \chi_a \rangle_{S_n} = \langle -1 + \text{{\rm Ind}}_{\Gamma_v}^{S_n}(1_{\Gamma_v}), \chi_a \rangle_{S_n}
 = \langle \text{{\rm Ind}}_{\Gamma_v}^{\Gamma}(1_{\Gamma_v}), \chi_a \rangle_{S_n}
$$
By Frobenius reciprocity this last inner product is $ \langle 1, \chi_a \vert_{\Gamma_v} \rangle_{\Gamma_v} = ( \chi_a (1) + \chi_a( \sigma ) )/2$,
which yields $u = r_1 + r_2 - 1$.

If in addition $n$ is odd, necessarily $r_1 > 0$, so $K$ does not contain a fourth root of unity, and 
\cite[Conjecture 2.1, Proposition 2.2]{Malle2} is the following.

\begin{conjecture}[Malle, Adam--Malle] \label{conj:adammalle}
For the collection of number fields $K$ of odd degree $n$ with signature $(r_1,r_2)$ whose
Galois closure has the symmetric group $S_n$ as Galois group, we have
\begin{equation} \label{eq:conjadammalle}
\Prob(\rk_2 C_K=\rho) = 
\frac{1}{2^{\rho (r_1+r_2-1) + \rho(\rho+1)/2} (2)_\rho} 
\frac{(4)_{r_1+r_2-1}(2)_\infty}{(2)_{r_1+r_2-1} (4)_\infty} 
\end{equation}
for any nonnegative integer $\rho$. 
\end{conjecture}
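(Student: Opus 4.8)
The plan is to observe that the displayed formula is nothing more than \cite[Conjecture 2.1 and Proposition 2.2]{Malle2}, in the refined form of Adam--Malle \cite{A-M}, specialized to an $S_n$-field of odd degree; there is no independent argument to produce, only a verification that Malle's general heuristic collapses to the stated expression. First I would recall the shape of that heuristic: for a number field $K$ whose Galois closure $L$ has $\Gamma = \Gal(L/\QQ)$ and with $\Gal(L/K)$ of index $n$, the prediction for $\Prob(\rk_2 C_K = \rho)$ depends only on the nonnegative integer $u = \langle \chi_E, \chi_a \rangle_{\Gamma}$ together with a dichotomy according to whether $K$ contains a primitive fourth root of unity, where $\chi_E = -1 + \mathrm{Ind}_{\Gamma_v}^{\Gamma}(1_{\Gamma_v})$ is the Herbrand character attached to an archimedean place $v$ of $L$ and $\chi_a = \chi_\pi - 1$ is the augmentation character of the permutation action of $\Gamma$ on $\Gamma/\Gal(L/K)$.

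Next I would feed in the two inputs. The value of $u$ is exactly the computation performed in the paragraph preceding the conjecture: with $\Gamma = S_n$ the decomposition group $\Gamma_v$ is generated by a product $\sigma$ of $r_2$ disjoint transpositions, $\chi_a$ is absolutely irreducible of degree $n-1$ with $\chi_a(\sigma) = r_1 - 1$, and Frobenius reciprocity gives $u = \tfrac12\bigl(\chi_a(1) + \chi_a(\sigma)\bigr) = \tfrac12\bigl((n-1) + (r_1-1)\bigr) = r_1 + r_2 - 1$. For the second input, the hypothesis that $n$ is odd forces $r_1 \ge 1$, so $K$ has a real embedding and hence $\sqrt{-1} \notin K$; this places us in the branch of Malle's formula for fields not containing $i$. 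Substituting $u = r_1 + r_2 - 1$ into that branch and regrouping the factors as the $q$-Pochhammer symbols $(2)_m$ and $(4)_m$ of \eqref{eqn:pochham} then reproduces the right-hand side of \eqref{eq:conjadammalle}.

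The part needing the most care will be this final bookkeeping step: one must reconcile the normalization conventions of \cite{Malle2}, \cite{A-M}, and \eqref{eqn:pochham}, and in particular confirm that the ``$i \notin K$'' case contributes exactly the correction factor $(4)_{r_1+r_2-1}(2)_\infty / \bigl((2)_{r_1+r_2-1}(4)_\infty\bigr)$ and not its $i \in K$ analogue. As an internal consistency check I would verify that the resulting probabilities sum to $1$ as $\rho$ ranges over the nonnegative integers, using the standard $q$-series identity; this is the routine computation I would not spell out here.
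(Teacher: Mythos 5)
Your proposal is correct and follows exactly the route the paper takes: the statement is not proved but obtained by specializing Malle's prediction \cite[Conjecture 2.1, Proposition 2.2]{Malle2} (with Adam--Malle's refinement) to the $S_n$ case, using the same Herbrand/Frobenius-reciprocity computation $u = \langle \chi_E,\chi_a\rangle = \tfrac12(\chi_a(1)+\chi_a(\sigma)) = r_1+r_2-1$ and the observation that odd degree forces $r_1>0$, hence $\sqrt{-1}\notin K$, selecting the correct branch of the formula. Nothing further is needed.
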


If $\phi$ is the 2-Selmer signature map for the number field $K$, then by \eqref{eq: kernelranks}
we have $\dim \ker \phi = \rho$ if and only if $\rk_2 C_K = \rho$, i.e., 
$\Prob(\dim \ker \phi = \rho) = \Prob(\rk_2 C_K=\rho)$.
As a result, we make the following heuristic assumption:

\smallskip
\begin{itemize} 
\item[$(\textup{H}_2) \ \ $] 
The 2-Selmer signature maps $\phi$ of number fields $K$ have
$\Prob(\dim \ker \phi = \rho)$ given 
by the probability distribution in \eqref{eq:conjadammalle} of Conjecture \ref{conj:adammalle},
independent of
the distribution of the images in heuristic $(\textup{H}_1)$.
\end{itemize}
\smallskip
Under this heuristic assumption, multiplying the probability 
$\Prob(\rk_2 C_K^+ - \rk_2 C_K = k)$ predicted by Conjecture \ref{conj:rhoplusminusrho} with the 
probability $\Prob(\rk_2 C_K = \rho^+ - k)$ predicted by Conjecture \ref{conj:adammalle} and summing over
the appropriate values of $k$ yields the following conjecture.

\begin{conjecture}  \label{conj:rhoplus}
For the collection of number fields $K$ of odd degree $n$ with signature $(r_1,r_2)$ whose
Galois closure has the symmetric group $S_n$ as Galois group, we have
\begin{equation}  \label{eq:conjrhoplus} 
\begin{aligned}
& \Prob(\rk_2 C_K^+ = \rho^+) = 
\sum_{k=0}^{\min(\rho^+,\lfloor r_1/2 \rfloor)}  \Prob(\rk_2 C_K = \rho^+ - k) \  \Prob(\rk_2 C_K^+ - \rk_2 C_K = k) \\
& \qquad  \qquad 
= \frac{
(2)_\infty (4)_{(r_1 - 1)/2}(4)_{(r_1 - 1)/2 +r_2}
}
{
2^{(r_1 + r_2 - 1)\rho^+} (4)_\infty 
}
\sum_{k=0}^{\min(\rho^+,\lfloor r_1/2 \rfloor)}
\,  
\frac{
2^{      k(r_1 - 1 - k) - (\rho^+ -k) (\rho^+ -k + 1)/2  }
}
{
(2)_k   (2)_{k+r_2} (2)_{\rho^+ - k} (4)_{(r_1 - 1)/2 -k}
}    
\end{aligned}
\end{equation}
for any nonnegative integer $\rho^+$.
\end{conjecture}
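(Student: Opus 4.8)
The statement has two parts joined by an equality, and I would treat them separately. The first equality in \eqref{eq:conjrhoplus} is not an arithmetic fact but a formal consequence of the heuristic assumptions $(\textup{H}_1)$ and $(\textup{H}_2)$ together with the exact identifications already established: by \eqref{eq: kernelranks} the condition $\rk_2 C_K = \rho$ is literally the condition $\dim\ker\phi = \rho$, whose probability is modeled by \eqref{eq:conjadammalle}; and by Theorem \ref{theorem:imSelmaxisotropic} (together with Theorem \ref{theorem:imageSelmer}) the condition $\rk_2 C_K^+ - \rk_2 C_K = k$ is the condition $\dim(\im\phi\cap V_\infty)=k$, whose probability is modeled by \eqref{eq:conjrhoplusminusrho}. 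Since $(\textup{H}_2)$ postulates that these two invariants of $\phi$ are distributed independently, the law of total probability applied to the decomposition $\rho^+ = \rk_2 C_K + (\rk_2 C_K^+ - \rk_2 C_K)$ yields the sum of products displayed. The summation range is dictated by where both factors are defined and nonzero: $\Prob(\rk_2 C_K = \rho^+ - k)$ forces $k \le \rho^+$, while Conjecture \ref{conj:rhoplusminusrho} forces $0 \le k \le \lfloor r_1/2\rfloor$, giving $0 \le k \le \min(\rho^+,\lfloor r_1/2\rfloor)$.

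The second equality is a genuine (and elementary) identity of $q$-Pochhammer expressions that I would verify by direct substitution. First substitute $\rho = \rho^+ - k$ into \eqref{eq:conjadammalle} and multiply by \eqref{eq:conjrhoplusminusrho}. The factors $(4)_{r_1+r_2-1}$ and $(2)_{r_1+r_2-1}$ each appear once in a numerator and once in a denominator of the product and cancel outright. The remaining Pochhammer factors that do not depend on $k$, namely $(2)_\infty/(4)_\infty$, $(4)_{(r_1-1)/2}$ and $(4)_{(r_1-1)/2+r_2}$, can be pulled out in front of the sum; since $n$ is odd, $r_1$ is odd and $\lfloor r_1/2\rfloor = (r_1-1)/2$, so these match the prefactor on the right-hand side of \eqref{eq:conjrhoplus} verbatim.

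The one step deserving care is the bookkeeping of the power of $2$. After substitution the $2$-part of the denominator of the $k$-th term is
\[
2^{(\rho^+ - k)(r_1 + r_2 - 1) + (\rho^+ - k)(\rho^+ - k + 1)/2 + k(k + r_2)}.
\]
Using the rearrangement $(\rho^+-k)(r_1+r_2-1) + k(k+r_2) = \rho^+(r_1+r_2-1) - k(r_1-1-k)$, the exponent becomes $\rho^+(r_1+r_2-1) + (\rho^+-k)(\rho^+-k+1)/2 - k(r_1-1-k)$; factoring the $k$-independent $2^{\rho^+(r_1+r_2-1)}$ out of the sum leaves $2^{\,k(r_1-1-k) - (\rho^+-k)(\rho^+-k+1)/2}$ in the numerator of the summand, which is precisely the power of $2$ that appears inside the sum in \eqref{eq:conjrhoplus}. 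Collecting the surviving denominators $(2)_k$, $(2)_{k+r_2}$, $(2)_{\rho^+-k}$, $(4)_{(r_1-1)/2-k}$ then gives the summand on the right-hand side exactly. I do not expect any real obstacle: the probabilistic half is pure modeling and the algebraic half is routine cancellation; the only pitfalls are the sign in the exponent rearrangement and remembering that $\lfloor r_1/2\rfloor = (r_1-1)/2$ holds because $n$, hence $r_1$, is odd.
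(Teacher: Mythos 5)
Your proposal is correct and follows essentially the same route as the paper: the first equality is exactly the paper's appeal to the independence heuristic $(\textup{H}_2)$ combined with Conjectures \ref{conj:rhoplusminusrho} and \ref{conj:adammalle}, and the second equality is the routine cancellation (of $(2)_{r_1+r_2-1}$, $(4)_{r_1+r_2-1}$ and the regrouping of the power of $2$) that the paper leaves implicit. Your exponent bookkeeping $(\rho^+-k)(r_1+r_2-1)+k(k+r_2)=\rho^+(r_1+r_2-1)-k(r_1-1-k)$ checks out, so nothing is missing.
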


\begin{remark}
There are other predictions for $\Prob(\rk_2 C_K=\rho)$: one due to Venkatesh--Ellenberg 
involving the Schur multiplier \cite[\S 2.4]{VenkateshEllenberg}, 
work of Garton \cite{Garton} accounting for roots of unity, and theorems of Wood in the function 
field case \cite{Woodff} that suggest predictions in the number field case.  
It is possible that these different perspectives all agree with Conjecture \ref{conj:adammalle}, but this 
has not yet been established.  If Conjecture \ref{conj:adammalle} needs modification, then the 
conjectures presented here conditional on the distribution of $2$-ranks for class groups
can be modified accordingly.
It would also be of interest to find a ``full matrix model'', in the style of Venkatesh--Ellenberg \cite{VenkateshEllenberg} 
(inspired by Friedman--Washington \cite{FriedmanWashington}) that predicts both the image and kernel 
of the 2-Selmer map directly, without relying on independent conjectures regarding the 2-rank of the
class group.  Recent unpublished work of Bartel--Lenstra gives a hopeful indication that such a model might exist.
\end{remark}

For the first two values of $\rho^+$, Conjecture \ref{conj:rhoplus} gives
\begin{equation*}
\Prob(\rk_2 C_K^+ = 0)  = \frac{(2)_\infty (4)_{(r_1-1)/2+r_2}}{(4)_\infty (2)_{r_2}} 
\end{equation*}
and
\begin{equation*}
\Prob(\rk_2 C_K^+ = 1)
=
\begin{cases}
\left(\dfrac{1}{2^{r_2 }} \right)
\dfrac{(2)_\infty (4)_{r_2}}{(4)_\infty (2)_{r_2}} 
& \text{if } r_1 = 1 \\
&  \\
\left(\dfrac{1}{2^{r_1 + r_2 - 1}} + \dfrac{2^{r_1-1}-1}{2^{r_1} (2^{r_2+1}-1)}\right)
\dfrac{(2)_\infty (4)_{(r_1-1)/2+r_2}}{(4)_\infty (2)_{r_2}} 
& \text{if } r_1 > 1 \ ,
\end{cases}
\end{equation*}
with increasingly complicated expressions when $\rho^+ > 1$.  

If $r_2$ is fixed and $r_1$ tends to infinity, all the terms in \eqref{eq:conjrhoplus} tend to zero except the
term with $k = \rho^+$, so Conjecture \ref{conj:rhoplus} implies
\begin{equation}
\lim_{r_1 \to \infty} \Prob(\rk_2 C_K^+ = \rho^+) = 
\dfrac{ (2)_\infty }{  2^{ \rho^+ ( \rho^+ + r_2) } (2)_{\rho^+}  (2)_{r_2 +\rho^+} } \ .
\end{equation}
Taking in particular the values when $r_2 = 0$ gives the following corollary.

\begin{corollary} \label{corollary:largenrhoplus}
Conjecture \ref{conj:rhoplus} predicts that among totally real $S_n$-fields of large odd degree $n$,
the 2-ranks $\rho^+$ of the narrow class group are distributed as follows:
approximately $28.879\%$ have 2-rank 0, $57.758\%$ have 2-rank 1, 
$12.835\%$ have 2-rank 2, $0.524\%$ have 2-rank 3, $0.005\%$ have 2-rank 4, etc., according to the values
\begin{equation}
\dfrac{ (2)_\infty }{  2^{ (\rho^+)^2 } (2)_{\rho^+}^{\, 2}  } \ .
\end{equation}

\end{corollary}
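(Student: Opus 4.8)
The plan is to extract the limit directly from the closed formula \eqref{eq:conjrhoplus} of Conjecture \ref{conj:rhoplus}. First I would specialize to $r_2 = 0$ and $r_1 = n$ with $n$ odd; then for all $n$ large enough that $(n-1)/2 \ge \rho^+$ we have $\min(\rho^+,\lfloor r_1/2\rfloor) = \rho^+$, so the sum in \eqref{eq:conjrhoplus} is a \emph{fixed} finite sum over $0 \le k \le \rho^+$. In particular $\lim_{n\to\infty}$ may be taken term by term, and it suffices to compute the limit of each of the $\rho^+ + 1$ terms separately.

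Next I would track the exponent of $2$ in the $k$-th term. Combining the factor $2^{-(r_1+r_2-1)\rho^+} = 2^{-(n-1)\rho^+}$ from the prefactor with the factor $2^{k(r_1-1-k)-(\rho^+-k)(\rho^+-k+1)/2}$ inside the sum yields net exponent $(n-1)(k-\rho^+) - k^2 - (\rho^+-k)(\rho^+-k+1)/2$. Since every $q$-Pochhammer factor appearing lies in a compact subinterval of $(0,1]$ as $n$ varies, the sign of $(n-1)(k-\rho^+)$ controls the term: for $k < \rho^+$ it tends to $-\infty$ and that term vanishes in the limit, so only $k = \rho^+$ survives, with net exponent exactly $-(\rho^+)^2$.

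It then remains to compute the limit of the surviving $k = \rho^+$ term. With $r_2 = 0$ the prefactor contributes $(2)_\infty (4)_{(n-1)/2}^2/(4)_\infty$ and the $k = \rho^+$ summand contributes $1/\bigl((2)_{\rho^+}^2\,(4)_{(n-1)/2-\rho^+}\bigr)$ (using $(2)_0 = 1$). Since $(4)_m = \prod_{i=1}^m(1-4^{-i}) \to (4)_\infty$ as $m\to\infty$, both $(4)_{(n-1)/2}$ and $(4)_{(n-1)/2-\rho^+}$ converge to $(4)_\infty$, so the ratio $(4)_{(n-1)/2}^2/\bigl((4)_\infty(4)_{(n-1)/2-\rho^+}\bigr)$ tends to $1$. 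Collecting everything gives
\[
\lim_{n\to\infty}\Prob(\rk_2 C_K^+ = \rho^+) = \dfrac{(2)_\infty}{2^{(\rho^+)^2}(2)_{\rho^+}^{\,2}},
\]
which is the $r_2 = 0$ specialization of the displayed limit preceding the corollary.

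Finally I would evaluate this for $\rho^+ = 0,1,2,3,4$ using $(2)_\infty \approx 0.288788$ together with $(2)_0 = 1$, $(2)_1 = 1/2$, $(2)_2 = 3/8$, $(2)_3 = 21/64$, and $(2)_4 = 315/1024$, obtaining the stated approximate percentages $28.879\%$, $57.758\%$, $12.835\%$, $0.524\%$, and $0.005\%$. No serious obstacle arises: once the sum is seen to be finite and of fixed length, the whole argument is bookkeeping with a single exponent of $2$ and the elementary convergence $(4)_m \to (4)_\infty$; the only point requiring any care is getting the sign of $(n-1)(k-\rho^+)$ right, since this is precisely what forces all but the top term to die.
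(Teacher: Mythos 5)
Your proposal is correct and follows essentially the same route as the paper: the text preceding the corollary observes that as $r_1 \to \infty$ (with $r_2$ fixed) every term of \eqref{eq:conjrhoplus} except $k = \rho^+$ tends to zero, yielding the limit $(2)_\infty / \bigl(2^{\rho^+(\rho^+ + r_2)}(2)_{\rho^+}(2)_{r_2+\rho^+}\bigr)$, and then sets $r_2 = 0$. You merely specialize to $r_2 = 0$ at the outset and make explicit the exponent bookkeeping (the sign of $(n-1)(k-\rho^+)$ and the convergence $(4)_m \to (4)_\infty$) that the paper leaves to the reader; the numerical evaluations also match.
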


We next compute the $t$-power moments of the distribution on the right hand side of \eqref{eq:conjrhoplus}.  
For fixed $(r_1,r_2)$ with $r_1$ odd, let
\begin{align}
\eta(\rho) & = 
\frac{1}{2^{\rho (r_1+r_2-1) + \rho(\rho+1)/2} (2)_\rho} 
\frac{(4)_{r_1+r_2-1}(2)_\infty}{(2)_{r_1+r_2-1} (4)_\infty} \ , \\
p(k) & =
\dfrac{
(2)_{r_1 + r_2 - 1}(4)_{(r_1 - 1)/2}(4)_{(r_1 - 1)/2 +r_2}
}{
2^{k(k+r_2)} (2)_{k}(2)_{k+r_2}  (4)_{r_1+r_2-1} (4)_{(r_1 - 1)/2-k} 
} \ , \textup{ and } \label{eq:p} \\
\eta^+(\rho^+) & = 
\sum_{k=0}^{\min(\rho^+,\lfloor r_1/2 \rfloor)}  \eta(\rho^+ - k) \, p(k) \label{eq:rhoplus}
\end{align}
denote the distributions in Conjectures  \ref{conj:adammalle}, \ref{conj:rhoplusminusrho}, and \ref{conj:rhoplus},
respectively.  We first
prove the following combinatorial lemma.

\begin{lemma} \label{lem:pksum}
Suppose $m$ and $r_2$ are nonnegative integers. If 
\begin{equation} \label{eq:pformula}
\tilde p(k) = 
\dfrac{(q)_{2m + r_2}(q^2)_{m}(q^2)_{m+r_2}}{q^{k(k+r_2)} (q)_{k}(q)_{k+r_2} (q^2)_{2m+r_2} (q^2)_{m-k} } ,
\end{equation}
(with $(q)_m = \prod_{i=1}^m (1 - q^{-i})$ as before) then
\begin{equation} \label{eq:psum}
\sum_{k=0}^{m} q^k \tilde p(k) = \dfrac{1+q^{-r_2}}{1+q^{-2m-r_2}}.
\end{equation}

\end{lemma}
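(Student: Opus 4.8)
The plan is to rewrite this as a terminating basic hypergeometric identity and prove it by induction on $m$. First I would substitute $x=q^{-1}$, so that $(q)_j=\prod_{i=1}^j(1-x^i)$ and $(q^2)_j=\prod_{i=1}^j(1-x^{2i})$, and absorb the factor $q^k=x^{-k}$ into the exponent; since $k(k+r_2)-k=k(k+r_2-1)$, the claim becomes $\sum_{k=0}^m x^{k(k+r_2-1)}\,\frac{(q)_{2m+r_2}(q^2)_m(q^2)_{m+r_2}}{(q)_k(q)_{k+r_2}(q^2)_{2m+r_2}(q^2)_{m-k}}=\frac{1+x^{r_2}}{1+x^{2m+r_2}}$. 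It is convenient to unmix the two bases using the elementary factorization $(q^2)_j=(q)_j\prod_{i=1}^j(1+x^i)$: applying it to the three ``$q^2$'' Pochhammer symbols turns each summand into a $q$-binomial coefficient times a ratio of products of the shape $\prod(1+x^i)$, and rewrites the right-hand side as such a ratio as well; at this stage one may also recognize the sum as a specialization of a classical terminating $q$-series evaluation, but the induction below is self-contained.

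For the induction, the base case $m=0$ is immediate, since the sum reduces to the single term $k=0$ with $\tilde p(0)=\frac{(q)_{r_2}(q^2)_{r_2}}{(q)_{r_2}(q^2)_{r_2}}=1$, matching $\frac{1+q^{-r_2}}{1+q^{-r_2}}=1$. For the inductive step, write $S_m=\sum_{k=0}^m q^k\tilde p_m(k)$ to display the dependence on $m$. Since $(1+q^{-2m-r_2})\cdot\frac{1+q^{-r_2}}{1+q^{-2m-r_2}}=1+q^{-r_2}=(1+q^{-2m+2-r_2})\cdot\frac{1+q^{-r_2}}{1+q^{-2m+2-r_2}}$, it suffices to prove the first-order recurrence $(1+q^{-2m-r_2})\,S_m=(1+q^{-2m+2-r_2})\,S_{m-1}$. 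I would establish this by creative telescoping (the $q$-Zeilberger algorithm): the term ratios $\tilde p_m(k)/\tilde p_{m-1}(k)$ and $\tilde p_m(k)/\tilde p_{m-1}(k-1)$ are explicit ratios of the defining products and collapse, using the factorization above, to short products of factors $1-x^j$ and $1+x^j$ times a monomial in $x$; from them one produces a rational function $g(m,k)$, a rational multiple of $q^k\tilde p_{m-1}(k)$ supported on $0\le k\le m$, satisfying $(1+q^{-2m-r_2})\,q^k\tilde p_m(k)-(1+q^{-2m+2-r_2})\,q^k\tilde p_{m-1}(k)=g(m,k)-g(m,k-1)$. Summing over $0\le k\le m$ and using $\tilde p_{m-1}(m)=0$, the right-hand side telescopes to $g(m,m)-g(m,-1)=0$, which is the desired recurrence.

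I expect the one genuinely fiddly point to be producing and verifying the telescoping certificate $g(m,k)$ (equivalently, on the alternative route, the parameter bookkeeping needed to cast the sum in the exact form of the quoted classical summation). This is a routine but unenlightening rational-function verification in $\Q(x)$: once a candidate $g$ is written down, checking the displayed identity requires nothing beyond the elementary term ratios already computed. With the recurrence established the lemma follows by induction, and I would close by recording that the only use of the lemma in the sequel is the specialization $q=2$, $m=(r_1-1)/2$, which via \eqref{eq:p} yields $\sum_k 2^k p(k)=(1+2^{-r_2})/(1+2^{-(r_1+r_2-1)})$.
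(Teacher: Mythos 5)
Your proposal is correct and follows essentially the same route as the paper: the paper also argues by induction on $m$ via a $q$-Zeilberger/WZ-style telescoping recurrence, normalizing the summand as $f_{m,k}$ so that the claim becomes $\sum_{k=0}^m f_{m,k}=1$ and exhibiting an explicit certificate $\mathrm{cert}_{m,k}$ (found with Paule's qZeil package) so that $g_{m,k}=\mathrm{cert}_{m,k}f_{m,k}$ satisfies $f_{m,k}-f_{m-1,k}=g_{m,k}-g_{m,k-1}$, with the boundary conventions $f_{m-1,m}=g_{m,-1}=0$ and $g_{m,m}=0$ making the sum telescope. The only difference is that you leave the certificate as a to-be-performed (routine) rational-function computation, whereas the paper records it explicitly; the base case, the first-order recurrence in $m$, and the telescoping mechanism are otherwise identical.
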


\begin{proof}
If
$$
f_{m,k} =  q^{-k( k + r_2 - 1)}  \dfrac{1 + q^{-2m - r_2} }{1 + q^{-r_2}} 
\dfrac{  (q)_{2m + r_2} (q^2)_m (q^2)_{m + r_2} } {(q)_k (q)_{k + r_2} (q^2)_{2m + r_2} (q^2)_{m - k}   }
$$
then \eqref{eq:psum} is equivalent to
\begin{equation} \label{eq:fsum}
\sum_{k = 0}^m f_{m,k} = 1 . 
\end{equation}
Since $f_{0,0} = 1$, the sum for $m = 0$ is indeed 1.  For $m \ge 1$ and $0 \le k \le m$, 
define
$$
\text{cert}_{m,k} = \dfrac{ (q^{2k} - q^{2m})(q^{3 + 2 k} + q^{2 m} - 
   q^{1 + 2 m} + q^{1 + k + 2 m} + q^{1 + k + 2 m + r2} + 
   q^{2 + 2 k + 2 m + r2})}
{q^{2k + 2m}          (q^{2m} - 1)   (q^{2(m + r_2)} - 1 )}  ,
$$
and set $g_{m,k} = \text{cert}_{m,k} f_{m,k}$.  Then a straightforward computation confirms that
$$
1 - \dfrac{f_{m-1,k}}{f_{m,k}} = \text{cert}_{m,k} - \text{cert}_{m,k-1} \dfrac{f_{m,k-1}}{f_{m,k}}
$$
for $1 \le k \le m-1$.  It follows that
\begin{equation} \label{eq:recursion1}
f_{m,k} - f_{m-1,k} = g_{m,k} - g_{m,k-1}
\end{equation}
for $1 \le k \le m-1$.  Defining $f_{m-1,m} = g_{m,-1} = 0$, it is easy to check that \eqref{eq:recursion1} holds for
$0 \le k \le m$. Summing \eqref{eq:recursion1} from $k=0$ to $m$, using $f_{m-1,m} = g_{m,-1} = 0$ and noting 
$g_{m,m} = 0$ (since $\text{cert}_{m,m} = 0$) gives
$$
\sum_{k=0}^m f_{m,k} - \sum_{k=0}^{m-1} f_{m-1,k} = 0.
$$
It follows that the sum on the left hand side in \eqref{eq:fsum} is independent of $m$, hence always equals 1 
since the sum is 1 for $m = 0$, which completes the proof.  
\end{proof}

\begin{remark}
The factor $\text{cert}_{m,k}$ (for ``certificate'') in the Lemma determining a Wilf-Zeilberger recurrence 
\eqref{eq:recursion1} 
for $f_{m,k}$ were determined using the software package ``qZeil'' implementing
a $q$-analogue of the Zeilberger algorithm.  The software was graciously provided by Peter Paule 
at the Research Institute for Symbolic Computation at 
Johannes Kepler University, Linz, Austria.  The method of proof in the Lemma follows Section 4
of the paper \cite{P-R} (see also \S 2.1 in \cite{R}).
\end{remark}

\begin{proposition} \label{prop:probdistcks}
For $t \geq 1$, the $t^{\text{th}}$-power moment $\sum_{\rho^+=0}^{\infty} 2^{t\rho^+} \eta^+(\rho^+)$ 
of the probability distribution predicting $\Prob(\rk_2 C_K^+ = \rho^+)$ in Conjecture \ref{conj:rhoplus} is
\begin{equation} \label{eq:tmoment}
\prod_{s=1}^{t} (1+2^{s-r_1 - r_2}) \sum_{k=0}^{\lfloor r_1/2\rfloor} 2^{tk} p(k)
\end{equation}
where $p(k)$ is given by \eqref{eq:p}.  For $t = 1$ we have 
\begin{equation}  \label{eq:firstmoment}
\sum_{\rho^+=0}^{\infty} 2^{\rho^+} \eta^+(\rho^+) = 1+2^{-r_2}.
\end{equation}
\end{proposition}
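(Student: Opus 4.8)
The plan is to establish the general moment formula \eqref{eq:tmoment} and then read off \eqref{eq:firstmoment} by specializing $t=1$ and invoking Lemma~\ref{lem:pksum}. \emph{Step 1 (factoring the moment).} Substitute the definition \eqref{eq:rhoplus} of $\eta^+$ into $\sum_{\rho^+\ge 0}2^{t\rho^+}\eta^+(\rho^+)$. All terms are nonnegative, so the double sum may be rearranged freely; reindexing by $\rho=\rho^+-k$, the pair $(k,\rho)$ runs over $0\le k\le\lfloor r_1/2\rfloor$ and $\rho\ge 0$, and the sum factors as
\[
\sum_{\rho^+=0}^{\infty}2^{t\rho^+}\eta^+(\rho^+)=\Bigl(\sum_{k=0}^{\lfloor r_1/2\rfloor}2^{tk}\,p(k)\Bigr)\Bigl(\sum_{\rho=0}^{\infty}2^{t\rho}\,\eta(\rho)\Bigr).
\]
Thus everything reduces to evaluating the $t$-th moment $\sum_{\rho\ge0}2^{t\rho}\eta(\rho)$ of the Malle--Adam--Malle distribution.

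\emph{Step 2 (the Malle moment).} Set $q=2$, $Q=q^{-1}=1/2$, $u=r_1+r_2-1$, and write $(2)_\rho=(Q;Q)_\rho:=\prod_{i=1}^{\rho}(1-Q^i)$; all series below converge absolutely since $|Q|<1$. The constant factor of $\eta$ is $C=(4)_u(2)_\infty/\bigl((2)_u(4)_\infty\bigr)$. Using $\rho(\rho+1)/2=\binom{\rho}{2}+\rho$ one gets $2^{t\rho}\eta(\rho)=C\,Q^{\binom{\rho}{2}}(Q^{\,u-t+1})^{\rho}/(Q;Q)_\rho$, and Euler's $q$-exponential identity $\sum_{n\ge0}Q^{\binom{n}{2}}z^n/(Q;Q)_n=\prod_{j\ge0}(1+zQ^j)$ with $z=Q^{\,u-t+1}$ gives $\sum_{\rho\ge0}2^{t\rho}\eta(\rho)=C\prod_{m\ge u-t+1}(1+Q^m)$. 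Now $(4)_u/(2)_u=\prod_{i=1}^u(1+Q^i)$ and, cancelling even-index factors, $(2)_\infty/(4)_\infty=\prod_{j\ge0}(1-Q^{2j+1})=1/\prod_{m\ge1}(1+Q^m)$, so $C=1/\prod_{m\ge u+1}(1+Q^m)$ and therefore
\[
\sum_{\rho=0}^{\infty}2^{t\rho}\eta(\rho)=\prod_{m=u-t+1}^{u}(1+Q^m)=\prod_{s=1}^{t}\bigl(1+2^{\,s-r_1-r_2}\bigr).
\]
Together with Step~1 this is exactly \eqref{eq:tmoment} (and $t=0$ recovers $\sum_\rho\eta(\rho)=1$, a sanity check).

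\emph{Step 3 ($t=1$).} Here the Malle-moment factor is $1+2^{\,1-r_1-r_2}$. Put $m=(r_1-1)/2$, an integer since $r_1$ is odd, so $\lfloor r_1/2\rfloor=m$ and $2m+r_2=r_1+r_2-1$. A termwise comparison of \eqref{eq:p} with \eqref{eq:pformula} at $q=2$ shows $p(k)=\tilde p(k)$ for $0\le k\le m$, so Lemma~\ref{lem:pksum} gives $\sum_{k=0}^{\lfloor r_1/2\rfloor}2^{k}p(k)=(1+2^{-r_2})/(1+2^{-2m-r_2})=(1+2^{-r_2})/(1+2^{\,1-r_1-r_2})$. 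Multiplying by $1+2^{\,1-r_1-r_2}$ cancels the denominator and yields $\sum_{\rho^+}2^{\rho^+}\eta^+(\rho^+)=1+2^{-r_2}$, which is \eqref{eq:firstmoment}.

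The main obstacle is Step~2: recognizing the $\rho$-sum as an Euler $q$-series and then telescoping the resulting infinite product against the normalizing constant $C$, keeping careful track of odd-index versus all-index $q$-Pochhammer products. The rearrangement of Step~1, the identification $p(k)=\tilde p(k)$, and the final cancellation in Step~3 are routine bookkeeping, and the genuinely combinatorial input — the evaluation of $\sum_k 2^k\tilde p(k)$ — is already available as Lemma~\ref{lem:pksum}.
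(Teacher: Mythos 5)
Your proposal is correct, and its overall architecture (factor the double sum, then apply Lemma \ref{lem:pksum} at $t=1$) is the same as the paper's; the genuine difference is in how the $\rho$-factor is handled. The paper simply quotes Malle's computation \cite[Proposition 2.2]{Malle2} for $\sum_{\rho\ge 0}2^{t\rho}\eta(\rho)=\prod_{s=1}^{t}(1+2^{s-r_1-r_2})$, whereas you rederive it from scratch: writing $2^{t\rho}\eta(\rho)=C\,Q^{\binom{\rho}{2}}(Q^{u-t+1})^{\rho}/(Q;Q)_\rho$ with $Q=1/2$, $u=r_1+r_2-1$, applying Euler's identity $\sum_n Q^{\binom n2}z^n/(Q;Q)_n=\prod_{j\ge0}(1+zQ^j)$, and telescoping against $C$ via $(4)_u/(2)_u=\prod_{i=1}^u(1+Q^i)$ and $(2)_\infty/(4)_\infty=1/\prod_{m\ge1}(1+Q^m)$ — all of which I have checked and which is correct. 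What your route buys is self-containedness: the moment formula for the Adam--Malle distribution is proved rather than cited, at the cost of a short $q$-series computation; the paper's route is shorter but leans on the external reference. One small point in your favor: at $t=1$ the Malle factor is $1+2^{\,1-r_1-r_2}$, matching the denominator $1+2^{-2m-r_2}$ from Lemma \ref{lem:pksum} with $m=(r_1-1)/2$, exactly as you write; the paper's proof records this quantity as $1+2^{-r_1-r_2-1}$ in both places, which is a (harmless, since it cancels) typographical slip that your version corrects. Your identification $p(k)=\tilde p(k)$ at $q=2$, $m=(r_1-1)/2$ and the final cancellation are exactly as in the paper.
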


\begin{proof}
By \eqref{eq:rhoplus} we have
\begin{equation}
\begin{aligned}
\sum_{\rho^+=0}^{\infty} 2^{t\rho^+} \eta^+(\rho^+)
& = \sum_{\rho^+=0}^{\infty} 2^{t\rho^+} \sum_{k=0}^{\min(\rho^+,\lfloor r_1/2\rfloor)} \eta (\rho^+-k) \, p(k)  
\\
&= \left(\sum_{\rho=0}^{\infty} 2^{t\rho} \eta(u,\rho) \right) \left(\sum_{k=0}^{\lfloor r_1/2 \rfloor} 2^{tk} p(k)\right).
\end{aligned}
\end{equation}
The first expression is the $t^{\text{th}}$-power moment of $\eta(u,\rho)$, whose value was calculated by 
Malle \cite[Proposition 2.2]{Malle2} to be 
\[ \left(\sum_{\rho=0}^{\infty} 2^{t\rho} \eta(u,\rho) \right)  = \prod_{s=1}^{t} (1+2^{s-r_1-r_2}) \]
which gives the first statement in the proposition.  For $t=1$, the first factor in \eqref{eq:tmoment}
is $1 + 2^{-r_1-r_2 - 1}$ and the second factor 
is $(1+2^{-r_2})/(1 + 2^{-r_1-r_2 - 1})$ by setting $q=2$ and $m = (r_1 - 1)/2$ in
Lemma \ref{lem:pksum}, so their product yields \eqref{eq:firstmoment}.
\end{proof}

Table \ref{table:strictclassgpranks} gives approximate values of the probability distribution
$\eta^+(\rho^+)$ together with its moments.

\begin {table}[ht] 

\begin{center}
\begin{tabular}{c|ccc|cccc}
$(r_1,r_2)$ & $\rho^+=0$ & $\rho^+=1$ & $\rho^+=2$ & $t=1$ & $t=2$ & $t=3$ & $t=4$ \\
\hline   
$(3,0)$ & 0.314567 & 0.550492 & 0.124516 & 2 & 21/4 & 39/2 & 225/2  \rule[0pt]{0pt}{12pt} \\
$(1,1)$ & 0.629133 & 0.314567 & 0.052427 & 3/2 & 3 & 9 & 45 \rule[-2pt]{0pt}{0pt} \\ 
\hline
$(5,0)$ & 0.294907 & 0.571382 & 0.127102 & 2 & 81/16 & 135/8 & 4995/64 \rule[0pt]{0pt}{12pt} \\
$(3,1)$ & 0.589813 & 0.368633 & 0.039935 & 3/2 & 45/16 & 225/32 & 405/16 \\
$(1,2)$ & 0.786417 & 0.196604 & 0.016384 & 5/4 & 15/8 & 15/4 & 45/4 \rule[-2pt]{0pt}{0pt} \\
\hline
$(7,0)$ & 0.290298 & 0.576061 & 0.128021 & 2 & 321/64 & 519/32 & 71415/1024  \rule[0pt]{0pt}{12pt} \\
$(5,1)$ & 0.580597 & 0.381017 & 0.037448 & 3/2 & 177/64 & 837/128 & 21195/1024 \\
$(3,2)$ & 0.774129 & 0.214268 & 0.011376 & 5/4 & 117/64 & 855/256 & 4185/512 \\
$(1,3)$ & 0.884719 & 0.110590 & 0.004608 & 9/8 & 45/32 & 135/64 & 135/32 \\
\end{tabular}
\end{center}
\caption {Conjectured $\Prob(\rk_2 C_K^+ = \rho^+)$ for $S_n$-fields $K$ of degree $n$ and 
signature $(r_1,r_2)$, and the $t$-power moments of the associated probability distribution.}
\label{table:strictclassgpranks} 

\end {table}

Combining Conjecture \ref{conj:rhoplus} that $\Prob(\rk_2 C_K^+ = \rho^+) = \eta^+(\rho^+)$
with the first moment computation in the previous proposition gives the following conjecture.

\begin{conjecture} \label{conj:conjCKplus}
For the collection of number fields $K$ of odd degree $n$ with signature $(r_1,r_2)$ whose
Galois closure has the symmetric group $S_n$ as Galois group, the average size of $C_K^+[2]$ is $1+2^{-r_2}$.
\end{conjecture}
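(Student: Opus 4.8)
The plan is to read off the average from Conjecture \ref{conj:rhoplus} together with the first-moment computation in Proposition \ref{prop:probdistcks}. Since $C_K^+$ is a finite abelian group, $\#C_K^+[2] = 2^{\rk_2 C_K^+} = 2^{\rho^+}$, so, writing $\scrK$ for the collection of $S_n$-fields of odd degree $n$ with signature $(r_1,r_2)$ ordered by absolute discriminant, the average to be computed is
$$
\lim_{X\to\infty}\ \frac{1}{\#\{K\in\scrK : \lvert \disc K\rvert \le X\}} \sum_{\substack{K\in\scrK \\ \lvert \disc K\rvert \le X}} 2^{\rho^+(K)}.
$$
First I would reorganize the inner sum by grouping fields according to the value of $\rho^+$, rewriting the right-hand side as $\sum_{\rho^+\ge 0} 2^{\rho^+}\,\Prob(\rk_2 C_K^+ = \rho^+)$, provided the passage from a limit of finite sums to an infinite series can be justified. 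Then, invoking Conjecture \ref{conj:rhoplus} to replace $\Prob(\rk_2 C_K^+ = \rho^+)$ by the predicted density $\eta^+(\rho^+)$ of \eqref{eq:rhoplus}, this series becomes the first ($t=1$) moment $\sum_{\rho^+\ge 0} 2^{\rho^+}\eta^+(\rho^+)$, which equals $1+2^{-r_2}$ by \eqref{eq:firstmoment}. This yields the conjecture.

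The algebraic content---the evaluation $\sum_{\rho^+} 2^{\rho^+}\eta^+(\rho^+) = 1+2^{-r_2}$---is already carried out in Proposition \ref{prop:probdistcks}, where via $\eta^+ = \sum_k \eta(\rho^+-k)\,p(k)$ and Malle's moment formula for $\eta$ it reduces to the $q$-series identity of Lemma \ref{lem:pksum} at $q=2$, $m=(r_1-1)/2$. So the one genuine obstacle is the interchange of limits flagged above. Convergence is not the issue: $\eta^+(\rho^+)$ decays super-exponentially in $\rho^+$---compare the limiting distribution $(2)_\infty/(2^{(\rho^+)^2}(2)_{\rho^+}^{\,2})$ of Corollary \ref{corollary:largenrhoplus}---so $2^{\rho^+}\eta^+(\rho^+)$ is summable. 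What is needed, and what the heuristics $(\textup{H}_1)$, $(\textup{H}_2)$ do not supply, is a tail bound uniform in the truncation parameter: that the density within $\{K\in\scrK : \lvert \disc K\rvert \le X\}$ of fields with $\rho^+ \ge m$ is bounded, independently of $X$, by a sequence $c_m$ with $\sum_m 2^m c_m < \infty$. This is exactly where unconditional results enter: for $n=3$ the average $1+2^{-r_2}$ is a theorem of Bhargava--Varma \cite{B-V} (see also Ho--Shankar--Varma \cite{HSV}), proved directly rather than through the rank-by-rank heuristic. Granting Conjecture \ref{conj:rhoplus} plus such a uniform tail estimate, the statement is immediate.
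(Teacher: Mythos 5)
Your derivation is exactly the paper's: since $\#C_K^+[2]=2^{\rho^+}$, one combines Conjecture \ref{conj:rhoplus} (that $\Prob(\rk_2 C_K^+=\rho^+)=\eta^+(\rho^+)$) with the $t=1$ moment $\sum_{\rho^+}2^{\rho^+}\eta^+(\rho^+)=1+2^{-r_2}$ from Proposition \ref{prop:probdistcks}, which rests on Lemma \ref{lem:pksum}. Your added remarks on the limit-interchange/uniform tail issue are a sensible caveat the paper leaves implicit (the statement being a conjecture derived heuristically), but they do not change the route.
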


For cubic fields ($n=3$), this conjecture is a theorem of Bhargava--Varma \cite{B-V}.  
See also Ho--Shankar--Varma \cite{HSV}, who prove related results for the family of number fields arising from binary 
$n$-ic forms.

\medskip

\subsection*{Conjectures: Signature ranks of units} \label{subsec:sigrks}
We next consider the signature rank for $S_n$-fields $K$ of odd degree $n$.
The units of $K$ define a subspace $E$ of $\Sel_2(K)$ of dimension $r_1 + r_2$ that contains
the element $(-1) K^{*2} /K^{*2}$ (which is nontrivial: $-1 \not\in K^{*2}$ since $r_1$ is positive).  
If $\phi$ is the 2-Selmer signature map for $K$ then the signature rank of
the units of $K$ is the dimension of $\phi_\infty (E)$, hence is 
$r_1 + r_2 - \dim(E \cap \ker \phi_\infty)$.  
By equation \ref{eq: kernelranks}, the subspace $ \ker \phi_\infty$ of $\Sel_2(K)$ has
dimension $\rho^+ + r_2 = (\rho + k) + r_2$, where
$k = \dim( \im \phi \cap V_\infty)$ is determined by the isomorphism type of $\im \phi$ as a 
maximal totally isotropic subspace of $V_\infty \perp V_2$ and where $\rho = \dim (\ker \phi)$.  

We make the following heuristic assumption:  

\smallskip
\begin{itemize} 
\item[$(\textup{H}_3) \ \ $] 
For the collection of fields $K$ of odd degree
the subspace of $\Sel_2(K)$ generated by the units of $K$ is distributed as a uniformly random subspace.
\end{itemize}
\smallskip
With this assumption $\dim(E \cap \ker \phi_\infty)$ can be determined using the 
linear algebra computation in the following lemma.

\begin{lemma} \label{lem:randomE}
Let $q$ be any prime power, let $X$ be an $\F_q$-vector space with $\dim X=m$, let $0 \neq e \in X$, and let 
$Y \subseteq X$ be a subspace with $e \not\in Y$ and $\dim Y=r$.

If $E$ is a uniformly random subspace of $X$ with $e \in E$ and $\dim E=t \geq 1$, then 
$$
\Prob(\dim(E \cap Y)=s') = 
q^{s'(r+t-m-s')} \frac{(q)_r (q)_{t-1} (q)_{m-1-r} (q)_{m-t} }{(q)_{r-s'}(q)_{s'}(q)_{t-1-s'}(q)_{m-1}(q)_{m+s'-r-t}} 
$$
for any nonnegative integer $s'$ with $r+t-m \leq s' \leq \min(r,t-1)$.
\end{lemma}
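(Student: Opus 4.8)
The plan is to pass to the quotient $\bar X = X/\langle e \rangle$ and reduce the statement to a classical subspace count. First I would note that $E \mapsto \bar E := E/\langle e \rangle$ is a bijection between the $t$-dimensional subspaces of $X$ containing $e$ and the $(t-1)$-dimensional subspaces of $\bar X$ (here $\dim \bar X = m-1$, and $t \ge 1$ is what makes this sensible), so it carries the uniform distribution to the uniform distribution. Let $\bar Y$ be the image of $Y$ in $\bar X$; since $e \notin Y$ we have $Y \cap \langle e \rangle = 0$, so $\dim \bar Y = r$. The same hypothesis together with $e \in E$ shows $E \cap Y \xrightarrow{\sim} \bar E \cap \bar Y$: injectivity is immediate from $Y \cap \langle e \rangle = 0$, and for surjectivity one lifts $\bar v \in \bar E \cap \bar Y$ to $v \in E$ and $w \in Y$, observes $v - w \in \langle e \rangle \subseteq E$, hence $w = v - (v-w) \in E \cap Y$ represents $\bar v$. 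Thus $\dim(E \cap Y) = \dim(\bar E \cap \bar Y)$, and it remains to compute the probability that a uniformly random $(t-1)$-dimensional subspace of the $(m-1)$-dimensional space $\bar X$ meets the fixed $r$-dimensional subspace $\bar Y$ in dimension exactly $s'$.

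Next I would record the standard enumeration: in an $N$-dimensional $\F_q$-space containing a fixed $r$-dimensional subspace $W$, the number of $k$-dimensional subspaces $U$ with $\dim(U \cap W) = s'$ is $q^{(r-s')(k-s')}\binom{r}{s'}_q\binom{N-r}{k-s'}_q$. This follows by first choosing $V := U \cap W$ among the $\binom{r}{s'}_q$ subspaces of $W$ of dimension $s'$, and then counting extensions of $V$ to a $k$-subspace $U$ with $U \cap W = V$; passing to $X/V$, these extensions are the $(k-s')$-dimensional subspaces of $X/V$ meeting $W/V$ trivially, and the number of $j$-dimensional subspaces of an $N'$-dimensional space meeting a fixed $r'$-dimensional subspace trivially is $q^{r'j}\binom{N'-r'}{j}_q$ (each such subspace is the graph of a homomorphism from one of the $\binom{N'-r'}{j}_q$ subspaces of the quotient into the fixed subspace). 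Dividing by the total number $\binom{N}{k}_q$ of $k$-subspaces gives the desired probability, and substituting $N = m-1$, $k = t-1$ (keeping $r$ and $s'$) yields the answer in Gaussian-binomial form.

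Finally I would convert to the paper's normalization $(q)_m = \prod_{i=1}^m(1 - q^{-i})$ from \eqref{eqn:pochham} via the identity $\binom{n}{k}_q = q^{k(n-k)}\,(q)_n / \big((q)_k (q)_{n-k}\big)$. Applying this to $\binom{r}{s'}_q$, $\binom{m-1-r}{t-1-s'}_q$ and $\binom{m-1}{t-1}_q$, and collecting these with the explicit factor $q^{(r-s')(t-1-s')}$, the accumulated power of $q$ simplifies after a short computation to $q^{s'(r+t-m-s')}$, while the $(q)_\bullet$-factors assemble into exactly the displayed quotient, with denominator indices $r-s'$, $s'$, $t-1-s'$, $m-1$, $m+s'-r-t$ as stated. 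The inequalities $r+t-m \le s' \le \min(r,t-1)$ (together with $s' \ge 0$) are precisely the conditions making all these indices nonnegative, equivalently making the relevant Gaussian binomials nonzero, so the formula holds on the stated range. The conceptual content lies entirely in the reduction modulo $\langle e \rangle$ and the classical count; the only genuine obstacle is the bookkeeping in this last step---tracking the signs and powers of $q$ incurred when translating between the $\prod(1-q^i)$ and $\prod(1-q^{-i})$ conventions---which is routine.
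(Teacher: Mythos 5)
Your proof is correct, and it organizes the count differently from the paper. The paper works directly inside $X$: it enumerates ordered bases of a candidate $E$ with the prescribed intersection --- first $e$, then $s'$ independent vectors of $Y$, then $t-1-s'$ vectors chosen outside the span of $Y$ and the previous ones --- then divides by the number of such ordered bases of a fixed $E$ and by the number of $t$-dimensional subspaces containing $e$, and simplifies the resulting quotient. You instead pass to $\bar X = X/\langle e\rangle$ first, verify (correctly, with $e\notin Y$ used both for injectivity and in the lifting argument for surjectivity) that $\dim(E\cap Y)=\dim(\bar E\cap \bar Y)$, and then quote the classical enumeration $q^{(r-s')(k-s')}\binom{r}{s'}_q\binom{N-r}{k-s'}_q$ of $k$-subspaces of an $N$-space meeting a fixed $r$-subspace in dimension exactly $s'$, with $N=m-1$, $k=t-1$. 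The final conversion is indeed only bookkeeping: using $\binom{n}{k}_q=q^{k(n-k)}(q)_n/\bigl((q)_k(q)_{n-k}\bigr)$, the accumulated exponent is $(r-s')(t-1)+(t-1-s')(m-r-t+s')-(t-1)(m-t)=s'(r+t-m-s')$, and the $(q)_\bullet$-factors assemble into exactly the displayed quotient, so your formula agrees with the lemma; your remark that the stated range of $s'$ is exactly what keeps all subscripts nonnegative is also right. What your route buys is that the element $e$ is disposed of in one clean reduction and the rest is a textbook Gaussian-binomial count; what the paper's route buys is a self-contained computation whose three displayed counts feed directly into the stated expression without invoking the classical formula or converting notation. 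The underlying enumeration is the same in both cases.
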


\begin{proof}
Fix the integer $s'$ with $r+t-m \leq s' \leq \min(r,t-1)$. The total number of possible 
subspaces $E$ satisfying $e \in E$, $\dim E = t$, and $\dim(E \cap Y)=s'$ can be counted by
constructing a basis for $E$, as follows.  Start 
with $e$, then choose $s'$ linearly independent elements of $Y$, 
and finally complete with $t-1-s'$ additional linearly independent elements of $X$ to give a basis of $E$.  This gives a total of
\begin{equation} \label{eqn:cntsub1}
\begin{aligned}
  1 \cdot (q^r-1)(q^r-q) \cdots (q^r-q^{s'-1})  & \cdot (q^m-q^{r+1}) \cdots (q^m-q^{r+t-s'-1})\\ 
&  = q^{rs'+m(t-1-s')} \frac{(q)_r (q)_{m-1-r}}{(q)_{r-s'}(q)_{m+s'-r-t}}
\end{aligned}
\end{equation} 
possible bases.  A subspace $E$ has 
\begin{equation} \label{eqn:cntsub2}
 1 \cdot (q^{s'}-1)\cdots (q^{s'}-q^{s'-1}) \cdot (q^t-q^{s'+1})\cdots (q^t-q^{t-1}) = q^{{s'}^2} (q)_{s'} \ q^{t(t-1-s')} (q)_{t-1-s'} 
 \end{equation}
such bases, and the total number of subspaces of dimension $t$ containing the element $e$ is 
\begin{equation} \label{eqn:cntsub3}
\frac{(q^m-q)\cdots (q^m-q^{t-1})}{(q^t-q)\cdots(q^t-q^{t-1})} = q^{(m-t)(t-1)}\frac{(q)_{m-1}}{(q)_{m-t}(q)_{t-1}}. 
\end{equation}
Then $\Prob(\dim(E \cap Y)=s')$ is obtained by dividing \eqref{eqn:cntsub1} by the product of \eqref{eqn:cntsub2} and \eqref{eqn:cntsub3},
which simplifies to give the result stated in the lemma.
\end{proof}

We now apply the lemma with $q = 2$, $X = \Sel_2(K)$, $e = (-1) K^{*2} /K^{*2}$, $Y=\ker \phi_\infty$, and 
$E$ the subspace generated by the units of $K$ in $\Sel_2(K)$, so $t=r_1+r_2$.   
Note that since $n = [K:\QQ]$ is odd, Corollary \ref{cor:sgn2minus1odddegree} gives $\sgn_2(-1) \neq 0$, hence $e \not\in Y$. 
Under the assumption that $\dim( \im \phi \cap V_\infty) = k$ and $\dim (\ker \phi) = \rho$ 
for the associated 2-Selmer signature map, we have $m=r_1+r_2+\rho$ and $r= \rho+k+r_2$.  
Taking $s' = r_1 + r_2 - s$, the lemma then gives an expression for 
$\Prob(\dim (E \cap \ker \phi_\infty) =   r_1 + r_2 - s)$ for integers $s$ with 
$ k+r_2 \leq (r_1 + r_2 - s) \leq \min(\rho+k+r_2,r_1 + r_2 - 1) $.  Since 
$ \dim (E \cap  \ker \phi_\infty) =   r_1 + r_2  - s$ if and only if $\sgnrk(E_K) = s$, the result of applying the 
lemma is the conditional probability 

\begin{equation} \label{eq:conditionalsignaturerank}
\begin{aligned}
\qquad & \Prob \negthinspace\big( \sgnrk(E_K) = s  \bigm\vert  \dim( \im \phi \cap V_\infty) = k \text{ and } \dim (\ker \phi) = \rho \,  \big)   \\
& \qquad \quad 
= 2^{(r_1 + r_2 - s)(k - r_1 + s)} 
\frac{ 
(2)_{\rho+k+r_2} (2)_{r_1 + r_2 - 1} (2)_{r_1- k - 1} (2)_{\rho} 
}
{
(2)_{\rho+k -r_1  + s}(2)_{r_1 + r_2 - s} (2)_{s - 1} 
(2)_{r_1 + r_2 -1 + \rho} (2)_{r_1  - s - k}
}  \ ,
\end{aligned}
\end{equation}
for integers $s$ with $  r_1 - \min( \rho + k, r_1 - 1)  \le s \le r_1 - k$.

The probability that $\dim( \im \phi \cap V_\infty) = k$ is predicted in Conjecture \ref{eq:conjrhoplusminusrho}.
Multiplying by the conditional probability in equation \eqref{eq:conditionalsignaturerank} and summing over
the possible values of $k$ gives the predicted probability for the unit signature rank to equal $s$ as $K$ 
ranges over fields whose class group has 2-rank $\rho$ (recall  $\dim (\ker \phi) = \rk_2 C_K[2]$):
\begin{equation} \label{eq:sigrankforgivenrho}
\begin{aligned}
\qquad \Prob \negthinspace\big( & \sgnrk(E_K) = s  \bigm\vert  \rk_2 C_K[2] = \rho \,  \big) =  \\
&  
\qquad 
2^{(r_1 + r_2 - s)(s - r_1)} 
\frac {  (2)_{r_1 + r_2 - 1}^2 
(4)_{(r_1-1)/2} (4)_{(r_1-1)/2+r_2} (2)_\rho
}
{
(2)_{r_1 + r_2 - s} (2)_{s-1}  (2)_{r_1 + r_2 - 1 + \rho}  (4)_{r_1 + r_2 - 1}
} \ \times \\ 
& 
 \qquad \quad
\biggl(
\sum_{k=\max(0,r_1 - s - \rho)}^{\min(r_1 - s,(r_1 - 1)/2)}
\frac
{  2^{k(r_1 - s - k)}  (2)_{r_1-1-k} (2)_{\rho+k+r_2} 
}
{
  (2)_{r_1 - s - k} (4)_{(r_1-1)/2-k} (2)_{k} (2)_{k+r_2} (2)_{\rho+k -r_1 + s}  
} 
\biggr) \ ,
\end{aligned}
\end{equation}
for any integer $\rho \ge \max(0, (r_1 + 1)/2 - s)$ (note that $\rho \ge (r_1 + 1)/2 - s$ is Corollary \ref{cor:weakArmFro}).

The probability that $\dim (\ker \phi) = \rho$ is predicted in Conjecture \ref{conj:adammalle}.  Multiplying by
this probability as well and summing over all the possible values of $k$ and $\rho$ we obtain the following conjecture for the
predicted probability of signature rank $s$.

\begin{conjecture}  \label{conj:signaturerank}
For the collection of number fields $K$ of odd degree $n$ with signature $(r_1,r_2)$ whose
Galois closure has the symmetric group $S_n$ as Galois group, we have

\begin{equation}   \label{eq:signaturerank}
\begin{aligned}
& \Prob( \sgnrk(E_K) = s) =  \\
&  \qquad \qquad   2^{(r_1 + r_2 - s)(s - r_1)} 
\frac {  (2)_{r_1 + r_2 - 1} 
(4)_{(r_1-1)/2} (4)_{(r_1-1)/2+r_2} (2)_\infty
}
{
(2)_{r_1 + r_2 - s} (2)_{s-1}   (4)_\infty
} \ \times \\ 
& \qquad \qquad \quad  \times \left ( \sum_{\rho=\max(0, (r_1 + 1)/2 - s)}^{\infty}
\frac
{  1
}
{
2^{\rho (r_1 + r_2 - 1) + \rho(\rho+1)/2}  (2)_{r_1 + r_2 - 1 + \rho} 
} \right .  \\
& \qquad \qquad \qquad \quad \left .
\biggl(
\sum_{k=\max(0,r_1 - s - \rho)}^{\min(r_1 - s,(r_1 - 1)/2)}
\frac
{  2^{k(r_1 - s - k)}   (2)_{r_1-1-k} (2)_{\rho+k+r_2} 
}
{
(2)_{r_1 - s - k} (4)_{(r_1-1)/2-k} (2)_{k} (2)_{k+r_2} (2)_{\rho+k -r_1 + s}  
} 
\biggr) \right) 
\end{aligned}
\end{equation}

\smallskip\noindent
for any integer $s$ with $1 \le s \le r_1$. 
\end{conjecture}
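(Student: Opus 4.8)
The goal is to derive \eqref{eq:signaturerank} from the heuristics $(\textup{H}_1)$, $(\textup{H}_2)$, $(\textup{H}_3)$ by conditioning on the two invariants that, by Theorems \ref{theorem:imSelmaxisotropic} and \ref{theorem:imageSelmer}, govern the situation: the integer $k = \dim(\im\phi \cap V_\infty) = \rk_2 C_K^+ - \rk_2 C_K$ (which, since $n$ is odd, so that case (a) of Theorem \ref{theorem:imageSelmer} applies, pins down the isomorphism type of $\im\phi$) and $\rho = \dim\ker\phi = \rk_2 C_K[2]$. First I would record the bookkeeping: the unit subspace $E = E_K K^{*2}/K^{*2} \subseteq \Sel_2(K)$ has dimension $t = r_1 + r_2$, contains the class $e = (-1)K^{*2}$, and $\sgnrk(E_K) = \dim\phi_\infty(E) = (r_1+r_2) - \dim(E \cap \ker\phi_\infty)$. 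By Proposition \ref{prop:kummerfields} together with $\rho^+ = \rho + k$, conditioning on $(k,\rho)$ fixes $m = \dim\Sel_2(K) = r_1+r_2+\rho$ and $\dim\ker\phi_\infty = \rho^+ + r_2 = \rho+k+r_2$.

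The central step is Lemma \ref{lem:randomE}, applied with $q = 2$, $X = \Sel_2(K)$, $Y = \ker\phi_\infty$, and $E$ the unit subspace, so $r = \dim Y = \rho+k+r_2$ and $t = \dim E = r_1+r_2 \ge 1$. One checks $e \in E$ and $e \notin Y$ — the latter because $\phi_\infty(e) = \sgn_\infty(-1) = (-1,\dots,-1) \neq 0$ since $r_1 \ge 1$ (Corollary \ref{cor:sgn2minus1odddegree} records the companion fact $\sgn_2(-1)\ne 0$ in odd degree). Under $(\textup{H}_3)$ the subspace $E$ is uniformly random among such, so the lemma gives $\Prob(\dim(E\cap Y)=s')$ in closed form. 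Setting $s' = r_1+r_2-s$ and substituting the values of $m,r,t$, the exponent $s'(r+t-m-s')$ collapses to $(r_1+r_2-s)(k-r_1+s)$ and the $q$-Pochhammer quotient reduces to the ratio displayed in \eqref{eq:conditionalsignaturerank}; simultaneously one unwinds the admissibility condition $r+t-m \le s' \le \min(r,t-1)$ into $r_1 - \min(\rho+k,r_1-1) \le s \le r_1 - k$, which will reappear as the $k$-summation range $\max(0,r_1-s-\rho) \le k \le \min(r_1-s,(r_1-1)/2)$ together with $\rho \ge \max(0,(r_1+1)/2-s)$, the lower bound on $\rho$ being exactly Corollary \ref{cor:weakArmFro}.

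To finish I would average in two stages. Multiplying the conditional probability \eqref{eq:conditionalsignaturerank} by $p(k)$ — the probability from Conjecture \ref{conj:rhoplusminusrho} (heuristic $(\textup{H}_1)$ via Theorem \ref{theorem:Sprobability}) that $\dim(\im\phi\cap V_\infty)=k$, which by the independence in $(\textup{H}_2)$ is also the conditional probability of $k$ given $\rho$ — the powers of $2$ recombine as $2^{(r_1+r_2-s)(s-r_1)}\cdot 2^{k(r_1-s-k)}$, the $k$-independent $q$-Pochhammer factors come out in front, and summing over the admissible $k$ produces \eqref{eq:sigrankforgivenrho}, the distribution of $\sgnrk(E_K)$ conditional on $\rk_2 C_K[2] = \rho$. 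Then, again using the independence in $(\textup{H}_2)$, multiply by $\eta(\rho) = \Prob(\rk_2 C_K = \rho)$ from Conjecture \ref{conj:adammalle} and sum over $\rho \ge \max(0,(r_1+1)/2-s)$; the factors $(2)_\rho$, one copy of $(2)_{r_1+r_2-1}$, and $(4)_{r_1+r_2-1}$ cancel between $\eta(\rho)$ and the prefactor of \eqref{eq:sigrankforgivenrho}, and what remains is exactly \eqref{eq:signaturerank}.

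The main obstacle lies not in any single idea but in the bookkeeping: one must reconcile three overlapping families of dimension inequalities — those from Lemma \ref{lem:randomE}, those implicit in the case analysis of Theorem \ref{theorem:imageSelmer}, and Corollary \ref{cor:weakArmFro} — into one consistent pair of summation ranges, and then carry the $q$-Pochhammer cancellations through two successive products without generating spurious terms in which some $(q)_j$ acquires a negative index. It should also be said plainly that the argument is conditional: the product-then-sum structure is licensed only by the independence clauses of $(\textup{H}_1)$--$(\textup{H}_3)$, so the conclusion is a prediction derived from those heuristics together with Conjectures \ref{conj:rhoplusminusrho} and \ref{conj:adammalle}, not an unconditional theorem.
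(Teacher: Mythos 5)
Your derivation is correct and follows essentially the same route as the paper: apply Lemma \ref{lem:randomE} (under $(\textup{H}_3)$) with $X=\Sel_2(K)$, $Y=\ker\phi_\infty$, $e=(-1)K^{*2}$ to get the conditional probability \eqref{eq:conditionalsignaturerank}, then average against $p(k)$ from Conjecture \ref{conj:rhoplusminusrho} and $\eta(\rho)$ from Conjecture \ref{conj:adammalle}, with exactly the summation ranges and $q$-Pochhammer cancellations you describe. The only cosmetic deviation is your justification of $e\notin Y$ via $\sgn_\infty(-1)\neq 0$ (the paper cites $\sgn_2(-1)\neq 0$ in odd degree), which is harmless, and you rightly flag, as the paper does, that the result is a heuristic prediction rather than a theorem.
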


\medskip\noindent
Table \ref{table:signaturerank} gives approximate numerical values for the predicted 
densities in Conjecture \ref{conj:signaturerank} when $n = 3,5,7$  
(when $r_1 = 1$ we have $\sgnrk(E_K) = 1$; also equation \eqref{eq:signaturerank} yields precisely 1 for $s=1$ in this case).

\begin {table}[ht] 

\begin{center}
\begin{tabular}{c|ccccccc}
$(r_1,r_2)$ & $s=1$ & $s=2$ & $s=3$ & $s=4$ & $s=5$ & $s=6$ & $s=7$ \\
\hline
$(3,0)$ & 0.019097  & 0.618304 &  0.362599 &  &  &  \rule[0pt]{0pt}{12pt} \\
$(1,1)$ & 1 &  &  &  &  &  \\
\hline
$(5,0)$ & $ 1.9 \cdot 10^{-7} $ & 0.000582  & 0.105508 & 0.589338 & 0.304572 &  &  \rule[0pt]{0pt}{12pt} \\
$(3,1)$ &  0.002630 & 0.346318 & 0.651052 &  &  &  & \\
$(1,2)$ & 1 &  &   &  &  &  &  \\
\hline
$(7,0)$ & $ < 9 \cdot 10^{-16} $ & $ < 2 \cdot 10^{-10} $ & 0.000003 & 0.003921 & 0.122913 & 0.580570 & 0.292593 \rule[0pt]{0pt}{12pt} \\
$(5,1)$ & $ < 4 \cdot 10^{-9} $ & 0.000040 & 0.027980 & 0.377432 & 0.594548 &  \\
$(3,2)$ & 0.000346 & 0.180949 & 0.818705 &  &  &  &  \\
$(1,3)$ & 1 &  &   &  &  &  &
\end{tabular}
\end{center}
 \caption {Conjectured probability that the signature rank of the units is $s$ 
for $S_n$-fields $K$ of degree $n$ and signature $(r_1,r_2)$}
\label{table:signaturerank} 

\end {table}

\subsection*{Conjectures: Class group a direct summand of the narrow class group} \label{subsec:splittingsequence}

As a final application, we conjecture a probability that the fundamental exact sequence 
\eqref{eq:clgp1} (equivalently, \eqref{eq:clgp2} or \eqref{eq:clgp3}) splits, i.e., that the class group $C_K$ is
a direct summand of the narrow class group $C_K^+$.   As noted in Lemma \ref{lem:exactseqsplits}, the splitting
of this sequence is equivalent to $\rho^+ =\rho + \rho_\infty$.  Given values for $\rho$ and $k = \rho^+ - \rho$, 
this is the condition that $\rho_\infty$ is also $k$.  

Since $\sgnrk(E_K) = r_1 - k$ if and only if $\rho_\infty = k$ by \eqref{eq:rankrelations}, the 
conditional probability
$ \Prob \negthinspace\big( \rho_\infty = k \bigm\vert  \dim( \im \phi \cap V_\infty) = k \text{ and } \dim (\ker \phi) = \rho \,  \big)$ 
is obtained from \eqref{eq:conditionalsignaturerank} on setting $s = r_1 - k$.  
Multiplying by the probability that $\dim( \im \phi \cap V_\infty) = k$ predicted in Conjecture \ref{eq:conjrhoplusminusrho}
and the probability that $\dim (\ker \phi) = \rho$ predicted in Conjecture \ref{conj:adammalle} and summing over
the possible values of $k$ and $\rho$ yields the following conjecture.

\begin{conjecture}  \label{conj:sequencesplitting}
For the collection of number fields $K$ of odd degree $n$ with signature $(r_1,r_2)$ whose
Galois closure has the symmetric group $S_n$ as Galois group, we have
\begin{equation} \label{eq:sequencesplitting}
\begin{aligned}
\Prob( & \textup{$C_K$ is a direct summand of $C_K^+$}  ) =  \\
&  \qquad 
\frac {  (2)_{r_1 + r_2 - 1} 
(4)_{(r_1-1)/2} (4)_{(r_1-1)/2+r_2} (2)_\infty
}
{
(4)_\infty
} \ \times \\ 
& \qquad  \quad  \times \left ( 
\sum_{\rho=0}^{\infty}
\frac
{  1
}
{
2^{\rho (r_1 + r_2 - 1) + \rho(\rho+1)/2}  (2)_{r_1 + r_2 - 1 + \rho} (2)_{\rho} 
} 
\right .  \\
& \qquad  \qquad \quad \left .
\biggl(
\sum_{k=0}^{\lfloor r_1/2 \rfloor} 
\frac
{ (2)_{\rho+k+r_2} 
}
{
2^{k(k+r_2)}(2)_{k+r_2}^2(2)_{k}(4)_{(r_1-1)/2-k}
} 
\biggr) \right)    \ .
\end{aligned}
\end{equation}

\end{conjecture}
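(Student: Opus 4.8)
The plan is to derive Conjecture~\ref{conj:sequencesplitting} from the heuristic assumptions $(\textup{H}_1)$, $(\textup{H}_2)$, $(\textup{H}_3)$ together with the linear‑algebra computation \eqref{eq:conditionalsignaturerank}, with no genuinely new ingredient. First I would translate the splitting condition into the language of the $2$‑Selmer signature map. By Lemma~\ref{lem:exactseqsplits}, $C_K$ is a direct summand of $C_K^+$ exactly when $\rho^+ = \rho + \rho_\infty$. By Theorem~\ref{theorem:imSelmaxisotropic} we have $\rho^+ - \rho = k$, where $k = \dim(\im\phi \cap V_\infty)$; and the exact sequence \eqref{eq:clgp3} forces $\rho^+ \le \rho + \rho_\infty$, hence $\rho_\infty \ge k$ always. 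So the splitting of \eqref{eq:clgp3} is equivalent to the condition $\rho_\infty = k$, its minimal possible value, which by the rank relation $\sgnrk(E_K) = r_1 - \rho_\infty$ of \eqref{eq:rankrelations} is in turn equivalent to $\sgnrk(E_K) = r_1 - k$ (the maximal value compatible with the given $k$).

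Next I would condition on the pair $(k,\rho) = \bigl(\dim(\im\phi\cap V_\infty),\ \dim\ker\phi\bigr)$, where $\dim\ker\phi = \rk_2 C_K$ by \eqref{eq: kernelranks}. Substituting $s = r_1 - k$ into the conditional probability \eqref{eq:conditionalsignaturerank} produces $\Prob\bigl(\rho_\infty = k \bigm\vert \dim(\im\phi\cap V_\infty) = k,\ \dim\ker\phi = \rho\bigr)$; here the exponent $(r_1+r_2-s)(k-r_1+s)$ vanishes, and the Pochhammer factors collapse via $(2)_{r_1-s-k} = (2)_0$, $(2)_{s-1} = (2)_{r_1-k-1}$, $(2)_{r_1+r_2-s} = (2)_{k+r_2}$, $(2)_{\rho+k-r_1+s} = (2)_\rho$, so that this conditional probability simplifies to $(2)_{\rho+k+r_2}(2)_{r_1+r_2-1}\big/\bigl((2)_{k+r_2}(2)_{r_1+r_2-1+\rho}\bigr)$. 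Invoking $(\textup{H}_1)$ and $(\textup{H}_3)$ (to treat the image of $\phi$ and the unit subspace $E$ as independent uniformly random objects) and $(\textup{H}_2)$ (to treat $\dim\ker\phi$ as independently distributed by Conjecture~\ref{conj:adammalle}), the predicted probability becomes
$$
\sum_{\rho\ge 0}\ \sum_{k=0}^{\lfloor r_1/2\rfloor}\ \Prob\bigl(\rho_\infty = k \bigm\vert k,\rho\bigr)\cdot p(k)\cdot \eta(\rho),
$$
with $p(k)$ as in \eqref{eq:p} (from Conjecture~\ref{conj:rhoplusminusrho}) and $\eta(\rho)$ the Adam--Malle density of Conjecture~\ref{conj:adammalle}. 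I would then check that these are exactly the summation ranges in \eqref{eq:sequencesplitting}: the bound $0\le k\le\lfloor r_1/2\rfloor$ is forced by Theorem~\ref{theorem:imageSelmer} (equivalently Corollary~\ref{cor:ArmFrohThm}), and at $s=r_1-k$ with $k\le(r_1-1)/2$ (recall $r_1$ is odd) the constraint $\rho\ge (r_1+1)/2 - s$ of Corollary~\ref{cor:weakArmFro} becomes the vacuous $\rho\ge k-(r_1-1)/2$, so $\rho$ ranges over all nonnegative integers.

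Finally I would multiply the three factors and simplify: the three powers of $(2)_{r_1+r_2-1}$ combine to a single $(2)_{r_1+r_2-1}$, the $(4)_{r_1+r_2-1}$ appearing in $p(k)$ and in $\eta(\rho)$ cancel, a second $(2)_{k+r_2}$ is picked up from $p(k)$, and the $\rho$‑ and $k$‑dependent pieces separate, yielding precisely the double sum of \eqref{eq:sequencesplitting} with outer factor $(2)_{r_1+r_2-1}(4)_{(r_1-1)/2}(4)_{(r_1-1)/2+r_2}(2)_\infty\big/(4)_\infty$. The only real work is this $q$‑Pochhammer bookkeeping, which is mechanical; the one point needing (minor) care is verifying that \eqref{eq:conditionalsignaturerank} is valid — nonzero and given by the simplified value — for every $0\le k\le\lfloor r_1/2\rfloor$ and every $\rho\ge 0$, which reduces to the inequality $k\le r_1-1$, automatic in the stated range. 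I therefore expect this last verification and range‑matching, rather than any conceptual difficulty, to be the main (and mild) obstacle.
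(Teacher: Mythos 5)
Your proposal is correct and matches the paper's own derivation essentially step for step: splitting is translated via Lemma \ref{lem:exactseqsplits} into $\rho_\infty = k$, which by \eqref{eq:rankrelations} amounts to setting $s = r_1 - k$ in the conditional probability \eqref{eq:conditionalsignaturerank}, and the result is then multiplied by the densities of Conjectures \ref{conj:rhoplusminusrho} and \ref{conj:adammalle} and summed over $k$ and $\rho$. The Pochhammer simplifications and summation ranges you record are exactly those implicit in the paper, so there is nothing to add.
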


\medskip\noindent
Table \ref{table:sequencesplits} gives approximate numerical values for these predicted densities when $n = 3,5,7$ 
(when $r_1 = 1$ we have $\sgnrk(E_K) = 1$ and, by \eqref{eq:rankrelations} or directly, $\rho_\infty = 0$ and
$\rho^+ = \rho$; also equation \eqref{eq:sequencesplitting} yields precisely 1 in this case).

\begin {table}[ht] 
\begin{center}
\begin{tabular}{c|c}
$(r_1,r_2)$ & $\Prob( \rho^+ = \rho + \rho_\infty )$   \\
\hline
$(3,0)$ & 0.943700  \rule[0pt]{0pt}{12pt} \\
$(1,1)$ & 1 \\
\hline
$(5,0)$ & 0.982241  \rule[0pt]{0pt}{12pt} \\
$(3,1)$ & 0.981776 \\
$(1,2)$ & 1 \\
\hline
$(7,0)$ & 0.995315  \rule[0pt]{0pt}{12pt} \\
$(5,1)$ & 0.994300 \\
$(3,2)$ & 0.994831 \\
$(1,3)$ & 1
\end{tabular}
\end{center}
\caption {Predicted probability that $C_K$ is a direct summand of $C_K^+$ 
for $S_n$-fields $K$ of degree $n$ and signature $(r_1,r_2)$}
\label{table:sequencesplits} 
\end {table}

\eject

\begin{remark}
As in the derivation of equation \eqref{eq:sigrankforgivenrho},
one can give a predicted probability that 
$C_K$ is a direct summand of $C_K^+$ just for those fields
with $\rk_2 C_K[2]  = \rho$ for any fixed $\rho$:

\begin{equation}
\begin{aligned}
& \qquad \Prob \negthinspace\big(  \textup{$C_K$ is a direct summand of $C_K^+$}  \bigm\vert  \rk_2 C_K[2] = \rho \,  \big) =  \\
& \qquad \quad \frac {  (2)_{r_1 + r_2 - 1}^2 
(4)_{(r_1-1)/2} (4)_{(r_1-1)/2+r_2} 
}
{
(4)_{r_1 + r_2 - 1} (2)_{r_1 + r_2 - 1 + \rho}
} 
 \biggl ( \sum_{k=0}^{\lfloor r_1/2 \rfloor} 
\frac
{ (2)_{\rho+k+r_2}
}
{
2^{k(k+r_2)}(2)_{k+r_2}^2 (2)_{k} (4)_{(r_1-1)/2-k}
} \biggr ) \ . 
\end{aligned}
\end{equation}

\end{remark}

\section{Computations for totally real cubic and quintic fields} \label{section:computations}

In this section, we present the results of some relatively extensive computations for totally real cubic and quintic fields, 
providing evidence for our conjectures.  
In these computations, for efficiency we assume class group bounds that are implied by the Generalized 
Riemann Hypothesis (GRH), so the results are conditional on GRH.

\subsection*{Results}

Tables \ref{table:cubicfieldsdata} (totally real cubic fields) and \ref{table:quinticfieldsdata} (totally real quintic fields)
summarize our computations.  In each we randomly sampled $N$ fields (typically one million) with discriminants bounded by
several different values of $X$, as indicated in the tables.  The procedure for generating the fields differs slightly in the
two cases, as described in greater detail later.  As noted in the Introduction, because it is known that 100\% of totally real cubic
(respectively, quintic) number fields have the symmetric group as Galois group for their Galois closure, 
our conjectures predict densities for \emph{all} totally real cubic (respectively, quintic) fields.

\medskip

For each discriminant bound $X$, the corresponding entries in the tables are the following:

\begin{itemize}[label=\raisebox{0.25ex}{\tiny$\bullet$}]

\item 
the computed density of fields with given 2-rank $\rho$ of the class group, predicted in Conjecture \ref{conj:adammalle},

\item 
the computed density of fields with given 2-rank $\rho^+$ of the narrow class group, predicted in Conjecture \ref{conj:rhoplus},

\item
the density of fields with given $k = \rho^+ - \rho$, the dimension of the intersection of the 
image of the 2-Selmer signature map with the space $V_\infty$ (cf.\  Definition \ref{def:2SelSigMapK}), predicted
in \ref{conj:rhoplusminusrho},

\item
the $t^{\text{th}}$-power moment ($t = 1,2,3$) of the distribution of values of $\rho$ and of $\rho^+$ for the computed fields,
cf.~Proposition \ref{prop:probdistcks},

\item
the density of fields with given signature rank of the units, predicted in Conjecture \ref{eq:signaturerank} (the indicated 
exact values 0 are known to hold),

\item
the density of fields with given signature rank of the units and given 2-rank of the class group $\rho$ 
(with separate indication $1/\sqrt{N_i}$ of the margin for error, where $N_i$ is the number of fields in the sample having $\rho = i$),
cf.\ equation \eqref{eq:sigrankforgivenrho}, and 

\item 
the density of fields for which \eqref{eq:clgp3} splits, equivalently, $C_K$ is a direct summand of $C_K^+$,
predicted in Conjecture \ref{conj:sequencesplitting}.
\end{itemize}

\medskip

As the data in the two tables shows, the computed densities are 
in remarkably good agreement with our predictions, in each case convergent apparently monotonically 
to the expected values.  The convergence is relatively slow, which seems to be a characteristic of these sorts of
problems and is a phenomenon observed by other authors (and may be an inherent computational difficulty, 
see \cite{DGK}).  It would be possible to extend these computations, and sample with larger $X$, 
but already this data we believe is sufficiently compelling.

\begin{remark} \label{remark:firstcubicfields}

The first totally real cubic fields $K$ with signature ranks $s = 3,2,1$ are generated by roots of the following
polynomials:
(1) $s = 3$, $x^3 - x^2 - 2x + 1$, $ \disc K = 49$,
(2) $s = 2$, $x^3 - 4x -1$, $ \disc K = 229$, and 
(3) $s = 1$, $x^3 - 39x - 26$, $ \disc K = 13689$.

The first totally real quintic fields $K$ with unit signature rank
$s = 4$ and $s = 5$ have been known for some time and the first examples with $s = 2$ and $s = 3$ were
computed in the 2006 Master's thesis of Jason Hill (cf.\ \cite{Hi}), and confirmed by the computations here. 
These fields are generated by roots of the following polynomials: 
(1) $s = 5$, $x^5 - x^4 - 4x^3 + 3x^2 + 3x - 1$, $ \disc K = 14641$, 
(2) $s = 4$, $x^5 - 2x^4 - 3x^3 + 5x^2 + x - 1$, $ \disc K = 36497$,
(3) $s = 3$, $x^5 - 2x^4 - 6x^3 + 8x^2 + 8x + 1$, $ \disc K = 638597$, and
(4) $s = 2$, $x^5 - x^4 - 21x^3 - 7x^2 + 68x + 60$, $ \disc K = 52315684$.

In addition, Hill found 20 totally real quintic fields with a totally
positive system of fundamental units, the first of which 
(not known to be the first example of such a field) is given by 
(5) $s = 1$, $ x^5 - 2x^4 - 32x^3 + 41x^2 + 220x - 289$, $ \disc K =  405673292473$.

Hill's search (the precursor to the one here) was not exhaustive, and yielded roughly one
field with a totally positive system of fundamental units for every
10 million totally real quintic fields produced.   

\end{remark}

\begin {table}[htp] 
\begin{center}
\begin{tabular}{lc|ccccc|c}
 & & \multicolumn{5}{c|}{$X$} \\
&  & $10^{10}$ & $10^{11}$ & $10^{12}$ & $10^{13}$ & $10^{14}$ & predicted \\
\hline
\multicolumn{2}{l|}{$N$$\rule{0em}{2.2ex}$} & $10^6$ & $10^6$ & $10^6$ & $0.82\cdot 10^6$ & $0.33\cdot 10^6$ &   \\
\multicolumn{2}{l|}{$1/\sqrt{N}$} & 0.001 & 0.001 & 0.001 & 0.001 & 0.002 &   \\
\hline
\multirow{2}{*}{$k$} & $0$ & 0.415 & 0.412 & 0.408 & 0.406 & 0.405 & \phantom{\rule[0pt]{0pt}{12pt}} $2/5 = 0.400$ \rule[0pt]{0pt}{12pt} \\
& $1$ & 0.585 & 0.588 & 0.592 & 0.594 & 0.596 & $3/5 = 0.600$ \\
\hline
\multirow{3}{*}{\parbox[c][8ex]{8ex}{$\sgnrk(E_K)$}} 
& $1$ & 0.015 & 0.016 & 0.017 & 0.017 & 0.017 & \phantom{\rule[0pt]{0pt}{12pt}} 0.019  \rule[0pt]{0pt}{12pt} \\
& $2$ & 0.604 & 0.606 & 0.610 & 0.612 & 0.614 & 0.618 \\
& $3$ & 0.382 & 0.377 & 0.373 & 0.370 & 0.368 & 0.363 \\
\hline
\multirow{4}{*}{$\rho$} 
& $0$ & 0.821 & 0.812 & 0.806 & 0.800 & 0.798 & \phantom{\rule[0pt]{0pt}{12pt}} 0.786  \rule[0pt]{0pt}{12pt} \\
& $1$ & 0.169 & 0.177 & 0.181 & 0.186 & 0.188 & 0.197 \\
& $2$ & 0.010 & 0.011 & 0.012 & 0.013 & 0.014 & 0.016 \\
& $\geq 3$ & 0.000 & 0.000 & 0.000 & 0.000 & 0.000 & 0.001 \\
\hline
\multirow{3}{*}{$\rho$ moments} 
& $1$ & 1.199 & 1.212 & 1.220 & 1.228 & 1.231 & \phantom{\rule[0pt]{0pt}{12pt}} 1.250  \rule[0pt]{0pt}{12pt} \\
& $2$ & 1.661 & 1.710 & 1.746 & 1.779 & 1.792 & 1.875 \\
& $3$ & 2.867 & 3.052 & 3.195 & 3.331 & 3.380 & 3.750 \\
\hline
\multirow{4}{*}{$\rho^+$} 
& $0$ & 0.338 & 0.333 & 0.327 & 0.324 & 0.322 & \phantom{\rule[0pt]{0pt}{12pt}} 0.315  \rule[0pt]{0pt}{12pt} \\
& $1$ & 0.555 & 0.554 & 0.555 & 0.554 & 0.553 & 0.550 \\
& $2$ & 0.102 & 0.107 & 0.111 & 0.115 & 0.117 & 0.125 \\
& $\geq 3$ & 0.005 & 0.006 & 0.007 & 0.008 & 0.008 & 0.010 \\
\hline
\multirow{3}{*}{$\rho^+$ moments}
& $1$ & 1.897 & 1.921 & 1.941 & 1.955 & 1.963 & \phantom{\rule[0pt]{0pt}{12pt}} 2.000  \rule[0pt]{0pt}{12pt} \\
& $2$ & 4.530 & 4.690 & 4.825 & 4.926 & 4.968 & 5.250 \\
& $3$ & 14.20 & 15.23 & 16.27 & 17.02 & 17.19 & 19.50 \\
\hline
\multirow{2}{*}{splits?} 
& yes & 0.952 & 0.949 & 0.948 & 0.946 & 0.946 & \phantom{\rule[0pt]{0pt}{12pt}} 0.944  \rule[0pt]{0pt}{12pt} \\
& no & 0.048 & 0.051 & 0.052 & 0.053 & 0.054 & 0.056 \\
\hline
\multirow{3}{*}{\parbox[c][8ex]{11ex}{$\sgnrk(E_K)$ \\ (fields with $\rho=0$)}} 
& $1$ & 0.000 & 0.000 & 0.000 & 0.000 & 0.000 & 0 \rule[0pt]{0pt}{12pt}  \\
& $2$ & 0.588 & 0.590 & 0.594 & 0.596 & 0.597 & 0.600 \\
& $3$ & 0.412 & 0.410 & 0.406 & 0.405 & 0.403 & 0.400 \\
\multicolumn{2}{l|}{$1/\sqrt{N_0}$\rule{0em}{2.5ex}} & 0.001 & 0.001 & 0.001 & 0.001 & 0.002 &   \\
\hline
\multirow{3}{*}{\parbox[c][8ex]{11ex}{$\sgnrk(E_K)$ \\ (fields with $\rho=1$)}} 
& $1$ & 0.082 & 0.083 & 0.085 & 0.083 & 0.082 & \phantom{\rule[0pt]{0pt}{12pt}} 0.086 \rule[0pt]{0pt}{12pt}  \\
& $2$ & 0.674 & 0.678 & 0.679 & 0.683 & 0.686 & 0.686 \\
& $3$ & 0.245 & 0.240 & 0.237 & 0.235 & 0.232 & 0.229 \\
\multicolumn{2}{l|}{$1/\sqrt{N_1}$\rule{0em}{2.5ex}} & 0.002 & 0.002 & 0.002 & 0.003 & 0.004 &   \\
\hline
\multirow{3}{*}{\parbox[c][8ex]{11ex}{$\sgnrk(E_K)$ \\ (fields with $\rho=2$)}}
& $1$ & 0.120 & 0.125 & 0.130 & 0.132 & 0.126 & \phantom{\rule[0pt]{0pt}{12pt}} 0.131 \rule[0pt]{0pt}{12pt}  \\
& $2$ & 0.672 & 0.677 & 0.675 & 0.677 & 0.685 & 0.686 \\
& $3$ & 0.208 & 0.198 & 0.196 & 0.191 & 0.190 & 0.183 \\
\multicolumn{2}{l|}{$1/\sqrt{N_2}$\rule{0em}{2.5ex}} & 0.010 & 0.010 & 0.010 & 0.010 & 0.015 &   \\
\hline
\end{tabular}
\end{center}

\caption {Computed data versus predicted for totally real cubic fields.}
\label{table:cubicfieldsdata} 

  \end {table}

\begin {table}[htp] 

\begin{center}
\begin{tabular}{lc|ccc|c}
 & & \multicolumn{3}{c|}{$D$} \\
&  & $10^{9}$ & $10\cdot 10^{9}$ & $320 \cdot 10^{9}$ & predicted \\
\hline
\multicolumn{2}{l|}{$N$$\rule{0em}{2.2ex}$} & $10^6$ & $10^6$ & $10^6$ &  \\
\multicolumn{2}{l|}{$1/\sqrt{N}$} & 0.001 & 0.001 & 0.001 &  \\
\hline
\multirow{2}{*}{$k$} & $0$ & 0.376 & 0.356 & 0.333 & \phantom{\rule[0pt]{0pt}{12pt}} $16/51 \approx 0.314$ \rule[0pt]{0pt}{12pt}  \\
& $1$ & 0.566 & 0.574 & 0.584 & $30/51 \approx 0.588$ \\
& $2$ & 0.058 & 0.070 & 0.084 & $\ 5/51 \approx 0.098$ \\
\hline
\multirow{5}{*}{\parbox[c][8ex]{8ex}{$\sgnrk(E_K)$}} 
& $1$ & 0.000 & 0.000 & 0.000 & \phantom{\rule[0pt]{0pt}{12pt}}  $ 1.9 \cdot 10^{-7} $   \rule[0pt]{0pt}{12pt} \\
& $2$ & 0.000 & 0.000 & 0.000 & 0.000582 \\
& $3$ & 0.060 & 0.073 & 0.089 & 0.106 \\
& $4$ & 0.567 & 0.576 & 0.585 & 0.589 \\
& $5$ & 0.372 & 0.351 & 0.325 & 0.305 \\
\hline
\multirow{3}{*}{$\rho$} 
& $0$ & 0.981 & 0.973 & 0.958 & \phantom{\rule[0pt]{0pt}{12pt}} 0.940 \rule[0pt]{0pt}{12pt}  \\
& $1$ & 0.018 & 0.027 & 0.041 & 0.059 \\
& $\geq 2$ & 0.000 & 0.000 & 0.000 & 0.001 \\
\hline
\multirow{3}{*}{$\rho$ moments} 
& $1$ & 1.02 & 1.03 & 1.04 & \phantom{\rule[0pt]{0pt}{12pt}} 1.06 \rule[0pt]{0pt}{12pt}   \\
& $2$ & 1.05 & 1.08 & 1.13 & 1.20 \\
& $3$ & 1.13 & 1.20 & 1.32 & 1.49 \\
\hline
\multirow{3}{*}{$\rho^+$} 
& $0$ & 0.368 & 0.345 & 0.318 & \phantom{\rule[0pt]{0pt}{12pt}} 0.295 \rule[0pt]{0pt}{12pt}   \\
& $1$ & 0.564 & 0.570 & 0.574 & 0.571 \\
& $\geq 2$ & 0.068 & 0.085 & 0.108 & 0.134 \\
\hline
\multirow{3}{*}{$\rho^+$ moments}
& $1$ & 1.77 & 1.83 & 1.91 & \phantom{\rule[0pt]{0pt}{12pt}} 2.00 \rule[0pt]{0pt}{12pt}   \\
& $2$ & 3.73 & 4.05 & 4.50 & 5.06 \\
& $3$ & 9.45 & 10.9 & 13.3 & 16.9 \\
\hline
\multirow{2}{*}{splits?} 
& yes & 0.994 & 0.991 & 0.987 & \phantom{\rule[0pt]{0pt}{12pt}} 0.982 \rule[0pt]{0pt}{12pt}  \\
& no & 0.006 & 0.009 & 0.013 & 0.018 \\
\hline
\multirow{3}{*}{\parbox[c][12ex]{11ex}{$\sgnrk(E_K)$ \\ (fields with $\rho=0$)}} 
& $1$ & 0.000 & 0.000 & 0.000 & \phantom{\rule[0pt]{0pt}{12pt}} 0    \rule[0pt]{0pt}{12pt}   \\
& $2$ & 0.000 & 0.000 & 0.000 & 0 \\
& $3$ & 0.059 & 0.070 & 0.084 & 0.098 \\
& $4$ & 0.566 & 0.575 & 0.584 & 0.588 \\
& $5$ & 0.375 & 0.354 & 0.332 & 0.314 \\
\multicolumn{2}{l|}{$1/\sqrt{N_0}$\rule{0em}{2.5ex}} & 0.001 & 0.001 & 0.001 &  \\
\hline
\multirow{3}{*}{\parbox[c][12ex]{11ex}{$\sgnrk(E_K)$ \\ (fields with $\rho=1$)}} 
& $1$ & 0.000 & 0.000 & 0.000 & \phantom{\rule[0pt]{0pt}{12pt}} 0    \rule[0pt]{0pt}{12pt}   \\
& $2$ & 0.002 & 0.004 & 0.006 & 0.009 \\
& $3$ & 0.150 & 0.169 & 0.194 & 0.221 \\
& $4$ & 0.621 & 0.622 & 0.614 & 0.607 \\
& $5$ & 0.227 & 0.205 & 0.185 & 0.162 \\
\multicolumn{2}{l|}{$1/\sqrt{N_0}$\rule{0em}{2.5ex}} & 0.007 & 0.006 & 0.005 &  \\
\hline
\end{tabular}
\end{center}

\caption {Computed data versus predicted for totally real quintic fields.}
\label{table:quinticfieldsdata} 

  \end {table}

\subsection*{Computational matters}

We begin with comments related to the computations for totally real cubic fields.  

A \defi{cubic ring} is a commutative ring that is free of rank $3$ as a $\Z$-module.  
By the correspondence of Delone--Faddeev \cite{DeloneFaddeev} (as refined by Gan--Gross--Savin \cite{GanGrossSavin}), 
cubic rings are parametrized by $\GL_2(\Z)$-equivalence classes of integral binary cubic forms, with action twisted 
by the determinant; see Gross--Lucianovic \cite{GL}.  This parametrization is discriminant-preserving, and unique 
representatives of the $\GL_2(\Z)$-orbits are provided by reduced forms.  
By work of Davenport--Heilbronn \cite{DavenportHeilbronn},  maximal orders in cubic fields 
are in bijection with reduced binary cubic forms satisfying certain congruence conditions.  
Belabas \cite{Belabas} has used these bijections to exhibit a fast algorithm to tabulate cubic fields.  
Following his method, but with statistical purposes in mind, we instead 
use this bijection to \emph{sample} binary cubic forms.

Let $f(x,y)=ax^3+bx^2y+cxy^2+dy^3$ with $a,b,c,d \in \Z$ and let
\begin{equation} 
P=b^2-3ac, \quad Q=bc-9ad, \quad R = c^2-3bd .
\end{equation}
We say that $f$ is \defi{real} if $\disc f>0$, and we suppose that $f$ is real.  
The binary quadratic form $H(f)(x,y) = Px^2+Qxy+Ry^2$ is called the \defi{Hessian} of $f$.  
Following Belabas \cite[Definition 3.2]{Belabas}, we say $f$ is \defi{Hessian reduced} if 
$\abs{Q} \leq P$ and $P \leq R$, and we say $f$ is \defi{reduced} if $f$ is Hessian reduced 
and all of the following further conditions hold:
\begin{itemize}[label=\raisebox{0.25ex}{\tiny$\bullet$}]
\item $b>0$ or $d<0$,
\item $Q \neq 0$ or $d < 0$,
\item $P \neq Q$ or $b < \abs{3a-b}$, and 
\item $P \neq R$ or ($a \leq \abs{d}$ and ($a \neq \abs{d}$ or $b < \abs{c}$)).  
\end{itemize}
By \cite[Corollary 3.3.1]{Belabas} two equivalent real, reduced forms are equal.

Suppose that $f(x,y)$ is irreducible and let $K=\Q(\theta)=\Q[x]/(f(x,1))$, so that $K$ is a cubic field.  
The cubic ring defined by $f$ is the order $R \subseteq K$ with $\Z$-basis $1,a\theta, a\theta^2+b\theta$.  
By the work of Davenport--Heilbronn \cite{DavenportHeilbronn}, the order $R$ fails to be maximal at $p$ 
if and only if either $p \mid f$ or $f(x,y)$ is $\GL_2(\Z)$-equivalent to a form such that $p^2 \mid a$ 
and $p \mid b$ (see also Belabas \cite[Corollary 3.3.2]{Belabas}).  For a nice self-contained account of 
the above, see Bhargava--Shankar--Tsimserman \cite[\S\S 2--3]{BST}.

Using this, we can sample totally real cubic fields.  Let $X>0$ be a height parameter, and sample 
$a,b \in [0,X] \cap \Z$ and $c,d \in [-X,X] \cap \Z$ uniformly.  We keep the corresponding cubic form 
$f(x,y)$ if it is real, irreducible, reduced, and the corresponding order $R$ is maximal.  
In this way, we have sampled a uniformly random totally real cubic field $K$ and the discriminant 
of $K$ is $O(X)$, with an effectively computable constant.  

For such fields, we compute (subject to the GRH) using the computer algebra 
system \textsc{Magma} \cite{BCP} the class group, unit group, narrow class group, 
and $2$-Selmer group \cite[\S 5.2.2]{Co2}.  The $2$-Selmer signature map $\phi$ is then 
effectively computable, defined by real and $2$-adic signatures, the latter of which are 
given by (efficiently computable) congruences modulo bounded powers of even primes.  
We also verify the equalities
\eqref{eq: kernelranks} and Proposition \ref{prop:VinftyV2types} in each case. 

Using this procedure, we computed $N$ random totally real cubic fields with height bound $X=10^{m}$ and 
$m=10,11,12,13,14$, where $N$ is as indicated.  By the central limit theorem, this sampling procedure will 
converge to the actual distribution (in each case, with fixed $X$) with error $O(1/\sqrt{N})$ as $N$ increases, 
so $1/\sqrt{N}$ gives an approximation of the margin of error.  While
this procedure samples number fields ordered by height and not by discriminant, we expect the same densities 
for both orderings.  In particular this sampling difference should not effect the numerical 
data presented---the results agree for those ranges of discriminant for which we have complete lists of all 
cubic fields.  The advantage in using the precise bijection of totally real cubic fields (more precisely,
with their rings of integers) with certain reduced binary cubic forms is that it is possible to sample
such cubic fields with large discriminants.  Computing exhaustive lists of all totally real cubic fields
up to these discriminant bounds is computationally impractical.  

\medskip

We next turn to the computations for totally real quintic fields.  

By work of Bhargava \cite{Bhargava,Bha2,Bha3}, in principle one should similarly be able to sample quintic fields 
as we did cubic fields, but such an explicit method has not yet been exhibited.  
Instead, we simply generated a large table of totally real quintic fields.  

For an overview of algorithms for enumeration of number fields, see Cohen \cite[\S 9.3]{Co2}.
The computations here are described in Voight \cite{Voight}, where a number of substantial 
optimizations are made for the case of totally real fields (which have been implemented and 
are available in \textsc{Sage} \cite{Sage}).  The basic starting point is Hunter's theorem (see \cite{Hu} and
the improvement \cite[Theorem 6.4.2]{Co1}), which in
the case of a totally real quintic field $K$ states that there exists a nonrational algebraic integer $\alpha \in K$ 
(hence a generator for $K$ over $\QQ$) with $\Tr(\alpha) \in \{ 0, - 1, -2 \}$ and
\begin{equation} \label{eq:Hunter}
t_2(\alpha) \leq \frac{\Tr(\alpha)^2}{5} + \sqrt{2} \left( \frac{\disc K}{5} \right)^{1/4}
\end{equation}
with Minkowski norm $t_2(\alpha) = \sum_{i=1}^5  {\alpha_i}^2 $ given by the sum of squares of the
conjugates of $\alpha$.  We let $t_2(K)$ denote the minimum $t_2(\alpha)$ for such a generator $\alpha$ of
the field $K$.  Then Hunter's inequality implies 
\begin{equation} \label{eq:discbound}
\disc K \ge \frac{5}{4} (t_2(K) - 4/5)^4
\end{equation}
and based on preliminary computations, a significantly better lower bound for $\disc K$
relative to $t_2(K)$ appears unlikely.
This last inequality implies that we can be assured of computing a complete list of totally real quintic fields 
with discriminant bounded by $10^{10}$ by computing all polynomials with $t_2 = t \leq 299$ (and
checking when polynomials generate the same field $K$).  
Examining the data for computations for values of $t < 100$ and root discriminant bounded by 40 
suggested it requires roughly 
$ 5.0 (10)^{-13} D^{7/4} (\log D)^3$ seconds to compute all totally real quintic fields up to
discriminant $D$.  This estimate indicates a computation of polynomials with $t_2 \leq 299$ for fields 
with a root discriminant bound of 100 would require more than 60 CPU years.  

As a result, we compromised and limited the computations to searching all totally real quintic fields 
with $t_2(K) < 175$, but keeping only those fields with root discriminant bounded by 200. This computation 
(on a cluster at the Vermont Advanced Computing Center) ran for
approximately 329 CPU days and produced $376\,508\,889$ fields.

By \eqref{eq:discbound} this yielded a complete list of all totally real quintic fields whose 
discriminant is at most $1\,151\,072\,334$ (root discriminant $\leq  64.89$), and in addition yielded
a large number of fields with larger discriminant (over 370 million, each 
with root discriminant at most 200).

As a check on the computations, we first compared our list of fields to the table of totally real fields
with disciminant bounded by $2(10^7)$ (root discriminant $\leq 28.8540$)
computed by the PARI group \cite{Parinf}. The results agree except that there are $5$ duplications
in the PARI database (which lists $22740$ fields):
\[ \begin{array}{c|c|cc}
i & d & f_1 & f_2 \\
\hline
8590 & 9262117 & x^5-10x^3+x^2+13x-6 & x^5-26x^3+20x^2+10x+1 \rule[0pt]{0pt}{12pt} \\
13372 & 13072837 & x^5+x^4-18x^3-6x^2+28x+17 & x^5+2x^4-17x^3-6x^2+64x-40 \\
15570 & 14731145 & x^5+x^4-25x^3-25x^2+2x+5 & x^5+x^4-20x^3-31x^2+23x+1 \\
19853 & 17946025 & x^5+2x^4-17x^3-20x^2+10x+5 & x^5+2x^4-20x^3-17x^2+16x+13 \\
20453 & 18371721 & x^5-31x^3+60x^2+16x+1 & x^5+2x^4-23x^3-47x^2+129x+265 \\
\end{array} \]
(we list the numbers of the field $i,i+1$ in the PARI database, their common discriminant, and 
two (primitive) polynomials for the corresponding fields).  The corrected total of 
$22735$ fields is the same as the number of fields we computed.

We then compared our data to that of Malle \cite{Malle109}, who has computed 
all totally real fields with discriminant $\leq 10^9$.  He reports that there are $2\,341\,960$ such 
quintic fields.  The number of fields and the collection of field discriminants reported by Malle
are both the same as ours with the same discriminant bound.

\medskip

For a fixed discriminant bound $D$, the count of fields for given values of $t_2(K)$ appear to be
nearly normally distributed, with a mean on the order of $D^{1/4}$, as suggested by  \eqref{eq:Hunter};
this in turn suggests a cumulative count of fields should be approximated by an error function $\textup{erf}(t)$.
A best-fit regression results in an error-function approximation that suggests among the $376\,508\,889$
computed fields we have approximately $99\%$ of totally real quintic fields with root discriminant at
most 100, and perhaps a narrow majority of fields with root discriminant at most 200.  The sampling
in Table \ref{table:quinticfieldsdata} was taken for discriminants bounded by $10^9$ (for which our list
of fields is complete), $10^{10}$ (root discriminant 100), and  $3.2(10)^{11}$ (root discriminant 200), respectively.

\appendix

\section{Maximal totally isotropic subspaces of orthogonal direct sums over perfect fields of characteristic 2 
(with~{\sc Richard~Foote})}  \label{sec:appendixA}

Throughout this appendix, $\F$ denotes a perfect field of characteristic 2. 

We first classify, up to isometry, the possible nondegenerate finite dimensional symmetric spaces over $\F$, and
prove a version of Witt's Theorem for them.  
We then prove a structure theorem for the maximal totally isotropic 
subspaces of an orthogonal direct sum of two such spaces and derive a number of consequences.  
These results will be applied to number fields in Section \ref{section:im2Selmer}.

Suppose $V$ is a nontrivial $n$-dimensional vector space over $\FF$ equipped with a nondegenerate
symmetric bilinear form $b$.  The map $v \mapsto b(v,v)$ from $V$ to $\F$ is an $\F_2$-linear homomorphism
and if $b(v,v) = \alpha \ne 0$ then,
because $\F$ is perfect, there is a scalar multiple of $v$ of any length $\beta$ (namely $\gamma v$ where 
$\gamma^2 = \beta/\alpha$), so $v \mapsto b(v,v)$ is either 
the trivial map or is surjective.  The set 
$$
\Valt = \{ v \in V \mid b(v,v) = 0 \}
$$
of isotropic elements of $V$ is a canonical subspace, called the \emph{alternating subspace} of $V$, and
is of codimension at most 1.  

Let $\H$ denote the hyperbolic plane over $\FF$.

\begin{proposition} \label{prop:symspaces}
If $b$ is a nondegenerate symmetric bilinear form on the vector space $V$ of dimension $n \ge 1$ over $\FF$, 
then up to isometry precisely one of the following three cases can occur:

\begin{enumerate}

\item[{(1)}]
The form $b$ is alternating, i.e., $V = \Valt$.  Then $n$ is even and 
$V$ is an orthogonal direct sum of $n/2$ hyperbolic planes.

\item[{(2)}]
The form $b$ is not alternating.  Then there is an orthonormal basis for $V$ and with respect to such 
a basis the form is the ``dot product'' on $V$.  The element $\vcan$ given by the sum of the elements 
in any orthonormal basis is uniquely defined independent 
of the choice of orthonormal basis and 
$\Valt = \gp{\vcan}^\perp$.  There are two possible subcases:

\begin{enumerate}
\item[{(i)}]
If $n$ is odd, then $\Valt$ is
an orthogonal direct sum of $(n-1)/2$ hyperbolic planes; 
$V$ is the orthogonal direct sum of $\Valt$ and the one-dimensional space $\calD = \langle \vcan \rangle$.
The element $\vcan$ is 
the unique element of length 1 in $V$ orthogonal to $\Valt$.

\item[{(ii)}]
If $n$ is even, then the form $b$ restricted to the alternating
subspace $\Valt$ of codimension 1 is degenerate, with radical $\langle \vcan \rangle$; 
$\Valt$ is the orthogonal direct sum of $\langle \vcan \rangle$ and a (noncanonical) subspace $\Valt'$ that is the orthogonal 
direct sum of $n/2 - 1$ hyperbolic planes. 
The nondegenerate space  $\calD = (\Valt')^\perp$
is a two-dimensional space
containing $\langle \vcan \rangle$ and 
$\calD$
is the direct sum $\langle v_1 \rangle \oplus \langle \vcan \rangle $
with any element 
$v_1 \in \calD$ with $b(v_1,v_1) = 1$.
The space $V$ is the orthogonal direct sum of 
$\Valt'$ and 
$\calD$, so
$V = (\langle v_1 \rangle \oplus \langle \vcan \rangle) \perp {\H}^{n/2 - 1}$ and 
$\Valt =  \langle \vcan \rangle \perp {\H}^{n/2 - 1}$.

\end{enumerate}    

\end{enumerate}    

\end{proposition}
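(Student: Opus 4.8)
The plan is to study the additive map $q\colon V\to\F$ given by $q(v)=b(v,v)$; its behaviour over a perfect field of characteristic $2$ forces the trichotomy, after which I would exhibit explicit normal forms in each case. First I would record the elementary facts that in characteristic $2$ one has $q(v+w)=q(v)+q(w)$ and $q(\lambda v)=\lambda^{2}q(v)$, so $\Valt=\{v:q(v)=0\}$ is an $\F$-subspace; since $\F$ is perfect, $q$ is either identically $0$ or surjective, and in the latter case choosing $e$ with $q(e)=1$ and writing $q(v)=\nu^{2}$ gives $q(v+\nu e)=0$, so $V=\Valt+\langle e\rangle$ and $\codim\Valt=1$. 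This separates Case (1) ($\Valt=V$, i.e.\ $b$ alternating) from Case (2) ($\codim\Valt=1$). In Case (1) the standard symplectic argument---repeatedly split off a hyperbolic plane $\langle v,w\rangle$ with $b(v,w)=1$ and pass to its orthogonal complement---shows $n$ is even and $V\cong\H^{n/2}$.

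The main work is Case (2), where I claim $V$ admits an orthonormal basis. I would argue by induction on $n$: pick $e$ with $q(e)=1$, so $\langle e\rangle$ is nondegenerate and $V=\langle e\rangle\perp\langle e\rangle^{\perp}$; if $\langle e\rangle^{\perp}$ is non-alternating, induction and adjoining $e$ finish. The one delicate point---and the step I expect to be the main obstacle---is that $\langle e\rangle^{\perp}$ can be alternating (this forces $n$ odd), in which case I would use Case (1) to write $\langle e\rangle^{\perp}=H_{1}\perp\H^{(n-3)/2}$ with $H_{1}=\langle x,y\rangle$ hyperbolic, and check directly that $\{e+x,\,e+y,\,e+x+y\}$ is an orthonormal basis of the three-dimensional space $\langle e\rangle\perp H_{1}$; splitting off one of these vectors then leaves a non-alternating nondegenerate space of dimension $n-1$, to which induction applies.

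Given an orthonormal basis $e_{1},\dots,e_{n}$, set $\vcan=\sum_{i}e_{i}$ (nonzero, being a sum of basis vectors). For $v=\sum a_{i}e_{i}$ one computes $q(v)=\sum a_{i}^{2}=(\sum a_{i})^{2}=b(v,\vcan)^{2}$. Since $x^{2}=y^{2}$ implies $x=y$ in a field, nondegeneracy of $b$ shows $\vcan$ is the unique vector satisfying $q(v)=b(v,\vcan)^{2}$ for all $v$, hence independent of the chosen orthonormal basis; the same identity gives $\Valt=\langle\vcan\rangle^{\perp}$, while $q(\vcan)=\sum_{i}q(e_{i})$ equals $1$ if $n$ is odd and $0$ if $n$ is even.

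In subcase (2)(i) ($n$ odd) we then have $q(\vcan)=1$, so $\vcan\notin\Valt$ and $V=\Valt\perp\langle\vcan\rangle$ with $\Valt$ nondegenerate, alternating, of even dimension $n-1$, hence $\cong\H^{(n-1)/2}$ by Case (1); as $\Valt^{\perp}=\langle\vcan\rangle$ and $q(\lambda\vcan)=\lambda^{2}$, the vector $\vcan$ is the unique length-$1$ element orthogonal to $\Valt$. In subcase (2)(ii) ($n$ even) we have $q(\vcan)=0$, so $\vcan\in\Valt$; since $\Valt^{\perp}=\langle\vcan\rangle$, the radical of $b|_{\Valt}$ is exactly $\langle\vcan\rangle$, and passing to the $(n-2)$-dimensional quotient gives a nondegenerate alternating form, so any vector-space complement $\Valt'$ of $\langle\vcan\rangle$ in $\Valt$ is isometric to $\H^{n/2-1}$ and is automatically orthogonal to $\vcan$. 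Then $\calD=(\Valt')^{\perp}$ is a nondegenerate two-dimensional space containing $\vcan$; it is not alternating (else $V=\Valt'\perp\calD$ would be), so it contains some $v_{1}$ with $q(v_{1})=1$, necessarily outside $\langle\vcan\rangle$, whence $\calD=\langle v_{1}\rangle\oplus\langle\vcan\rangle$, $V=(\langle v_{1}\rangle\oplus\langle\vcan\rangle)\perp\H^{n/2-1}$, and $\Valt=\langle\vcan\rangle\perp\H^{n/2-1}$. Finally, the three cases are mutually exclusive---$b$ alternating forces $n$ even, and among even $n$ the cases (1) and (2)(ii) are distinguished by whether $b$ is alternating---and the explicit normal forms exhibited show the isometry type is determined by $n$ together with which case holds, which gives the ``precisely one'' assertion.
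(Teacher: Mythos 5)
Your proposal is correct, and its overall skeleton matches the paper's proof: split off hyperbolic planes in the alternating case, prove existence of an orthonormal basis by induction in the nonalternating case, characterize $\vcan$, and then treat the odd and even subcases via $\Valt$, its radical, and $\calD=(\Valt')^\perp$ exactly as the paper does. Two local steps are handled differently. For the inductive construction of an orthonormal basis, the paper avoids your ``delicate point'' altogether: it observes that $V$ contains two linearly independent vectors of length $1$ while $\Valt^\perp$ is a line, so one can choose a length-$1$ vector $v\notin\Valt^\perp$, and then $\gp{v}^\perp$ is automatically nonalternating (if it were alternating it would equal $\Valt$, forcing $v\in\Valt^\perp$). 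You instead allow $\gp{e}^\perp$ to be alternating and repair that case by exhibiting the explicit orthonormal basis $\{e+x,\,e+y,\,e+x+y\}$ of $\gp{e}\perp\gp{x,y}$; this is a correct, slightly more computational alternative, whereas the paper's choice keeps the induction uniform. For the basis-independence of $\vcan$, the paper argues via the change-of-basis matrix (an orthogonal matrix over a field of characteristic $2$ has all row and column sums equal to $1$), while you use the intrinsic characterization $b(v,v)=b(v,\vcan)^2$ for all $v$ together with nondegeneracy and injectivity of squaring; your argument is arguably cleaner and also delivers $\Valt=\gp{\vcan}^\perp$ in the same breath, which the paper obtains from the same identity. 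The remaining verifications (the symplectic splitting in case (1), $\rad\Valt=\gp{\vcan}$, the complement $\Valt'\iso\H^{n/2-1}$, nonalternation of $\calD$, and mutual exclusivity of the three cases) coincide with the paper's.
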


\begin{proof}
Suppose first that $b$ is nondegenerate and alternating.  For any $0 \ne x \in V$, there is a $y \in V$ with
$b(x,y) \ne 0$ since $b$ is nondegenerate.  Then the subspace $\langle x , y \rangle$ 
is a hyperbolic plane ${\H}$.  Since
$b$ restricted to $\langle x , y \rangle$ is nondegenerate, $V = \langle x , y \rangle \oplus \langle x , y \rangle^\perp$.  
Then $b$ restricted to $\langle x , y \rangle^\perp$ 
is nondegenerate and
alternating, so by induction $V$ is the orthogonal direct sum of hyperbolic planes, which is (1) of the proposition. 

Assume now that $b$ is not alternating on $V$, so the alternating subspace $\Valt$ is of codimension 1.  

We first prove by
induction that $V$ has an orthonormal basis.  This is clear if $\dim V = 1$.  If $\dim V > 1$, 
then $V$ contains at least two linearly independent elements of length 1 (for example, any element $v_1$ of length 1
together with $v_1 + v_0$ for any nonzero $v_0 \in \Valt$).  Since $\Valt^\perp$ has dimension 1, it follows that
there exists an element $v$ in $V$ of length 1 that is not contained in $\Valt^\perp$.  Then $V = \gp{v} \perp \gp{v}^\perp$,
and $\gp{v}^\perp$ is a subspace of dimension $n-1$ on which $b$ is nondegenerate and nonalternating.  By induction,
there is an orthonormal basis for $\gp{v}^\perp$, which together with $v$ gives an orthonormal basis for $V$.

Next, if $\{v_1, \dots , v_n\}$ and $\{v_1', \dots , v_n'\}$ are two orthonormal bases for $V$, then they are related by
an $n \times n$ orthogonal matrix $A = (a_{ij})$.  Then $(a_{i1} + \dots + a_{in})^2 = a_{i1}^2 + \dots + a_{in}^2 = 1$ for any
$i = 1,2,\dots,n$, so the row sums of $A$ (and similarly, the column sums) are all 1.  It follows that
$v_1 + \dots + v_n = v_1' + \dots + v_n'$ and so the sum of the elements in any orthonormal basis is the same.
Also, if $\{v_1, \dots , v_n\}$ is an orthonormal basis and $v = \alpha_1 v_1 + \dots + \alpha_n v_n$, then
$b(v,v) = \alpha_1^2 + \dots + \alpha_n^2 = (\alpha_1 + \dots + \alpha_n)^2 = b(v,\vcan)^2 $.  In 
particular, the elements of $\Valt$ are those whose coordinates with respect to any orthonormal basis sum to 0, 
and $v \in \Valt$ if and only if $b(v, \vcan) = 0$, i.e., $\Valt = \gp{\vcan}^\perp$. 
 
Suppose that $n$ is odd.  In this case $\gp{\vcan}$ is nondegenerate, hence $V = \gp{\vcan} \perp \gp{\vcan}^\perp =
\gp{\vcan} \perp \Valt$.  Also $b$ restricted to $\Valt = \gp{\vcan}^\perp$ is
nondegenerate, so $\Valt$ is the orthogonal direct sum of hyperbolic planes by case 1, which proves (2i).

Suppose that $n$ is even.  In this case $\vcan \in \Valt$ and since $\gp{\vcan} = \Valt^\perp$ it follows that
$b$ is degenerate on $\Valt$ with a 1-dimensional radical: $\rad \Valt  = \langle \vcan \rangle$.  
Then $b$ induces a nondegenerate alternating form on $\Valt /  \langle \vcan \rangle$, which is therefore
the orthogonal direct sum of hyperbolic planes by (1).  Taking any lift to $\Valt$ gives a subspace 
$\Valt'$ that is an orthogonal direct sum ${\H}^{n/2 - 1}$ of hyperbolic planes.  
Since $b$ restricts to a nondegenerate form on
$\Valt'$, it follows that $(\Valt')^\perp$ 
is a two-dimensional space
containing $\langle \vcan \rangle$ and the remaining statements for (2ii) in the proposition follow, completing the proof. 
\end{proof}

\begin{remark}
If $Q$ is a quadratic form with associated bilinear form $b$, then $b$ is uniquely determined by $Q$ via
$ b(x,y) = Q(x+y) - Q(x) - Q(y)$ (however, $b$ does not uniquely determine $Q$, even if $b$ is nondegenerate).  Note
that if $b$ arises from a quadratic form in characteristic 2, then $b$ must be alternating, so   
a symmetric bilinear form that is not alternating cannot come from a quadratic form---in particular
the bilinear forms in (2) of the proposition do not arise from any quadratic form $Q$ on $V$.
\end{remark}

\begin{remark} \label{rem:isometrytype}
We refer to the three `types' of nondegenerate spaces in Proposition \ref{prop:symspaces}:
If $V$ is alternating of dimension $2m$ then $V \iso \H_1 \perp \H_2 \perp \cdots \perp \H_m$
where $\H_i  = \gp{e_i,f_i}$ is a hyperbolic plane with hyperbolic basis $\{ e_i,f_i \}$.
If $V$ is nonalternating with orthonormal basis $v_1,\dots,v_n$ 
let $m = \lfloor n/2 \rfloor $;  
for $1 \le i \le m $ let $e_i = v_{2i-1} + v_{2i}$, and 
let $f_i = v_{2i} + \dots + v_n$ ($n$ odd) or $f_i = v_{2i} + \dots + v_{n-1}$ ($n$ even).
Then $ V \iso \H_1 \perp \H_2 \perp \cdots \perp \H_{m-1} \perp \mathcal{D}$
where $\H_i  = \gp{e_i,f_i}$ is a hyperbolic plane with hyperbolic basis $\{ e_i,f_i \}$ and  
either (1) $\mathcal{D} =\gp{\vcan}$ with $b(\vcan,\vcan) = 1$ if $n$ is odd, or (2) $\mathcal{D} = \gp{\vcan,v_n}$ with
$b(\vcan,\vcan) = 0$, $b(\vcan,v_n) = b(v_n,v_n) = 1$ if $n$ is even.
\end{remark}

\subsection*{Witt's Theorem} 

Since most versions of Witt's Theorem on lifting isometries require an alternating form when the field has
characteristic 2 we include the following variant for each of the three 
nondegenerate symmetric bilinear forms that can occur.  
When the form $b$ is not alternating, every isometry of $V$ maps the canonical element $\vcan$ to itself, and 
the following shows this 
is the only obstruction to lifting isometries between subspaces.

\begin{proposition}[Witt's Theorem]  \label{prop:witt}
Suppose $V$ is a vector space of dimension $n \ge 1$ over a perfect field of characteristic 2 with a nondegenerate symmetric bilinear form
$b$ as in Proposition \ref{prop:symspaces}.

\begin{enumerate}
\item[{(1)}]
Suppose $b$ is alternating. Then every isometry from a subspace $W$ to a subspace $W'$ extends to an
isometry of $V$.

\item[{(2)}]
Suppose $b$ is not alternating.  
Let $\vcan$ denote the sum of the elements in any orthonormal basis for $V$.
Assume $W$ and $W'$ are subspaces of $V$ with $W \cap \langle \vcan \rangle = W' \cap \langle \vcan \rangle$ 
(i.e., either both $W$ and $W'$ contain $\vcan$ or neither does).  Then 
every isometry $\sigma : W \rightarrow W'$ that
maps $\vcan$ to $\vcan$ if $\vcan \in W,W'$, extends to an isometry of $V$.

\end{enumerate}

\end{proposition}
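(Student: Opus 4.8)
The plan is to dispose of part (1) by classical means and to reduce part (2) to it.

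\textbf{Part (1).} When $b$ is alternating, $(V,b)$ is a nondegenerate symplectic space and the statement is the classical symplectic form of Witt's theorem, valid over any field; equivalently it is the induction below with all of the bookkeeping about $\vcan$ deleted. Concretely, one inducts on $\dim W$: for $\dim W=1$, embed $W=\gp{w}$ and $W'=\gp{w'}$ in hyperbolic planes $\gp{w,x},\gp{w',x'}$ with $b(w,x)=b(w',x')=1$, write $V=\gp{w,x}\perp\gp{w,x}^\perp$ and likewise for $w'$, and glue the plane isometry $w\mapsto w',\,x\mapsto x'$ to any isometry of the two nondegenerate alternating complements (which exists by Proposition \ref{prop:symspaces}(1)); for $\dim W>1$ peel off one dimension and apply the inductive hypothesis together with a symplectic transvection.

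\textbf{Part (2): the role of $\vcan$.} Before the main argument I note why the hypothesis on $\vcan$ is forced. By Proposition \ref{prop:symspaces}(2), $b(v,v)=\ell(v)^2$ for all $v\in V$, where $\ell(v):=b(v,\vcan)$ is linear with $\ker\ell=\Valt$. Since the Frobenius is injective on a field of characteristic $2$, any isometry $\sigma\colon W\to W'$ of subspaces satisfies $\ell(\sigma w)=\ell(w)$ for all $w\in W$; taking $W=W'=V$ and using that the $b$-adjoint of an isometry of $V$ is its inverse yields $\sigma(\vcan)=\vcan$ for every isometry of $V$. So the hypothesis of (2) is precisely this unavoidable constraint, and the assertion is that it is the only one.

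\textbf{Part (2): reduction to part (1).} Use the orthogonal decomposition $V=\calD\perp M$ of Proposition \ref{prop:symspaces}(2), with $M$ a nondegenerate alternating (symplectic) subspace and $\calD=\gp{\vcan}$ if $n$ is odd (so $M=\Valt$) or $\calD=\gp{v_1,\vcan}$ if $n$ is even. Given $\sigma\colon W\to W'$ as in the statement, split on $\ell|_W$. If $\ell|_W=0$ then $W,W'\subseteq\Valt$; when $n$ is odd, $\Valt=M$ is symplectic, $\sigma$ is a symplectic isometry, and part (1) applied to $M$, extended by the identity on $\gp{\vcan}$ (which agrees with $\sigma$ on $\gp{\vcan}$ when $\vcan\in W$, by the previous paragraph), finishes. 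If $\ell|_W\neq 0$, pick $w_0\in W$ with $\ell(w_0)=1$, taking $w_0=\vcan$ when $\vcan\in W$; writing $w_0=d_0+u_0$ and $\sigma(w_0)=d_0'+u_0'$ with $d_0,d_0'\in\calD$ and $u_0,u_0'\in M$, one checks $\ell(d_0)=\ell(d_0')=1$ and $b(d_0,d_0)=b(w_0,w_0)=b(d_0',d_0')$, so an isometry of $\calD$ fixing $\vcan$ carries $d_0$ to $d_0'$; extending it by the identity on $M$ and composing with $\sigma$, we may assume $d_0'=d_0$. Then with $W_1:=W\cap\Valt$ a complement of $\gp{w_0}$ in $W$, the adjusted $\sigma$ carries $W_1$ into $\Valt$, and the extension problem reduces to a Witt extension inside the symplectic space $M$ for the data $(W_1,u_0)\mapsto(\sigma(W_1),u_0')$, handled by part (1).

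\textbf{The main obstacle.} The delicate case is $n$ even, where $\Valt$ is itself degenerate with radical $\gp{\vcan}$ and $\calD$ is two-dimensional, so part (1) does not apply to $\Valt$ and the reduction above needs a preliminary step whenever $W$ is degenerate: one must first enlarge $W$ and $W'$ to nondegenerate subspaces $\widetilde W,\widetilde W'$ by adjoining hyperbolic partners for bases of $\rad(b|_W)$ and $\rad(b|_{W'})$, and extend $\sigma$ along this enlargement before running the reduction. The care is needed exactly around whether $\vcan\in\rad(b|_W)$ --- equivalently $\vcan\in W$, since $\vcan$ pairs trivially with all of $\Valt$ --- which governs whether a partner can be chosen inside $\Valt$ or must have nonzero $\ell$-value. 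Underlying everything is that reflections are useless in characteristic $2$ and the natural ``difference vector'' $w+\sigma(w)$ is always isotropic here, so the extending isometry has to be assembled from hyperbolic-plane isometries together with the isometry classification of Proposition \ref{prop:symspaces}, and making that assembly precise is the real work.
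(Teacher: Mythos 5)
Your treatment of part (1) and of the odd-dimensional case of part (2) is sound, and in fact your replacement of $w_0$ by $u_0=w_0+\vcan$ so as to land in the symplectic space $\Valt$ and invoke part (1) is essentially the same device the paper uses. The genuine gap is the even-dimensional nonalternating case, which you yourself defer to the ``main obstacle'' paragraph without carrying it out (``making that assembly precise is the real work''). Worse, the assembly you propose cannot succeed as stated: for $n$ even you try to build the extension as (an isometry of $\calD$) $\perp$ (an isometry of $M$) relative to a fixed splitting $V=\calD\perp M$, but such maps preserve the $\vcan$-component of every vector with respect to that splitting, whereas a legitimate subspace isometry need not. Concretely, in a $4$-dimensional nonalternating space take $W=\gp{e_1}$, $W'=\gp{e_1+\vcan}$ with $e_1$ isotropic, $e_1\notin\gp{\vcan}^{\perp\perp}$\,--- the map $e_1\mapsto e_1+\vcan$ is an isometry of subspaces satisfying your hypotheses (neither contains $\vcan$), yet no map of the form $\tau_{\calD}\perp\tau_M$ extends it. The needed extensions lie in the abelian normal subgroup $E$ of $\Aut(V)\iso E\cdot Sp_{2m-2}(q)$ described before Proposition \ref{prop:isometryorders}, not in $\Aut(\calD)\perp\Aut(M)$; relatedly, your instruction ``take $w_0=\vcan$ when $\vcan\in W$'' is vacuous for even $n$, since there $\ell(\vcan)=b(\vcan,\vcan)=0$. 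The further suggestion of first enlarging $W,W'$ to nondegenerate subspaces by adjoining hyperbolic partners is itself a Witt-type extension problem (the partners must be chosen compatibly with $\sigma$), so as written it begs the question rather than resolving it.

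The paper closes exactly this case by a short dimension-shift: embed $V$ into $V_1=V\perp\gp{v_{n+1}}$ with $b(v_{n+1},v_{n+1})=1$, so $V_1$ is nonalternating of odd dimension with canonical element $\vcanone=\vcan+v_{n+1}$; set $W_1=W\perp\gp{v_{n+1}}$, $W_1'=W'\perp\gp{v_{n+1}}$ and extend $\sigma$ by fixing $v_{n+1}$. One checks the hypotheses on $\vcanone$ are inherited, applies the already-proved odd case, and restricts the resulting isometry of $V_1$ to $\gp{v_{n+1}}^{\perp}=V$. Either adopt this reduction, or rework your even-dimensional argument so that the extending isometry is allowed to mix $\gp{\vcan}$ with $M$ (i.e.\ is built inside $E\cdot Sp_{2m-2}$ rather than $\Aut(\calD)\perp\Aut(M)$); as it stands, part (2) for $n$ even is not proved.
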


\begin{proof}

The statement in (1) is the standard version of Witt's Theorem (cf.~\cite{Bour}, Theorem 1, \S 4.3, p.~71).  

For (2), suppose first that $n$ is odd.
We have $V = \gp{\vcan} \perp \subalt V$,
so $\subalt V$ is subspace of codimension 1 with a nondegenerate alternating form.
Now $\sigma$ restricts to an isometry of the canonical subspaces $\subalt W$ to $\subalt {W'}$.
If $W$, hence also $W'$, is contained in $\subalt V$, then by (1) applied in the latter space,
$\sigma$ extends to an isometry of $\subalt V$, and hence to an isometry of $V$ by mapping
$\vcan$ to itself.  It remains to consider when $\subalt W$ is codimension one in $W$
(and likewise for $W'$).
In this case let $w_1,w_2,\dots,w_d$ be a basis of $W$ chosen so that $w_2,\dots,w_d$ is a basis of $\subalt W$,
and let $w_i' = \sigma(w_i)$ for all $i$.

If $\vcan \in W$, then we may choose $w_1 = \vcan$ and so $w_1' = \vcan$ as well by hypothesis.
As before, $\sigma : \subalt W \rightarrow \subalt{W'}$ extends to an isometry of $\subalt V$, and since
$\sigma(\vcan) = \vcan$, it also extends to an isometry of $V$.

If $\vcan \notin W$, consider the subspaces obtained by replacing
$w_1$ by $w_1 + \vcan$ and $w_1'$ by $w_1' + \vcan$ in the bases for $W$ and
$W'$; these two new subspaces
both lie in $\subalt V$.
Since $\vcan$ is orthogonal to $\subalt V$, the isomorphism defined by mapping $w_1 + \vcan$ to $w_1' + \vcan$ and
$w_i \mapsto w_i'$ for $2 \le i \le d$ is an isometry of
these new subspaces, hence by what has previously been shown extends to an isometry of $V$.
Any isometry of $V$ necessarily fixes $\vcan$, hence this isometry 
maps $w_1$ to $w_1'$, i.e., it is an extension of the original $\sigma$, completing the proof for $n$ odd.

Suppose now that $n$ is even.
Embed $V$ into the $(n{+}1)$-dimensional space $V_1 = V \oplus \gp{v_{n+1}} $, and extend $b$ to $V_1$
by defining $b(v_{n+1},v_{n+1}) = 1$ and $b(v,v_{n+1}) = b(v_{n+1},v) = 0$ for $v \in V$.  Then $V_1 = V \perp \gp{v_{n+1}}$
and $V_1$ is a nondegenerate nonalternating space with an orthonormal basis consisting of an
orthonormal basis for $V$ together with $v_{n+1}$.  
If $\vcanone$ is the canonical vector for $V_1$, it
follows that $\vcanone = \vcan + v_{n+1}$.
Let $W_1 = W \perp \gp{v_{n+1}}$ and $W_1' = W' \perp \gp{v_{n+1}}$.  Then
either both $W_1$ and $W_1'$ contain $\vcanone$ or neither does.  
Define $\sigma_1 : W_1 \rightarrow W_1'$ by 
$\sigma_1 (w + \alpha v_{n+1}) = \sigma(w) + \alpha v_{n+1}$ for $w \in W$ and $\alpha \in \FF$.
Then $\sigma_1$ is an isometry from $W_1$ to $W_1'$ that maps $\vcanone$ to $\vcanone$ if $\vcanone \in W_1 , W_1'$.
By what has already been proved for odd $n$, $\sigma_1$ extends to an isometry $\widetilde{\sigma_1}$ of $V_1$ that in particular
maps $v_{n+1}$ to itself.
Since $\gp{v_{n+1}}^\perp = V$, the restriction of $\widetilde{\sigma_1}$ to $\gp{v_{n+1}}^\perp$ is an isometry of $V$ 
that extends $\sigma$, completing the proof.
\end{proof}

\begin{remark}
Not every isometry of subspaces extends to an isometry of $V$ when $b$ is not alternating, as shown by the  
example $W = \langle v \rangle$ and $W' = \langle v' \rangle$ and the map $\sigma (v) = v'$
where $v = \vcan$ and $v'$ is a vector of length 1 other than $\vcan$ when $n$ is odd (respectively, of length 0 other
than $\vcan$ when $n$ is even).
\end{remark}

\subsection*{Orders of Isometry Groups}   
We give the orders of some isometry groups and subspace
stabilizers in the case when $\FF = \FF_q$ is a finite field of characteristic 2.
First consider when $V$ is nondegenerate of dimension $n = 2m$ with an alternating form,
or of dimension $n = 2m+1$ with a nonalternating form.
In both cases the group of isometries of $V$ is
$$
\Aut(V) \iso \Aut(\Valt) \iso Sp_{2m}(q) .
$$
The stabilizer, $P_U$, of a totally isotropic subspace $U$ of dimension $k$ in $V$ is
a maximal parabolic subgroup of $Sp_{2m}(q)$ which is isomorphic to a
semidirect product $A \cdot (GL_k(q) \times Sp_{2m-2k}(q))$ 
where $A$ is a normal 2-subgroup of $P_U$ 
($A$ is the unipotent radical of $P_U$ and $GL_k(q) \times Sp_{2m-2k}(q)$ is a Levi factor of $P_U$),
cf.~\cite[Section 3.5.4]{W}.
Since $P_U$ contains a Borel (Sylow-2) subgroup of $\Aut(V)$,
the order of $P_U$ is therefore the product of the 2-part of the order of $\Aut(V)$ and the odd part of the order of the
Levi factor. 
Explicitly,
$$
\order{ \Aut(V) } = \order{ Sp_{2m}(q) } =  q^{m^2} \prod_{i=1}^m (q^{2i} - 1) ,
$$
and the 2-part of this order is $q^{m^2}$.  The order of $GL_k(q)$ is $(q^k - q^{k-1})(q^k - q^{k-2}) \cdots (q^k - 1)$,
the odd part of which is $(q-1)(q^2 - 1) \cdots (q^k - 1)$, and the odd part of the order of $Sp_{2m-2k}(q)$ is  
$(q^2 - 1) (q^4 - 1) \cdots (q^{2(m-k)} - 1)$. Hence for such $V$ we have 
$$
\order{\Aut(V,U)} = \order{P_U} = q^{m^2} \prod_{i=1}^k (q^i - 1) \prod_{i=1}^{m-k} (q^{2i} - 1)
$$
for any maximal totally isotropic subspace $U$ of dimension $k$. 

When $V$ has dimension $n = 2m$ with a nonalternating form,
embed $V$ as a subspace of codimension 1 in $V_1 = V \perp \gp{v_{n+1}}$
where $v_{n+1}$ has length 1, and let $\vcanone$ denote the canonical element in $V_1$.
Since $V = \gp{v_{n+1}}^\perp$, the isometries of $V$ are the isometries of $V_1$ that fix $v_{n+1}$.
Now $\Aut (V_1) = \Aut (V_{1,\textup{alt}})$, and under this identification the
isometries that fix $v_{n+1}$ are those that fix 
$v_{n+1} + \vcanone \in V_{1,\textup{alt}}$.
This is the normal subgroup of index $q-1$ in the maximal parabolic subgroup of $\Aut(V_{1,\textup{alt}})$
fixing the 1-dimensional space $\gp{ v_{n+1} + \vcanone }$.  
By \cite[Section 3.5.4]{W}, in this case the group of isometries of $V$ is 
$$
\Aut (V)  \iso  E \cdot Sp_{2m-2} (q) \qquad \text{where } E \iso (\F_q)^{n-1} \textup{ as an additive abelian group}
$$
(note the slight misstatement in \cite{W}: $E$ is abelian, not special, in characteristic 2).
The stabilizer of any totally isotropic subspace $U$ of dimension $k$ in $V$
also stabilizes the subspace $U_1 = \gp{U,\vcan}$, which has dimension $k_1 = k$ or $k+1$.
The stabilizer in $\Aut (V)$ of $U_1$
is $E$ together with a maximal parabolic subgroup stabilizing a $(k_1 {-} 1)$-dimensional
totally isotropic space in $Sp_{2m-2} (q)$, for $1 \le k_1 \le m$.
If $\vcan \in U$, i.e., $U = U_1$, this gives the order of the stabilizer of $U$.
Otherwise $U$ is a complement to $\gp{\vcan}$ in $U_1$.  Since, by Witt's Theorem,
all such complements are isometric under the action of the stabilizer of $U_1$ in $V$,
and since there are $q^k$ of them, the stabilizer of $U$ has index $q^k$ in the stabilizer of $U_1$.  
In either case, the explicit formula for $ \order{\Aut(V,U)} $ follows easily.

The orders of the various stabilizers are recorded in the following proposition.  
These orders may also be obtained directly by somewhat tedious but elementary counting arguments (e.g., when
$V$ is alternating, completing a basis for $U$ to a hyperbolic basis for $V$ and computing the 
number of possible images of each of the basis elements, etc.).  
The transitivity statements in the proposition are immediate from Witt's Theorem \ref{prop:witt}.  

\begin{proposition} \label{prop:isometryorders}
Suppose $V$ is vector space of dimension $n \ge 1$ over the finite field $\F_q$ of characteristic 2
with a nondegenerate symmetric bilinear form
$b$, let $\Aut(V)$ be the group of isometries of $V$, and for any subspace $U$ of $V$ 
let $\Aut(V,U)$ be the stabilizer of $U$ in $\Aut(V)$.

\begin{enumerate}
\item[{1.}]
Suppose $b$ is alternating, so $n = 2m$ is even. Then $\Aut(V)$ is transitive on the set of totally isotropic
subspaces $U$ of dimension $k$ and for such a $U$
$$
\order{ \Aut(V,U) } = q^{m^2} \prod_{i = 1}^{k} (q^i - 1) \prod_{i = 1}^{m-k} (q^{2 i} - 1) .
$$
In particular ($k = 0$), $\order{ \Aut(V) } = q^{m^2}  \prod_{i = 1}^{m} (q^{2 i} - 1)$.

\item[{2.}]

\begin{enumerate}

\item[{(i)}]
Suppose $b$ is not alternating and $n = 2m+1$ is odd.  Then $\Aut(V)$ is transitive on the set of totally isotropic
subspaces $U$ of dimension $k$ and for such a $U$
$$
\order{ \Aut(V,U) } =  q^{m^2} \prod_{i = 1}^{k} (q^i - 1) \prod_{i = 1}^{m-k} (q^{2 i} - 1) .
$$
In particular ($k = 0$), $\order{ \Aut(V) } = q^{m^2}  \prod_{i = 1}^{m} (q^{2 i} - 1)$.

\item[{(ii)}]
Suppose $b$ is not alternating, $n = 2m$ is even, and $\vcan$ is the unique nonzero element in the radical of the
alternating subspace of $V$.  Then 

\begin{enumerate}

\item[{(a)}]
$\Aut(V)$ is transitive on the set of totally isotropic
subspaces $U$ of dimension $k$ with $\vcan \in U$ and for such a $U$
$$
\order{ \Aut(V,U) } = q^{m^2} \prod_{i = 1}^{k-1} (q^i - 1) \prod_{i = 1}^{m-k} (q^{2 i} - 1) ;
$$

\item[{(b)}]
$\Aut(V)$ is transitive on the set of totally isotropic
subspaces $U$ of dimension $k$ with $\vcan \notin U$ and for such a $U$
$$
\order{ \Aut(V,U) } =  q^{m^2 - k} \prod_{i = 1}^{k} (q^i - 1) \prod_{i = 1}^{m-k-1} (q^{2 i} - 1) .
$$

\end{enumerate}       

\noindent
In particular (either $k = 1$ in (a) or $k = 0$ in (b)), $\order{ \Aut(V) } = q^{m^2}  \prod_{i = 1}^{m-1} (q^{2 i} - 1)$.

\end{enumerate}       

\end{enumerate}       

\end{proposition}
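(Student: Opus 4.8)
The plan is to reduce all three cases to computations in a symplectic group $Sp_{2m}(q)$ and then apply orbit--stabilizer together with Witt's Theorem (Proposition~\ref{prop:witt}), which supplies every transitivity assertion in the proposition. First I would record the two standard counts in a nondegenerate symplectic space $W$ of dimension $2m$ over $\F_q$: that $\order{Sp_{2m}(q)}=q^{m^2}\prod_{i=1}^{m}(q^{2i}-1)$, and that the number of totally isotropic $k$-subspaces of $W$ is $N_k=\bigl(\prod_{i=m-k+1}^{m}(q^{2i}-1)\bigr)\big/\bigl(\prod_{j=1}^{k}(q^j-1)\bigr)$. Both follow from the same bookkeeping: in characteristic $2$ every vector of $W$ is isotropic, so a hyperbolic pair (and, more generally, an ordered basis of a totally isotropic subspace) can be built one vector at a time, each new vector ranging over the orthogonal complement of the span of its predecessors minus that span; one divides the number of ordered isotropic $k$-frames by $\order{\GL_k(q)}$ to get $N_k$, and the order of $Sp_{2m}(q)$ telescopes using $\sum_{i=1}^{m}(2i-1)=m^2$.

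For Case~1 ($b$ alternating, $n=2m$) and Case~2(i) ($b$ nonalternating, $n=2m+1$), Propositions~\ref{prop:symspaces} and~\ref{prop:witt}(2) give $\Aut(V)\simeq\Aut(\Valt)\simeq Sp_{2m}(q)$; moreover, when $n$ is odd the totally isotropic subspaces of $V$ are exactly those of $\Valt$, since $\vcan$ is nonisotropic. Witt's Theorem makes $\Aut(V)$ transitive on totally isotropic $k$-subspaces, so $\order{\Aut(V,U)}=\order{Sp_{2m}(q)}/N_k$, which simplifies to $q^{m^2}\prod_{j=1}^{k}(q^j-1)\prod_{i=1}^{m-k}(q^{2i}-1)$ once the tail of $\prod_i(q^{2i}-1)$ is cancelled.

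For Case~2(ii) ($b$ nonalternating, $n=2m$) I would use the embedding $V\hookrightarrow V_1=V\perp\gp{v_{n+1}}$ from the discussion above: since $\gp{v_{n+1}}^{\perp}=V$ and every isometry of $V_1$ fixes $\vcanone=\vcan+v_{n+1}$, the group $\Aut(V)$ is identified with the stabilizer of the \emph{vector} $\vcan$ inside $\Aut(V_{1,\textup{alt}})\simeq Sp_{2m}(q)$; transitivity on nonzero vectors then gives $\order{\Aut(V)}=\order{Sp_{2m}(q)}/(q^{2m}-1)=q^{m^2}\prod_{i=1}^{m-1}(q^{2i}-1)$. Here $U\subseteq\Valt\subseteq V_{1,\textup{alt}}$ and $\vcan\perp U$ because $\gp{\vcan}=\Valt^{\perp}$ in $V$, so $U_1:=U+\gp{\vcan}$ is totally isotropic of dimension $k$ if $\vcan\in U$ and $k+1$ if $\vcan\notin U$. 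In subcase~(a) I would compute the $Sp_{2m}(q)$-orbit of the pair $(\vcan,U)$ to have size $N_k(q^k-1)$ (choose $U$, then a nonzero vector of it) and divide; in subcase~(b) I would compute the orbit of the triple $(\vcan,U_1,U)$ to have size $N_{k+1}(q^{k+1}-1)q^k$, the factor $q^k$ being the number of $k$-dimensional complements of $\gp{\vcan}$ in $U_1$, and divide. These produce $q^{m^2}\prod_{j=1}^{k-1}(q^j-1)\prod_{i=1}^{m-k}(q^{2i}-1)$ and $q^{m^2-k}\prod_{j=1}^{k}(q^j-1)\prod_{i=1}^{m-k-1}(q^{2i}-1)$ respectively, and the extremal values of $k$ recover the stated orders of $\Aut(V)$.

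The work here is almost all bookkeeping rather than ideas, and the place where I expect to have to be most careful is the even nonalternating case, where there is a \emph{double} reduction --- first from $\Aut(V)$ to a vector-stabilizer inside $V_{1,\textup{alt}}\simeq Sp_{2m}(q)$, then from that to an orbit count on the appropriate partial flag --- and where the $q$-power exponents (especially the $q^k$ and $q^{m^2-k}$ in subcase~(b)) have to telescope exactly. A cleaner but more structural alternative is to identify each stabilizer with a maximal parabolic subgroup, or an index-$(q-1)$ subgroup of one, and read its order off its Levi factor $\GL_k(q)\times Sp_{2m-2k}(q)$, as indicated in the paragraphs preceding the proposition; there the only subtlety is remembering that in characteristic $2$ the unipotent radical is elementary abelian rather than special.
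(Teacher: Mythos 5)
Your proposal is correct: the counts $\order{Sp_{2m}(q)}=q^{m^2}\prod_{i=1}^m(q^{2i}-1)$ and $N_k=\prod_{i=m-k+1}^{m}(q^{2i}-1)\big/\prod_{j=1}^{k}(q^j-1)$ are right, the identification of $\Aut(V)$ in the even nonalternating case with the stabilizer of the vector $\vcan=v_{n+1}+\vcanone$ inside $\Aut(V_{1,\textup{alt}})\simeq Sp_{2m}(q)$ is exactly the reduction used in the text, and your orbit sizes $N_k(q^k-1)$ for the pairs $(\vcan,U)$ and $N_{k+1}(q^{k+1}-1)q^k$ for the triples $(\vcan,U_1,U)$ do telescope to the stated formulas (I checked all four). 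The difference from the paper is in how the stabilizer orders are extracted once the reduction to $Sp_{2m}(q)$ is made: the paper identifies $\Aut(V,U)$ with a maximal parabolic subgroup (or, in case 2(ii), with an index-$q^k$ subgroup of the stabilizer of $U_1=\gp{U,\vcan}$ inside an index-$(q-1)$ subgroup of a parabolic) and reads the orders off the known structure with Levi factor $\GL_k(q)\times Sp_{2m-2k}(q)$, citing \cite{W}; you instead run orbit--stabilizer directly against counts of isotropic $k$-subspaces, pairs, and triples, with transitivity on these configurations supplied by the symplectic Witt theorem. This is precisely the ``somewhat tedious but elementary counting argument'' the paper mentions as an alternative but does not carry out: your route is self-contained and makes every $q$-power and cyclotomic factor verifiable by hand (including the $q^k$ complement count, which appears in both arguments), while the paper's route is shorter and places the stabilizers within standard group-theoretic territory. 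One small point worth making explicit when you write it up: in case 2(ii) you should note that for $U\subseteq V$ totally isotropic one has $U\subseteq\Valt=\gp{\vcan}^{\perp}$, so $U$ is also totally isotropic in $V_{1,\textup{alt}}$ and orthogonal to $\vcan$, and that the restriction identification matches the two actions on $U$, so that $\Aut(V,U)$ really is the stabilizer in $Sp_{2m}(q)$ of the pair (respectively triple) you count; this is the only place where the double reduction could silently go wrong, and it does not.
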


\subsection*{Maximal Totally Isotropic Subspaces of Orthogonal Direct Sums} \label{subsec:totisoorthosums}
In this subsection we prove the main result of this appendix, Theorem  \ref{thm:structuretheorem},
describing the structure of the maximal totally isotropic subspaces of an orthogonal direct
sum of two nondegenerate symmetric spaces as in Proposition \ref{prop:symspaces}.

For the remainder of this subsection, $W$ is a vector space over $\F$ of dimension $n \ge 1$ with a nondegenerate
symmetric bilinear form $b$, and $W'$ is a vector space over $\F$ of dimension $n' \ge 1$ with a nondegenerate 
symmetric bilinear form $b'$, where we make the additional assumption that $n$ and $n'$ have the same parity.

The form $B = b \perp b'$ defines a nondegenerate form on the orthogonal direct sum
$V = W \perp W'$ of even dimension $n+n'$: $B(w_1 + w_1', w_2 + w_2') = b(w_1, w_2) + b'(w_1', w_2') $. 
We identify $W = W \perp 0$ as a subspace of $V$, and similarly for $W'$.

Suppose $S$ is a maximal totally isotropic subspace of $V = W \perp W'$.  Let $U$ denote the totally
isotropic subspace $S \cap W$ and similarly let $U' = S \cap W'$.  Suppose $U$ has 
dimension $k$ and let $U^\perp$ denote the subspace of elements of $W$ orthogonal to $U$;
similarly suppose $U'$ has dimension $k'$ with orthogonal subspace $(U')^\perp$ in $W'$.

Since $U$ is totally isotropic, $U = \rad (U^\perp)$.  If $\KK$ is any vector space complement 
for $U$ in $U^\perp$ it follows that $\KK \iso U^\perp / \rad (U^\perp)$ is a nondegenerate space whose type is
independent of the complement chosen, that 
$\dim \KK = n - 2 k$, and that $U^\perp = U \perp \KK$. 
Similarly, for any vector space complement $\KK'$ 
for $U'$ in $(U')^\perp$ we have $(U')^\perp = U' \perp \KK'$ and $\KK'$ is nondegenerate of dimension $n' - 2k'$
whose type is independent of the complement chosen.  
Because $n$ and $n'$ have the same parity, the same is true for $\dim \KK$ and $\dim \KK'$.

The subspace of elements of the full space $V$ that are orthogonal to $U$ is $U \perp \KK \perp W'$, with a 
similar statement for $U'$.  Since $S$ is totally isotropic and contains $U$ and $U'$, $S$ is contained 
in the intersection of these orthogonal complements, so we obtain
\begin{equation} \label{eq:Scontainment}
U \perp U' \subseteq S \subseteq U \perp \KK \perp \KK' \perp U' .
\end{equation}
It follows immediately that
\begin{equation} \label{eq:Sdecomposition}
S = U \perp \widetilde S \perp U'
\end{equation}
where $\widetilde S = S \cap (\KK \perp \KK')$.  

Since $S$ is maximal totally isotropic in $V$, $\widetilde S$ must be maximal totally isotropic in $\KK \perp \KK'$.  
Also $\widetilde S \cap \KK$ is contained in $S \cap W = U$, so $U \cap \KK = 0$ implies 
$\widetilde S \cap \KK  = 0$ and similarly $\widetilde S \cap \KK' = 0$.

\begin{lemma}  \label{lem:diagonalsubspace}
Suppose $\KK$ and $\KK'$ are nondegenerate spaces whose dimensions have the same
parity and $\widetilde S$ is a maximal totally
isotropic space of $\KK \perp \KK'$ that satisfies $\widetilde S \cap \KK =  \widetilde S \cap \KK' = 0$.
Then $\KK$ and $\KK'$ are isometric, and there is a unique isometry
$\tau: \KK \rightarrow \KK'$ such that
$$
\widetilde S = \{ w + \tau w \mid w \in \KK \}.
$$
In particular, $\dim \KK = \dim \KK' = \dim \widetilde S$.

\end{lemma}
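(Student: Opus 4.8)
The plan is to use the two transversality hypotheses $\widetilde S \cap \KK = \widetilde S \cap \KK' = 0$ to exhibit $\widetilde S$ as the graph of a linear map $\tau\colon \KK \to \KK'$, and then to extract the isometry property of $\tau$ from the fact that $\widetilde S$ is totally isotropic. Write $\pi\colon \KK \perp \KK' \to \KK$ and $\pi'\colon \KK \perp \KK' \to \KK'$ for the two coordinate projections. Since $\ker \pi = \KK'$, the hypothesis $\widetilde S \cap \KK' = 0$ says precisely that $\pi$ restricted to $\widetilde S$ is injective; symmetrically $\widetilde S \cap \KK = 0$ makes $\pi'|_{\widetilde S}$ injective. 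In particular $\dim \widetilde S \le \dim \KK$ and $\dim \widetilde S \le \dim \KK'$.

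First I would pin down the dimensions. By Proposition \ref{prop:symspaces}, $\KK \perp \KK'$ is a nondegenerate symmetric space over $\F$ of dimension $\dim \KK + \dim \KK'$, which is even by the parity hypothesis, and the explicit decomposition in that proposition exhibits a totally isotropic subspace of exactly half this dimension (the isotropic generators of the constituent hyperbolic planes, together with $\langle \vcan \rangle$ in the even nonalternating case). On the other hand, any totally isotropic subspace $T$ satisfies $T \subseteq T^\perp$, and nondegeneracy gives $\dim T^\perp = (\dim \KK + \dim \KK') - \dim T$, so $\dim T \le (\dim \KK + \dim \KK')/2$. Hence a \emph{maximal} totally isotropic subspace has dimension exactly $(\dim \KK + \dim \KK')/2$. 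Applying this to $\widetilde S$ and combining with the two inequalities from the previous paragraph forces $\dim \KK = \dim \KK' = \dim \widetilde S$.

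Now $\pi|_{\widetilde S}$ and $\pi'|_{\widetilde S}$ are isomorphisms, so I would set $\tau = (\pi'|_{\widetilde S}) \circ (\pi|_{\widetilde S})^{-1}\colon \KK \to \KK'$, a linear isomorphism for which $\widetilde S = \{\, w + \tau w \mid w \in \KK \,\}$ by construction. To see $\tau$ is an isometry, take $w_1, w_2 \in \KK$; since $w_1 + \tau w_1$ and $w_2 + \tau w_2$ both lie in the totally isotropic space $\widetilde S$, and since $B = b \perp b'$ annihilates the cross terms,
\[
0 = B(w_1 + \tau w_1,\, w_2 + \tau w_2) = b(w_1, w_2) + b'(\tau w_1, \tau w_2),
\]
which in characteristic $2$ is exactly $b(w_1, w_2) = b'(\tau w_1, \tau w_2)$. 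Thus $\tau$ is an isometry, so in particular $\KK \iso \KK'$. Uniqueness is then immediate, even among all linear maps: if $\tau'$ satisfies $\widetilde S = \{\, w + \tau' w \mid w \in \KK \,\}$, then for each $w$ the vector $w + \tau' w \in \widetilde S$ has $\KK$-component $w$, so by injectivity of $\pi|_{\widetilde S}$ it must equal $w + \tau w$, giving $\tau' = \tau$.

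I do not expect a serious obstacle here; the only step that is not completely formal is the dimension bound, namely that a maximal totally isotropic subspace of a nondegenerate symmetric space of even dimension $N$ has dimension $N/2$. This is not displayed as a separate statement in the excerpt, but it follows at once from Proposition \ref{prop:symspaces}, which I would cite both for the upper bound (together with the elementary inclusion $T \subseteq T^\perp$ from nondegeneracy) and for the existence of a totally isotropic subspace attaining $N/2$.
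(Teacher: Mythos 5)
Your construction is the same as the paper's: you realize $\widetilde S$ as the graph of the linear isomorphism $\tau$ coming from the two coordinate projections, read off that $\tau$ is an isometry from the vanishing of $B$ on the graph (the cross terms die because $\KK \perp \KK'$ is an orthogonal sum), and get uniqueness from injectivity of the projection to $\KK$; all of that is correct. The one place you go beyond the paper is the dimension count, and there your justification has a gap. The paper simply asserts $\dim \widetilde S = (\dim\KK + \dim\KK')/2$ for a maximal totally isotropic $\widetilde S$; you try to derive it from (i) the bound $\dim T \le N/2$ for every totally isotropic $T$ (with $N = \dim\KK + \dim\KK'$) and (ii) the existence, via Proposition \ref{prop:symspaces}, of a totally isotropic subspace of dimension $N/2$. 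But (i) and (ii) together only show that the \emph{maximum possible} dimension of a totally isotropic subspace is $N/2$; they do not show that a totally isotropic subspace that is maximal \emph{under inclusion} attains this maximum, and inclusion-maximality is exactly what the hypothesis provides (in the application, $\widetilde S = S \cap (\KK \perp \KK')$ inherits inclusion-maximality from $S$). A priori an inclusion-maximal isotropic subspace could be smaller than $N/2$, so "Hence a maximal totally isotropic subspace has dimension exactly $N/2$" does not follow from what you wrote.

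The fix is a short extension argument. Suppose $T \subseteq \KK \perp \KK'$ is totally isotropic with $\dim T \le N/2 - 1$. Then $\dim T^\perp = N - \dim T \ge N/2 + 1$, and the isotropic vectors of $\KK \perp \KK'$ form a subspace of codimension at most $1$ (the alternating subspace, as at the start of the appendix), so the isotropic vectors inside $T^\perp$ form a subspace of dimension at least $N - \dim T - 1 \ge N/2 > \dim T$. Picking such an isotropic $v \in T^\perp \setminus T$, the span of $T$ and $v$ is again totally isotropic, so $T$ was not maximal. Hence every inclusion-maximal totally isotropic subspace has dimension exactly $N/2$, and with that in hand your argument — including the deduction $\dim\KK = \dim\KK' = N/2$ from $\dim\widetilde S \le \min(\dim\KK,\dim\KK')$ and $\dim\KK + \dim\KK' = N$, the graph construction, the isometry identity $b(w_1,w_2) = b'(\tau w_1,\tau w_2)$, and the uniqueness of $\tau$ — goes through and coincides with the paper's proof.
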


\begin{proof}
Suppose $\dim \KK = a$ and $\dim \KK' = b$ where $a$ and $b$ have the same parity.
Since $\widetilde S$ is a maximal totally isotropic subspace we have
$ \dim \widetilde S =  ( a + b )/2 $.
The canonical projection of $\widetilde S$ to $\KK$ is injective since
$\widetilde S \cap \KK = 0$, so $\dim \widetilde S \le \dim \KK$, and similarly for $\KK'$, so
$ \dim \widetilde S \le \min (a,b)$.  
Together these yield
$$
 \dim \widetilde S =  (a + b)/2  \ge \min (a,b) \ge \dim \widetilde S ,
$$
which implies that equality holds throughout, and that $a = b$ since $a$ and $b$ have the same parity.
Hence $\dim \KK = \dim \KK' = \dim \widetilde S$ and the canonical projections of
$\widetilde S$ to $\KK$ and to $\KK'$ are isomorphisms.  It follows that for each $w \in \KK$ there
is a unique $w' \in \KK'$ with $w + w' \in \widetilde S$.  
Since $\widetilde S$ is a subspace, the resulting map $\tau : \KK \rightarrow \KK'$
mapping $w$ to $w'$ is an isomorphism of vector spaces over $\FF$ 
and  $\widetilde S = \{ w + \tau w \mid w \in \KK \}$. 
Finally, if $s_1 = w_1 + w_1'$ and $s_2 = w_2 + w_2'$ with $w_1, w_2 \in \KK$, $w_1', w_2' \in \KK'$ 
are two elements in the totally isotropic
space $\widetilde S$ then $0 = B(s_1, s_2) = b(w_1,w_2) + b'(w_1',w_2')$.  Hence  
$b(w_1,w_2) = b'(\tau(w_1),\tau(w_2))$ for all $w_1, w_2 \in \KK$, i.e.,
$\tau$ is an isometry.
\end{proof}

Since $n$ and $n'$ have the same parity, so do $\dim \KK = n - 2k$ and $\dim \KK' = n' - 2k'$, hence 
by the lemma, the complements $\KK$ and $\KK'$ in \eqref{eq:Scontainment} and \eqref{eq:Sdecomposition}
are isometric.  This implies the dimensions of $U$ and $U'$ are related by $n - n' = 2(k - k')$, and
that there is a compatibility constraint imposed on $U$ and $U'$ to ensure
$\KK \iso U^\perp / \rad (U^\perp)$ and $\KK' \iso (U')^\perp / \rad ((U')^\perp)$ are of the same type.

\begin{remark} \label{remark:Ktype}
As previously noted, the type of the nondegenerate space $\KK$ is independent of the vector space complement chosen.  
Explicitly: 
\begin{enumerate} 
\item[{(a)}]
if $W$ is alternating, then $\KK$ is alternating,
\item[{(b)}]
if $W$ is of odd dimension, then $\KK$ is nonalternating of odd dimension, and 
\item[{(c)}]
if $W$ is nonalternating of even dimension, then $\KK$ has even dimension and is alternating if and only if $\wcan \in U$.
\end{enumerate}
The reason for (c) is that, since $U$ is always contained in $\subalt{W}$, the even dimensional space $\KK$
is contained in $\subalt{W}$ if and only if $U^\perp = U \perp \KK \subseteq \subalt{W} = \gp{\wcan}^\perp$, i.e.,
$\KK$ is alternating if and only if $\wcan \in U$.   There are similar statements for the spaces $U'$ and $\KK'$ in $W'$.
\end{remark}

\begin{definition} \label{def:isotropytype}
The totally isotropic subspaces $U$ of $W$ and $U'$ of $W'$ will be said to have \defi{compatible isotropy types}
if they satisfy the condition that $\rad (\subalt{W } ) \subseteq U$ if and only if
$\rad (\subalt{W'} ) \subseteq U'$.
\end{definition}

\begin{remark}
If $W = W'$, the content of the statement that $U_1$ and $U_2$ have
compatible isotropy types is that, in the case when $W$ is nonalternating of even
dimension, either both $U_1$ and $U_2$ contain $\wcan$ or both do not.  As such, in the situation $W = W'$ we shall
say that the totally isotropic subspaces $U_1$ and $U_2$ of $W$ have the {\it same isotropy type}. 
\end{remark}

With these preliminaries we have the following structure theorem for maximal totally isotropic subspaces of orthogonal direct sums.

\begin{theorem}[Structure Theorem] \label{thm:structuretheorem} 
Let $S$ be a maximal totally isotropic subspace
of $V = W \perp W'$ as above, with $U = S \cap W$ of dimension $k$ and
$U' = S \cap W'$ of dimension $k'$.  Then
\begin{enumerate}
\item[{(a)}]
$n - n' = 2(k - k')$, 
\item[{(b)}]
$U$ and $U'$ have compatible isotropy types,
\end{enumerate}
and $S = U \perp \widetilde S \perp U'$ where $\widetilde S$ is a diagonal subspace of $\KK \perp \KK'$:
$$
\widetilde S = \{ w + \tau w \mid w \in \KK \}
$$
for a unique isometry $\tau: \KK \rightarrow \KK'$ from a vector space complement, $\KK$, for $U$ in $U^\perp$ to 
a vector space complement, $\KK'$, for $U'$ in $(U')^\perp$.

Conversely, if $U$ is a totally isotropic subspace of $W$ of dimension $k$ and 
$U'$ is a totally isotropic subspace of $W'$ of dimension $k'$ satisfying (a) and (b)
then there exist precisely $\order{\Aut (\KK)}$ maximal totally isotropic subspaces $S$ with
$S \cap W = U$ and $S \cap W' = U'$, where $\KK$ is any vector space complement for $U$ in $U^\perp$.

\end{theorem}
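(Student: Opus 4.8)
The plan is to prove the two directions of the theorem separately, with the forward direction essentially already assembled from the lemmas preceding the statement, and the converse direction being the genuinely new content. For the forward direction, I would simply collect what has been established: given a maximal totally isotropic $S$, equations \eqref{eq:Scontainment} and \eqref{eq:Sdecomposition} give the decomposition $S = U \perp \widetilde S \perp U'$ with $\widetilde S = S \cap (\KK \perp \KK')$ maximal totally isotropic in $\KK \perp \KK'$ and meeting each of $\KK, \KK'$ trivially; Lemma \ref{lem:diagonalsubspace} then supplies the unique isometry $\tau : \KK \to \KK'$ realizing $\widetilde S$ as a diagonal subspace (and forces $\dim \KK = \dim \KK'$). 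From $\dim \KK = n - 2k$ and $\dim \KK' = n' - 2k'$ one reads off (a), namely $n - n' = 2(k - k')$. For (b), I would invoke Remark \ref{remark:Ktype}: the type of $\KK$ is determined by whether $\rad(\subalt{W}) \subseteq U$, the type of $\KK'$ by whether $\rad(\subalt{W'}) \subseteq U'$, and since $\tau$ is an isometry these types agree, which is exactly the definition (Definition \ref{def:isotropytype}) of compatible isotropy types.

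The converse is the crux. Suppose $U \leq W$ and $U' \leq W'$ are totally isotropic of dimensions $k, k'$ satisfying (a) and (b). I would fix once and for all a vector space complement $\KK$ for $U$ in $U^\perp$ and a complement $\KK'$ for $U'$ in $(U')^\perp$; as noted before the theorem, $\KK$ and $\KK'$ are nondegenerate, of dimensions $n - 2k$ and $n' - 2k'$, which are equal by (a), and of the same type by (b) combined with Remark \ref{remark:Ktype}. Hence $\KK \iso \KK'$ and at least one isometry $\tau_0 : \KK \to \KK'$ exists. The key observation is that the construction $\tau \mapsto S_\tau := U \perp \{ w + \tau w : w \in \KK\} \perp U'$ gives, for each isometry $\tau : \KK \to \KK'$, a maximal totally isotropic subspace of $V$ with $S_\tau \cap W = U$ and $S_\tau \cap W' = U'$: total isotropy follows since $U, U'$ are isotropic and mutually orthogonal, $\KK \perp U'$ and $\KK' \perp U$ are orthogonal, and $B(w + \tau w, w_1 + \tau w_1) = b(w, w_1) + b'(\tau w, \tau w_1) = 0$ because $\tau$ is an isometry; the dimension count $k + (n - 2k) + k' = (n + n')/2$ (using (a)) shows maximality; and the intersection with $W$ is exactly $U$ because $\KK \cap U = 0$ and $\{w + \tau w\}$ meets $W$ only in $w = 0$. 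Moreover the forward direction shows every maximal totally isotropic $S$ with $S \cap W = U$, $S \cap W' = U'$ arises this way for a \emph{unique} $\tau$ (Lemma \ref{lem:diagonalsubspace} gives uniqueness once $\KK, \KK'$ are pinned down — here one must note that $\widetilde S = S \cap (\KK \perp \KK')$ is independent of the complement only up to the identification, so a short argument is needed that the assignment $S \mapsto \tau$ is a bijection onto the set of isometries $\KK \to \KK'$).

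Thus the set of such $S$ is in bijection with $\mathrm{Isom}(\KK, \KK')$, and it remains to count the latter: fixing $\tau_0$, every isometry $\tau : \KK \to \KK'$ is $\tau_0 \circ \sigma$ for a unique $\sigma \in \Aut(\KK)$, so $\#\mathrm{Isom}(\KK, \KK') = \order{\Aut(\KK)}$, giving the stated count. I expect the main obstacle to be the bijectivity claim in the converse: one must check carefully that different maximal totally isotropic $S$ with the prescribed intersections yield different $\tau$ and that every $\tau$ is hit — the surjectivity is the easy half (the explicit $S_\tau$ above), but injectivity requires showing that $S$ is recovered from $\tau$ together with $U, U'$, i.e., that $S = U \perp \widetilde S \perp U'$ genuinely, which is \eqref{eq:Sdecomposition}, and that $\widetilde S$ determines $\tau$ given the fixed choice of $\KK$. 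A secondary subtlety worth a sentence is confirming that the count $\order{\Aut(\KK)}$ does not depend on which complement $\KK$ was chosen, which is immediate since any two choices are isometric.
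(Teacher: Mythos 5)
Your proposal is correct and follows essentially the same route as the paper: the forward direction is assembled from \eqref{eq:Scontainment}, \eqref{eq:Sdecomposition}, Lemma \ref{lem:diagonalsubspace}, and Remark \ref{remark:Ktype}, and the converse constructs the diagonal subspaces and counts them by the isometries $\KK \to \KK'$, which form a torsor under $\Aut(\KK)$. The only cosmetic difference is that the paper sidesteps your worry about the choice of complement by identifying the relevant subspaces with totally isotropic diagonal subspaces of the quotient $(U^\perp \perp (U')^\perp)/(U \perp U') \iso \KK \perp \KK'$, whereas you fix $\KK,\KK'$ once and for all, which works equally well.
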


\begin{proof}
The equality in (a) was noted previously.  For (b), if $W$ and $W'$ are both nonalternating of even dimension, then the statement that
the isometric subspaces $\KK$ and $\KK'$ are both
alternating or both nonalternating is the statement that $\wcan \in U$ if and only if $\wcan' \in U'$, i.e.,
$U$ and $U'$ have compatible isotropy types.    
The other cases to verify for (b) are checked similarly.  The decomposition statement for
$S$ follows from \eqref{eq:Sdecomposition} and Lemma \ref{lem:diagonalsubspace}.

For the converse, let $\KK$ be any vector space complement to $U$ in the subspace $U^\perp$ 
and define $\KK'$ similarly for $U'$.  Since $\KK \iso U^\perp / \rad(U^\perp)$, $\KK$ is nondegenerate,
as is $\KK'$.  By (a), $\dim \KK = \dim \KK'$ and by (b) the spaces
have the same type, so there is an isometry $\tau : \KK \rightarrow \KK'$.  Then
the diagonal subspace $\widetilde S = \{ w + \tau w \mid w \in \KK \}$ is totally isotropic 
and $S = U \perp \widetilde S \perp U'$ is a maximal totally isotropic subspace of $V$ with 
$S \cap W = U$ and $S \cap W' = U'$. 

From the containments in \eqref{eq:Scontainment}, the subspaces $S$ correspond bijectively with the
totally isotropic diagonal subspaces of the quotient $(U^\perp \perp (U')^\perp ) / (U \perp U')$, which is isometric with
$\KK \perp \KK'$ for any complements $\KK$ and $\KK'$, and, by Lemma \ref{lem:diagonalsubspace} there are $\order{\Aut (\KK)}$
such subspaces.
\end{proof}

By Witt's Theorem, the group of isometries $\Aut(W)$ acts transitively on the totally isotropic subspaces $U$ of any fixed
dimension $k$ and isotropy type, as does $\Aut(W')$ on the totally isotropic subspaces $U'$ of 
dimension $k' = k + (n' - n)/2$ of given isotropy type.  Fix such a subspace $U$ of $W$ and a compatible $U'$ of $W'$.

As we have seen, any two vector space complements $\KK_1$ and $\KK_2$ for $U$ in $U^\perp$ are isometric and
any isometry from $\KK_1$ to $\KK_2$ together with the identity map on $U$ defines an isometry of 
$U^\perp$ to itself, which can then be extended to an isometry of $W$ by Witt's Theorem.  Similarly, there is an isometry
of $W'$ that is the identity on $U'$ and extends any given isometry of one complement for $U'$ in $(U')^\perp$ to another.
In particular, if $\tilde{S_1} = \{ k + \tau_1(k) \mid k \in \KK \}$ and $\tilde{S_2} = \{ k + \tau_2(k) \mid k \in \KK \}$ 
are two diagonal subspaces of
$\KK \perp \KK'$ as in Theorem \ref{thm:structuretheorem}, then the isometry $\tau_2 \tau_1^{-1}$ of $\KK'$ 
can be extended to an isometry $\sigma'$ of $W'$ that stabilizes (in fact, can be taken to be the identity on) $U'$.  Then
$1 \perp \sigma' \in \Aut(W,U) \perp \Aut(W',U')$ is an isometry from $U \perp \tilde{S_1} \perp U'$ to $U \perp \tilde{S_2} \perp U'$.

Combined with Theorem \ref{thm:structuretheorem} these observations give us the following.  

\begin{corollary} \label{corollary:transitive}
Under the action of $\Aut(W,U) \perp \Aut(W',U')$ there is a single orbit of maximal totally isotropic
subspaces $S$ having $S \cap W = U$ and $S \cap W' = U'$;  this orbit has size $\order{\Aut(\KK)}$ 
where $\KK$ is any vector space complement for $U$ in $U^\perp$. 
 
Under the action of the group $\Aut(W) \perp \Aut(W')$ there is a single orbit
of maximal totally isotropic
subspaces $S$ in Theorem \ref{thm:structuretheorem} having the same $k$ (so the same $k'$), the same
isotropy type for $U$, and the same (compatible) isotropy type for $U'$. 
\end{corollary}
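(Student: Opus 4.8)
The plan is to deduce both assertions from the Structure Theorem (Theorem~\ref{thm:structuretheorem}) together with the version of Witt's Theorem appropriate to characteristic~$2$ (Proposition~\ref{prop:witt}). For the first assertion I would fix once and for all a vector space complement $\KK$ for $U$ in $U^\perp$ and a complement $\KK'$ for $U'$ in $(U')^\perp$, so that $U^\perp = U \perp \KK$ and $(U')^\perp = U' \perp \KK'$ (orthogonal direct sums, since $U = \rad(U^\perp)$ and $U' = \rad((U')^\perp)$). Any maximal totally isotropic $S$ with $S \cap W = U$ and $S \cap W' = U'$ then satisfies \eqref{eq:Scontainment}--\eqref{eq:Sdecomposition}, so $S = U \perp \widetilde S \perp U'$ with $\widetilde S = S \cap (\KK \perp \KK')$, and by Lemma~\ref{lem:diagonalsubspace} this $\widetilde S$ is the graph $\{\,w + \tau w : w \in \KK\,\}$ of a unique isometry $\tau \colon \KK \to \KK'$. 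Given two such subspaces $S_1, S_2$ with associated isometries $\tau_1, \tau_2$, put $\beta = \tau_2 \tau_1^{-1} \in \Aut(\KK')$ and let $\sigma'_0 = 1_{U'} \perp \beta$, an isometry of $(U')^\perp = U' \perp \KK'$. If $\sigma'_0$ extends to an isometry $\sigma'$ of $W'$, then $1_W \perp \sigma' \in \Aut(W,U) \perp \Aut(W',U')$, and since $\sigma' \circ \tau_1 = \beta \tau_1 = \tau_2$ on $\KK$, one computes directly that $(1_W \perp \sigma')(S_1) = U \perp \{\,w + \tau_2 w : w \in \KK\,\} \perp U' = S_2$. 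Hence this subgroup acts transitively on the set of such $S$, which by Theorem~\ref{thm:structuretheorem} has cardinality $\order{\Aut(\KK)}$, yielding the stated orbit size.

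The step requiring the most care---and the principal obstacle---is the extension of $\sigma'_0$ from $(U')^\perp$ to $W'$. This is unconditional when $b'$ is alternating. When $W'$ is nonalternating its canonical element $\wcan'$ lies in $\subalt{W'} = \gp{\wcan'}^\perp \supseteq U'$, hence in $(U')^\perp$, and Proposition~\ref{prop:witt}(2) then requires $\sigma'_0(\wcan') = \wcan'$. Writing $\wcan' = u' + \kappa'$ with $u' \in U'$ and $\kappa' \in \KK'$, and noting $\sigma'_0$ fixes $U'$ pointwise, it suffices to show $\beta(\kappa') = \kappa'$; since $\KK'$ is nonalternating here (Remark~\ref{remark:Ktype}; we may assume $\wcan' \notin U'$, as otherwise $\wcan' \in U'$ is already fixed) and every isometry of a nonalternating space fixes its canonical element (Proposition~\ref{prop:symspaces}), it is enough to check that $\kappa'$ is the canonical element of $\KK'$. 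This I would verify by a short computation: $u' \perp \KK'$ because $u' \in \rad((U')^\perp)$, so $b'(\kappa',\kappa') = b'(\wcan',\wcan')$, which is $1$ when $n'$ is odd and $0$ when $n'$ is even---the correct length in each case; and for any isotropic $x \in \KK'$ one has $x \in \subalt{W'} = \gp{\wcan'}^\perp$, whence $b'(\kappa',x) = b'(\wcan',x) + b'(u',x) = 0$, so $\kappa'$ is orthogonal to $\subalt{\KK'}$ (odd case), resp.\ lies in $\rad(\subalt{\KK'})$ (even case), which pins down $\kappa'$ as the canonical element. This completes the extension, and with it the first assertion.

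For the second assertion I would first note that $\Aut(W)$ acts transitively on the totally isotropic subspaces of a fixed dimension and isotropy type: given two such, $U_1$ and $U_2$, choose a linear isomorphism $U_1 \to U_2$ sending $\wcan$ to $\wcan$ whenever $\wcan \in U_1$ (possible, since then $\wcan \in U_2$ as well by the matching isotropy types); this is an isometry of these trivially paired subspaces meeting the hypothesis of Proposition~\ref{prop:witt}, so it extends to $\sigma \in \Aut(W)$ with $\sigma(U_1) = U_2$, and likewise one obtains $\sigma' \in \Aut(W')$ with $\sigma'(U_1') = U_2'$. Now let $S_1, S_2$ be maximal totally isotropic subspaces with the same $k$ and the same isotropy type for the $W$-intersection (hence, by parts (a) and (b) of Theorem~\ref{thm:structuretheorem}, the same $k'$ and the same, compatible, isotropy type for the $W'$-intersection); write $U_i = S_i \cap W$ and $U_i' = S_i \cap W'$. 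The isometry $\sigma \perp \sigma'$ of $V$ preserves $W$ and $W'$, so $(\sigma \perp \sigma')(S_1)$ is maximal totally isotropic with $(\sigma \perp \sigma')(S_1) \cap W = \sigma(U_1) = U_2$ and $(\sigma \perp \sigma')(S_1) \cap W' = \sigma'(U_1') = U_2'$. Applying the first assertion to $(\sigma \perp \sigma')(S_1)$ and $S_2$ produces $\gamma \in \Aut(W,U_2) \perp \Aut(W',U_2') \subseteq \Aut(W) \perp \Aut(W')$ carrying the former onto the latter, and then $\gamma \circ (\sigma \perp \sigma') \in \Aut(W) \perp \Aut(W')$ maps $S_1$ to $S_2$, establishing the single orbit.
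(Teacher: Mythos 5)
Your proof is correct and takes essentially the same route as the paper: decompose $S=U\perp\widetilde S\perp U'$ via Theorem \ref{thm:structuretheorem} and Lemma \ref{lem:diagonalsubspace}, extend $\tau_2\tau_1^{-1}$ (together with the identity on $U'$) to an isometry of $W'$ by Proposition \ref{prop:witt} to get transitivity of $\Aut(W,U)\perp\Aut(W',U')$ with orbit size $\order{\Aut(\KK)}$, and then use Witt-transitivity on totally isotropic subspaces of fixed dimension and isotropy type for the second assertion. Your only addition is the explicit verification that the extension hypothesis of Proposition \ref{prop:witt}(2) holds (that $1_{U'}\perp\beta$ fixes $\wcan'$, via identifying $\kappa'$ as the canonical element of $\KK'$), a point the paper leaves implicit.
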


\begin{corollary}  \label{cor:stabilizerorder}
If $\F = \F_q$ is a finite field of characteristic 2, then 
the order of the group of isometries in $\Aut(W) \perp \Aut(W')$ that stabilize a maximal totally isotropic subspace
$S$ as in Theorem \ref{thm:structuretheorem} is given by
\begin{equation} \label{eq:Sisometryorder}
\order{ \Aut (S) } = \dfrac{ \order{\Aut(W,U)} \order{\Aut(W',U')}}{ \order{\Aut(\KK)} }
\end{equation}
with orders of the isometry groups in \eqref{eq:Sisometryorder} given by Proposition \ref{prop:isometryorders} and
Remark \ref{remark:Ktype}.

\end{corollary}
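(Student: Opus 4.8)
The plan is to obtain \eqref{eq:Sisometryorder} as a direct application of the orbit--stabilizer theorem to the transitive action already isolated in Corollary \ref{corollary:transitive}. First I would check that the stabilizer of $S$ inside the full group $\Aut(W) \perp \Aut(W')$ coincides with its stabilizer inside the subgroup $G = \Aut(W,U) \perp \Aut(W',U')$. Indeed, an isometry $\sigma \perp \sigma'$ preserves the summand $W$ and the summand $W'$, so $(\sigma \perp \sigma')(S) \cap W = \sigma(S \cap W) = \sigma(U)$ and similarly on $W'$; hence if $\sigma \perp \sigma'$ fixes $S$ it fixes $U = S \cap W$ and $U' = S \cap W'$. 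Therefore $\Aut(S) \leq G$, and $\Aut(S)$ is exactly the stabilizer of $S$ under the action of $G$ on the set $\Sigma$ of all maximal totally isotropic subspaces $S_0$ of $V = W \perp W'$ with $S_0 \cap W = U$ and $S_0 \cap W' = U'$ (this set is $G$-stable by the same computation).

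Next I would invoke Corollary \ref{corollary:transitive}, which asserts that $G$ acts transitively on $\Sigma$ and that $\order{\Sigma} = \order{\Aut(\KK)}$ for any vector space complement $\KK$ to $U$ in $U^\perp$. Since the hypothesis $\F = \F_q$ finite makes all the groups involved finite, the orbit--stabilizer theorem applies and gives
\[
\order{\Aut(S)} \;=\; \frac{\order{G}}{\order{\Sigma}} \;=\; \frac{\order{\Aut(W,U)}\,\order{\Aut(W',U')}}{\order{\Aut(\KK)}},
\]
which is the asserted formula.

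Finally, to make the formula fully explicit one substitutes the orders of $\Aut(W,U)$, $\Aut(W',U')$, and $\Aut(\KK)$ from Proposition \ref{prop:isometryorders}; here Remark \ref{remark:Ktype} is what identifies the type of $\KK$ (alternating or not, and the parity of its dimension $n - 2k$) from the type of $W$ together with whether $\wcan \in U$, and similarly for $\KK' \iso \KK$. The only thing requiring care --- the main ``obstacle,'' though it is bookkeeping rather than a genuine difficulty --- is matching the four subcases $(1)$, $(2\mathrm{i})$, $(2\mathrm{ii})(\mathrm{a})$, $(2\mathrm{ii})(\mathrm{b})$ of Proposition \ref{prop:isometryorders} to each of the two factors and to the complement, keeping track of the parity constraints forced by Theorem \ref{thm:structuretheorem}(a). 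Carrying this out for the specific choices $W = V_\infty$ and $W' = V_2$ is precisely what produces the closed-form expressions recorded in Theorem \ref{theorem:imageSelmer}.
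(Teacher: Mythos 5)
Your proposal is correct and follows essentially the same route as the paper: observe that any isometry in $\Aut(W)\perp\Aut(W')$ stabilizing $S$ must stabilize $U=S\cap W$ and $U'=S\cap W'$, hence lies in $\Aut(W,U)\perp\Aut(W',U')$, and then apply orbit--stabilizer together with the transitivity and orbit-size count $\order{\Aut(\KK)}$ from Corollary \ref{corollary:transitive}. The concluding remarks about substituting orders from Proposition \ref{prop:isometryorders} via Remark \ref{remark:Ktype} match how the paper uses the corollary downstream.
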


\begin{proof}
Any isometry in $\Aut(W) \perp \Aut(W')$ that stabilizes $S$ also stabilizes $S \cap W$ and
$S \cap W'$, so is an element of the subgroup $\Aut(W,U) \perp \Aut(W',U')$.  The index of the stabilizer
of $S$ in this latter group is $\order{\Aut(\KK)}$ by Corollary \ref{corollary:transitive}, giving \eqref{eq:Sisometryorder}.
\end{proof}

By Theorem \ref{thm:structuretheorem} the maximal totally
isotropic subspaces $S$ of $W \perp W'$ correspond to compatibly isotropic subspaces $U$ and $U'$ of suitable
dimensions, and by Corollary \ref{corollary:transitive}, there is a
unique $S$ up to equivalence under $\Aut(W) \perp \Aut(W')$ if there is one.  
It remains to make explicit when, given a totally isotropic subspace 
$U$ of $W$, there is a subspace $U'$ of $W'$ with a compatible isotropy type.   

Without loss, we may assume that $\dim W \le \dim W'$.

\begin{corollary} \label{corollary:equivclasses}
Suppose $\dim W = n \le n' = \dim W'$.
For the five possible types for pairs of spaces $W$, $W'$, the compatible isotropy constraint and
the number of equivalence classes under $\Aut(W) \perp \Aut(W')$ of maximal totally isotropic subspaces of
$W \perp W'$ are the following:

\begin{enumerate}
\item[{(i)}]

$W$ alternating, $W'$ alternating: no constraint.  There is one equivalence class for each $k$ with $0 \le k \le n/2$.

\item[{(ii)}]
$W$ nonalternating and $n$ even, $W'$ alternating: $\wcan \in U$.    There is one equivalence class for each $k$ with $1 \le k \le n/2$

\item[{(iii)}]
$W$ alternating, $W'$ nonalternating and $n'$ even: $\wcan' \in U'$.
If $n = n'$ there is one equivalence class for each $k$ with $1 \le k \le n/2$.  
If $n < n'$ there is one equivalence class for each $k$ with $0 \le k \le n/2$.

\item[{(iv)}]
$W$ nonalternating and $n$ even, $W'$ nonalternating and $n'$ even: $\wcan \in U$ if and only if $\wcan' \in U'$.
There is one equivalence class for $k = 0$, there are two equivalence classes for each $k$ with $0 < k < n/2$ 
(one class with $\wcan \in U$ and $\wcan' \in U'$
and one class with $\wcan \notin U$ and $\wcan' \notin U'$), and there is one equivalence class with $k = n/2$.

\item[{(v)}]
$W$ nonalternating and $n$ odd, $W'$ nonalternating and $n'$ odd: no constraint.  There is
one equivalence class for each $k$ with $0 \le k \le (n-1)/2$.
\end{enumerate}

\end{corollary}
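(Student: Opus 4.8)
The plan is to read the corollary off from the Structure Theorem \ref{thm:structuretheorem} together with Corollary \ref{corollary:transitive}. By the latter, the orbits of maximal totally isotropic subspaces $S$ of $V = W \perp W'$ under $\Aut(W) \perp \Aut(W')$ are classified by three invariants: the integer $k = \dim(S \cap W)$, the isotropy type of $U = S \cap W$, and the isotropy type of $U' = S \cap W'$ in the sense of Definition \ref{def:isotropytype}. By Theorem \ref{thm:structuretheorem}(a),(b) these invariants are subject to the relation $n - n' = 2(k - k')$ (with $k' = \dim U'$) and to the compatibility condition $\rad(\subalt W) \subseteq U \iff \rad(\subalt{W'}) \subseteq U'$, and by the converse half of that theorem \emph{every} admissible compatible choice of invariants is realized by some $S$. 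Since $n$ and $n'$ have the same parity, $\lfloor n'/2 \rfloor - \lfloor n/2 \rfloor = (n'-n)/2$, so the inequality $0 \le k' \le \lfloor n'/2 \rfloor$ is equivalent to $0 \le k \le \lfloor n/2 \rfloor$; hence $k$ is the only free numerical parameter, and for each value of $k$ it remains to count the admissible compatible pairs of isotropy types on the two sides.

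First I would tabulate the isotropy types available in a single nondegenerate space $W$ of each of the three types of Proposition \ref{prop:symspaces}. If $W$ is alternating, or nonalternating of odd dimension, then $\rad(\subalt W) = 0$, so $\rad(\subalt W) \subseteq U$ holds vacuously: there is a unique isotropy type, for every admissible $k$ (namely $0 \le k \le n/2$, resp.\ $0 \le k \le (n-1)/2$). If $W$ is nonalternating of even dimension $n = 2m$, then $\rad(\subalt W) = \gp{\wcan}$, so the type of $U$ records whether $\wcan \in U$; here the type ``$\wcan \in U$'' is admissible exactly for $1 \le k \le m$ and the type ``$\wcan \notin U$'' exactly for $0 \le k \le m-1$. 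The one nontrivial verification is that a totally isotropic subspace $U$ of dimension $m$ must contain $\wcan$: if not, then $U \cap \gp{\wcan} = 0$, so $U$ embeds into the nondegenerate alternating space $\subalt W/\gp{\wcan} \iso \H^{m-1}$, forcing $\dim U \le m-1$; conversely $\gp{\wcan}$ is a $1$-dimensional totally isotropic subspace that extends to totally isotropic subspaces of all dimensions up to $m$.

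With these facts in hand each of the five cases is a mechanical check of which admissible type-pairs are compatible. In (i) and (v) compatibility is vacuous on both sides, giving one class per $k$ in the stated range. In (ii), $\rad(\subalt{W'}) = 0$, so compatibility forces $\wcan \in U$, admissible exactly for $1 \le k \le n/2$. Case (iii) is the mirror image, with the extra arithmetic that for $n < n'$ one has $k' \ge (n'-n)/2 \ge 1$, so ``$\wcan' \in U'$'' is automatic and $k$ may start at $0$, whereas for $n = n'$ it forces $k \ge 1$. In (iv) compatibility is the genuinely two-valued condition ``$\wcan \in U \iff \wcan' \in U'$'': for $0 < k < n/2$ both branches are admissible on each side (using $k \le m-1$ and $k' = k + (m'-m) \le m'-1$), giving two classes; for $k = 0$ only ``$\wcan \notin U,\ \wcan' \notin U'$'' survives, while for $k = n/2 = m$ the dimension argument above forces $\wcan \in U$ and (since then $k' = m'$) likewise $\wcan' \in U'$, so only one branch survives. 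The part that will take the most care is precisely this admissibility bookkeeping in the nonalternating even-dimensional cases at the two extreme values $k = 0$ and $k = n/2$; everything else is a direct translation of Definition \ref{def:isotropytype} through Corollary \ref{corollary:transitive}.
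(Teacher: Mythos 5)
Your proposal is correct and follows essentially the same route as the paper: both read the classification off from Theorem \ref{thm:structuretheorem} and the transitivity statement in Corollary \ref{corollary:transitive}, then check which pairs of isotropy types are compatible and admissible case by case (the paper only sketches this, doing case (ii) as an example). Your extra detail that a totally isotropic subspace of dimension $n/2$ in a nonalternating even-dimensional space must contain $\wcan$ is a correct and worthwhile filling-in of the bookkeeping the paper leaves implicit, particularly for the endpoint $k=n/2$ in case (iv).
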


\begin{proof}
This is straightforward. For example, the compatibility condition in (ii) is that 
$\wcan \in U$ if and only if $0 \in U'$, i.e., simply that $\wcan \in U$, and in this case $k$ must be
at least 1 and at most $n/2$ since $U$ is totally isotropic.  The remaining cases are similar. 
\end{proof}

\begin{remark}
The results of Corollary \ref{corollary:equivclasses} show there is a `reciprocity' between
the $\Aut(W)$ equivalence class of a totally isotropic subspace $U$ of $W$ and the unique
$\Aut(W')$ equivalence class of a compatibly isotropic subspace $U'$ of $W'$; the subspaces
$U$ and $U'$ are `linked' through a maximal totally isotropic subspace $S$ of $W \perp W'$.  Note also
that, while Corollary \ref{corollary:equivclasses} was stated for $\dim W \le \dim W'$, this reciprocity
is completely symmetric in $W$ and $W'$.
\end{remark}

By the second statement in Corollary \ref{corollary:transitive}, when $\F = \F_q$ is a finite field of characteristic 2
each equivalence class
in Corollary \ref{corollary:equivclasses} has size 
\begin{equation} \label{eq:massformula1}
\dfrac{ \order{ \Aut(W) } \order{ \Aut(W') } }{ \order{ \Aut(S_i) } } 
\end{equation}
where $S_i$ is any representative for the equivalence class and $\Aut(S_i)$ is
the subgroup of isometries in $\Aut(W) \perp \Aut(W')$ that stabilize $S_i$.  
By Corollary \ref{cor:stabilizerorder} 
this expression is 
\begin{equation} \label{eq:massformula2}
\dfrac{\order{\Aut (W)}}{\order{\Aut (W,U)}} \, \order{\Aut (\KK)} \, \dfrac{\order{\Aut (W')}}{\order{\Aut (W',U')}}
\end{equation}
for an appropriate $U$ and compatibly isotropic $U'$ and with $\KK$ any vector space complement to $U$ in $U^\perp$.
These equivalence classes partition the set of all maximal totally isotropic subspaces $S$ of
$V = W \perp W'$, so the sum of these orders for any of the five cases in Corollary \ref{corollary:equivclasses} 
is the total number of maximal totally isotropic
subspaces of $V = W \perp W'$.  Since any two maximal totally
isotropic subspaces are trivially isometric (by any vector space isomorphism) and 
since $\Aut (V)$ is transitive on the set of spaces $S$ by Witt's Theorem, 
this total is $\order{\Aut (V) } / \order{ \Aut (V,S) }$.  
This yields a ``mass formula" for the action of $\Aut (W) \perp \Aut (W')$ on these spaces:
\begin{equation} \label{eq:massformula3}
\sum_{i} \dfrac{1}{ \order{ \Aut (S_i) }  }  =  
\dfrac{ 
\order{ \Aut(V) } 
}{ 
\order{ \Aut(V,S) } 
\order{ \Aut(W) }
\order{ \Aut(W') } 
} 
\end{equation}
where the sum is extended over representatives $S_i$ of the equivalence classes of maximal totally isotropic subspaces of $V$ 
as in Corollary \ref{corollary:transitive}, and $S$ is any maximal totally isotropic subspace of $V$.  

The following corollary gives the mass formula explicitly in the case where
$n$ and $n'$ are odd (so both $W$ and $W'$ are nonalternating); this is the case of particular interest in 
the applications in Sections \ref{section:im2Selmer} and \ref{section:conjectures} 
(see Theorem  \ref{theorem:Sprobability}).
The mass formula in the other cases can be handled similarly.

\begin{corollary} \label{cor:massformula}
Suppose $\F = \F_q$ is a finite field of characteristic 2 and suppose
$W$ and $W'$ are both nonalternating where $n$, $n'$ are odd, $n \le n'$.
For $0 \le k \le \lfloor n/2 \rfloor$ let 
$S_k$ be a maximal totally isotropic subspace of $V = W \perp W'$ with $k = \dim (S_k \cap W)$ as in Corollary 
\ref{corollary:equivclasses}.  Then the number of isometries in 
$\Aut(W) \perp \Aut(W')$ stabilizing $S_k$ is 
\begin{equation} \label{eq:massexample1}
\order{ \Aut(S_k) } = q^{(n' -1)^2/4 + k (n -k-1) } \prod_{i=1}^{k} (q^{i} - 1) 
\prod_{i=1}^{k + (n' - n)/2} (q^{i} - 1)\prod_{i=1}^{(n -1)/2 - k} (q^{2i} - 1)
\end{equation}
and 
\begin{equation} \label{eq:massexample2}
\sum_{k=0}^{(n-1)/2} \dfrac{1}{ \order{ \Aut(S_k) } } = 
\dfrac{
\prod_{i=1}^{(n + n')/2 - 1} (q^{i} + 1)
}{ 
 q^{ (n -1)^2/4 + (n' -1)^2/4 } \prod_{i=1}^{(n -1)/2} (q^{2i} - 1) \prod_{i=1}^{(n' -1)/2} (q^{2i} - 1) .
}
\end{equation}

\end{corollary}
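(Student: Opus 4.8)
The plan is to obtain both displayed formulas by specializing the general machinery of this appendix to the case at hand. For \eqref{eq:massexample1}, I would start from Corollary~\ref{cor:stabilizerorder}, which gives $\order{\Aut(S_k)} = \order{\Aut(W,U)}\,\order{\Aut(W',U')}\,/\,\order{\Aut(\KK)}$, where $U = S_k \cap W$ has dimension $k$, $U' = S_k \cap W'$ has dimension $k'$, and $\KK$ is any vector space complement to $U$ in $U^\perp$. By Theorem~\ref{thm:structuretheorem}(a), $k' = k + (n'-n)/2$, and since $W$ and $W'$ are both nonalternating of odd dimension, Corollary~\ref{corollary:equivclasses}(v) tells us there is no isotropy constraint and exactly one equivalence class $S_k$ for each $k$ with $0 \le k \le (n-1)/2$. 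Writing $m = (n-1)/2$ and $m' = (n'-1)/2$, Proposition~\ref{prop:isometryorders}(2)(i) supplies $\order{\Aut(W,U)} = q^{m^2}\prod_{i=1}^{k}(q^i-1)\prod_{i=1}^{m-k}(q^{2i}-1)$ together with the analogous formula for $\order{\Aut(W',U')}$ with $(m',k')$ in place of $(m,k)$; meanwhile Remark~\ref{remark:Ktype}(b) identifies $\KK$ as nonalternating of odd dimension $n-2k = 2(m-k)+1$, so Proposition~\ref{prop:isometryorders}(2)(i) with $k=0$ gives $\order{\Aut(\KK)} = q^{(m-k)^2}\prod_{i=1}^{m-k}(q^{2i}-1)$. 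Dividing, the factor $\prod_{i=1}^{m-k}(q^{2i}-1)$ cancels entirely, the power of $q$ becomes $m^2 + (m')^2 - (m-k)^2 = (n'-1)^2/4 + k(n-k-1)$, and $m' - k' = (n-1)/2 - k$, which is precisely \eqref{eq:massexample1}.

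For \eqref{eq:massexample2} I would invoke the mass formula \eqref{eq:massformula3}, which asserts $\sum_{k} 1/\order{\Aut(S_k)} = \order{\Aut(V)}\,/\,\bigl(\order{\Aut(V,S)}\,\order{\Aut(W)}\,\order{\Aut(W')}\bigr)$ for $V = W \perp W'$ and any maximal totally isotropic $S \subseteq V$. The crucial preliminary point is that $V$ is nonalternating of even dimension $n+n'$: for $v = w + w' \in V$ one computes $B(v,v) = b(w,\wcan)^2 + b'(w',\wcan')^2 = B(v,\wcan+\wcan')^2$ using Proposition~\ref{prop:symspaces}(2), and $\wcan + \wcan' \neq 0$ because the two summands lie in the distinct subspaces $W$ and $W'$; hence the canonical element of $V$ is $\vcan = \wcan + \wcan'$. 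Putting $M = (n+n')/2$, Proposition~\ref{prop:isometryorders}(2)(ii) gives $\order{\Aut(V)} = q^{M^2}\prod_{i=1}^{M-1}(q^{2i}-1)$, while Proposition~\ref{prop:isometryorders}(2)(i) gives $\order{\Aut(W)} = q^{m^2}\prod_{i=1}^{m}(q^{2i}-1)$ and $\order{\Aut(W')} = q^{(m')^2}\prod_{i=1}^{m'}(q^{2i}-1)$. To evaluate $\order{\Aut(V,S)}$, I would first observe that every maximal totally isotropic $S$ of $V$ contains $\vcan$: indeed $S \subseteq \subalt V$, and $\vcan$ is isotropic and orthogonal to all of $\subalt V$, so maximality forces $\vcan \in S$; moreover $\dim S = M$. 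Thus Proposition~\ref{prop:isometryorders}(2)(ii)(a) with $k = M$ gives $\order{\Aut(V,S)} = q^{M^2}\prod_{i=1}^{M-1}(q^i-1)$, so that $\order{\Aut(V)}/\order{\Aut(V,S)} = \prod_{i=1}^{M-1}(q^i+1)$ after writing $q^{2i}-1 = (q^i-1)(q^i+1)$. Substituting these values into \eqref{eq:massformula3} and simplifying then yields \eqref{eq:massexample2}.

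The main obstacle is not any hard estimate but the bookkeeping of types and dimensions: one must correctly identify $\KK$ as nonalternating of odd dimension (Remark~\ref{remark:Ktype}(b)), $V$ as nonalternating of even dimension, and---so that case (2)(ii)(a) rather than (2)(ii)(b) of Proposition~\ref{prop:isometryorders} applies---establish that a maximal totally isotropic subspace of $V$ contains the canonical vector $\vcan$. Once these structural facts are in place, the rest is routine cancellation of $q$-shifted factorial products. One small point worth spelling out is that the formulas used for $\order{\Aut(W,U)}$ and $\order{\Aut(W',U')}$ require $k \le m$ and $k' \le m'$, both of which follow from the hypothesis $k \le \lfloor n/2 \rfloor = m$ together with $k' = k + (n'-n)/2$ and $n \le n'$ (so that $m' - k' = (n-1)/2 - k \ge 0$).
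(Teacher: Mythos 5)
Your proposal is correct and follows essentially the same route as the paper: the first formula comes from $\order{\Aut(S_k)} = \order{\Aut(W,U)}\,\order{\Aut(W',U')}/\order{\Aut(\KK)}$ together with the explicit orders in Proposition~\ref{prop:isometryorders}(2)(i) and the identification of $\KK$ as nonalternating of odd dimension, and the second from the mass formula \eqref{eq:massformula3} with $V$ nonalternating of even dimension, $\vcan \in S$, and Proposition~\ref{prop:isometryorders}(2)(ii)(a). The only difference is that you spell out the verifications that $\vcan_V = \wcan + \wcan'$ and that any maximal totally isotropic $S$ contains $\vcan$, which the paper simply asserts; these checks are correct.
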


\begin{proof}
Let $U = S_k \cap W$ and $U' = S_k \cap W'$ as in Theorem \ref{thm:structuretheorem}, so that $k = \dim U$ and 
$k' = \dim U'$ with $ n - n' = 2(k - k')$.  
Since $n = 2m + 1$ and $n' = 2m'+1$ are odd, (2)(i) of Proposition \ref{prop:isometryorders} gives
\begin{equation} \label{eq:massexampleproof1}
\order{ \Aut(W,U) } =  q^{m^2} \prod_{i = 1}^{k} (q^i - 1) \prod_{i = 1}^{m-k} (q^{2 i} - 1) ,
\quad
\order{ \Aut(W',U') } =  q^{m'^2} \prod_{i = 1}^{k'} (q^i - 1) \prod_{i = 1}^{m'-k'} (q^{2 i} - 1) .
\end{equation}
A vector space complement $\KK$ for $U$ in $U^\perp \subseteq W$ has dimension $n - 2k = 2(m - k) + 1$ and is nonalternating
(cf.\ Remark \ref{remark:Ktype}), so again by (2)(i) of Proposition \ref{prop:isometryorders} we have
\begin{equation} \label{eq:massexampleproof2}
\order{ \Aut(\KK) } =  q^{(m - k)^2} \prod_{i = 1}^{m-k} (q^{2 i} - 1) .
\end{equation}
By \eqref{eq:massformula2}, $\order{ \Aut(S_k) }$ is the product of the two
orders in \eqref{eq:massexampleproof1}  divided by the order in \eqref{eq:massexampleproof2}, which simplifies to 
give the first statement in the corollary.

In the case under consideration, $V = W \perp W'$ is nonalternating of
even dimension $n + n'$; any maximal totally isotropic $S$ contains $\vcan$ and has dimension $(n + n')/2$, so
by (2)(ii)(a) and (2)(i) of Proposition \ref{prop:isometryorders} we have
$$
\begin{aligned}
\order{ \Aut(V) } & =  q^{(n+n')^2 /4} \prod_{i=1}^{(n+ n')/2 -1} (q^{2i} - 1), & & & 
\order{ \Aut(V,S) } & = q^{(n+n')^2 /4} \prod_{i=1}^{(n+ n')/2 -1} (q^{i} - 1), \\
\order{ \Aut(W) } & = q^{ (n -1)^2/4 } \prod_{i=1}^{(n -1)/2} (q^{2i} - 1), & & & 
\order{ \Aut(W') } & = q^{ (n' -1)^2/4 } \prod_{i=1}^{(n' -1)/2} (q^{2i} - 1) .
\end{aligned}
$$
Using these orders for the right hand side of \eqref{eq:massformula3} and simplifying the result
gives the second equality of the corollary.
\end{proof}

Finally, we indicate how to construct a representative totally isotropic space $S$ for each 
of the possible equivalence classes delineated in Corollary \ref{corollary:equivclasses}.  This is
straightforward using the explicit descriptions in Remark \ref{rem:isometrytype}, as follows.

For each dimension $k$ in Corollary \ref{corollary:equivclasses}, take a totally isotropic
subspace $U$ of dimension $k$ in $W$: either $U = \gp{e_1,e_2,\dots,e_k}$ or
$U = \gp{e_1,e_2,\dots,e_{k-1},\wcan}$, depending on whether $\wcan \in U$ or not, as appropriate.
Then one vector space complement $\KK$ for $U$ in $U^\perp$ has a basis obtained by 
taking the basis for $W$ in Remark \ref{rem:isometrytype} and removing the elements $e_1, f_1, \dots , e_k, f_k$ (if
$U = \gp{e_1,e_2,\dots,e_k}$) or the elements $e_1, f_1, \dots , e_{k-1},f_{k-1}, \wcan, v_n$
(if $U = \gp{e_1,e_2,\dots,e_{k-1},\wcan}$).

Similarly take a totally isotropic subspace $U'$ of $W'$ of dimension $k' = k + (n' - n)/2$ (satisfying
any compatible isotropy constraint with $U$ required by Corollary \ref{corollary:equivclasses}) and construct
a vector space complement $\KK'$  for $U'$ in $(U')^\perp$.  

The complements $\KK$ and $\KK'$ will be isometric vector spaces, and it is elementary to construct an explicit
isometry $\tau$ since their bases are compatible with the
description in Proposition \ref{prop:symspaces} and Remark \ref{rem:isometrytype}.  Then $\tau$ defines a totally 
isotropic diagonal $\widetilde S$ which together with $U$ and $U'$ gives a maximal totally isotropic
subspace $S$ with $S \cap W = U$ and $S \cap W' = U'$.

\begin{remark}

The development here considered the case when $\dim W$ and $\dim W'$ have the same parity since this
is the situation that arises in the number field applications. 
When $\dim W$ and $\dim W'$ have opposite parity there are two cases: (1) one of the
spaces is alternating, in which case the results follow immediately by applying the results here to
the alternating subspace of $W \perp W'$, and (2) both $W$ and $W'$ are nonalternating, and in this case
the development here is easily modified.  For example, the decomposition \eqref{eq:Sdecomposition} 
still holds, the proof of Lemma \ref{lem:diagonalsubspace} shows that 
$\dim \KK$ and $\dim \KK'$ differ by one, and if $\dim \KK' = \dim \KK +1$, the diagonal subspaces 
$\widetilde S$ of $\KK \perp \KK'$ correspond to the isometries $\tau$ from $\KK$ to a nondegenerate subspace $\KK_0'$ of codimension 
1 in $\KK'$.  The number of diagonal subspaces is then $\order{\Aut (\KK) }$ times the number of
possible $\KK_0'$, which can easily be computed (e.g., by counting their orthogonal complements). There are
similar extensions of the other results.  We omit the details.
\end{remark}

\end{document}